\theoremstyle{plain}
\newtheorem{theorem}{Theorem}
\newtheorem{proposition}[theorem]{Proposition}
\newtheorem{lemma}[theorem]{Lemma}
\newtheorem{corollary}[theorem]{Corollary}
\newtheorem{definition}[theorem]{Definition}
\theoremstyle{definition}
\newtheorem{rem}[theorem]{Remark}
\newtheorem{remark}[theorem]{Remark}
\numberwithin{equation}{section}
\numberwithin{theorem}{section}
\def\R{{\mathbb R}}
\def\C{{\mathbb C}}
\newcommand{\E}{{\mathbb E}}
\renewcommand{\P}{{\mathbb P}}
\newcommand{\F}{{\mathcal F}}
\newcommand{\eps}{\varepsilon}
\newcommand{\e}{\varepsilon}
\newcommand{\sign}{\mathrm{sign}\text{ }}
\newcommand{\dd}[0]{\mathrm{d}}
\newcommand{\ud}[0]{\,\mathrm{d}}
\newcommand{\vertiii}[1]{{\left\vert\kern-0.25ex\left\vert\kern-0.25ex\left\vert #1
    \right\vert\kern-0.25ex\right\vert\kern-0.25ex\right\vert}}
\begin{document}

\title[The Hilbert transform and orthogonal martingales]
{The Hilbert transform and orthogonal martingales in Banach spaces}

\author{Adam Os\k{e}kowski}
\address{Department of Mathematics, Informatics and Mechanics\\
University of Warsaw\\
Banacha 2 \\ 02-097 Warsaw\\
Poland}
\email{ados@mimuw.edu.pl}

\author{Ivan Yaroslavtsev}
\address{Delft Institute of Applied Mathematics\\
Delft University of Technology \\ P.O. Box 5031\\ 2600 GA Delft\\The
Netherlands}
\email{yaroslavtsev.i.s@yandex.ru}

\begin{abstract}
Let $X$ be a given Banach space and let $M$, $N$ be two orthogonal $X$-valued local martingales such that $N$ is weakly differentially subordinate to $M$. The paper contains the proof of the estimate
$$
\mathbb E \Psi(N_t) \leq C_{\Phi,\Psi,X} \mathbb E \Phi(M_t),\;\;\; t\geq 0,
$$
where $\Phi, \Psi:X \to \mathbb R_+$ are convex continuous functions and the least admissible constant $C_{\Phi,\Psi,X}$ coincides with the $\Phi,\Psi$-norm of the periodic Hilbert transform. As a corollary, it is shown that the $\Phi,\Psi$-norms of the periodic Hilbert transform, the Hilbert transform on the real line, and the discrete Hilbert transform are the same if $\Phi$ is symmetric.
We also prove that under certain natural assumptions on $\Phi$ and $\Psi$, the condition $C_{\Phi,\Psi,X}<\infty$ yields the UMD property of the space $X$.
As an application, we provide comparison of $L^p$-norms of the periodic Hilbert transform to Wiener and Paley-Walsh decoupling constants. We also study the norms of the periodic, nonperiodic and discrete Hilbert transforms, present the corresponding estimates in the context of differentially subordinate harmonic functions and more general singular integral operators.
\end{abstract}

\keywords{Weak differential subordination, orthogonal martingales, periodic Hilbert transform, UMD spaces, plurisubharmonic functions, decoupling constants, discrete Hilbert transform, harmonic functions, Riesz transform}

\subjclass[2010]{44A15, 60G44 Secondary: 60B11, 31C10, 31B05, 46B09}

\maketitle

\tableofcontents

\section{Introduction}
The purpose of this paper is to study a certain class of estimates for singular integral operators acting on Banach-space-valued functions. Let us start with a related classical problem which has served as a motivation for many mathematicians for almost a century. The question is: how does the size of a periodic function control the size of its conjugate? Formally, assume that $f$ is a trigonometric polynomial of the form
$$ f(\theta)=\frac{a_0}{2}+\sum_{k=1}^N \big(a_k\cos(k\theta)+b_k\sin(k\theta)\big),\qquad \theta\in \mathbb{T}\simeq [-\pi,\pi),$$
with real coefficients $a_0$, $a_1$, $a_2$, $\ldots$, $a_N$, $b_1$, $b_2$, $\ldots$, $b_N$, and define the conjugate to $f$ as
$$ g(\theta)=\sum_{k=1}^N \big(a_k\sin(k\theta)-b_k\cos(k\theta)\big),\qquad \theta\in [-\pi,\pi).$$
Alternatively, the conjugate function can be defined as $g=\mathcal{H}^\mathbb{T}_{\mathbb R}f$, where $\mathcal{H}^\mathbb{T}_\R$ is the periodic Hilbert transform given by
\begin{equation}\label{eq:periodic0}
\mathcal{H}^{\mathbb T}_\R f(\theta)=\frac{1}{2\pi}\mbox{p.v.}\int_{-\pi}^\pi f(s)\,{\cot\frac{\theta-s}{2}}\mbox{d}s,\qquad \theta\in [-\pi,\pi),
\end{equation}
and the symbol $\R$ in the lower index of $\mathcal{H}^\mathbb{T}$ indicates that the operator acts on real-valued functions.
We can state the problem as follows. For a given $1\leq p\leq \infty$, does there exist a universal constant $C_p$ (that is, not depending on the coefficients or the number $N$) such that
$$
 \left(\int_{[-\pi,\pi)}|g(\theta)|^p\mbox{d}\theta\right)^{1/p}\leq C_p\left(\int_{[-\pi,\pi)}|f(\theta)|^p\mbox{d}\theta\right)^{1/p}\,?
$$
Furthermore, if the answer is yes, what is the optimal value of $C_p$ (i.e., what is the $L^p$-norm of $\mathcal{H}^\mathbb{T}_\R$)? The first question was answered by M. Riesz in \cite{Riesz28}: the inequality does hold if and only if $1<p<\infty$. The best value of $C_p$ was determined by Pichorides \cite{Pic72} and Cole (unpublished): the constant $\cot(\pi/(2p^*))$ is the best possible, where $p^*=\max\{p,p/(p-1)\}$.
There is a natural further question concerning the version of the above result for Banach-space-valued functions (it is not difficult to see that the formula \eqref{eq:periodic0} makes perfect sense in the vector setting, at least for some special $f$, see Section 2 below). Few years after the results of Riesz, it was realized that not all spaces are well-behaved: Bochner and Taylor \cite{BoTa} showed that $||\mathcal{H}^\mathbb{T}_{\ell_1}||_{L_p\to L_p}=\infty$ for all $p$. The problem of characterizing the `good' Banach spaces was solved over forty years later: Burkholder \cite{Burk81} and Bourgain \cite{Bour83} showed that the so-called UMD\footnote{UMD stands for ``unconditional martingale differences''} spaces form a natural environment to the study of the $L^p$-boundedness ($1<p<\infty$) of the periodic Hilbert transform, and more generally, for the $L^p$-boundedness of a wider class of singular integral operators.

The above problems, though expressed in an analytic language, have a very strong connection with probability theory, especially with the theory of martingales (see e.g.\ \cite{Burk83,Bour83,BanWang95,BanWang96,Gar85,HNVW1,BanKw17,Os15a,Os17a,GM-SS}). Let us provide some necessary definitions. Suppose that $(\Omega,\F,\mathbb{P})$ is a complete probability space, equipped with a continuous-time filtration $(\F_t)_{t\geq 0}$. Let $M=(M_t)_{t\geq 0}$, $N=(N_t)_{t\geq 0}$ be two adapted real-valued local martingales, whose trajectories are right-continuous and have limits from the left. Let $[M]$, $[N]$ stand for the associated quadratic variation (square brackets) of $M$ and $N$, see \cite{DM82} and \eqref{eq:defquadvar} below. Furthermore, $M^*=\sup_{t\geq 0}|M_t|$, $N^*=\sup_{t\geq 0}|N_t|$ denote the corresponding maximal functions.  Following Ba\~nuelos and Wang \cite{BanWang95} and Wang \cite{Wang}, $N$ is {\em differentially subordinate} to $M$ (which we denote by $N \ll M$) if, with probability $1$, the process $t\mapsto [M]_t-[N]_t$ is a nondecreasing and nonnegative function of $t\geq 0$. Furthermore, we say that $M$ and $N$ are \emph{orthogonal}, if $[M,N]:= \frac{[M+N] - [M-N]}{4}=0$ 
almost surely.

One of the remarkable examples of the aforementioned connection between the theory of singular integral operators and martingale theory was provided by Ba\~nuelos and Wang in \cite{BanWang95}. They have shown that the $L^p$-norm of $\mathcal{H}^\mathbb{T}$ acting on real-valued functions is equal to the sharp constant in the corresponding $L^p$-inequality
\begin{equation}\label{eq:BanWang95DS+Orth}
 (\mathbb E|N_t|^p)^{\frac{1}{p}} \leq C_p (\mathbb E|M_t|^p)^{\frac{1}{p}},\;\;\; t\geq 0,
\end{equation}
where $N$ is assumed to be differentially subordinate and orthogonal to $M$. The goal of the current article is to show that this interplay between the norm of $\mathcal{H}^\mathbb{T}$ and the martingale inequality \eqref{eq:BanWang95DS+Orth} can be extended to i) more general $\Phi,\Psi$-norms (see the beginning of Section 3 for the definition) and ii) more general Banach spaces in which the functions and processes take values.

\smallskip

Let us say a few words about the structure of the paper. The next Section is devoted to the introduction of the background which is needed for our further study. In particular, we recall there the notion of UMD spaces, define appropriate analogues of Banach-space-valued differential subordination and orthogonality, formulate the vector extensions of stochastic calculus and provide some basic information about plurisubharmonic functions, fundamental objects in the complex analysis of several variables.  Section 3 contains the main result of the paper, connecting the best constants in certain $\Phi,\Psi$-estimates for the periodic Hilbert transform and their counterparts in martingale theory. Though the rough idea of the proof can be tracked back to the classical works \cite{Burk87,BanWang95,HKV03,Pic72} (the validity of a given estimate for the Hilbert transform / orthogonal differentially subordinate martingales is equivalent to the existence of a certain special plurisubharmonic function), there are several serious technical problems to be overcome, due to the fact that we work in the Banach-space-valued setting.
Section \ref{sec:applications} is devoted to some applications. The first and the most notable one connects together the $\Phi,\Psi$-norms of the periodic Hilbert transform $\mathcal H_X^{\mathbb T}$, the Hilbert transform $\mathcal H^{\mathbb R}_X$ defined on a real line, and the discrete Hilbert transform $\mathcal H_{X}^{\rm dis}$ (for the definition of the latter object, consult Definition \ref{def:defofRHT} and \ref{def:defofdisHT} below). It turns out that all these norms coincide for quite general class of $\Phi$ and $\Psi$. This in particular generalizes the recent result of Ba{\~n}uelos and Kwa{\'s}nicki \cite{BanKw17} on the discrete Hilbert transform $\mathcal H^{\rm dis}_{\mathbb R}$, which asserts that
\[
 \|\mathcal H^{\rm dis}_{\mathbb R}\|_{L^p(\mathbb Z)\to L^p(\mathbb{Z})} =  \|\mathcal H^{\mathbb T}_{\mathbb R}\|_{L^p(\mathbb T)\to L^p(\mathbb{T})} =  \cot\Bigl(\frac{\pi}{2p^*}\Bigr),\qquad 1<p<\infty.
\]
This used to be an open problem for $90$ years (see \cite{BanKw17,Lae07,Ti26}). Subsection \ref{sebsec:decconstants} is devoted to the comparison of $L^p$-norms of the periodic Hilbert transform to Wiener and Paley-Walsh decoupling constants.
Application in Subsection \ref{sebsec:NecofUMD} is concerned with UMD Banach spaces and can be regarded as an extension of Bourgain's result \cite{Bour83}: we show that under some mild assumption on $\Phi$ and $\Psi$, the validity of the corresponding $\Phi,\Psi$-estimate (with some finite constant) implies the UMD property of $X$. In Subsection \ref{subsec:sharperLpforWDS} we prove that the results obtained in this paper can be applied to obtain sharper estimates for weakly differentially subordinate martingales (not necessarily satisfying the orthogonality assumption). Subsection \ref{subsec:WDSharmfunc} contains the study of related estimates in the context of harmonic functions on Euclidean domains. In Subsection \ref{subsec:singint} we present the possibility of extending the estimates to the more general class of singular integral operators. Our final application, described in Subsection \ref{sec:Hilbert operators}, discusses the vector-valued extension of the classical results of Hardy concerning Hilbert operators.

\section{Preliminaries}
This section contains the definitions of some basic notions and facts used later. Here and below, the scalar field is assumed to be $\mathbb R$, unless stated otherwise. {In particular, all Banach spaces are real, unless stated otherwise.}

\subsection{Periodic Hilbert transform}\label{subsec:PHT}
In what follows, the symbol $\mathbb{T}$ will stand for the torus $(\{z\in \mathbb C:|z| = 1\},\cdot)$ equipped with the natural multiplication. Sometimes, passing to the argument of a complex number, we will identify $\mathbb{T}$ with the interval $[-\pi,\pi)$. Let $X$ be a Banach space. A function $f:\mathbb{T}\to X$ is called a {\em step function}, if it is of the form
$$
f=\sum_{k=1}^N x_k\mathbf 1_{A_k}(s),\;\;\; -\pi\leq s<\pi,
$$
where $N$ is finite, $x_k\in X$ and $A_k$ are intervals in $\mathbb T$. The periodic Hilbert transform $\mathcal{H}_X^{\mathbb{T}}$ of a step function $f:\mathbb{T} \to X$ is given by the singular integral
\begin{equation}\label{eq:periodic}
\mathcal{H}^{\mathbb T}_Xf(t)=\frac{1}{2\pi}\mbox{p.v.}\int_{-\pi}^\pi f(s)\,{\cot\frac{t-s}{2}}\mbox{d}s,\qquad -\pi\leq t< \pi.
\end{equation}

\subsection{UMD Banach spaces}\label{subsec:UMD} Suppose that $(\Omega,\F,\mathbb{P})$ is a nonatomic probability space.
A Banach space $X$ is called a {\em UMD space} if for some (or equivalently, for
all)
$p \in (1,\infty)$ there exists a finite constant $\beta$ such that the following holds. If $(d_n)_{n=1}^\infty$ is any {$X$-valued} martingale difference sequence (relative to some discrete-time filtration) contained in $L^p(\Omega; X)$ and
$(\varepsilon_n)_{n=1}^\infty$ is any deterministic sequence of signs, then
\[
\Bigl(\mathbb E \Bigl\| \sum^N_{n=1} \varepsilon_n d_n\Bigr\|^p\Bigr )^{\frac
1p}
\leq \beta \Bigl(\mathbb E \Bigl \| \sum^N_{n=1}d_n\Bigr\|^p\Bigr )^{\frac 1p}.
\]
The least admissible constant $\beta$ above is denoted by $\beta_{p,X}$ and is called
the {\em UMD constant} of $X$.
It is well-known that UMD spaces enjoy a large number
of useful properties, such as being reflexive. Examples of UMD
spaces include all finite dimensional spaces, {Hilbert spaces (then $\beta_{p,X} = p^*-1$ with $p^* = \max\{p, p/(p-1)\}$),} the reflexive range of
$L^q$-spaces, Sobolev spaces, Schatten class spaces, and Orlicz spaces. On the other hand, all nonreflexive Banach spaces, e.g.
$L^1(0,1)$ and $C([0,1])$, are not UMD. We refer the reader to
\cite{Burk01,HNVW1,Pis16} for further details.

\begin{remark}\label{rem:h_p,Xvsbeta_p,X}
As we have already mentioned in the introductory section, UMD Banach spaces form a natural environment for the $L^p$-boundedness of the periodic Hilbert transform. It follows from \cite{Burk83,Bour83} that for every $1<p<\infty$ we have
 \begin{equation}\label{eq:sqrtbetaleqhleqbeta^2}
   \sqrt{\beta_{p,X}} \leq \|\mathcal H_X^{\mathbb T}\|_{L^p(\mathbb T, X)\to L^p(\mathbb{T},X)} \leq \beta_{p, X}^2.
 \end{equation}
It is not known whether the quadratic dependence can be improved on either of the sides (see e.g.\ \cite{HNVW1,GM-SS,Burk01}). Notice that if $X = \mathbb R$, then the dependence becomes linear: indeed,
\[
 \frac{2}{\pi}\beta_{p,\mathbb R} = \frac{2}{\pi}(p^*-1) \leq \cot\Bigl(\frac{\pi}{2p^*}\Bigr) =  \|\mathcal H_X^{\mathbb T}\|_{\mathcal L(L^p(\mathbb T, X))} \leq p^*-1 = \beta_{p,\mathbb R},
\]
where, as above, $p^*:=\max\{p,p/(p-1)\}$.
\end{remark}

We will see later that the context of UMD is also natural from the viewpoint of more general $\Phi,\Psi$-estimates for the periodic Hilbert transform {(see Subsection \ref{sebsec:NecofUMD})}.

\subsection{Stochastic integration {and It\^o's formula}}
For given Banach spaces $X,\,Y$, the symbol $\mathcal{L}(X,Y)$ will denote the classes of all linear operators from $X$ to $Y$. We will also use the notation $\mathcal{L}(X)=\mathcal{L}(X,X)$. Suppose that $H$ is a Hilbert space. For each $h\in H$ and $x\in X$, we denote by $h\otimes x$ the associated linear operator given by $g\mapsto \langle g, h\rangle x$, $g\in H$. The process $\phi: \mathbb R_+ \times \Omega \to \mathcal L(H,X)$ is called  \textit{elementary progressive}
with respect to the filtration $\mathbb F = (\mathcal F_t)_{t \geq 0}$ if it is of the form
\begin{equation*}
 \phi(t,\omega) = \sum_{k=1}^K\sum_{m=1}^M \mathbf 1_{(t_{k-1},t_k]\times B_{mk}}(t,\omega)
\sum_{n=1}^N h_n \otimes x_{kmn},\;\;\; t\geq 0, \omega \in \Omega.
\end{equation*}
Here $0 \leq t_0 < \ldots < t_K <\infty$ is a finite increasing sequence of nonegative numbers, the sets
$B_{1k},\ldots,B_{Mk}$ belong to $\mathcal F_{t_{k-1}}$ for each  $k = 1,\,2,\,\ldots, K$, and the vectors $h_1,\ldots,h_N$ are assumed to be orthogonal.
Suppose further that $M$ is an adapted local martingale taking values in $H$. Then the {\em stochastic integral} $\phi \cdot M:\mathbb R_+ \times \Omega \to X$ of $\phi$ with respect to $M$ is defined by the formula
\begin{equation*}
 (\phi \cdot M)_t = \sum_{k=1}^K\sum_{m=1}^M \mathbf 1_{B_{mk}}
\sum_{n=1}^N \langle(M(t_k\wedge t)- M(t_{k-1}\wedge t)), h_n\rangle x_{kmn},\;\; t\geq 0.
\end{equation*}

In what follows, we will also need a version of It\^o formula, which is a variation of \cite[Theorem~26.7]{Kal} that does not use the Euclidean structure of a finite-dimensional Banach space. The proof can be found in \cite[Theorem 5.5]{Y17FourUMD}.

\begin{lemma}[It\^o formula]\label{lem:itoformula}
 Let $d\geq 1$, $X$ be a $d$-dimensional Banach space, $f\in C^2(X)$, $M:\mathbb R_+ \times \Omega \to X$ be a martingale. Let $(x_n)_{n=1}^d$ be a basis of $X$, $(x_n^*)_{n=1}^d$ be the corresponding dual basis. Then for each $t\geq 0$
 \begin{equation}\label{eq:itoformula}
  \begin{split}
   f(M_t) = f(M_0)&+ \int_0^t \langle \partial_xf(M_{s-}), \ud M_s\rangle\\
&+ \frac 12 \int_0^t \sum_{n,m=1}^d \tfrac{\partial^2 f(M_{s-})}{\partial x_n \partial x_m}\ud[\langle M, x_n^*\rangle,\langle M, x_m^*\rangle]_s^c\\
&+ \sum_{s\leq t}(\Delta f(M_s) - \langle  \partial_xf(M_{s-}), \Delta M_{s}\rangle).
  \end{split}
 \end{equation}
\end{lemma}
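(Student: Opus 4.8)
The plan is to reduce the intrinsic identity \eqref{eq:itoformula} to the classical finite-dimensional It\^o formula by freezing the basis $(x_n)_{n=1}^d$ and passing to coordinates. Identify $X$ with $\R^d$ via the linear isomorphism $\iota\colon x\mapsto(\langle x,x_1^*\rangle,\dots,\langle x,x_d^*\rangle)$, and set $M^n:=\langle M,x_n^*\rangle$, so that $M=\sum_{n=1}^d M^n x_n$ and $(M^1,\dots,M^d)$ is an $\R^d$-valued (c\`adl\`ag) martingale; in particular each $M^n$ is a real martingale, hence a semimartingale, and the covariations $[M^n,M^m]$ together with their continuous parts $[M^n,M^m]^c$ are well defined. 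Put $g:=f\circ\iota^{-1}\in C^2(\R^d)$. One then applies the scalar-coordinate It\^o formula in the form of \cite[Theorem~26.7]{Kal} to $g$ and $(M^1,\dots,M^d)$.

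The second step is to translate the outcome back into basis-free notation. The chain rule gives $\partial_n g(\iota(x))=\langle\partial_x f(x),x_n\rangle$ and $\partial^2_{nm}g(\iota(x))=\tfrac{\partial^2 f(x)}{\partial x_n\partial x_m}$, so the Hessian term of \cite[Theorem~26.7]{Kal} coincides literally with the middle term of \eqref{eq:itoformula}. For the first-order term one invokes linearity of the real It\^o integral together with the definition of the vector stochastic integral from the preceding subsection: since $\partial_x f$ is continuous and $s\mapsto M_{s-}$ is predictable and (pathwise) locally bounded, $\partial_x f(M_{s-})$ is a predictable, locally bounded $X^*$-valued process, and $\sum_{n=1}^d\int_0^t\langle\partial_x f(M_{s-}),x_n\rangle\,\ud M^n_s$ coincides with $\int_0^t\langle\partial_x f(M_{s-}),\ud M_s\rangle$. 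The jump contributions are handled pointwise: $\Delta f(M_s)=g(\iota(M_s))-g(\iota(M_{s-}))$ and $\langle\partial_x f(M_{s-}),\Delta M_s\rangle=\sum_{n=1}^d\partial_n g(\iota(M_{s-}))\Delta M^n_s$, so the last sum in \eqref{eq:itoformula} matches the jump term in the coordinate formula.

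It remains to record that the right-hand side of \eqref{eq:itoformula} does not depend on the chosen basis; this is exactly the point at which the statement goes beyond the Euclidean formulation of \cite[Theorem~26.7]{Kal}. Under a change of basis, the matrix $\bigl(\tfrac{\partial^2 f}{\partial x_n\partial x_m}\bigr)_{n,m}$ transforms by the Jacobian on both indices, while $\bigl([\langle M,x_n^*\rangle,\langle M,x_m^*\rangle]^c\bigr)_{n,m}$ transforms by the inverse Jacobian on both indices, so the bilinear contraction appearing in the Hessian term is invariant; the first-order integral and the jump sum are manifestly intrinsic. Hence the formula obtained in coordinates is in fact the asserted coordinate-free identity.

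The main obstacle is the rigorous identification of the first-order term: one must verify that the vector-valued stochastic integral $\int_0^t\langle\partial_x f(M_{s-}),\ud M_s\rangle$, which is defined via approximation by elementary progressive processes, agrees with the coordinatewise sum of scalar It\^o integrals produced by \cite[Theorem~26.7]{Kal}. This calls for an approximation argument — approximate $\partial_x f(M_{s-})$ by elementary $X^*$-valued integrands using continuity of $\partial_x f$ and local boundedness of the paths of $M_{-}$ — combined with a dominated-convergence statement for stochastic integrals to pass to the limit in both representations simultaneously. Everything else is routine bookkeeping with the chain rule and the linearity of the integrals.
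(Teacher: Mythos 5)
The paper does not prove this lemma itself; it states that the result is a variation of \cite[Theorem~26.7]{Kal} ``that does not use the Euclidean structure of a finite-dimensional Banach space'' and defers the proof to \cite{Y17FourUMD}. Your proposal is correct and is precisely the expected argument: freeze a basis to reduce to the coordinate It\^o formula of \cite[Theorem~26.7]{Kal}, translate back via the chain rule, and observe that the Hessian contraction is basis-invariant (the Jacobian acting contravariantly on the second-derivative matrix cancels the Jacobian acting covariantly on the covariation matrix). Your flag on the first-order term is also the right point of care — identifying $\int_0^t\langle\partial_xf(M_{s-}),\ud M_s\rangle$ with the coordinatewise sum $\sum_{n}\int_0^t\langle\partial_xf(M_{s-}),x_n\rangle\,\ud M^n_s$ requires appealing to the construction of the vector stochastic integral as a finite linear combination of scalar It\^o integrals (here trivial since $X$ is finite-dimensional, so the integrand is an $X^*$-valued predictable locally bounded process and the vector integral reduces to a finite sum by bilinearity); no genuinely new approximation beyond what is already in the scalar theory is needed once this is noted.
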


{
Here $\partial_x f(y)\in X^*$ is the Fr\'echet derivative of $f$ in point $y\in X$. Recall that $(x_n^*)_{n=1}^d\subset X^*$ is called the {\em corresponding dual basis} of $(x_n)_{n=1}^d$ if $\langle x_n, x_m^*\rangle = \delta_{nm}$ for each $m,n=1,\ldots, d$.
}

{

\subsection{Quadratic variation}
Let $(\Omega, \mathcal F, \mathbb P)$ be a probability space with a filtration $\mathbb F = (\mathcal F_t)_{t\geq 0}$ that
satisfies the usual conditions. Let $M:\mathbb R_+ \times \Omega \to \mathbb R$ be a local martingale. We define a {\em quadratic variation} of $M$ in the following way:
\begin{equation}\label{eq:defquadvar}
 [M]_t  := |M_0|^2 + \mathbb P-\lim_{{\rm mesh}\to 0}\sum_{n=1}^N \|M(t_n)-M(t_{n-1})\|^2,
\end{equation}
where the limit in probability is taken over partitions $0= t_0 < \ldots < t_N = t$. Note that $[M]$ exists and is nondecreasing a.s. The reader can find more on quadratic variations in \cite{Kal,Prot,DM82}.
For any martingales $M, N:\mathbb R_+ \times \Omega \to \mathbb R$ we can define a {\em covariation} $[M,N]:\mathbb R_+ \times \Omega \to \mathbb R$ as $[M,N] := \frac{1}{4}([M+N]-[M-N])$.
Since $M$ and $N$ have c\`adl\`ag versions, $[M,N]$ has a c\`adl\`ag version as well (see e.g. \cite[Theorem I.4.47]{JS}).

}

\subsection{Weak differential subordination and orthogonal martingales}\label{sec:orthodiff} We have defined the notions of differential subordination and orthogonality of real-valued local martingales in the introductory section. We turn our attention to their vector analogues.

\begin{definition}
 Let $M, N$ be local martingales taking values in a given Banach space $X$. Then $N$ is said to be {\em weakly differentially subordinate} to $M$ (which will be denoted by $N \stackrel{w}\ll M$) if $\langle N, x^*\rangle\ll\langle M, x^*\rangle$ for any functional $x^* \in X^*$.
\end{definition}

It is known (see \cite{Y17MartDec}) that if $N$ is weakly differentially subordinate to $M$, then
\begin{equation}\label{eq:WDSbeta^2(beta+1)est}
 (\mathbb E \|N_t\|^p)^{\frac{1}{p}} \leq \beta_{p,X}^2(\beta_{p,X}+1)(\mathbb E \|M_t\|^p)^{\frac{1}{p}},\;\;\; t\geq 0.
\end{equation}
This estimate can be improved under some additional assumptions on $M$ and $N$ (see \cite{Y17FourUMD,Y17MartDec}). Here we will show such an improvement for $M$ and $N$ being orthogonal (see Section \ref{sec:results}). Moreover, using this improvement we will strengthen \eqref{eq:WDSbeta^2(beta+1)est} (see Remark \ref{rem:wdsbeta(beta+1)est}).

\begin{definition}\label{def:orthmart}
Let $M, N$ be local martingales taking values in a given Banach space $X$. Then $M,\,N$ are said to be {\em orthogonal}, if 
$[\langle M, x^*\rangle,\langle N, x^*\rangle] = 0$ almost surely for all functionals $x^* \in X^*$.
\end{definition}

\begin{remark}\label{rem:orth+wds}
Assume that $M, N$ are local martingales taking values in some Banach space $X$. If $M,\,N$ are orthogonal and $N$ is weakly differentially subordinate to $M$, then $N_0=0$ almost surely (which follows immediately from the above definitions).  Moreover, under these assumptions, $N$ must have continuous trajectories with probability $1$. Indeed, in such a case for any fixed $x^*\in X^*$ the real-valued local martingales $\langle M, x^*\rangle$, $\langle N, x^*\rangle$ are orthogonal and we have $\langle N, x^*\rangle \ll \langle M, x^*\rangle$. Therefore, $\langle N, x^*\rangle$ has a continuous version for each $x^*\in X^*$ by \cite[Lemma 3.1]{Os09a} (see also \cite[Lemma 1]{BanWang96}), which in turn implies that $N$ is continuous: any $X$-valued local martingale has a c\`adl\`ag version (see \cite{Y17FourUMD} and \cite[Proposition 2.2.2]{VerPhD}).
\end{remark}

\begin{remark}
The requirement $\langle M_0, x^*\rangle\cdot \langle N_0, x^*\rangle  =0$ for all $x^*\in X^*$ in Definition \ref{def:orthmart} is usually omitted (see e.g.\ \cite{JS,BanWang96,BanWang95}). Nevertheless we need this requirement in order to simplify all the statements in the sequel concerning orthogonal martingales.
\end{remark}

Weakly differentially subordinate orthogonal martingales appear naturally while working with the periodic Hilbert transform, which can be seen by exploiting the classical argument of Doob (the composition of a harmonic function with a Brownian motion is a martingale). Indeed, suppose that $X$ is a given Banach space. Suppose that $f$ is a simple function and put $g=\mathcal{H}^\mathbb{T}_Xf$. Let $u_f$, $u_g$ denote the harmonic extensions of $f$ and $g$ to the unit disc, obtained by the convolution with the Poisson kernel. In particular, the equality  $g=\mathcal{H}^\mathbb{T}f$ implies that $u_g(0,0)=0$ and for any functional $x^*\in X^*$, the function $\langle u_f,x^*\rangle+i\langle u_g,x^*\rangle$ is holomorphic on the disc.

Next, suppose that $W=(W^1,W^2)$ is a planar Brownian motion started from $(0,0)$ and stopped upon leaving the unit disc. Then the processes $M=(M_t)_{t\geq 0}=(u_f(W_t))_{t\geq 0}$, $N=(N_t)_{t\geq 0}=(u_g(W_t))_{t\geq 0}$ are $X$-valued martingales such that $N_0=0$. For any functional $x^*\in X^*$, we apply the standard, one-dimensional It\^o's formula to obtain, for any $t\geq 0$,
$$ \langle M_t,x^*\rangle =\langle M_0,x^*\rangle +\int_0^t \nabla \langle u_f(W_s),x^*\rangle \mbox{d}W_s$$
and
$$ \langle N_t,x^*\rangle =\langle N_0,x^*\rangle +\int_0^t \nabla \langle u_g(W_s),x^*\rangle \mbox{d}W_s.$$
By the aforementioned connection to analytic functions, the gradients $\nabla \langle u_f,x^*\rangle$, $\nabla \langle u_g,x^*\rangle$ are orthogonal and of equal length, so
\begin{align*}
 [\langle M, x^* \rangle, \langle N, x^*\rangle]_t =\int_0^t \nabla \langle u_f(W_s),x^*\rangle\cdot \nabla \langle u_g(W_s),x^*\rangle  \ud s = 0,
\end{align*}
and
\begin{align*}
  [\langle M, x^* \rangle]_t-[\langle N,x^* \rangle]_t=|\langle M_0,x^*\rangle|^2 + \int_0^t \nabla \langle u_f(W_s),x^*\rangle^2-\nabla \langle u_g(W_s),x^*\rangle^2\ud s=|\langle M_0,x^*\rangle|^2\geq 0.
\end{align*}
Hence $M$, $N$ are orthogonal and satisfy the weak differential subordination $N\stackrel{w}\ll M$. Since the distribution of $W_\infty$ is uniform on the unit circle $\mathbb{T}$, essentially any estimate of the form
$$ \E V(M_t,N_t)\leq 0,\qquad t\geq 0,$$
 for weakly differentially subordinate orthogonal martingales leads to the analogous bound
$$ \int_\mathbb{T} V(f,\mathcal{H}^\mathbb{T}_Xf)\mbox{d}x\leq 0$$
for the periodic Hilbert transform, at least when restricted to the class of simple functions.  (Later in Theorem \ref{thm:orthmartPhiPsi} we will show that the reverse holds true).

For more information and examples concerning the differential subordination, weak differential subordination, and orthogonal martingales, we refer the reader to \cite{JS,HNVW1,Y17MartDec,Prot,BanWang96,Wang,Burk84,BanWang95}.

\subsection{Subharmonic and plurisubharmonic functions}

{An upper semiconitnuous} function $f:\mathbb R^d \to \mathbb R \cup \{-\infty\}$ is called {\em subharmonic} if for any ball $B\subset \mathbb R^d$ and any harmonic function $g:B\to \mathbb R$ such that $f\leq g$ on $\partial B$ one has the inequality $f\leq g$ on the whole $B$. The following lemma follows from \cite[Proposition I.9]{LG86}.

\begin{lemma}\label{lem:either-inftyorL^1loc}
 Let $d\geq 1$ and let $f:\mathbb R^d \to \mathbb R \cup \{-\infty\}$ be a subharmonic function. Then either $f \equiv -\infty$, or $f$ is locally integrable.
\end{lemma}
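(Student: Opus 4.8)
The plan is to prove the dichotomy by a connectedness argument. Suppose $f\not\equiv-\infty$ and put
\[
U=\{x\in\mathbb R^d:\ f\in L^1(B)\ \text{for some open ball}\ B\ni x\}.
\]
Since $\mathbb R^d$ is connected, it suffices to show that $U$ is nonempty, open and closed; this forces $U=\mathbb R^d$, which is precisely the statement that $f$ is locally integrable. Openness is immediate: if a ball $B$ witnesses $x\in U$, then $B\subset U$.

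The analytic ingredient I would isolate first --- this is the real content, and it is exactly \cite[Proposition I.9]{LG86} --- is that a subharmonic function $f$ is upper semicontinuous (hence locally bounded above) and satisfies the sub-mean value inequality
\[
f(x)\le\frac{1}{|B(x,r)|}\int_{B(x,r)}f(y)\,\mathrm dy\qquad\text{for every ball}\ B(x,r)\subset\mathbb R^d .
\]
From this I would extract the following sublemma: \emph{if $f(x_0)>-\infty$, then $f\in L^1(B(x_0,r))$ for every $r>0$.} Indeed, the sub-mean value inequality gives $\int_{B(x_0,r)}f\ge f(x_0)\,|B(x_0,r)|>-\infty$, whereas local boundedness above gives $\int_{B(x_0,r)}\max\{f,0\}\le|B(x_0,r)|\cdot\sup_{\overline{B(x_0,r)}}f<\infty$; adding these bounds yields $\int_{B(x_0,r)}|f|<\infty$.

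Granting the sublemma, the two remaining properties are short. For $U\neq\emptyset$: since $f\not\equiv-\infty$ there is $x_0$ with $f(x_0)>-\infty$, and the sublemma gives $x_0\in U$. For closedness of $U$: let $x_1\in\overline U$, and choose $x_2\in U$ with $|x_1-x_2|<1$. Since $f$ is integrable on a ball around $x_2$, it is finite almost everywhere there, so there exists $x_2'$ with $f(x_2')>-\infty$ and $|x_1-x_2'|<2$; the sublemma applied at $x_2'$ with radius $3$ gives $f\in L^1(B(x_2',3))$, and since $B(x_1,1)\subset B(x_2',3)$ we conclude $f\in L^1(B(x_1,1))$, i.e.\ $x_1\in U$. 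This completes the proof modulo the analytic ingredient.

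I expect the only real obstacle to be that analytic ingredient --- passing from the comparison definition of subharmonicity to upper semicontinuity, local boundedness above and the sub-mean value inequality. The subtle point is that ``the harmonic function with boundary data $f|_{\partial B}$'' is not a priori well defined, since it already requires some integrability of $f$ on the sphere $\partial B$; the standard route, which I would follow (and which underlies \cite[Proposition I.9]{LG86}), is to approximate $f|_{\partial B(x,r)}$ from above by continuous functions $\varphi_k\downarrow f$, apply the comparison property to the genuine harmonic Poisson integrals of the $\varphi_k$ on $B(x,r)$, and pass to the limit by monotone convergence; everything else is the bookkeeping carried out above.
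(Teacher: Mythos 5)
Your proof is correct, and there is no argument in the paper to compare it against: the paper proves nothing here, it merely cites \cite[Proposition I.9]{LG86}. What you have written is a clean, self-contained reconstruction of the standard argument. Two remarks.

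First, the sublemma you isolate --- if $f(x_0)>-\infty$ then $f\in L^1(B(x_0,r))$ for \emph{every} $r>0$ --- already makes the connectedness machinery superfluous. Once a single point $x_0$ with $f(x_0)>-\infty$ is produced (which is exactly what $f\not\equiv-\infty$ gives you), the sublemma yields $f\in L^1(B(x_0,r))$ for all $r$, and every bounded set is contained in some such ball, so $f\in L^1_{\mathrm{loc}}$. There is nothing wrong with the open--closed set $U$, but it is doing no work once the sublemma holds at arbitrary radius; you could delete $U$ entirely. (A small typo in the sublemma: $\int_B\max\{f,0\}\le|B|\sup_{\overline B}f$ fails when $\sup_{\overline B}f<0$; write $|B|\max\{\sup_{\overline B}f,0\}$, the conclusion is unaffected.)

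Second, you describe your task as ``passing from the comparison definition of subharmonicity to upper semicontinuity, local boundedness above and the sub-mean value inequality,'' as though upper semicontinuity were derivable from the comparison property the paper states. It is not. Upper semicontinuity (which also supplies Borel measurability, without which $\int_B f$ is not even well-defined) is part of the \emph{definition} of subharmonic in Lelong--Gruman; the paper's Subsection 2.6 definition is simply stated more loosely than the source it cites. So the honest formulation is: assuming the standard definition (comparison property \emph{and} upper semicontinuity, which is what \cite{LG86} and hence this paper actually use), your Poisson-integral approximation by continuous $\varphi_k\downarrow f$ on $\partial B$ is exactly the right way to extract the sub-mean value inequality, and the rest of your argument goes through as written.
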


 Let $X$ be a Banach space. {An upper semiconitnuous} function $F:X + iX \to \mathbb R\cup \{-\infty\}$ is called {\em plurisubharmonic} if for any $x ,y\in X + iX$ the restriction $z\mapsto F(x + yz)$ is subharmonic in $z\in \mathbb C$.

 {
\begin{remark}\label{rem:complexifofX}
 Notice that $X+iX$ is a Banach space equipped with the norm
 \[
  \|x+iy\|_{X+iX}:= \sup_{x^* \in X^*, \|x^*\|\leq 1} (|\langle x, x^*\rangle|^2 + |\langle y, x^*\rangle|^2)^{\frac 12},\;\;\; x, y\in X
 \]
 (see \cite[Subsection B.4]{HNVW1}).
\end{remark}
}

\begin{remark}\label{rem:PSHeither-inftyorL^1loc}
 Let $X$ be finite-dimensional. Then any plurisubharmonic function defined on $X+iX$ is subharmonic (see \cite[Proposition I.9]{LG86} and \cite[Theorem 39]{G14}). Therefore, by Lemma \ref{lem:either-inftyorL^1loc}, a plurisubharmonic function either identically equals $-\infty$, or is locally integrable.
\end{remark}

{
Let $F:X+iX \to \mathbb R$ be $k$-times differentiable, $u_1,\ldots, u_k\in X+iX$. Then we denote
\[
 \frac{\partial^k F(v)}{\partial u_1\cdots \partial u_k} := \frac{\partial^k}{\partial t_1 \cdots \partial t_k} F(v + t_1u_1 + \cdots + t_ku_k)\Big|_{t_1,\ldots, t_k=0},\;\;\; v\in X+iX.
\]
In particular, for any $u\in X+iX$,
\[
 \frac{\partial^k F(v)}{\partial u^k} := \frac{\partial^k}{\partial t^k} F(v + tu)\Big|_{t=0},\;\;\; v\in X+iX.
\]
}

\begin{remark}\label{rem:xzplurisubhstaff}
 Note that if $X$ is finite-dimensional, $F$ is plurisubharmonic and twice differentiable, then for all $z_0\in X+iX$ and $x\in X$ we have
 \begin{align*}
  \frac{\partial^2 F(z_0)}{\partial x^2} +   \frac{\partial^2 F(z_0)}{\partial ix^2} = \Bigl(\frac{\partial^2 F(z_0 + zx)}{\partial \Re z^2} +   \frac{\partial^2 F(z_0 + zx)}{\partial \Im z^2}\Bigr)\Big|_{z=0}= \Delta_zF(z_0 + zx)|_{z=0} \geq 0.
 \end{align*}
\end{remark}

Later on we will need the following result.

\begin{proposition}\label{prop:PSH+concaveis2ndvar->convexin1st+cont}
 Let $X$ be a Banach space and let $F:X + iX \to \mathbb R\cup \{-\infty\}$ be plurisubharmonic. Assume further that $y\mapsto F(x+iy)$ is concave in $y\in X$ for any fixed $x\in X$. Then $x\mapsto F(x+iy)$ is convex in $x\in X$ for any $y\in X$, and $F$ is continuous.
\end{proposition}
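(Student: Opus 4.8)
The plan is to reduce the infinite-dimensional statement to the finite-dimensional case and then exploit the second-derivative characterization of plurisubharmonicity recorded in Remark \ref{rem:xzplurisubhstaff}. First I would observe that both conclusions — convexity of $x\mapsto F(x+iy)$ and continuity of $F$ — are properties that can be checked on finite-dimensional sections: convexity along a segment $[x_0,x_1]\subset X$ only involves the two-dimensional (real) subspace $\mathrm{span}_{\R}\{x_0,x_1\}+iy$, and continuity at a point can likewise be tested on finite-dimensional subspaces containing that point and a neighbourhood basis direction, using that $X+iX$ is a Banach space with the norm from Remark \ref{rem:complexifofX}. So fix a finite-dimensional subspace $V\subseteq X$ and consider $G:=F|_{V+iV}$. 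Restrictions of plurisubharmonic functions to complex-linear finite-dimensional subspaces are again plurisubharmonic, and $y\mapsto G(x+iy)$ remains concave; moreover by Remark \ref{rem:PSHeither-inftyorL^1loc}, $G$ is either $\equiv-\infty$ or locally integrable. The $\equiv-\infty$ case is excluded because $F$ is assumed to take values in $\R\cup\{-\infty\}$ but concavity in $y$ forces finiteness on a dense set, hence everywhere (a concave function on $\R^n$ with a single value $-\infty$ in the interior is $\equiv-\infty$, contradicting plurisubharmonicity's $L^1_{loc}$/finiteness on a large set); I would spell this dichotomy out to get $G$ finite-valued, hence (being subharmonic and locally bounded above, and locally integrable) in fact upper semicontinuous and locally bounded.

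Next, on $V+iV$ I would run a mollification argument: let $G_\eps:=G*\rho_\eps$ be the convolution with a smooth radial approximate identity on $V+iV\cong\R^{2n}$. Then each $G_\eps$ is smooth, plurisubharmonic (convolution of a plurisubharmonic function with a radial mollifier is plurisubharmonic), and still concave in the $y$-variable (convolution in $y$ preserves concavity in $y$, and we may mollify in the $x$- and $y$-blocks separately). For the smooth function $G_\eps$, Remark \ref{rem:xzplurisubhstaff} gives, for every $z_0$ and every $x\in V$,
\[
 \frac{\partial^2 G_\eps(z_0)}{\partial x^2} + \frac{\partial^2 G_\eps(z_0)}{\partial (ix)^2} \geq 0,
\]
while concavity in $y$ gives $\dfrac{\partial^2 G_\eps(z_0)}{\partial (ix)^2}\leq 0$. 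Adding these, $\dfrac{\partial^2 G_\eps(z_0)}{\partial x^2}\geq 0$ for all $x\in V$ and all $z_0\in V+iV$, i.e. $x\mapsto G_\eps(x+iy)$ is convex for each fixed $y$. Letting $\eps\downarrow 0$, $G_\eps\to G$ pointwise (by upper semicontinuity and local boundedness, the standard decreasing-limit / a.e.\ convergence of mollifications of subharmonic functions applies), so $x\mapsto G(x+iy)=F(x+iy)$ is convex on $V$; since $V$ was arbitrary, convexity holds on all of $X$. Continuity then follows: a function on a Banach space that is convex in $x$ for each $y$ and concave in $y$ for each $x$, and which is locally bounded (which we extract from the finite-dimensional local boundedness plus a Baire-category / uniform-boundedness argument, or more simply from the fact that a separately convex/concave function finite everywhere is locally bounded on finite-dimensional sections and the bound is uniform by convexity), is continuous; alternatively one invokes that a finite-valued convex function on a finite-dimensional space is continuous, a finite-valued concave function likewise, and a separately continuous biconvex-biconcave function with appropriate local bounds is jointly continuous.

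The main obstacle I expect is the passage from finite-dimensional sections to the full Banach space $X$ for the continuity claim, and the extraction of local boundedness: plurisubharmonicity in infinite dimensions does not a priori give local boundedness above (there is no Lemma \ref{lem:either-inftyorL^1loc} analogue), so I must get the bound from the separate convexity/concavity structure itself. The clean route is: once convexity in $x$ is established, for fixed $y$ the map $x\mapsto F(x+iy)$ is convex and finite, and a real-valued convex function on a Banach space is continuous iff it is locally bounded above; local boundedness above comes from concavity in $y$ (a concave finite function on a Banach space is locally bounded below, and combined with a pointwise bound on a neighbourhood one bootstraps). Handling this interplay carefully — rather than the routine mollification computation — is where the real work lies; everything else is bookkeeping with the definitions and Remarks \ref{rem:complexifofX}, \ref{rem:PSHeither-inftyorL^1loc}, \ref{rem:xzplurisubhstaff} already in place.
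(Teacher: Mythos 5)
Your overall strategy coincides with the paper's: reduce to finite-dimensional sections, mollify, combine the plurisubharmonic second-derivative inequality (Remark \ref{rem:xzplurisubhstaff}) with concavity in the $iy$-direction to extract $\partial^2_x G_\eps \geq 0$, pass to the decreasing limit, and then invoke continuity of a finite concave-convex function. This is precisely the content of Lemma \ref{lem:plurisubharmonicconcaveinseconds-->convexinfirst} and the way the paper uses it.

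However, your argument for ruling out the $-\infty$ case has a genuine gap. You write that ``a concave function on $\R^n$ with a single value $-\infty$ in the interior is $\equiv -\infty$,'' which is false: the function $g(y) = -\sqrt{-y}$ for $y \le 0$ and $g(y) = -\infty$ for $y>0$ is concave on $\R$, takes the value $-\infty$ somewhere, and is not identically $-\infty$. More importantly, your reasoning tries to dismiss $\equiv -\infty$ \emph{before} establishing convexity in $x$, using only concavity in $y$, and concavity alone is simply not strong enough — the effective domain of a concave function can be any convex set, including one whose complement has measure zero. The paper's fix is to reverse the order: first obtain convexity of $x \mapsto F(x+iy)$ via the mollification argument, and \emph{then} argue by contradiction that if $F(x_0 + iy_0) = -\infty$, the concavity in $y$ forces a nontrivial convex set $X \setminus A$ of $y$'s with $F(x_0+iy)=-\infty$, and the just-established convexity in $x$ propagates this $-\infty$ to all $(x,y) \in X \times (X\setminus A)$, a set of positive measure, contradicting local integrability from Remark \ref{rem:PSHeither-inftyorL^1loc}. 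Your proof needs this two-step structure (convexity first, finiteness second), rather than a standalone concavity argument. Apart from this, the rest of your sketch is sound and matches the paper.
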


For the proof we will need the following lemma.

\begin{lemma}\label{lem:plurisubharmonicconcaveinseconds-->convexinfirst}
 Let $X$ be a finite-dimensional Banach space and let $V:X+iX \to \mathbb R$ be a continuous twice differentiable plurisubharmonic function. Let $y\mapsto V(x+iy)$ be concave in $y\in X$ for all $x\in X$. Then $t\mapsto V(tx+z)$ is convex in $t\in \mathbb R$ for all $x\in X$ and $z\in X+iX$. In particular, {since} $t\mapsto V(tx+z)$ is differentiable, {we have that}
 \begin{equation}\label{eq:plurisubharmonicconcaveinseconds-->convexinfirst}
  V(tx+z)\geq V(sx+z) + \partial_{x} V(sx+z) (t-s),\;\;\; t,s\in \mathbb R.
 \end{equation}
\end{lemma}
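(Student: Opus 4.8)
The plan is to reduce the assertion to a single pointwise inequality between second-order directional derivatives of $V$. Fix $x\in X$ and $z\in X+iX$, and set $g(t):=V(tx+z)$ for $t\in\mathbb R$. Since $V$ is twice differentiable, $g$ is a twice differentiable function of the real variable $t$, and by the chain rule, in the notation introduced just before Remark \ref{rem:xzplurisubhstaff},
\[
 g''(t)=\frac{\partial^2 V(tx+z)}{\partial x^2},\qquad t\in\mathbb R.
\]
Thus it suffices to prove $g''(t)\ge 0$ for every $t$: then $g'$ is nondecreasing, so $g$ is convex, and being differentiable it lies above each of its tangent lines, which is precisely \eqref{eq:plurisubharmonicconcaveinseconds-->convexinfirst}.

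To bound $g''(t)$ from below I would invoke plurisubharmonicity of $V$ in the form recorded in Remark \ref{rem:xzplurisubhstaff}, applied at the point $z_0=tx+z$ and in the direction $x\in X$:
\[
 \frac{\partial^2 V(tx+z)}{\partial x^2}+\frac{\partial^2 V(tx+z)}{\partial ix^2}\ge 0 .
\]
It then remains to check that the second summand is nonpositive. Writing $tx+z=a+ib$ with $a,b\in X$, the curve $s\mapsto(tx+z)+s(ix)=a+i(b+sx)$ lies in the imaginary fibre over $a$, so $s\mapsto V\big(a+i(b+sx)\big)$ is the restriction to a line of the map $y\mapsto V(a+iy)$, which is concave by hypothesis; hence its second derivative at $s=0$, namely $\dfrac{\partial^2 V(tx+z)}{\partial ix^2}$, is $\le 0$. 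Combining the two displays gives $g''(t)=\dfrac{\partial^2 V(tx+z)}{\partial x^2}\ge 0$, as needed.

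There is no serious obstacle here; the only points deserving care are (i) verifying that the hypotheses of Remark \ref{rem:xzplurisubhstaff} are in force — $X$ is finite-dimensional and $V$ is twice differentiable, which is exactly what is assumed — and (ii) keeping the directional-derivative notation consistent, in particular confirming that $\partial^2 V/\partial ix^2$ really is the second derivative of $V$ along the purely imaginary perturbation $y\mapsto y+sx$ of the imaginary part, so that the concavity assumption applies verbatim. Once $g''\ge 0$ is established, the passage to \eqref{eq:plurisubharmonicconcaveinseconds-->convexinfirst} is just the standard first-order characterisation of convexity for a differentiable function of one real variable.
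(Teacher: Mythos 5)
Your proof is correct and is essentially the argument in the paper: both identify $g''(t)$ with $\partial^2 V(tx+z)/\partial x^2$, bound it from below by $-\partial^2 V(tx+z)/\partial ix^2$ via plurisubharmonicity (Remark \ref{rem:xzplurisubhstaff}), and conclude that term is nonnegative by the concavity hypothesis. The paper merely writes the same two-line computation without explicitly naming Remark \ref{rem:xzplurisubhstaff}.
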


\begin{proof}
The first part follows from the fact that $V$ is plurisubharmonic and twice differentiable. Indeed, we have
\begin{multline*}
  \frac{\partial^2 V(tx+z)}{\partial t^2}
  = \Bigl(\frac{\partial^2 V(tx+z+isx)}{\partial t^2}+  \frac{\partial^2 V(tx+z+isx)}{\partial s^2}\Bigr)\Big|_{s=0} - \frac{\partial^2 V(tx+z+isx)}{\partial s^2}\Big|_{s=0}\geq 0
\end{multline*}
since
$$
\Bigl(\frac{\partial^2 V(tx+z+isx)}{\partial t^2}+  \frac{\partial^2 V(tx+z+isx)}{\partial s^2}\Bigr)\Big|_{s=0} \geq 0
$$
by plurisubharmonicity and $ \frac{\partial^2 V(tx+z+isx)}{\partial s^2}\leq 0$ by concavity of $y\mapsto V(x + z +iy)$. The inequality \eqref{eq:plurisubharmonicconcaveinseconds-->convexinfirst} follows immediately from the convexity of $t\mapsto V(tx+iy)$ and twice differentiability of $V$.
\end{proof}

{To complete the proof of Proposition \ref{prop:PSH+concaveis2ndvar->convexin1st+cont}} we will need the following observation which will allow us to integrate over a Banach space.

\begin{remark}\label{rem:lebmeasuintoverX}
 Let $X$ be a finite dimensional Banach space. Then due to \cite[Theorem 2.20 and Proposition 2.21]{FolHarm} there exists a unique  translation-invariant measure $\lambda_X$ on $X$ such that $\lambda_X(\mathbb B_X) = 1$ for the unit ball $\mathbb B_X$ of $X$. We will call $\lambda_X$ the {\em Lebesgue measure}. In the sequel we will omit the Lebesgue measure notation while integrating over $X$ (i.e.\ we will write $\int_X F(s) \ud s$ instead of $\int_X F(s) \lambda_X(\dd s)$).
\end{remark}

\begin{proof}[Proof of Proposition \ref{prop:PSH+concaveis2ndvar->convexin1st+cont}]
 Without loss of generality we can assume that $X$ is finite-dimensional and that $f\not\equiv -\infty$. {As $X$ is finite dimensional, $X + iX \simeq \mathbb C^d$ for some $d$.} Let $\phi:X+iX\to \mathbb R_+$ be a $C^{\infty}$ function {radial with respect to $\mathbb C^d$ (i.e.\ depending only on $|z_1|^2 + \ldots + |z_d|^2$ with $z_1, \ldots, z_d$ being the basis of $\mathbb C^d$)} with bounded support such that
 \[
  \int_{X+iX}\phi(s)\ud s = 1.
 \]

 (This integral is well-defined due to Remark \ref{rem:complexifofX} and \ref{rem:lebmeasuintoverX}). 
For each $\eps>0$ we define $F_{\eps}:X+iX \to \mathbb R$ in the following way:
 \begin{equation}\label{eq:defofF_eps}
   F_{\eps}(s) = \int_{X+iX} F(s-\eps t)\phi(t)\ud t,\;\;\;\; s\in X+iX.
 \end{equation}
Then $F_{\eps}$ is plurisubharmonic due to \cite[Theorem 4.1.4]{Hor94}. Moreover, again by \cite[Theorem 4.1.4]{Hor94}, we have $F_{\eps}\searrow F$ as $\eps \searrow 0$. On the other hand, $F_{\eps}$ is well-defined and of class $C^{\infty}$. Furthermore, the function $y\mapsto F_{\eps}(x+iy)$ is concave in $y\in X$ for any $x\in X$ by \eqref{eq:defofF_eps}: here we use the fact that $F$ is locally integrable (see Remark \ref{rem:PSHeither-inftyorL^1loc}) and the concavity of $y\mapsto F(x+iy)$ for any fixed $x\in X$. Therefore by Lemma \ref{lem:plurisubharmonicconcaveinseconds-->convexinfirst}, the function $x\mapsto F_{\eps}(x+iy)$ is convex for any fixed $y\in X$; hence so is  $F$,  being the pointwise limit of $(F_{\eps})_{\eps>0}$ as $\eps \to 0$.

Let us now show that $F>-\infty$. Assume that there exists $x_0,y_0\in X$ such that $F(x_0+iy_0)=-\infty$. Since the function $y\mapsto F(x_0+iy)$ is concave, the set $A=\{y\in X:F(x_0+iy)>\infty\}\subset X$ is convex and open; moreover, $y_0\notin A$, so $X\setminus A$ is of positive measure. Now fix $(x, y)\in X\times (X\setminus A)$. Notice that $F(x_0+iy)=-\infty$. On the other hand $x\mapsto F(x+iy)$ is convex, so $F(x+iy) = -\infty$ as well (if a convex function equals $-\infty$ in one point, it equals $-\infty$ on the whole $X$). Therefore $F=-\infty$ in the set $X\times (X\setminus A)$ of positive measure; hence $F\equiv -\infty$ by Remark \ref{rem:PSHeither-inftyorL^1loc}, which leads to a contradiction.

Finally, note that $F<\infty$: we have $F\leq F_1$ with $F_1$ defined in \eqref{eq:defofF_eps}. Therefore $F$ is continuous as a finite concave-convex function (see \cite[Proposition 3.3]{Thib84} and \cite[Corollary 4.5]{JT}).
\end{proof}

For further material on subharmonic and plurisubharmonic functions, we recommend the works \cite{LG86,G14,Hor94,Ron71,Ron74}.

\subsection{Meyer-Yoeurp decomposition}\label{subsec:MYdec}

Let $X$ be a Banach space and let $M$ be a local martingale with values in $X$. Then {is called {\em purely discontinuous} if $[M]$ is a.s.\ a pure jump process (see \cite{Kal,JS} for details).} $M$ is said to have the {\em Meyer-Yoeurp decomposition} if there exist an $X$-valued continuous local martingale $M^c$ and an $X$-valued purely discontinuous local martingale $M^d$ such that $M^c_0=0$ and $M=M^c + M^d$ a.s. It was shown by Meyer in \cite{Mey76} and by Yoeurp in \cite{Yoe76} that any real-valued martingale has the Meyer-Yoeurp decomposition. Later in \cite{Y17GMY} it was shown that any $X$-valued local martingale has the Meyer-Yoeurp decomposition if and only if $X$ has the UMD property. See \cite{Kal,JS,Y17MartDec} for further details.

The following result shows the connection between the Meyer-Yoeurp decomposition and the weak differential subordination.

\begin{proposition}\label{prop:MYdecconttocontpdtopd}
 Let $X$ be a Banach space and let $M,\,N$ be local $X$-valued martingales possessing the Meyer-Yoeurp decompositions  $M=M^c+M^d$, $N=N^c+N^d$. Then $N \stackrel{w}\ll M$ if and only if $N^c \stackrel{w}\ll M^c$ and $N^d \stackrel{w}\ll M^d$. Moreover, if $M$ and $N$ are orthogonal, then $M^c$ and $N^c$, $M^d$ and $N^d$ are pairwise orthogonal.
\end{proposition}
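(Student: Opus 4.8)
The plan is to deduce the proposition from the corresponding scalar statements by testing against functionals, since both weak differential subordination and orthogonality are defined coordinatewise. The one structural input needed for this reduction is that the Meyer--Yoeurp decomposition commutes with the action of $x^*\in X^*$: if $M=M^c+M^d$ is the given $X$-valued decomposition, then $\langle M,x^*\rangle=\langle M^c,x^*\rangle+\langle M^d,x^*\rangle$ is the scalar Meyer--Yoeurp decomposition of $\langle M,x^*\rangle$, and likewise for $N$. The first summand is plainly continuous; that $\langle M^d,x^*\rangle$ is purely discontinuous follows from the domination $\mathrm d[\langle M^d,x^*\rangle]\le\|x^*\|^2\,\mathrm d[M^d]$ of Lebesgue--Stieltjes measures (pass to the limit over partitions in $\sum_i|\langle M^d(t_i)-M^d(t_{i-1}),x^*\rangle|^2\le\|x^*\|^2\sum_i\|M^d(t_i)-M^d(t_{i-1})\|^2$) together with the fact that $[M^d]$ is pure jump; uniqueness of the scalar Meyer--Yoeurp decomposition then identifies the two parts. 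This compatibility is also recorded in \cite{Y17MartDec,Y17GMY}.

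Next I would record two facts about a pair of real local martingales in Meyer--Yoeurp form $a=a^c+a^d$, $b=b^c+b^d$ with $a^c_0=b^c_0=0$. First, a continuous local martingale and a purely discontinuous one have vanishing covariation: $[a^c,b^d]$ has no jumps (as $a^c$ is continuous), its continuous part vanishes by the Kunita--Watanabe inequality since $[b^d]$ is pure jump, and $[a^c,b^d]_0=0$; hence $[a^c,b^d]=0$, and similarly $[a^d,b^c]=0$ and $[a^c,a^d]=0$. Consequently $[a]=[a^c]+[a^d]$ with $[a^c]$ continuous and $[a^d]$ pure jump, and $[a,b]=[a^c,b^c]+[a^d,b^d]$. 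Second, an elementary observation on finite-variation processes: if $A=A'+A''$ with $A'$ continuous, $A''$ pure jump, and $A$ nondecreasing, then both $A'$ and $A''$ are nondecreasing — the jumps of $A$ are nonnegative and carried entirely by $A''$, so $A''$ is nondecreasing, and for $s<t$ one has $A'_t-A'_s=(A_t-A_s)-\sum_{s<u\le t}\Delta A_u\ge 0$ because a nondecreasing function increases by at least the sum of its jumps.

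For the equivalence, fix $x^*$ and write $[\langle M,x^*\rangle]-[\langle N,x^*\rangle]=\big([\langle M^c,x^*\rangle]-[\langle N^c,x^*\rangle]\big)+\big([\langle M^d,x^*\rangle]-[\langle N^d,x^*\rangle]\big)$, the first bracket being continuous and the second pure jump. If $N^c\stackrel{w}\ll M^c$ and $N^d\stackrel{w}\ll M^d$, each bracket is nonnegative and nondecreasing, hence so is the sum, and $N\stackrel{w}\ll M$. Conversely, if $N\stackrel{w}\ll M$, then $[\langle M,x^*\rangle]-[\langle N,x^*\rangle]$ is nonnegative and nondecreasing; its jumps coincide with those of $[\langle M^d,x^*\rangle]-[\langle N^d,x^*\rangle]$ (since $\Delta M=\Delta M^d$, $\Delta N=\Delta N^d$) and are thus $\ge 0$, while its value at $0$ equals $|\langle M_0,x^*\rangle|^2-|\langle N_0,x^*\rangle|^2=[\langle M^d,x^*\rangle]_0-[\langle N^d,x^*\rangle]_0\ge 0$ (the continuous brackets vanish at $0$). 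Hence $[\langle M^d,x^*\rangle]-[\langle N^d,x^*\rangle]$ is nonnegative and nondecreasing, i.e. $\langle N^d,x^*\rangle\ll\langle M^d,x^*\rangle$; the continuous remainder $[\langle M^c,x^*\rangle]-[\langle N^c,x^*\rangle]$, being the difference of the nondecreasing total and this nondecreasing pure-jump part, is nondecreasing by the second observation above and vanishes at $0$, so $\langle N^c,x^*\rangle\ll\langle M^c,x^*\rangle$. Since $x^*$ is arbitrary, $N^c\stackrel{w}\ll M^c$ and $N^d\stackrel{w}\ll M^d$.

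For the orthogonality assertion, fix $x^*$ and put $a=\langle M,x^*\rangle$, $b=\langle N,x^*\rangle$, so that $0=[a,b]=[a^c,b^c]+[a^d,b^d]$ by the first fact. Here $[a^c,b^c]$ is continuous with $[a^c,b^c]_0=0$, whereas $[a^d,b^d]$ is pure jump with $[a^d,b^d]_0=\langle M_0,x^*\rangle\langle N_0,x^*\rangle=0$, the last equality being part of the orthogonality hypothesis; a process that is simultaneously continuous, pure jump, and null at time $0$ is identically $0$, so $[a^c,b^c]=[a^d,b^d]=0$. Together with $\langle M^c_0,x^*\rangle\langle N^c_0,x^*\rangle=0$ and $\langle M^d_0,x^*\rangle\langle N^d_0,x^*\rangle=\langle M_0,x^*\rangle\langle N_0,x^*\rangle=0$, and since $x^*$ is arbitrary, this gives that $M^c,N^c$ and $M^d,N^d$ are pairwise orthogonal. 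I expect the only genuine subtlety to be the compatibility of the Meyer--Yoeurp decomposition with functionals — in particular the preservation of pure discontinuity under $x^*$ — together with the careful bookkeeping of the time-$0$ values of the bracket processes; everything else is standard stochastic calculus and the elementary finite-variation lemma.
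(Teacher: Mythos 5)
Your proof is correct and takes a genuinely different route. The paper dispatches the weak differential subordination equivalence by citing \cite{Y17MartDec} (see also \cite[Lemma 1]{Wang}), and for the orthogonality assertion it proceeds differently: it invokes the standing hypothesis $N\stackrel{w}\ll M$ together with orthogonality to conclude (via Remark~\ref{rem:orth+wds}) that $N$ is continuous with $N_0=0$, hence $N^d=0$, and then reduces to showing $\langle M^c,x^*\rangle\perp\langle N,x^*\rangle$ via \cite[Lemma 1]{BanWang96}. Your orthogonality argument is sharper in one respect: by decomposing $[a,b]=[a^c,b^c]+[a^d,b^d]$ and observing that a process which is simultaneously continuous, pure jump, and null at time $0$ must vanish, you obtain pairwise orthogonality from $[M,N]=0$ alone, without invoking the weak differential subordination and without needing $N^d=0$. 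You also give a direct, self-contained proof of the WDS equivalence via the continuous/pure-jump decomposition of the nondecreasing process $[\langle M,x^*\rangle]-[\langle N,x^*\rangle]$, whereas the paper simply cites the literature. The one step that deserves a flag is your domination $d[\langle M^d,x^*\rangle]\le\|x^*\|^2\,d[M^d]$: it presupposes that the real-valued quadratic variation $[M^d]$ of the $X$-valued process exists, which the paper implicitly assumes (its definition of purely discontinuous $X$-valued martingales in Subsection 2.6 uses $[M]$) but which is not automatic in an arbitrary Banach space; you correctly note that the compatibility of the Meyer--Yoeurp decomposition with functionals is recorded in \cite{Y17MartDec,Y17GMY}, and relying on that citation sidesteps the issue.
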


\begin{proof}
 The first part can be found in \cite{Y17MartDec} (see also \cite[Lemma 1]{Wang}). Due to Remark~\ref{rem:orth+wds} we know that $N^d=0$, so it is sufficient to show that $M^c$ and $N^c$ are orthogonal. The latter is equivalent to the fact that $\langle M^c, x^*\rangle$ and $\langle N^c, x^*\rangle$ are orthogonal for any $x^*\in X^*$, which holds true by \cite[Lemma 1]{BanWang96}.
\end{proof}

\section{Main theorem}\label{sec:results}
Having introduced all the necessary notions, we turn to the study of our new results. For given two nonnegative and continuous functions $\Phi, \Psi:X\to \mathbb R_+$, we define the associated `$\Phi,\Psi$-norm' of $\mathcal{H}^\mathbb{T}_X$ by the formula
\begin{align*}
|\mathcal H_{X}^{\mathbb T}|_{\Phi,\Psi}:=\inf\Biggl\{c\in [0,\infty]: \int_{\mathbb T} \Psi(\mathcal H_X^{\mathbb T}f(s))\ud s\leq c\int_{\mathbb T} \Phi(f(s))\ud s\mbox{for all step functions }f:\mathbb T\to X\Biggr\}.
\end{align*}

Notice that if $\Psi \equiv 0$, then $|\mathcal H_{X}^{\mathbb T}|_{\Phi,\Psi} = 0$, and if $\Phi \equiv 0$, then $|\mathcal H_{X}^{\mathbb T}|_{\Phi,\Psi} \in \{0,+\infty\}$. Throughout the paper we exclude these trivial cases: we will assume that both $\Phi$ and $\Psi$ are not identically zero. Furthermore, for any $1<p<\infty$, we will denote the $L^p$-norm of $\mathcal{H}^\mathbb{T}_X$ by $\hbar_{p,X}$ (in the language of $\Phi,\Psi$-norms, we have $\hbar_{p,X}^p=|\mathcal H^{\mathbb T}_X|_{\Phi,\Psi}$ with $\Phi(x)=\Psi(x)=||x||^p$).

The following theorem is the main result of this section.

\begin{theorem}\label{thm:orthmartPhiPsi}
 Let $X$ be a separable Banach space and let $\Phi,\,\Psi:X\to \mathbb R_+$ be continuous convex functions such that $\Psi(0)=0$ and $|\mathcal H^{\mathbb T}_X|_{\Phi, \Psi}<\infty$. Let $M,\,N$ be two orthogonal $X$-valued local martingales such that $N \stackrel{w}\ll M$. Then
 \begin{equation}\label{eq:HTnormboudfororcontmart}
  \mathbb E \Psi(N_t) \leq  C_{\Phi, \Psi, X}\mathbb E \Phi(M_t),\qquad  t\geq 0,
 \end{equation}
 {and the least admissible $C_{\Phi,\Psi,X}$ equals $|\mathcal H^{\mathbb T}_X|_{\Phi, \Psi}$.}
\end{theorem}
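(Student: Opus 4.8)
The plan is to prove the two inequalities $C_{\Phi,\Psi,X}\geq|\mathcal H^{\mathbb T}_X|_{\Phi,\Psi}$ and $C_{\Phi,\Psi,X}\leq|\mathcal H^{\mathbb T}_X|_{\Phi,\Psi}$ separately, the first being essentially the converse of the Doob-type construction from Subsection~\ref{sec:orthodiff} and the second the substantial part. For ``$\geq$'' I would argue as follows. Fix a step function $f:\mathbb T\to X$, put $g=\mathcal H^{\mathbb T}_Xf$, let $u_f,u_g$ be the Poisson extensions of $f,g$ to the unit disc, and let $W$ be a planar Brownian motion started at the origin with (a.s.\ finite) exit time $\tau$ from the disc; set $M_t=u_f(W_{t\wedge\tau})$, $N_t=u_g(W_{t\wedge\tau})$. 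By the computation in Subsection~\ref{sec:orthodiff}, $M,N$ are $X$-valued martingales, $M,N$ are orthogonal, and $N\stackrel{w}\ll M$; here the separability of $X^{*}$ is used to realise the relations $\langle N,x^{*}\rangle\ll\langle M,x^{*}\rangle$ and $[\langle M,x^{*}\rangle,\langle N,x^{*}\rangle]=0$ simultaneously for all $x^{*}$ off a single null set, working along a countable norming subset of $X^{*}$. Since $f$ is a step function $u_f$ is bounded, so $M$ is bounded and $\E\Phi(M_t)\to\frac1{2\pi}\int_{\mathbb T}\Phi(f)\ud s$; meanwhile $N_t\to N_\tau$ a.s.\ and $\Psi\geq0$, so Fatou gives $\frac1{2\pi}\int_{\mathbb T}\Psi(g)\ud s=\E\Psi(N_\tau)\leq\liminf_t\E\Psi(N_t)$. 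Inserting \eqref{eq:HTnormboudfororcontmart} with constant $C_{\Phi,\Psi,X}$ yields $\int_{\mathbb T}\Psi(\mathcal H^{\mathbb T}_Xf)\ud s\leq C_{\Phi,\Psi,X}\int_{\mathbb T}\Phi(f)\ud s$ for every step function, hence $|\mathcal H^{\mathbb T}_X|_{\Phi,\Psi}\leq C_{\Phi,\Psi,X}$.

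For the main inequality write $c:=|\mathcal H^{\mathbb T}_X|_{\Phi,\Psi}<\infty$. First one reduces to the case $X$ finite-dimensional: testing $\Psi$ against finitely many of its affine minorants (including $0$) and replacing $\Phi$ by its push-forward under the resulting finite-dimensional quotient map, one bounds $\E\Psi(N_t)$ by a supremum of expectations of convex functions of finitely many coordinates $\langle N_t,x^*\rangle,\langle M_t,x^*\rangle$, which form an orthogonal weakly differentially subordinate pair in a common Euclidean space with $\Phi,\Psi$-norm of the Hilbert transform still at most $c$. Assuming $X$ finite-dimensional, let $V:X+iX\to\mathbb R\cup\{-\infty\}$ be the largest plurisubharmonic minorant of $(x+iy)\mapsto c\,\Phi(x)-\Psi(y)$; equivalently, by the analytic-disc description of the plurisubharmonic envelope, $V(x+iy)$ is the infimum of $\frac1{2\pi}\int_0^{2\pi}\bigl[c\,\Phi(\Re\mathbf h(e^{i\theta}))-\Psi(\Im\mathbf h(e^{i\theta}))\bigr]\ud\theta$ over analytic maps $\mathbf h:\overline{\mathbb D}\to X+iX$ with $\mathbf h(0)=x+iy$. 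Then $V(x+iy)\leq c\,\Phi(x)-\Psi(y)$ (constant disc); $y\mapsto V(x+iy)$ is concave (the integrand is concave in $y$ for each $\mathbf h$, and an infimum of concave functions is concave), so by Proposition~\ref{prop:PSH+concaveis2ndvar->convexin1st+cont} the map $x\mapsto V(x+iy)$ is convex and $V$ is continuous; and, crucially, $V(x+i0)\geq0$ for all $x$ — this is exactly where the hypothesis $|\mathcal H^{\mathbb T}_X|_{\Phi,\Psi}=c$ enters, since for an analytic disc with $\mathbf h(0)=x$ one has $\Im\mathbf h|_{\mathbb T}=\mathcal H^{\mathbb T}_X(\Re\mathbf h|_{\mathbb T})$ and $\frac1{2\pi}\int_{\mathbb T}\Re\mathbf h=x$, so the $\Phi,\Psi$-bound for $\mathcal H^{\mathbb T}_X$ makes the disc integral nonnegative. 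Finally $V\not\equiv-\infty$ (as $V(\cdot+i0)\geq0$), hence $V>-\infty$ everywhere by convexity in $x$ and concavity in $y$; thus $V$ is finite, continuous, plurisubharmonic, convex in $x$, concave in $y$, with $V(x+iy)\leq c\,\Phi(x)-\Psi(y)$ and $V(x+i0)\geq0$.

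It remains to run an It\^o argument with $V$. Mollifying as in the proof of Proposition~\ref{prop:PSH+concaveis2ndvar->convexin1st+cont} (via \cite[Theorem~4.1.4]{Hor94}) produces smooth plurisubharmonic $V_\eps\searrow V$ that remain concave in $y$, hence convex in $x$ by Lemma~\ref{lem:plurisubharmonicconcaveinseconds-->convexinfirst}. Let $M,N$ be orthogonal with $N\stackrel{w}\ll M$; by Remark~\ref{rem:orth+wds} $N$ is continuous and $N_0=0$, and after localisation we may assume $M$ bounded. Applying Lemma~\ref{lem:itoformula} to $V_\eps$ and the $(X\times X)$-valued process $(M,N)$, the jump part of the drift is nonnegative because $V_\eps$ is convex in the first variable (the jumps of $(M,N)$ are jumps of $M$ only), while the continuous part is nonnegative by combining the plurisubharmonicity of $V_\eps$ — used along several complex lines, through the device in the proof of Lemma~\ref{lem:plurisubharmonicconcaveinseconds-->convexinfirst} writing a pure second derivative as a complex Laplacian minus a concave second derivative — with the convexity in $x$, the concavity in $y$, the orthogonality of $M,N$ (which forces the relevant cross-covariation matrix to be antisymmetric) and the semidefinite comparison $[\langle M,x^*\rangle]\geq[\langle N,x^*\rangle]$ from $N\stackrel{w}\ll M$. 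Thus $V_\eps(M_t,N_t)$ is a submartingale, so
\[
 \E\bigl[c\,\Phi(M_t)-\Psi(N_t)\bigr]\;\geq\;\E V_\eps(M_t,N_t)\;\geq\;\E V_\eps(M_0,N_0)\;=\;\E V_\eps(M_0,0)\;\geq\;\E V(M_0,0)\;\geq\;0,
\]
using majorisation, the submartingale property, $N_0=0$, $V_\eps\geq V$ and $V(\cdot+i0)\geq0$. Letting $\eps\to0$ and removing the localisation (Fatou for the $\Psi$-term and uniform integrability for the $\Phi$-term, both steps trivial when $\E\Phi(M_t)=\infty$) yields $\E\Psi(N_t)\leq c\,\E\Phi(M_t)$, i.e.\ $C_{\Phi,\Psi,X}\leq c$.

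I expect the difficulty to be twofold. First, carrying out rigorously in the Banach setting the reduction to finite dimensions, the construction of the extremal $V$, and the verification that it is plurisubharmonic, finite, and concave in $y$ (including the smoothing that preserves these structural properties). Second, the pointwise Hessian estimate showing that plurisubharmonicity together with the one-sided convexity and concavity is exactly enough to neutralise the It\^o drift of $V_\eps(M,N)$ for an \emph{arbitrary} orthogonal weakly differentially subordinate pair, and not merely for the ``holomorphic'' pairs arising from the disc. These are the ``several serious technical problems'' alluded to in the introduction: the scalar blueprint of \cite{Burk87,BanWang95,Pic72,HKV03} must be substantially re-engineered because $\Phi$ and $\Psi$ are arbitrary convex functions on a (possibly high-dimensional) space and the relevant special function is not explicit.
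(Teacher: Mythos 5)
Your proposal follows essentially the same route as the paper: the Doob planar-Brownian-motion construction for the lower bound, and for the upper bound a reduction to finite dimensions via push-forward convex functions on quotients $Y_n^*$, the plurisubharmonic envelope $U_{\Phi,\Psi}$ (which the paper obtains from \cite[Theorem~2.3]{HKV03} and which is precisely the analytic-disc infimum you describe), mollification, and an It\^o-formula drift estimate driven by orthogonality and weak differential subordination. The continuous-drift nonnegativity that you correctly isolate as the crux is exactly what Propositions~\ref{prop:existofaskewsymmopvalpr} and \ref{prop:nicesecderofU_PhiPsi} together with Lemma~\ref{lemma:traceindep} supply, via a Meyer--Yoeurp decomposition, a time-change representing the continuous parts as Brownian stochastic integrals, and a spectral decomposition of the skew-symmetric operator relating the two integrands.
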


The idea behind the proof of \eqref{eq:HTnormboudfororcontmart} can be roughly described as follows. First, we will show that the condition $|\mathcal H^{\mathbb T}_X|_{\Phi, \Psi}<\infty$ (i.e., the validity of a $\Phi,\Psi$-estimate for the periodic Hilbert transform) implies the existence of a certain special function on $X+iX$, enjoying appropriate size conditions and concavity. Next, we will compose this function with $M+iN$ and prove, using the concavity and It\^o's formula from the previous section, that the resulting process has nonnegative expectation. This in turn will give the desired bound, in the light of the size condition of the special function. Though this reasoning is typical for this kind of martingale inequalities, there are two essential differences. First, we will see that the special function will not have any explicit form: in particular, this makes the exploitation of its properties much harder, as one can get them only from some abstract (and restricted) reasoning. The second difference is related to the fact that we work {with} Banach-space-valued processes: this enforces us to study some additional, structural properties of the local martingales involved. {Moreover, since we will work in infinite-dimensional Banach spaces, the approximation to finite dimensions exploited in the proof should be especially delicate because we do not want to ruin weak differential subordination and orthogonality of the corresponding martingales.}

Having described our plan, we turn to its realization. We will need several intermediate facts. The following theorem links the quantity $|\mathcal{H}_X^\mathbb{T}|_{\Phi,\Psi}$ with a certain special plurisubharmonic function.

\begin{theorem}\label{thm:existenceofU^H_Phi,Psi}
 Let $X$ be a separable Banach space and let $\Phi,\,\Psi:X\to \mathbb R_+$ be continuous functions such that $\Psi(0)=0$ and $|\mathcal H^{\mathbb T}_X|_{\Phi, \Psi}<\infty$. Then there exists a plurisubharmonic function $U_{\Phi,\Psi}:X+i X \to \mathbb R$ such that $U_{\Phi,\Psi}(x)\geq 0$ for all $x\in X$ and
 \begin{equation*}
  U_{\Phi,\Psi}(x+iy)\leq  |\mathcal H^{\mathbb T}_X|_{\Phi, \Psi} \Phi(x) - \Psi(y),\;\;\; x,y\in X.
 \end{equation*}
 Moreover, if $\Psi$ is convex, then $y\mapsto U_{\Phi,\Psi}(x+iy)$ is concave in $y\in X$ for all $x\in X$.
\end{theorem}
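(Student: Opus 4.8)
The plan is to construct $U_{\Phi,\Psi}$ directly as an upper envelope — a supremum of "test values" ranging over all configurations of weakly differentially subordinate orthogonal martingales. Concretely, I would define
\[
U_{\Phi,\Psi}(x+iy) := \sup\Bigl\{\,\mathbb E\bigl(|\mathcal H^{\mathbb T}_X|_{\Phi,\Psi}\,\Phi(M_\tau) - \Psi(N_\tau)\bigr)\,\Bigr\},
\]
where the supremum is over all pairs $(M,N)$ of $X$-valued local martingales starting from $M_0 = x$, $N_0 = y$, such that $N$ is weakly differentially subordinate to $M$, $M$ and $N$ are orthogonal (in the sense of Definition \ref{def:orthmart}, but \emph{without} the $M_0\cdot N_0 = 0$ requirement, so that we may start from arbitrary $(x,y)$), and $\tau$ is a bounded stopping time, under suitable integrability. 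The two asserted size bounds are then essentially built in: taking $M\equiv x$, $N\equiv y$ constant shows $U_{\Phi,\Psi}(x+iy)\geq |\mathcal H^{\mathbb T}_X|_{\Phi,\Psi}\Phi(x)-\Psi(y)$, so in particular $U_{\Phi,\Psi}(x)\geq |\mathcal H^{\mathbb T}_X|_{\Phi,\Psi}\Phi(x)\geq 0$; the upper bound $U_{\Phi,\Psi}(x+iy)\leq |\mathcal H^{\mathbb T}_X|_{\Phi,\Psi}\Phi(x)-\Psi(y)$ is exactly the martingale inequality one wants to prove, and the point of the theorem is that it follows from $|\mathcal H^{\mathbb T}_X|_{\Phi,\Psi}<\infty$ via the Brownian-motion construction already sketched in Subsection \ref{sec:orthodiff}: passing through harmonic extensions and the rotational/Doob argument, the finiteness of the $\Phi,\Psi$-norm of the periodic Hilbert transform forces the value functional to be bounded above by $|\mathcal H^{\mathbb T}_X|_{\Phi,\Psi}\Phi(x)-\Psi(y)$ at the starting point. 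One must be careful here that $U_{\Phi,\Psi}$ is finite (not $+\infty$) and real-valued; this is where the upper bound does double duty.

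Next I would verify plurisubharmonicity. Fix $x_0,y_0\in X+iX$ and consider $\zeta\mapsto U_{\Phi,\Psi}(x_0+\zeta y_0)$ on $\mathbb C$. Subharmonicity in $\zeta$ means the sub-mean-value inequality over circles; this is the standard "dynamic programming" verification for envelopes of this type. Given a center $\zeta_0$ and radius $r$, one runs, starting from $x_0+\zeta_0 y_0$, the martingale obtained by composing the real and imaginary parts with a Brownian motion on the disc of radius $r$ (exactly as in the Doob argument of Subsection \ref{sec:orthodiff}), which produces an orthogonal weakly-differentially-subordinate pair whose terminal law is the uniform distribution on the boundary circle; concatenating with near-optimal martingales emanating from each boundary point and using the tower property and the definition of the supremum yields
\[
U_{\Phi,\Psi}(x_0+\zeta_0 y_0)\geq \frac{1}{2\pi}\int_0^{2\pi} U_{\Phi,\Psi}\bigl(x_0+(\zeta_0+re^{i\theta})y_0\bigr)\,\dd\theta - \text{(error)},
\]
and letting the error tend to $0$ gives subharmonicity. (One should also record upper semicontinuity of $U_{\Phi,\Psi}$, which follows from the convexity/continuity of $\Phi,\Psi$ together with the upper size bound; alternatively one works with the usc regularization and checks it still satisfies the two size bounds.)

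Finally, concavity of $y\mapsto U_{\Phi,\Psi}(x+iy)$ when $\Psi$ is convex: here the key structural input from Remark \ref{rem:orth+wds} is that an orthogonal pair with $N\stackrel{w}\ll M$ has $N$ \emph{continuous} — but more to the point, for the value function we need that from starting data $(x, y)$ we may freely add to the imaginary part any \emph{continuous martingale orthogonal to everything}, i.e. we may superimpose an independent term in the $N$-direction. Given $y_1,y_2\in X$, $\lambda\in(0,1)$, and near-optimal pairs $(M^i,N^i)$ from $(x,y_i)$, one builds on an enlarged probability space a pair starting from $(x,\lambda y_1 + (1-\lambda)y_2)$ by randomizing: with probability $\lambda$ follow $(M^1, N^1 - y_1 + \lambda y_1 + (1-\lambda)y_2)$ and with probability $1-\lambda$ follow $(M^2, N^2 - y_2 + \lambda y_1 + (1-\lambda)y_2)$, using that a deterministic shift of $N$ preserves orthogonality and weak differential subordination. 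Convexity of $\Psi$ (so $-\Psi$ is concave and the shift only helps in the right direction) together with Jensen then gives $U_{\Phi,\Psi}(x+i(\lambda y_1+(1-\lambda)y_2))\geq \lambda U_{\Phi,\Psi}(x+iy_1)+(1-\lambda)U_{\Phi,\Psi}(x+iy_2)$. I expect the main obstacle to be the technical care needed in these concatenation/randomization arguments in the infinite-dimensional c\`adl\`ag setting: ensuring the pasted processes remain genuine local martingales, that weak differential subordination and orthogonality survive the gluing (this must be checked coordinate-wise against every $x^*\in X^*$, using Proposition \ref{prop:MYdecconttocontpdtopd}), and that all the integrability and stopping-time truncations are legitimate so that the suprema are finite and the usc regularization does not destroy the size bounds.
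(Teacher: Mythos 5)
Your construction goes in the wrong direction, in two independent ways, and is not the route the paper takes.

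First, the direction of the envelope. The paper follows \cite{HKV03} (as its proof sketch says), where $U_{\Phi,\Psi}$ is built as an \emph{infimum} over analytic discs $F:\overline{\mathbb D}\to X+iX$ with $F(0)=z$ of the boundary functional $\frac{1}{2\pi}\int_{\mathbb T}\bigl[|\mathcal H^{\mathbb T}_X|_{\Phi,\Psi}\Phi(\Re F)-\Psi(\Im F)\bigr]$. For an infimum of this type the two size bounds come out for free: $U(x+iy)\leq |\mathcal H^{\mathbb T}_X|_{\Phi,\Psi}\Phi(x)-\Psi(y)$ by taking $F\equiv x+iy$ constant, and $U(x)\geq 0$ by the very hypothesis that the $\Phi,\Psi$-inequality holds for the Hilbert transform of the boundary values of $F$. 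Plurisubharmonicity is the standard dynamic-programming inequality for such an infimum, and concavity in $y$ is literally ``an infimum of concave functions is concave,'' exactly as the paper says. You instead define $U$ as a \emph{supremum} over orthogonal weakly-differentially-subordinate martingale pairs. A supremum of a value functional over admissible strategies satisfies the \emph{super}-mean-value inequality, not the sub-mean-value one: indeed the inequality you yourself derive reads $U(\text{center})\geq \text{mean over circle}-\text{error}$, which you then call ``subharmonicity,'' but that is the defining property of plurisuperharmonic functions. Your $U$, if it existed, would be of the wrong sign class and could not be fed into the It\^o argument of Theorem~\ref{thm:orthmartPhiPsi}.

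Second, and more serious, the logic is circular. To verify $U_{\Phi,\Psi}(x+iy)\leq |\mathcal H^{\mathbb T}_X|_{\Phi,\Psi}\Phi(x)-\Psi(y)$ with your definition you must show that $\mathbb E\bigl[|\mathcal H^{\mathbb T}_X|_{\Phi,\Psi}\Phi(M_\tau)-\Psi(N_\tau)\bigr]\leq |\mathcal H^{\mathbb T}_X|_{\Phi,\Psi}\Phi(x)-\Psi(y)$ for \emph{every} orthogonal weakly differentially subordinate pair $(M,N)$ starting at $(x,y)$. That is precisely (a mild variant of) the main martingale inequality \eqref{eq:HTnormboudfororcontmart}, whose proof in Section~\ref{sec:results} relies on the existence of $U_{\Phi,\Psi}$ — i.e.\ on Theorem~\ref{thm:existenceofU^H_Phi,Psi} itself. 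Your appeal to ``the Brownian-motion construction already sketched in Subsection~\ref{sec:orthodiff}'' does not close this gap: that construction only manufactures orthogonal WDS pairs out of analytic data $(f,\mathcal H^{\mathbb T}_X f)$ and a Brownian motion on the disc, and therefore bounds the value of $U$ only over that special subclass of test martingales, not over the general pairs your supremum ranges over. The whole point of passing through a plurisubharmonic function built from the analytic-disc data (rather than from the martingales one is trying to control) is to break exactly this circle. Finally, the concavity argument via randomization also does not work as stated: after the constant shift of $N^i$ the terminal values are $N^i_\tau+c_i$ rather than $N^i_\tau$, so $-\Psi(N^i_\tau+c_i)$ is not controlled by the near-optimal value $-\mathbb E\Psi(N^i_\tau)$, and the Jensen step you invoke has no foothold; in the infimum-over-discs construction, concavity in $y$ is instead immediate because a constant imaginary shift of $F$ produces a concave (in $y$) integrand and an infimum of concave functions is concave.
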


\begin{proof}[Proof (sketch)]
We repeat the reasoning presented in \cite[Theorem 2.3]{HKV03} (the separability of $X$ is a key part of the construction  $U_{\Phi,\Psi}$). The last property follows from the construction of $U_{\Phi,\Psi}$, the fact that $y\mapsto |\mathcal H^{\mathbb T}_X|_{\Phi, \Psi}\Phi(x) - \Psi(y)$ is a concave function in $y\in X$, and the fact that a minimum of concave functions is a concave function as well.
\end{proof}

\begin{corollary}\label{thm:existenceofU^H_p,X}
 Let $X$ be a Banach space, $1<p<\infty$. Then $X$ is a UMD Banach space if and only if there exists a plurisubharmonic function $U_{p,X}:X+i X \to \mathbb R$ such that $U_{p,X}(x)\geq 0$ for all $x\in X$ and
 \begin{equation*}
  U_{p,X}(x+iy)\leq  \hbar_{p,X}^p \|x\|^p - \|y\|^p,\;\;\; x,y\in X.
 \end{equation*}
 Moreover, if this is the case, then $y\mapsto U_{p,X}(x+iy)$ is concave in $y\in X$ for all $x\in X$.
\end{corollary}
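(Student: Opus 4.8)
The plan is to derive the corollary from Theorem \ref{thm:existenceofU^H_Phi,Psi} (applied with $\Phi(x)=\Psi(x)=\|x\|^p$) together with the standard characterization of UMD spaces via the $L^p$-boundedness of the periodic Hilbert transform, recorded in Remark \ref{rem:h_p,Xvsbeta_p,X}. So the two directions split naturally along the identity $\hbar_{p,X}^p=|\mathcal H^{\mathbb T}_X|_{\|\cdot\|^p,\|\cdot\|^p}$.

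First, suppose $X$ is UMD. By \eqref{eq:sqrtbetaleqhleqbeta^2} we have $\hbar_{p,X}\leq\beta_{p,X}^2<\infty$, so $|\mathcal H^{\mathbb T}_X|_{\Phi,\Psi}=\hbar_{p,X}^p<\infty$ with $\Phi=\Psi=\|\cdot\|^p$. Since $X$ is UMD it is reflexive, hence separable-dual issues can be handled by a standard reduction: one passes to a closed separable subspace $Y\subseteq X$ containing the relevant vectors, uses that the periodic Hilbert transform of a $Y$-valued step function is again $Y$-valued with the same $\Phi,\Psi$-norm bound (so $|\mathcal H^{\mathbb T}_Y|_{\Phi,\Psi}\leq|\mathcal H^{\mathbb T}_X|_{\Phi,\Psi}$), and then builds the plurisubharmonic function on $Y+iY$. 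Actually the cleanest route is simply to invoke Theorem \ref{thm:existenceofU^H_Phi,Psi}, which already produces a plurisubharmonic $U:=U_{\Phi,\Psi}$ on $X+iX$ with $U(x)\geq0$ for $x\in X$ and $U(x+iy)\leq|\mathcal H^{\mathbb T}_X|_{\Phi,\Psi}\Phi(x)-\Psi(y)=\hbar_{p,X}^p\|x\|^p-\|y\|^p$; set $U_{p,X}:=U$. The moreover part (concavity of $y\mapsto U_{p,X}(x+iy)$) is immediate since $\Psi=\|\cdot\|^p$ is convex, so that clause of Theorem \ref{thm:existenceofU^H_Phi,Psi} applies.

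For the converse, suppose such a plurisubharmonic $U_{p,X}$ exists. The goal is to deduce $\hbar_{p,X}<\infty$, whence $X$ is UMD by the Burkholder–Bourgain characterization (again Remark \ref{rem:h_p,Xvsbeta_p,X}). Let $f:\mathbb T\to X$ be a step function and $g=\mathcal H^{\mathbb T}_Xf$. Run the Doob-type construction recalled in Subsection \ref{sec:orthodiff}: with $W$ a planar Brownian motion started at the origin and stopped on exiting the unit disc, and $u_f,u_g$ the harmonic extensions, the processes $M_t=u_f(W_t)$, $N_t=u_g(W_t)$ are $X$-valued martingales with $N_0=0$, $M$ and $N$ orthogonal, and $N\stackrel{w}\ll M$. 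Then $t\mapsto U_{p,X}(M_t+iN_t)$ is a submartingale: this is the heart of the argument and uses plurisubharmonicity of $U_{p,X}$ exactly the way It\^o's formula plus the orthogonality/length-equality relations $[\langle M,x^*\rangle]-[\langle N,x^*\rangle]\geq0$ and $[\langle M,x^*\rangle,\langle N,x^*\rangle]=0$ interact with the Laplacian-type positivity $\tfrac{\partial^2F}{\partial x^2}+\tfrac{\partial^2F}{\partial(ix)^2}\geq0$ of Remark \ref{rem:xzplurisubhstaff}. Hence $\mathbb E U_{p,X}(M_t+iN_t)\geq U_{p,X}(M_0)\geq0$, and using the size bound $U_{p,X}(x+iy)\leq\hbar_{p,X}^p\|x\|^p-\|y\|^p$ gives $\mathbb E\|N_t\|^p\leq\hbar_{p,X}^p\,\mathbb E\|M_t\|^p$; letting $t\to\infty$ and using that $W_\infty$ is uniform on $\mathbb T$ yields $\int_{\mathbb T}\|g\|^p\leq\hbar_{p,X}^p\int_{\mathbb T}\|f\|^p$, i.e.\ the periodic Hilbert transform is $L^p$-bounded on $X$-valued step functions, so $X$ is UMD.

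The main obstacle is the converse direction's submartingale claim: strictly speaking $U_{p,X}$ need not be smooth (merely plurisubharmonic), so one cannot apply It\^o's formula directly. The remedy — which I expect is exactly what the paper's full proof of Theorem \ref{thm:orthmartPhiPsi} supplies and which can be cited here — is to mollify $U_{p,X}$ as in the proof of Proposition \ref{prop:PSH+concaveis2ndvar->convexin1st+cont} (convolution with a smooth bump, giving smooth plurisubharmonic $U_\eps\searrow U_{p,X}$), run the It\^o argument for each $U_\eps$ in the finite-dimensional reductions, and pass to the limit; a secondary technical point is reducing to a separable (indeed finite-dimensional) subspace so that It\^o's formula (Lemma \ref{lem:itoformula}) applies and the mollification of Proposition \ref{prop:PSH+concaveis2ndvar->convexin1st+cont} is available. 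Given that Theorem \ref{thm:orthmartPhiPsi} has already been proved, the honest way to present the corollary is: the "only if" part is Theorem \ref{thm:existenceofU^H_Phi,Psi} plus \eqref{eq:sqrtbetaleqhleqbeta^2}, and the "if" part is the martingale-to-Hilbert-transform implication above combined with Theorem \ref{thm:orthmartPhiPsi}'s machinery, concluding via Remark \ref{rem:h_p,Xvsbeta_p,X}.
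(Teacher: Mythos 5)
Your proof takes essentially the same route as the paper's one-line argument: take $\Phi(x)=\Psi(x)=\|x\|^p$, so that $|\mathcal H^{\mathbb T}_X|_{\Phi,\Psi}=\hbar_{p,X}^p$, apply Theorem~\ref{thm:existenceofU^H_Phi,Psi} for the existence and $y$-concavity of $U_{p,X}$, and use the Burkholder--Bourgain characterization that $\hbar_{p,X}<\infty$ if and only if $X$ is UMD. You have usefully spelled out the converse (``if'') direction via the Doob martingale construction, which the paper's terse proof leaves implicit, and you correctly note the separability hypothesis of Theorem~\ref{thm:existenceofU^H_Phi,Psi} as a loose end that the corollary's statement and the paper's proof also gloss over. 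One refinement worth making in the ``if'' direction: running the Doob argument with the literal constant $\hbar_{p,X}^p$ only returns the tautology $\|\mathcal H^{\mathbb T}_Xf\|_p\leq\hbar_{p,X}\|f\|_p$, which is vacuous when $\hbar_{p,X}=\infty$; the clean way to extract UMD is to treat the upper bound as $U_{p,X}(x+iy)\leq C\|x\|^p-\|y\|^p$ for a finite constant $C$, run the argument to get $\hbar_{p,X}^p\leq C<\infty$, and only then identify the optimal $C$ with $\hbar_{p,X}^p$.
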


\begin{proof}
It is sufficient to take $\Phi(x) = \Psi(x) = \|x\|^p$, $x\in X$, and apply Theorem~\ref{thm:existenceofU^H_Phi,Psi} and the fact that $\hbar_{p,X}<\infty$ if and only if $X$ is a UMD Banach space (see \cite{Burk81,Bour83}).
\end{proof}

\begin{lemma}\label{lemma:sttimeargforconvexfuncandmart}
 Let $X$ be a Banach space, let $M$ be an $X$-valued local martingale and let $(\tau_n)_{n\geq 1}$ be a sequence of stopping times increasing to infinity almost surely. Let $\Phi:X\to \mathbb R_+$ be a convex function such that $\mathbb E \Phi(M_t)<\infty$ for some $t\geq 0$. Then $\mathbb E \Phi(M_{t\wedge \tau_n})\nearrow \mathbb E \Phi(M_t)$ as $n\to \infty$.
\end{lemma}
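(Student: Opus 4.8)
The plan is to prove the two one-sided bounds $\liminf_n\mathbb E\Phi(M_{t\wedge\tau_n})\ge\mathbb E\Phi(M_t)$ and $\mathbb E\Phi(M_{t\wedge\tau_n})\le\mathbb E\Phi(M_t)$ for every $n$, to check that $n\mapsto\mathbb E\Phi(M_{t\wedge\tau_n})$ is nondecreasing, and then to combine these three facts.

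For the lower bound I would use only $\Phi\ge0$ together with the essentially trivial convergence $M_{t\wedge\tau_n}\to M_t$: since $\tau_n\uparrow\infty$ almost surely, for a.e.\ $\omega$ one has $t\wedge\tau_n(\omega)=t$, and hence $\Phi(M_{t\wedge\tau_n}(\omega))=\Phi(M_t(\omega))$, for all $n$ large enough. Thus $\Phi(M_{t\wedge\tau_n})\to\Phi(M_t)$ almost surely, and Fatou's lemma yields $\mathbb E\Phi(M_t)\le\liminf_n\mathbb E\Phi(M_{t\wedge\tau_n})$. (No continuity of $\Phi$ is needed here.)

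For the upper bound and the monotonicity I would invoke conditional Jensen's inequality. For $m\le n$ the times $t\wedge\tau_m\le t\wedge\tau_n\le t$ are bounded stopping times, so by optional sampling $M_{t\wedge\tau_m}=\mathbb E[M_{t\wedge\tau_n}\mid\mathcal F_{t\wedge\tau_m}]$ and $M_{t\wedge\tau_m}=\mathbb E[M_t\mid\mathcal F_{t\wedge\tau_m}]$; convexity of $\Phi$ then gives $\Phi(M_{t\wedge\tau_m})\le\mathbb E[\Phi(M_{t\wedge\tau_n})\mid\mathcal F_{t\wedge\tau_m}]$ and $\Phi(M_{t\wedge\tau_m})\le\mathbb E[\Phi(M_t)\mid\mathcal F_{t\wedge\tau_m}]$, and taking expectations yields $\mathbb E\Phi(M_{t\wedge\tau_m})\le\mathbb E\Phi(M_{t\wedge\tau_n})$ and $\mathbb E\Phi(M_{t\wedge\tau_m})\le\mathbb E\Phi(M_t)<\infty$. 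Hence $(\mathbb E\Phi(M_{t\wedge\tau_n}))_n$ is nondecreasing and bounded above by $\mathbb E\Phi(M_t)$; combined with the Fatou bound this forces $\mathbb E\Phi(M_{t\wedge\tau_n})\nearrow\mathbb E\Phi(M_t)$, as claimed.

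The main obstacle is the optional-sampling step $M_{t\wedge\tau_m}=\mathbb E[M_t\mid\mathcal F_{t\wedge\tau_m}]$ (and its analogue with $t$ replaced by $t\wedge\tau_n$). This is immediate when $M$ is a true, integrable martingale on $[0,t]$, but for a genuine local martingale one has to pass through a localizing sequence $(\sigma_k)$ with $M^{\sigma_k}$ a uniformly integrable (or bounded) martingale, establish the stopped inequalities, and then let $k\to\infty$; it is precisely here that the hypothesis $\mathbb E\Phi(M_t)<\infty$, together with the local boundedness of the finite convex function $\Phi$, is used to secure the integrability of the $\Phi(M_{t\wedge\sigma_k})$ and the passage to the limit. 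Equivalently, the crux is that $\{\Phi(M_{t\wedge\tau_n})\}_n$ be uniformly integrable — which holds in all the situations where the lemma is applied (e.g.\ $M$ the composition of a bounded harmonic function with stopped Brownian motion, or after an a priori reduction to $L^p$-bounded martingales) — after which Vitali's convergence theorem gives the conclusion directly and the Fatou step above is merely a convenient shortcut.
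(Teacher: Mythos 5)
Your argument follows exactly the paper's route: Fatou's lemma for the lower bound, conditional Jensen via optional sampling for the upper bound and the monotonicity (the paper's two-sentence proof cites Kallenberg's optional sampling theorem and then applies Fatou). You are right to single out the optional-sampling identity $M_{t\wedge\tau_m}=\mathbb E[M_t\mid\mathcal F_{t\wedge\tau_m}]$ as the crux, but you undersell the problem: it cannot be recovered from $\mathbb E\Phi(M_t)<\infty$ alone by localizing and passing to the limit, and the lemma as written is actually false. Take $X=\mathbb R$, $\Phi(x)=|x|$, and $M=1/R$, where $R$ is the three-dimensional Bessel process started at $R_0=1$; then $M$ is a positive, continuous, strictly local martingale with $\mathbb E M_t<1$ for every $t>0$. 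Setting $\tau_n=\inf\{s:M_s\ge n\}$ gives $\tau_n\uparrow\infty$ a.s., each $M^{\tau_n}$ is a bounded martingale, and $\mathbb E\Phi(M_{t\wedge\tau_n})=\mathbb E M_{t\wedge\tau_n}=1$ for all $n$; this sequence is constant and does not tend to $\mathbb E\Phi(M_t)<1$, so the asserted convergence $\mathbb E\Phi(M_{t\wedge\tau_n})\nearrow\mathbb E\Phi(M_t)$ fails.

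Your concluding observation is the right repair: the genuine missing hypothesis is that $M$ be a true martingale on $[0,t]$ (equivalently, that $\{\Phi(M_{t\wedge\tau_n})\}_n$ be uniformly integrable). With that in place, conditional Jensen gives $\Phi(M_{t\wedge\tau_n})\le\mathbb E[\Phi(M_t)\mid\mathcal F_{t\wedge\tau_n}]$, the right-hand side family is UI since $\Phi(M_t)\in L^1$, and the whole argument goes through. This extra condition is satisfied everywhere the lemma is actually invoked in the paper (after the martingales have been stopped to be bounded, or for $L^p$-bounded true martingales), so the downstream results are unaffected; but it must be added to the statement rather than deduced from $\mathbb E\Phi(M_t)<\infty$ and the local boundedness of $\Phi$, as your parenthetical suggests. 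The paper's own two-line proof shares the same implicit gap.
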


\begin{proof}
 Notice that $(\mathbb E\Phi(M_{t\wedge \tau_n}))_{n\geq 1}$ is an increasing sequence which is less then $\mathbb E \Phi(M_t)$ by the conditional Jensen's inequality, \cite[Theorem 7.12]{Kal}, and \cite[Lemma 7.1(iii)]{Kal}. On the other other hand $\Phi(M_{t\wedge \tau_n}) \to \Phi(M_t)$ a.s.\ since $\tau_n \to \infty$ as $n\to \infty$. It suffices to apply Fatou's lemma to get the assertion.
\end{proof}

The next statement contains the proof of a structural property of orthogonal martingales. We need an additional notion. A linear operator $T$ acting on a Hilbert space $H$ is called {\em skew-symmetric} (or {\em antisymmetric}) if $\langle Th,h\rangle =0$ for all $h\in H$.

\begin{proposition}\label{prop:existofaskewsymmopvalpr}
 Let $d\geq 1$, $W$ be a $d$-dimensional standard Brownian motion, let $X$~be a finite-dimensional Banach space and let $\phi, \psi:\mathbb R_+ \times \Omega \to \mathcal L(\mathbb R^d,X)$ be progressively measurable processes such that $M:= \phi\cdot W$ and $N:=\psi\cdot W$ are well-defined orthogonal martingales. Assume further that $N \stackrel{w}\ll M$. Then there exists a operator-valued progressively-measurable process $A:\mathbb R_+ \times \Omega \to \mathcal L(\mathbb R^d)$ such that $\|A\|\leq 1$, $\psi^* =A \phi^*$ a.s.\ on $\mathbb R_+ \times \Omega$, and $P_{\textnormal{Ran}(\phi^*)}(s,\omega)A(s,\omega)$ is skew-symmetric for all $s\geq 0$ and  $\omega\in \Omega$, where $P_{\textnormal{Ran}(\phi^*)} \in \mathcal L(\mathbb R^d)$ is the orthoprojection on $\textnormal{Ran}(\phi^*)$.
\end{proposition}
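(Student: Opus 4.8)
The plan is to work pointwise in $(s,\omega)$ and reconstruct $A$ from the linear-algebraic structure forced by orthogonality and weak differential subordination. Fix $(s,\omega)$ and abbreviate $\phi = \phi(s,\omega)$, $\psi = \psi(s,\omega)$, viewed as elements of $\mathcal L(\mathbb R^d, X)$; then $\phi^*, \psi^* \in \mathcal L(X^*, \mathbb R^d)$. The starting observation is that for any $x^* \in X^*$ the real martingales $\langle M, x^*\rangle$ and $\langle N, x^*\rangle$ are stochastic integrals of $\phi^* x^*$ and $\psi^* x^*$ against $W$, so that almost everywhere in $(s,\omega)$ the covariation and differential subordination conditions read
\begin{equation}\label{eq:pointwiseortho}
 \langle \phi^* x^*, \psi^* x^*\rangle_{\mathbb R^d} = 0, \qquad \|\psi^* x^*\|_{\mathbb R^d} \leq \|\phi^* x^*\|_{\mathbb R^d}, \qquad x^* \in X^*.
\end{equation}
(One has to be slightly careful: the covariation and square-function identities hold for a.e.\ $(s,\omega)$ for each fixed $x^*$; since $X$ is finite-dimensional one runs this over a countable dense set of $x^*$ and uses continuity to get the pointwise statements simultaneously for all $x^*$ off a null set, which is harmless for the conclusion.) The second inequality in \eqref{eq:pointwiseortho} says exactly that $\operatorname{Ran}(\psi^*) \subseteq \operatorname{Ran}(\phi^*)$ and that there is a linear contraction $B: \operatorname{Ran}(\phi^*) \to \mathbb R^d$ with $\psi^* = B\phi^*$ and $\|B\| \le 1$; extend $B$ by zero on $\operatorname{Ran}(\phi^*)^\perp$ to get $A_0 \in \mathcal L(\mathbb R^d)$ with $\|A_0\| \le 1$, $\psi^* = A_0 \phi^*$, and $A_0 = A_0 P_{\operatorname{Ran}(\phi^*)}$.

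The key step is to upgrade $A_0$ so that $P_{\operatorname{Ran}(\phi^*)} A_0$ is skew-symmetric. Write $R = \operatorname{Ran}(\phi^*)$ and $P = P_R$. The orthogonality relation in \eqref{eq:pointwiseortho} says $\langle \phi^* x^*, A_0 \phi^* x^*\rangle = 0$ for all $x^*$, i.e.\ $\langle v, A_0 v\rangle = 0$ for all $v \in R$, i.e.\ $\langle v, PA_0 v\rangle = 0$ for all $v \in R$ — but for $v \in R$ we have $Pv = v$, so this is precisely $\langle P A_0|_R w, w\rangle = 0$ for all $w \in R$, which says $PA_0$ restricted to $R$ is skew-symmetric \emph{as an operator on $R$}. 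So already $PA_0$ is skew-symmetric on $R$; the only issue is its behaviour on $R^\perp$, where $A_0$ vanishes but $P A_0$ composed the other way ($A_0$ then $P$) could still be nonzero as a map $R^\perp \to R$. Concretely, decompose $A_0$ relative to $\mathbb R^d = R \oplus R^\perp$ as a block operator $A_0 = \begin{pmatrix} S & T \\ 0 & 0\end{pmatrix}$ with $S \in \mathcal L(R)$ skew-symmetric, $T \in \mathcal L(R^\perp, R)$, and $\|A_0\|\le 1$. Then $P A_0 = \begin{pmatrix} S & T \\ 0 & 0 \end{pmatrix}$ as a map into $R$, and this is skew-symmetric (in the sense $\langle PA_0 v, v\rangle = 0$ for all $v \in \mathbb R^d$) if and only if $\langle Tw, v\rangle + \langle Sv', v'\rangle$-type cross terms vanish; expanding $v = v_R + v_\perp$ one finds $\langle PA_0 v, v\rangle = \langle S v_R, v_R\rangle + \langle T v_\perp, v_R\rangle = \langle T v_\perp, v_R\rangle$, so skew-symmetry of $PA_0$ on all of $\mathbb R^d$ is equivalent to $T = 0$. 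Thus set
\[
 A := \begin{pmatrix} S & 0 \\ 0 & 0 \end{pmatrix} = A_0 P;
\]
then $A P = A$, so $PA = PA_0P$ has the block form $\begin{pmatrix} S & 0\\ 0 & 0\end{pmatrix}$ which is skew-symmetric; $\|A\| = \|A_0 P\| \le \|A_0\| \le 1$; and $A\phi^* = A_0 P \phi^* = A_0 \phi^*$ (since $\operatorname{Ran}(\phi^*) = R$ so $P\phi^* = \phi^*$) $= \psi^*$. This gives, pointwise in $(s,\omega)$, an operator $A$ with all three required properties, and crucially $A = A_0 P$ depends on $(s,\omega)$ only through $A_0(s,\omega)$ and $P_{\operatorname{Ran}(\phi^*(s,\omega))}$.

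It remains to address \textbf{measurability}, which I expect to be the main obstacle. The assignment $(s,\omega) \mapsto P_{\operatorname{Ran}(\phi^*(s,\omega))}$ is progressively measurable because the rank of $\phi^*$ is a progressively measurable integer-valued function and on each set $\{\operatorname{rank} \phi^* = r\}$ the orthoprojection onto the range can be written via a measurable selection of a basis (Gram--Schmidt applied to a measurable choice of $r$ columns realizing the rank, again a countable case distinction), or more cleanly as $P_{\operatorname{Ran}(\phi^*)} = \lim_{\delta \downarrow 0} \phi^*(\phi + \delta I_X \text{-regularization})\ldots$; in any case $P_{\operatorname{Ran}(\phi^*)}$ is a measurable function of $\phi^*\phi$ by standard results on measurable functional calculus for self-adjoint matrices. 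For $A_0$: on each piece where $\operatorname{rank}\phi^* = r$ is constant, pick (measurably) an orthonormal basis $e_1, \ldots, e_r$ of $R = \operatorname{Ran}(\phi^*)$ and vectors $u_i \in X^*$ with $\phi^* u_i = e_i$ (again a measurable selection, e.g.\ via the Moore--Penrose pseudoinverse $A_0 = \psi^* (\phi^*)^\dagger$, which is a measurable function of $(\phi, \psi)$ — this is in fact the cleanest route: take $A_0 := \psi^* (\phi^*)^{+}$ where $(\phi^*)^+$ is the pseudoinverse, note $(\phi^*)^+ \phi^* = P_{\operatorname{Ran}((\phi^*)^*)} = P_{\operatorname{Ran}(\phi)}$... one must instead use $A_0 = \psi^* (\phi^*)^+$ and check $A_0 \phi^* = \psi^* (\phi^*)^+ \phi^* = \psi^* P_{(\ker \phi^*)^\perp} = \psi^*$ using $\operatorname{Ran}(\psi^*)\subseteq \operatorname{Ran}(\phi^*)$ hence $\ker\psi^* \supseteq \ker\phi^*$). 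The pseudoinverse is not continuous in $\phi^*$ where the rank jumps, but it \emph{is} Borel measurable — which suffices for progressive measurability after the standard reduction to constant-rank pieces. So the final definition $A(s,\omega) := \psi^*(s,\omega)\,(\phi^*(s,\omega))^{+}\,P_{\operatorname{Ran}(\phi^*(s,\omega))}$ is progressively measurable, and by the pointwise algebra above it has $\|A\| \le 1$, $A\phi^* = \psi^*$ a.s., and $P_{\operatorname{Ran}(\phi^*)} A$ skew-symmetric for every $(s,\omega)$, as required.
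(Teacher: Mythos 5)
Your construction coincides with the paper's: derive, a.e.\ in $(s,\omega)$ and simultaneously for all $x^*\in X^*$ (via a countable dense set of functionals), the pointwise conditions $\|\psi^*x^*\|\le\|\phi^*x^*\|$ and $\langle\psi^*x^*,\phi^*x^*\rangle=0$; then define $A$ on $R:=\textnormal{Ran}(\phi^*)$ by $A(\phi^*x^*)=\psi^*x^*$ and $A=0$ on $R^\perp$. Your pseudoinverse formula $A=\psi^*(\phi^*)^+$ produces exactly this operator, and your explicit treatment of progressive measurability is a worthwhile addition, since the paper simply asserts it.

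However, the block-matrix ``upgrade'' of $A_0$ is both unnecessary and based on a wrong block decomposition. Because $A_0$ vanishes on $R^\perp$, its block form relative to $\mathbb R^d=R\oplus R^\perp$ is $\bigl(\begin{smallmatrix}S & 0\\ U & 0\end{smallmatrix}\bigr)$ (right column zero), not $\bigl(\begin{smallmatrix}S & T\\ 0 & 0\end{smallmatrix}\bigr)$ (bottom row zero): nothing forces $A_0(R)\subseteq R$, so there is in general a nonzero lower-left block $U\in\mathcal L(R,R^\perp)$, while the upper-right block $T$ you worry about is automatically zero. In fact $A_0$ already satisfies all three required properties as it stands: for $v=v_R+v_\perp$ with $v_R=\phi^*x^*$ one has $P_RA_0v=P_RA_0v_R\in R$, hence
\[
\langle P_RA_0v,v\rangle=\langle P_RA_0v_R,v_R\rangle=\langle A_0v_R,v_R\rangle=\langle\psi^*x^*,\phi^*x^*\rangle=0,
\]
so $P_RA_0$ is skew-symmetric on all of $\mathbb R^d$ without any modification. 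Your final definition $A:=A_0P_R$ happens to coincide with $A_0$ (indeed $A_0P_R=A_0$ precisely because $A_0|_{R^\perp}=0$), so the conclusion you reach is correct; but the intermediate reasoning about a nonzero $T$-block that must be killed is spurious, and should be removed.
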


\begin{proof}
Let $(x_n^*)_{n\geq 1}$ be a dense sequence in $X^*$. Then by the orthogonality of $M$, $N$ and the condition $N\stackrel{w}\ll M$, we have
\begin{equation*}
 \|\psi^*(t,\omega)x_n^*\| \leq \|\phi^*(t,\omega)x_n^*\|,
\end{equation*}
\begin{equation*}
 \langle \psi^*(t,\omega)x_n^*,\phi^*(t,\omega)x_n^*\rangle=0
\end{equation*}
for almost all $\omega \in \Omega$, all $t\in \mathbb R_+$ and all $n\geq 1$.
Hence by {a} density argument, for any $x^*\in X^*$, almost all  $\omega \in \Omega$ and all $t\in \mathbb R_+$,
\begin{equation}\label{eq:ineqweakonphiandpsi}
 \|\psi^*(t,\omega)x^*\| \leq \|\phi^*(t,\omega)x^*\|,
\end{equation}
\begin{equation}\label{eq:weakorthofphiandpsi}
 \langle \psi^*(t,\omega)x^*,\phi^*(t,\omega)x^*\rangle=0.
\end{equation}
Fix $t\in \mathbb R_+$ and $\omega \in \Omega$ such that \eqref{eq:ineqweakonphiandpsi} and \eqref{eq:weakorthofphiandpsi} hold for any $x^*\in X^*$.
Define $A(t,\omega):H\to H$ in the following way (we omit $(t,\omega)$ for the convenience of the reader):
\begin{equation}\label{eq:defofAskewsymm}
  Ah:=
 \begin{cases}
\psi^*x^*, &\text{if}\;\; \exists x^*\in X^*\;\text{such that}\; h=\phi^*x^*;\\
0,&\text{if}\;\; h\perp\text{Ran}(\phi^*) .
\end{cases}
\end{equation}
Then $A$ is well-defined since if $h=\phi^*(t,\omega)x_1^*=\phi^*(t,\omega)x_2^*$ for some different $x_1^*,x_2^*\in X^*$, then by \eqref{eq:ineqweakonphiandpsi},
\begin{align*}
 \|\psi^*(t,\omega)x_1^* - \psi^*(t,\omega)x_2^*\| = \|\psi^*(t,\omega)(x_1^* -x_2^*)\|&\leq \|\phi^*(t,\omega)(x_1^* -x_2^*)\|\\
 &= \|\phi^*(t,\omega)x_1^* - \phi^*(t,\omega)x_2^*\| = \|h-h\|=0.
\end{align*}
Moreover, $A$ is linear on both $\text{Ran}(\phi^*)$ and $(\text{Ran}(\phi^*))^{\perp}$, so it can be extended to a linear operator $A \in \mathcal L(H)$. Notice that then we have $\psi^* = A\phi^*$. Furthermore, the conditions \eqref{eq:ineqweakonphiandpsi} and \eqref{eq:defofAskewsymm} imply that $\|A\|\leq 1$, while \eqref{eq:weakorthofphiandpsi} and \eqref{eq:defofAskewsymm} give that $P_{\textnormal{Ran}(\phi^*)}A$ is skew-symmetric  ($P_{\textnormal{Ran}(\phi^*)}$ being the orthoprojection on $\textnormal{Ran}(\phi^*)$).
\end{proof}

In our later considerations, we will also need the following technical result.

\begin{proposition}\label{prop:nicesecderofU_PhiPsi}
 Let $X$ be a finite-dimensional Banach space and let $\Phi,\Psi:X\to \mathbb R_+$ be continuous functions such that $\Psi$ is convex, $\Psi(0)=0$ and $|\mathcal H^{\mathbb T}_X|_{\Phi, \Psi}<\infty$. Let $U_{\Phi,\Psi}:X+iX\to \mathbb R$ be the special function from Theorem \ref{thm:existenceofU^H_Phi,Psi}. Assume additionally that $U_{\Phi,\Psi}$ is twice differentiable. Then for any $x, y\in X$, $z_0\in X+iX$ and any $\lambda\in [-1,1]$ we have
 \begin{equation}\label{eq:prop:nicesecderofU_PhiPsi}
  \begin{split}
      \frac{\partial^2 U_{\Phi, \Psi}(z_0)}{\partial x^2} + \frac{\partial^2 U_{\Phi, \Psi}(z_0)}{\partial y^2}&+2 \lambda \Bigl(\frac{\partial^2 U_{\Phi, \Psi}(z_0)}{\partial x\partial iy} - \frac{\partial^2 U_{\Phi, \Psi}(z_0)}{\partial y\partial ix}\Bigr)  + \lambda^2\Bigl(\frac{\partial^2 U_{\Phi, \Psi}(z_0)}{\partial ix^2}+ \frac{\partial^2 U_{\Phi, \Psi}(z_0)}{\partial iy^2}\Bigr)\geq 0.
  \end{split}
 \end{equation}
\end{proposition}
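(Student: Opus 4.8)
The plan is to deduce the inequality from the plurisubharmonicity of $U_{\Phi,\Psi}$, which asserts that for every $z_0 \in X+iX$ and every $w \in X+iX$ the map $z \mapsto U_{\Phi,\Psi}(z_0 + zw)$ is subharmonic on $\mathbb{C}$; since $U_{\Phi,\Psi}$ is assumed twice differentiable, subharmonicity of this restriction is equivalent to the nonnegativity of its Laplacian at $z=0$, i.e.
\[
\Bigl(\frac{\partial^2}{\partial \Re z^2} + \frac{\partial^2}{\partial \Im z^2}\Bigr) U_{\Phi,\Psi}(z_0 + zw)\Big|_{z=0} \geq 0.
\]
The idea is to choose $w \in X+iX$ cleverly so that expanding this Laplacian in terms of the directional second derivatives $\frac{\partial^2 U_{\Phi,\Psi}(z_0)}{\partial u \partial v}$ (as defined just before Remark \ref{rem:xzplurisubhstaff}) produces exactly the left-hand side of \eqref{eq:prop:nicesecderofU_PhiPsi}.

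First I would take $w = x + i\lambda x + y \cdot i - \lambda y$... more precisely, the natural candidate is $w = (x - \lambda y) + i(y + \lambda x)$, i.e. $w = (1 + i\lambda)x + (i - \lambda)y$, which is the rotation-by-$\lambda$-ish combination suggested by the cross terms $\partial_x\partial_{iy} - \partial_y\partial_{ix}$. Writing $z = t$ real and computing $\frac{\partial^2}{\partial t^2}U_{\Phi,\Psi}(z_0 + tw)\big|_{t=0}$ expands, by bilinearity of the second Fréchet differential, into a quadratic form in the four "basis directions" $x$, $ix$, $y$, $iy$ with coefficients read off from the real and imaginary parts of $w$: the real part contributes $(x - \lambda y)$ and the imaginary part contributes $(y + \lambda x)$. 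Then $\frac{\partial^2}{\partial \Im z^2}U_{\Phi,\Psi}(z_0 + zw)\big|_{z=0}$ is the analogous quadratic form with the roles of real and imaginary parts interchanged, i.e. with direction $i w = -(y+\lambda x) + i(x - \lambda y)$; adding the two kills the mixed $\Re$–$\Im$ discrepancies in a way that symmetrizes the expression. After collecting terms — using that $\frac{\partial^2 U}{\partial u \partial v}$ is symmetric in $u,v$ — one should land precisely on
\[
\frac{\partial^2 U_{\Phi,\Psi}(z_0)}{\partial x^2} + \frac{\partial^2 U_{\Phi,\Psi}(z_0)}{\partial y^2} + 2\lambda\Bigl(\frac{\partial^2 U_{\Phi,\Psi}(z_0)}{\partial x \partial iy} - \frac{\partial^2 U_{\Phi,\Psi}(z_0)}{\partial y \partial ix}\Bigr) + \lambda^2\Bigl(\frac{\partial^2 U_{\Phi,\Psi}(z_0)}{\partial ix^2} + \frac{\partial^2 U_{\Phi,\Psi}(z_0)}{\partial iy^2}\Bigr),
\]
with possibly a leftover term of the form $\lambda\bigl(\partial_x\partial_{ix} + \partial_y\partial_{iy} - \partial_{ix}\partial_x - \partial_{iy}\partial_y\bigr)$, which vanishes by symmetry of the second differential, and cross terms like $\partial_x\partial_y$ which should cancel between the two Laplacian pieces. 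The nonnegativity is then exactly plurisubharmonicity applied to the restriction $t \mapsto U_{\Phi,\Psi}(z_0 + tw)$ combined with its conjugate direction $it$.

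The main obstacle I anticipate is purely bookkeeping: correctly identifying the direction vector $w$ so that the bilinear expansion matches \eqref{eq:prop:nicesecderofU_PhiPsi} term-by-term, and verifying that all the "unwanted" cross terms ($\partial_x\partial_y$, $\partial_{ix}\partial_{iy}$, and the asymmetric $\lambda$-linear pieces) genuinely cancel. A cleaner route, which I would actually pursue, is to observe that $U_{\Phi,\Psi}$ restricted to the real two-dimensional subspace $\mathrm{span}_{\mathbb R}\{w, iw\} \subset X+iX$ is subharmonic in the usual sense (this is the content of plurisubharmonicity, per Remark \ref{rem:PSHeither-inftyorL^1loc} applied in the relevant complex line), so its Laplacian in any orthonormal pair of that plane — in particular the pair obtained from $w$ and $iw$ — is nonnegative; then it remains only to expand $\Delta$ in the $w, iw$ coordinates via the chain rule and simplify using symmetry of $\partial^2 U_{\Phi,\Psi}$. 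Either way the argument is elementary once the correct $w$ is in hand; no deep input beyond twice differentiability and the definition of plurisubharmonicity is needed.
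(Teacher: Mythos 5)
Your plan has a genuine gap: you try to derive the inequality for general $\lambda \in [-1,1]$ from plurisubharmonicity alone, by exhibiting a single complex direction $w$ whose line-Laplacian $\Delta_z U_{\Phi,\Psi}(z_0+zw)\big|_{z=0}$ equals the left-hand side of \eqref{eq:prop:nicesecderofU_PhiPsi}. That cannot work. If $w=u+iv$ with $u,v$ real-linear combinations of $x,y$, matching the term $\frac{\partial^2}{\partial u^2}+\frac{\partial^2}{\partial v^2}=\frac{\partial^2}{\partial x^2}+\frac{\partial^2}{\partial y^2}$ forces $(u,v)$ to be an orthogonal change of variables of $(x,y)$, which then automatically yields $\frac{\partial^2}{\partial (iu)^2}+\frac{\partial^2}{\partial (iv)^2}=\frac{\partial^2}{\partial (ix)^2}+\frac{\partial^2}{\partial (iy)^2}$ with coefficient $1$, not $\lambda^2$; so no such $w$ exists unless $\lambda=\pm1$. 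Concretely, for your proposed $w=(x-\lambda y)+i(y+\lambda x)$, a direct expansion gives $\Delta_z U_{\Phi,\Psi}(z_0+zw)\big|_{z=0}=(1+\lambda^2)\bigl[U_{xx}+U_{yy}+2(U_{x(iy)}-U_{y(ix)})+U_{(ix)(ix)}+U_{(iy)(iy)}\bigr]$, i.e.\ a positive multiple of the $\lambda=1$ inequality, with no $\lambda$-dependence surviving. The unwanted terms do not cancel; they conspire so that only the $\lambda=\pm1$ cases are accessible this way.

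The missing ingredient is the concavity of $y\mapsto U_{\Phi,\Psi}(x+iy)$, i.e.\ the last assertion of Theorem~\ref{thm:existenceofU^H_Phi,Psi} (which uses that $\Psi$ is convex). It gives $\frac{\partial^2 U_{\Phi,\Psi}(z_0)}{\partial (ix)^2}\leq 0$ and $\frac{\partial^2 U_{\Phi,\Psi}(z_0)}{\partial (iy)^2}\leq 0$, so the left-hand side of \eqref{eq:prop:nicesecderofU_PhiPsi} is a concave quadratic in $\lambda$. Hence it suffices to check $\lambda=\pm1$, and those two cases are precisely the plurisubharmonicity inequalities along the complex lines through $z_0$ in the directions $y-ix$ and $y+ix$ respectively. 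Your plan never invokes concavity (and consequently never uses the hypothesis that $\Psi$ is convex), which is why it cannot close; it does correctly recover the $\lambda=\pm1$ endpoint cases, but the reduction to those endpoints is the heart of the argument.
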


\begin{proof}
 Notice that the function
  \begin{align*}
   \lambda \mapsto \frac{\partial^2 U_{\Phi, \Psi}(z_0)}{\partial x^2} + \frac{\partial^2 U_{\Phi, \Psi}(z_0)}{\partial y^2} +2 \lambda \Bigl(\frac{\partial^2 U_{\Phi, \Psi}(z_0)}{\partial x\partial iy} - \frac{\partial^2 U_{\Phi, \Psi}(z_0)}{\partial y\partial ix}\Bigr) + \lambda^2\Bigl(\frac{\partial^2 U_{\Phi, \Psi}(z_0)}{\partial ix^2} + \frac{\partial^2 U_{\Phi, \Psi}(z_0)}{\partial iy^2}\Bigr)
 \end{align*}
 is concave due to the fact that $\frac{\partial^2 U_{\Phi, \Psi}(z_0)}{\partial ix^2}$, $\frac{\partial^2 U_{\Phi, \Psi}(z_0)}{\partial iy^2}\leq 0$ by the last part of Theorem~\ref{thm:existenceofU^H_Phi,Psi}. Therefore it is sufficient to show \eqref{eq:prop:nicesecderofU_PhiPsi} for $\lambda = 1$ and $\lambda = -1$. We will consider the first possibility only,  the second can be handled analogously. We have
 \begin{align*}
  \frac{\partial^2 U_{\Phi, \Psi}(z_0)}{\partial x^2} + \frac{\partial^2 U_{\Phi, \Psi}(z_0)}{\partial y^2}&+2 \Bigl(\frac{\partial^2 U_{\Phi, \Psi}(z_0)}{\partial x\partial iy} - \frac{\partial^2 U_{\Phi, \Psi}(z_0)}{\partial y\partial ix}\Bigr)  + \Bigl(\frac{\partial^2 U_{\Phi, \Psi}(z_0)}{\partial ix^2} + \frac{\partial^2 U_{\Phi, \Psi}(z_0)}{\partial iy^2}\Bigr)\\
 &= \frac{\partial^2 U_{\Phi,\Psi}(z_0+t(x+iy))}{\partial t^2} +\frac{\partial^2 U_{\Phi,\Psi}(z_0+t(y-ix))}{\partial t^2}\\
 &=\Delta_z U_{\Phi,\Psi}(z_0+z(y-ix))\geq 0,
 \end{align*}
since $U_{\Phi,\Psi}$ is plurisubharmonic (here $\Delta_z$ is the Laplace operator acting with respect to the $z$-variable).
\end{proof}

\begin{corollary}\label{cor:propnicesecderofU_PhiPsi}
Under the assumptions of the previous Proposition,  for any $x, y\in X$, $z_0\in X+iX$, $\lambda\in [-1,1]$ and any $\mu\in [-|\lambda|, |\lambda|]$ we have
 \begin{equation}\label{eq:corprop:nicesecderofU_PhiPsi}
  \begin{split}
      \frac{\partial^2 U_{\Phi, \Psi}(z_0)}{\partial x^2} + \frac{\partial^2 U_{\Phi, \Psi}(z_0)}{\partial y^2}&+2 \mu \Bigl(\frac{\partial^2 U_{\Phi, \Psi}(z_0)}{\partial x\partial iy} - \frac{\partial^2 U_{\Phi, \Psi}(z_0)}{\partial y\partial ix}\Bigr) + \lambda^2\Bigl(\frac{\partial^2 U_{\Phi, \Psi}(z_0)}{\partial ix^2} + \frac{\partial^2 U_{\Phi, \Psi}(z_0)}{\partial iy^2}\Bigr)\geq 0.
  \end{split}
 \end{equation}
\end{corollary}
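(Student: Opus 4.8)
The plan is to deduce the corollary directly from Proposition~\ref{prop:nicesecderofU_PhiPsi} by a one-variable interpolation (convex combination) argument, with no further use of the special structure of $U_{\Phi,\Psi}$. For brevity I would abbreviate the three quantities appearing in \eqref{eq:prop:nicesecderofU_PhiPsi} as
$$
a := \frac{\partial^2 U_{\Phi,\Psi}(z_0)}{\partial x^2} + \frac{\partial^2 U_{\Phi,\Psi}(z_0)}{\partial y^2}, \qquad b := \frac{\partial^2 U_{\Phi,\Psi}(z_0)}{\partial x\partial iy} - \frac{\partial^2 U_{\Phi,\Psi}(z_0)}{\partial y\partial ix}, \qquad c := \frac{\partial^2 U_{\Phi,\Psi}(z_0)}{\partial ix^2} + \frac{\partial^2 U_{\Phi,\Psi}(z_0)}{\partial iy^2},
$$
so that Proposition~\ref{prop:nicesecderofU_PhiPsi} reads $a + 2\nu b + \nu^2 c \geq 0$ for every $\nu\in[-1,1]$, while \eqref{eq:corprop:nicesecderofU_PhiPsi} is precisely the assertion $a + 2\mu b + \lambda^2 c \geq 0$ for $|\mu|\leq|\lambda|\leq 1$.

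The key observation is that the two endpoints $\nu = |\lambda|$ and $\nu = -|\lambda|$ both lie in $[-1,1]$ and both produce the same coefficient $\lambda^2$ in front of $c$, since $(\pm|\lambda|)^2 = \lambda^2$. Hence Proposition~\ref{prop:nicesecderofU_PhiPsi} yields the two inequalities $a + 2|\lambda| b + \lambda^2 c \geq 0$ and $a - 2|\lambda| b + \lambda^2 c \geq 0$. I would then interpolate in the $b$-coefficient while keeping the $c$-coefficient frozen: if $\lambda = 0$ then necessarily $\mu = 0$ and \eqref{eq:corprop:nicesecderofU_PhiPsi} reduces to $a\geq 0$, which is Proposition~\ref{prop:nicesecderofU_PhiPsi} with $\nu = 0$; if $\lambda\neq 0$, set $t := \tfrac12\bigl(1 + \mu/|\lambda|\bigr)$, note that $t\in[0,1]$ because $|\mu|\leq|\lambda|$, and observe that $2|\lambda|(2t-1) = 2\mu$. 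Forming the convex combination $t\cdot\bigl(a + 2|\lambda| b + \lambda^2 c\bigr) + (1-t)\cdot\bigl(a - 2|\lambda| b + \lambda^2 c\bigr)$ then gives exactly $a + 2\mu b + \lambda^2 c \geq 0$, i.e.\ \eqref{eq:corprop:nicesecderofU_PhiPsi}.

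There is essentially no obstacle here: the whole argument is a convex combination of two instances of the already-established Proposition~\ref{prop:nicesecderofU_PhiPsi}, and the only points requiring a moment's care are the degenerate case $\lambda = 0$ (dispatched trivially) and the verification that the interpolation parameter $t$ lands in $[0,1]$, which is exactly the hypothesis $\mu\in[-|\lambda|,|\lambda|]$. It is worth remarking that, in contrast to Proposition~\ref{prop:nicesecderofU_PhiPsi}, this step does not use the sign condition $c\leq 0$ stemming from the concavity of $y\mapsto U_{\Phi,\Psi}(x+iy)$; only the freezing of the $c$-coefficient at both endpoints is used.
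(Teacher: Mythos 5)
Your proof is correct and takes essentially the same route as the paper, which simply notes that the left-hand side of \eqref{eq:corprop:nicesecderofU_PhiPsi} is linear (affine) in $\mu$ and so it suffices to check the endpoints $\mu=\pm\lambda$; your explicit convex combination with $t=\tfrac12(1+\mu/|\lambda|)$ is exactly that argument spelled out, and your separate treatment of $\lambda=0$ and the observation that $c\le0$ is not needed here are both accurate.
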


\begin{proof}
The left-hand side of \eqref{eq:corprop:nicesecderofU_PhiPsi} is linear in $\mu$, so it is sufficient to check the estimate for $\mu = \pm \lambda$.
\end{proof}

The following lemma {was proven in the supplement to \cite[Lemma 3.7]{Y17MartDec}}.

\begin{lemma}\label{lemma:traceindep}
 Let $d$ be a natural number, $E$ be a $d$-dimensional linear space. Let $V: E\times E \to \mathbb R$ and $W:E^* \times E^* \to \mathbb R$ be two bilinear functions. Then the expression
 \begin{equation*}
  \sum_{n, m=1}^d V(e_n, e_m) W(e_n^*, e_m^*)
 \end{equation*}
does not depend on the choice of basis $(e_n)_{n=1}^d$ of $E$ (here $(e_n^*)_{n=1}^d$ is the corresponding dual basis of $(e_n)_{n=1}^d$).
\end{lemma}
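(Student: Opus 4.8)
The plan is to recognize the displayed quantity as the trace of a canonically defined endomorphism of $E$ and then invoke the basis-independence of the trace.

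First I would associate to the bilinear form $V$ the linear map $S\colon E \to E^*$ characterized by $(Sx)(y) = V(x,y)$ for $x,y\in E$, and to $W$ the linear map $T\colon E^* \to E$ characterized by $\xi(T\eta) = W(\xi,\eta)$ for $\xi,\eta\in E^*$. Both are well-defined and unique without reference to any basis, since $E$ is finite-dimensional and $E^{**}=E$ canonically; in particular the composition $T\circ S\in\mathcal L(E)$ is basis-free.

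Next, for a fixed basis $(e_n)_{n=1}^d$ with corresponding dual basis $(e_n^*)_{n=1}^d$, I would compute the relevant matrices by pairing against dual basis vectors: $Se_n = \sum_{m=1}^d V(e_n,e_m)\,e_m^*$ and $Te_m^* = \sum_{k=1}^d W(e_k^*,e_m^*)\,e_k$. Composing gives $(T\circ S)e_n = \sum_{m,k=1}^d V(e_n,e_m)\,W(e_k^*,e_m^*)\,e_k$, so the matrix of $T\circ S$ in the basis $(e_n)_{n=1}^d$ has $(k,n)$-entry $\sum_{m=1}^d V(e_n,e_m)\,W(e_k^*,e_m^*)$. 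Taking the trace (the sum of the diagonal entries $k=n$) yields precisely $\sum_{n,m=1}^d V(e_n,e_m)\,W(e_n^*,e_m^*)$, the expression under consideration. Since $T\circ S$ does not depend on the basis and the trace of an endomorphism of a finite-dimensional space is independent of the basis in which it is computed, the claim follows.

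Alternatively one could argue directly: for a second basis $f_n=\sum_k a_{kn}e_k$ with invertible change-of-basis matrix $A=(a_{kn})$, the dual basis transforms by $C=(A^{\mathsf T})^{-1}$, and substituting $f_n=\sum_k a_{kn}e_k$, $f_m^*=\sum_l c_{lm}e_l^*$ into the sum, then collapsing the products $\sum_n a_{kn}c_{pn}=\delta_{kp}$ (which holds because $C^{\mathsf T}A=I$ forces $AC^{\mathsf T}=I$ for square matrices), returns the sum in the original basis. I expect no real obstacle in either route; the only point needing a moment's care is bookkeeping which argument slot of each bilinear form carries which index, so that the contraction is genuinely a trace rather than a trace of a transpose.
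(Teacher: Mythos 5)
Your argument is correct, and both routes you sketch (the trace interpretation and the direct change-of-basis computation) go through without gaps; the trace reading is the cleanest way to see the claim, and the bookkeeping you flagged (which argument slot of $V$ and $W$ carries which index) is handled consistently, since the $(k,n)$-entry $\sum_m V(e_n,e_m)W(e_k^*,e_m^*)$ of $T\circ S$ does sum to the stated expression on the diagonal $k=n$. Note that the paper itself does not prove this lemma here but merely cites \cite{Y17MartDec} for it, so there is nothing in the present text to compare against; your trace argument is the standard proof and is almost certainly the one used in that reference.
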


\begin{corollary}\label{cor:lemmatraceindep}
 Let $d$ be a natural number, $E$ be a $d$-dimensional linear space. Let $V: E\times E \to \mathbb R$ and $W_1, W_2:E^* \times E^* \to \mathbb R$ be bilinear functions. Assume additionally that $V$ is symmetric nonnegative (i.e.\ $V(x,x)\geq 0$ for all $x\in E$) and that $W_1(x^*, x^*) \leq W_2(x^*, x^*)$ for all $x^*\in X^*$. Then
 \begin{equation*}
  \sum_{n, m=1}^d V(e_n, e_m) W_1(e_n^*, e_m^*) \leq  \sum_{n, m=1}^d V(e_n, e_m) W_2(e_n^*, e_m^*)
 \end{equation*}
for any basis $(e_n)_{n=1}^d$ of $E$ (here $(e_n^*)_{n=1}^d$ is the corresponding dual basis of $(e_n)_{n=1}^d$).
\end{corollary}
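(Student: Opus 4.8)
The plan is to reduce the inequality to the pointwise hypothesis on $W_1,W_2$ by exploiting the basis-independence established in Lemma~\ref{lemma:traceindep}. First I would put $W:=W_2-W_1$, which is again a bilinear form on $E^*\times E^*$ and satisfies $W(x^*,x^*)\geq 0$ for every $x^*\in E^*$. By bilinearity of the expression in $W$, it suffices to show
\[
 \sum_{n,m=1}^d V(e_n,e_m)\,W(e_n^*,e_m^*)\geq 0
\]
for one (equivalently, by Lemma~\ref{lemma:traceindep}, for every) basis $(e_n)_{n=1}^d$ of $E$ with corresponding dual basis $(e_n^*)_{n=1}^d$.

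The next step is to choose the basis conveniently. Since $V$ is symmetric and nonnegative, a standard diagonalization of a positive semidefinite symmetric bilinear form on a finite-dimensional space (e.g.\ via a Gram--Schmidt-type orthogonalization with respect to $V$) yields a basis $(e_n)_{n=1}^d$ of $E$ with $V(e_n,e_m)=\lambda_n\delta_{nm}$, where $\lambda_n\geq 0$ for all $n$. For this basis every off-diagonal term vanishes and the sum collapses to $\sum_{n=1}^d \lambda_n\,W(e_n^*,e_n^*)$, which is nonnegative because each $\lambda_n\geq 0$ and $W(e_n^*,e_n^*)\geq 0$ by the hypothesis applied to $x^*=e_n^*$. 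By Lemma~\ref{lemma:traceindep} this value equals the corresponding sum computed with any basis, so the desired inequality holds for an arbitrary basis of $E$.

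The only point requiring a word of care is that $W$ is not assumed symmetric. This causes no difficulty: in the diagonalizing basis only the diagonal entries $W(e_n^*,e_n^*)$ occur, so the antisymmetric part of $W$ plays no role. Alternatively, one may replace $W$ at the outset by its symmetrization $\tfrac12\big(W(x^*,y^*)+W(y^*,x^*)\big)$, which does not change the sum (because $V$ is symmetric) and still satisfies the pointwise nonnegativity. I do not expect any genuine obstacle here: the substantive content is entirely contained in Lemma~\ref{lemma:traceindep} together with the elementary diagonalization of $V$.
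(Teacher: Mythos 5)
Your proof is correct and follows essentially the same route as the paper: diagonalize $V$ in a basis that is $V$-orthogonal, observe the double sum collapses to the diagonal, apply the pointwise hypothesis on $W_1,W_2$ (equivalently on $W_2-W_1$), and then invoke Lemma~\ref{lemma:traceindep} to transfer the conclusion to an arbitrary basis. The paper does not bother with the symmetrization remark at the end, but your observation that only the diagonal entries of $W$ occur is exactly why it is unnecessary; the substance of the two arguments is the same.
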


\begin{proof}
 Since $V$ is symmetric and nonnegative it defines an inner product on $E\times E$. Let $(\tilde e_n)_{n=1}^d$ be an orthogonal basis of $E$ under the inner product $V$ (i.e.\ $V(\tilde e_n,\tilde e_m)=0$ for all $n\neq m$, and $V(\tilde e_n,\tilde e_n)\geq 0$ for all $n=1,\ldots,d$). Then we have that
 \begin{equation}\label{eq:VW_1VW_2}
  \begin{split}
        \sum_{n, m=1}^d V(\tilde e_n, \tilde e_m) &W_1(\tilde e_n^*, \tilde e_m^*)= \sum_{n=1}^d V(\tilde e_n, \tilde e_n) W_1(\tilde e_n^*, \tilde e_n^*)\\
     &\leq \sum_{n=1}^d V(\tilde e_n, \tilde e_n) W_2(\tilde e_n^*, \tilde e_n^*) = \sum_{n, m=1}^d V(\tilde e_n, \tilde e_m) W_2(\tilde e_n^*, \tilde e_m^*),
  \end{split}
 \end{equation}
where $(\tilde e_n^*)_{n=1}^d$ is the corresponding dual basis of $(\tilde e_n)_{n=1}^d$. Consequently, the desired follows from \eqref{eq:VW_1VW_2} and Lemma \ref{lemma:traceindep}.
\end{proof}

The next few statements aim at establishing an appropriate ``localization'' procedure: we will prove how to deduce the general, possibly infinite-dimensional context from its finite-dimensional counterpart. We need some additional notation.
Let $X$ be a Banach space with a dual $X^*$, $Y\subset X^*$ be a linear subspace. Let $P:Y\hookrightarrow X^*$ be the continuous embedding operator. Then $P^*$ is a well-defined bounded linear operator from $X^{**}$ to $X_Y:=Y^*$ such that $\text{Ran}(P^*) = X_Y$. Moreover, if $Y$ is finite-dimensional, then $\text{Ran}(P^*|_{X}) = X_Y$, where $P^*|_{X}:X\to X_{Y}$ is a well-defined restriction of $P^*$ on $X$ due to the natural embedding $X\hookrightarrow X^{**}$. For any function $\phi:X\to \mathbb R_+$, we can define $\phi_Y:X_Y\to\mathbb R_+$ by the formula
\begin{equation}\label{eq:defofphi_Y}
 \phi_Y(\tilde x) = \inf\{\phi(x)\,:\,x\in X, P^*x=\tilde x\},\;\;\; \tilde x \in X_Y.
\end{equation}
{In other words, $\phi_Y(\tilde x)$ denotes the infimum of $\phi(x)$ over all $x\in X$ satisfying $x|_{Y} = \tilde x$ where we consider $x$ as an element of $Y^*$.}

\begin{lemma}\label{lem:Phi_YPsi_Yareconvex}
 Let $X$ be a Banach space with a dual $X^*$ and let $Y\subset X^*$ be a {closed} linear subspace. Let $\phi:X\to \mathbb R_+$ be a convex function. Then $\phi_Y:X_Y\to \mathbb R_+$ defined by \eqref{eq:defofphi_Y}
is convex and we have $\phi_Y(P^* x)\leq \phi(x)$ for all $x\in X$.
\end{lemma}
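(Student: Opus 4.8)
The plan is to verify the two claimed properties of $\phi_Y$ directly from the definition \eqref{eq:defofphi_Y}. The inequality $\phi_Y(P^*x)\leq\phi(x)$ is immediate: for any fixed $x\in X$, the element $x$ itself belongs to the set $\{x'\in X : P^*x'=P^*x\}$ over which we take the infimum, so $\phi_Y(P^*x)=\inf\{\phi(x')\,:\,P^*x'=P^*x\}\leq\phi(x)$. This requires no work beyond unwinding the notation.

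For convexity, I would fix $\tilde x_1,\tilde x_2\in X_Y$ and $\lambda\in[0,1]$, and show $\phi_Y(\lambda\tilde x_1+(1-\lambda)\tilde x_2)\leq\lambda\phi_Y(\tilde x_1)+(1-\lambda)\phi_Y(\tilde x_2)$. The natural route is an epsilon-argument: given $\eps>0$, pick $x_1,x_2\in X$ with $P^*x_i=\tilde x_i$ and $\phi(x_i)\leq\phi_Y(\tilde x_i)+\eps$. Then $P^*(\lambda x_1+(1-\lambda)x_2)=\lambda\tilde x_1+(1-\lambda)\tilde x_2$ since $P^*$ is linear, so by definition of $\phi_Y$ and convexity of $\phi$,
\[
\phi_Y(\lambda\tilde x_1+(1-\lambda)\tilde x_2)\leq\phi(\lambda x_1+(1-\lambda)x_2)\leq\lambda\phi(x_1)+(1-\lambda)\phi(x_2)\leq\lambda\phi_Y(\tilde x_1)+(1-\lambda)\phi_Y(\tilde x_2)+\eps.
\]
Letting $\eps\downarrow0$ gives the claim. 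One should also note $\phi_Y$ takes values in $\mathbb R_+$ (not $-\infty$): this holds because $\phi\geq0$, so each infimum is a nonnegative real number, and it is finite because, since $\text{Ran}(P^*)=X_Y$ (or $\text{Ran}(P^*|_X)=X_Y$ when $Y$ is finite-dimensional), the set we infimize over is nonempty for every $\tilde x\in X_Y$.

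There is essentially no serious obstacle here; the only mild subtlety is making sure the infimum defining $\phi_Y(\tilde x)$ is over a nonempty set, which is exactly the surjectivity statement $\text{Ran}(P^*)=X_Y$ recalled just before the lemma, and ensuring that when we need the minimizing elements for $\tilde x_1,\tilde x_2$ we can take genuine near-minimizers in $X$ rather than in $X^{**}$ — again fine because either $P^*$ is surjective onto $X_Y$ or, in the finite-dimensional-$Y$ case, already $P^*|_X$ is. So the proof is just the two short paragraphs above; the epsilon can in fact be dispensed with if one works with actual minimizers, but the epsilon-version avoids having to argue the infimum is attained.
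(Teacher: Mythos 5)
Your proof is correct and follows essentially the same route as the paper's: both get $\phi_Y(P^*x)\leq\phi(x)$ by noting $x$ is a competitor in its own infimum, and both get convexity by combining near-optimal representatives $x_1,x_2$ into $\lambda x_1+(1-\lambda)x_2$ and applying convexity of $\phi$. The paper writes it as a chain of infima (claiming the infimum over $\{x:P^*x=\tilde x\}$ equals the infimum over $\{\lambda x_1+(1-\lambda)x_2 : P^*x_i=\tilde x_i\}$) whereas you use an $\eps$-argument, but these are cosmetically different versions of the same computation.
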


\begin{proof}
 Fix $\tilde x_1,\tilde x_2\in X_Y$, $\lambda \in[0,1]$ and set $\tilde x = \lambda\tilde  x_1 + (1-\lambda)\tilde  x_2$. Then
 \begin{align*}
  \phi_Y(\tilde x) &= \inf_{\substack{x\in X\\P^*x=\tilde x}}\phi(x) = \inf_{\substack{x_1\in X, P^*x_1=\tilde x_1\\x_2\in X, P^*x_2=\tilde x_2}}\phi(\lambda x_1 + (1-\lambda) x_2)\leq\inf_{\substack{x_1\in X, P^*x_1=\tilde x_1\\x_2\in X, P^*x_2=\tilde x_2}}\lambda\phi( x_1) + (1-\lambda)\phi(x_2)\\
   &=\lambda\inf_{x_1\in X, P^*x_1=\tilde x_1}\phi(x_1) + (1-\lambda)\inf_{x_2\in X, P^*x_2=\tilde x_2}\phi( x_2)=\lambda \phi_{Y}(\tilde x_1) + (1-\lambda)\phi_Y(\tilde x_2),
 \end{align*}
 so $\phi_Y$ is convex. The last part of the lemma follows from the definition of $\phi_Y$.
\end{proof}

\begin{lemma}\label{lem:Phi_nPsi_nconvergestoPhiPsi}
 Let $X$ be a separable Banach space, $\phi:X\to\mathbb R_+$ be convex lower semi-continuous. Then there exists an increasing sequence of finite-dimensional subspaces $(Y_n)_{n\geq 1}$ of $X^*$ such that the following holds. If $P_n:Y_n\hookrightarrow X^*$ is the corresponding embedding for each $n\geq 1$ and $\phi_n:Y_n^*\to \mathbb R_+$ satisfies
 \begin{equation}\label{eq:defofphi_n}
 \phi_n(\tilde x) = \inf\{\phi(x)\,:\,x\in X, P_n^*x=\tilde x\},\;\;\; \tilde x \in Y_n^*,
\end{equation}
then for each $x\in X$ the sequence $(\phi_n(P_n^*x))_{n\geq 1}$ increases to $\phi(x)$ as $n\to \infty$.
\end{lemma}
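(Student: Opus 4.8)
The plan is to write $\phi$ as the pointwise supremum of a \emph{countable} family of continuous affine minorants and to let $Y_n$ be spanned by the linear parts of the first $n$ of them.

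First I would invoke the standard fact that a finite, convex, lower semi-continuous function on a Banach space is automatically locally Lipschitz continuous. Indeed, the sublevel sets $\{x\in X:\phi(x)\leq k\}$, $k\in\N$, are closed (by lower semi-continuity) and convex, and they cover $X$ (since $\phi$ is finite-valued); by the Baire category theorem one of them contains a ball, so $\phi$ is bounded above on a ball, and a routine convexity argument upgrades this to local Lipschitz continuity on all of $X$. Consequently, at every $v\in X$ the subdifferential $\partial\phi(v)\subset X^*$ is nonempty, and every element of $\partial\phi(v)$ has norm at most the Lipschitz constant of $\phi$ on a fixed ball around $v$.

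Next, using separability, fix a countable dense set $\{d_j\}_{j\geq1}\subset X$ and choose $x_j^*\in\partial\phi(d_j)$ for each $j$; then the affine functions $\ell_j(x):=\phi(d_j)+\langle x-d_j,x_j^*\rangle$ satisfy $\ell_j\leq\phi$ on $X$, with equality at $d_j$. I claim $\sup_{j\geq1}\ell_j(x)=\phi(x)$ for all $x\in X$: given $\e>0$, pick $d_j$ inside a fixed ball around $x$ on which $\phi$ is $L$-Lipschitz and close enough to $x$ that $|\phi(d_j)-\phi(x)|<\e$ and $L\|x-d_j\|<\e$; then, since $\|x_j^*\|\leq L$, we get $\ell_j(x)\geq\phi(d_j)-L\|x-d_j\|>\phi(x)-2\e$. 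Now put $Y_n:=\mathrm{span}\{x_1^*,\ldots,x_n^*\}$, an increasing sequence of finite-dimensional subspaces of $X^*$.

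Finally I would unwind \eqref{eq:defofphi_n}. For $x,x'\in X$ we have $P_n^*x'=P_n^*x$ exactly when $x-x'$ belongs to $Z_n:=\{z\in X:\langle z,y^*\rangle=0\text{ for all }y^*\in Y_n\}$, so $\phi_n(P_n^*x)=\inf\{\phi(x+z):z\in Z_n\}$. If $z\in Z_n$ and $j\leq n$ then $x_j^*\in Y_n$ annihilates $z$, so $\ell_j(x+z)=\ell_j(x)$ and hence $\phi(x+z)\geq\max_{j\leq n}\ell_j(x)=:a_n(x)$; taking the infimum over $z$ gives $a_n(x)\leq\phi_n(P_n^*x)$, while taking $z=0$ (or invoking Lemma~\ref{lem:Phi_YPsi_Yareconvex}) gives $\phi_n(P_n^*x)\leq\phi(x)$. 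Since $a_n(x)\nearrow\sup_j\ell_j(x)=\phi(x)$ and $(\phi_n(P_n^*x))_n$ is non-decreasing (because $Z_{n+1}\subseteq Z_n$), a sandwich argument yields $\phi_n(P_n^*x)\nearrow\phi(x)$, as desired. The one genuinely non-routine ingredient is the first step — the passage from lower semi-continuity to local Lipschitz continuity, and the resulting uniform control on the subgradients $x_j^*$ for $d_j$ near a given point — which is precisely what makes a \emph{countable} family of affine minorants suffice; everything else is bookkeeping with the annihilator subspaces $Z_n$.
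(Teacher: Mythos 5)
Your proof is correct and follows the same route as the paper's: express $\phi$ as a countable supremum of continuous affine functionals $\ell_j(x)=\langle x,x_j^*\rangle+a_j$, take $Y_n=\mathrm{span}(x_1^*,\dots,x_n^*)$, and sandwich $\phi_n(P_n^*x)$ between $\max_{j\le n}\ell_j(x)$ (each $\ell_j$ with $j\le n$ being constant on the fiber $x+Y_n^\perp$) and $\phi(x)$, using $Y_n\subset Y_{n+1}$ for monotonicity. The only departure is that the paper simply cites the affine-minorant representation as \cite[Lemma 1.2.10]{HNVW1}, whereas you rederive it from scratch — Baire category gives local Lipschitz continuity, which yields nonempty and locally bounded subdifferentials, and subgradients at a countable dense set supply the $\ell_j$'s — a correct, self-contained alternative that costs a bit more work but removes the external dependence.
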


\begin{proof}
By \cite[Lemma 1.2.10]{HNVW1} there exist a sequence $(x_n^*)_{n\geq 1}$ in $X^*$ and a sequence $(a_n)_{n\geq 1}$ of real numbers such that
\begin{equation}\label{eq:convassupoflin}
  \phi(x) = \sup_{n}\langle x,x_n^* \rangle + a_n,\;\;\; x\in X.
\end{equation}
Let $Y_n := \text{span}(x_1^*,\ldots,x_n^*)$ for each $n\geq 1$. Fix $x\in X$. First notice that $\phi_n(P_n^*x)\leq \phi(x)$ by Lemma \ref{lem:Phi_YPsi_Yareconvex}. Moreover, $\phi_n(P_n^*x)\leq \phi_{n+1}(P_{n+1}^*x)$ for each $n\geq 1$ since $Y_n\subset Y_{n+1}$ (see \eqref{eq:defofphi_n}). Fix $n\geq 1$. Then for any $y\in X$ such that $P_n^*x=P_n^*y$ we have  $\langle x,x_k^* \rangle = \langle y,x_k^* \rangle$ for any $k=1,\ldots, n$, so by \eqref{eq:convassupoflin},
\begin{align*}
 \phi_n(P_n^* x) &= \inf\{\phi(y):y\in X, P_n^*y=P_n^* x\}\geq \inf\{\sup_{1\leq k\leq n}\langle y,x_k^* \rangle + a_k:y\in X, P_n^*y=P_n^* x\}\\
 &= \inf\{\sup_{1\leq k\leq n}\langle x,x_k^* \rangle + a_k:y\in X, P_n^*y=P_n^* x\}=\sup_{1\leq k\leq n}\langle x,x_k^* \rangle + a_k.
\end{align*}
Since the latter expression tends to $\phi(x)$ as $n\to\infty$, we obtain the desired monotone convergence $\phi_n(P_n^*x)\nearrow \phi(x)$.
\end{proof}

\begin{proposition}\label{prop:normPhi_YPsi_YlessnormPhiPsi}
 Let $X$ be a Banach space with a dual $X^*$ and let $Y\subset X^*$ be a finite-dimensional linear subspace. Assume further that $\Phi, \Psi:X\to\mathbb R_+$ are convex continuous functions and let $\Phi_Y, \Psi_Y:X_Y\to \mathbb R_+$ be defined by \eqref{eq:defofphi_Y}. Then
 \[
  |\mathcal H^{\mathbb T}_{X_Y}|_{\Phi_Y, \Psi_Y} \leq |\mathcal H^{\mathbb T}_X|_{\Phi,\Psi}.
 \]
\end{proposition}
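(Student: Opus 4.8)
The plan is to transfer the defining inequality of $|\mathcal H^{\mathbb T}_X|_{\Phi,\Psi}$ from $X$ down to $X_Y$ by lifting step functions along $P^*$. We may assume $c_0:=|\mathcal H^{\mathbb T}_X|_{\Phi,\Psi}<\infty$. First I would note that this infimum is attained, i.e.\ $\int_{\mathbb T}\Psi(\mathcal H^{\mathbb T}_Xf)\ud s\leq c_0\int_{\mathbb T}\Phi(f)\ud s$ for every step function $f:\mathbb T\to X$: the set of admissible constants is a half-line $[c_0,\infty)$, closed because for a fixed $f$ the inequality passes to the limit in $c$ (here $\int_{\mathbb T}\Phi(f)\ud s\geq 0$).

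Next, fix a step function $g:\mathbb T\to X_Y$ and, after refining to a partition of $\mathbb T$ into intervals, write $g=\sum_{k=1}^N\tilde x_k\mathbf 1_{A_k}$ with $\tilde x_k\in X_Y$ and $\sum_k\mathbf 1_{A_k}\equiv 1$. The heart of the argument is to construct, for a given $\eps>0$, a good lift of $g$. Since $Y$ is finite-dimensional we have $\mathrm{Ran}(P^*|_X)=X_Y$ (recalled just before the statement), so $\{x\in X:P^*x=\tilde x_k\}\neq\emptyset$, and by the definition \eqref{eq:defofphi_Y} of $\Phi_Y$ as an infimum I would pick $x_k\in X$ with $P^*x_k=\tilde x_k$ and $\Phi(x_k)\leq\Phi_Y(\tilde x_k)+\eps$; set $f:=\sum_{k=1}^N x_k\mathbf 1_{A_k}$, a step function $\mathbb T\to X$ with $P^*\!\bigl(f(s)\bigr)=g(s)$ for all $s$. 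Since the $A_k$ partition $\mathbb T$,
\[
\int_{\mathbb T}\Phi(f(s))\ud s=\sum_{k=1}^N\Phi(x_k)\,|A_k|\leq\sum_{k=1}^N\bigl(\Phi_Y(\tilde x_k)+\eps\bigr)|A_k|=\int_{\mathbb T}\Phi_Y(g(s))\ud s+2\pi\eps .
\]

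Then I would use that $P^*$ intertwines the two Hilbert transforms on step functions: from \eqref{eq:periodic}, $\mathcal H^{\mathbb T}_Xf(s)=\sum_k c_k(s)\,x_k$ with real coefficients $c_k(s)=\frac{1}{2\pi}\mathrm{p.v.}\!\int_{A_k}\cot\frac{s-u}{2}\ud u$, so $P^*\!\bigl(\mathcal H^{\mathbb T}_Xf(s)\bigr)=\sum_k c_k(s)\tilde x_k=\mathcal H^{\mathbb T}_{X_Y}g(s)$ for a.e.\ $s$. Combining this with Lemma \ref{lem:Phi_YPsi_Yareconvex} applied to $\Psi$ (giving $\Psi_Y(P^*x)\le\Psi(x)$), with the admissibility of $c_0$, and with the displayed estimate for $\int_{\mathbb T}\Phi(f)\ud s$, I obtain
\[
\int_{\mathbb T}\Psi_Y\!\bigl(\mathcal H^{\mathbb T}_{X_Y}g(s)\bigr)\ud s\leq\int_{\mathbb T}\Psi\!\bigl(\mathcal H^{\mathbb T}_Xf(s)\bigr)\ud s\leq c_0\int_{\mathbb T}\Phi(f(s))\ud s\leq c_0\Bigl(\int_{\mathbb T}\Phi_Y(g(s))\ud s+2\pi\eps\Bigr).
\]
Letting $\eps\downarrow 0$ gives $\int_{\mathbb T}\Psi_Y(\mathcal H^{\mathbb T}_{X_Y}g)\ud s\leq c_0\int_{\mathbb T}\Phi_Y(g)\ud s$ for every step $g$; since $\Phi_Y,\Psi_Y$ are convex by Lemma \ref{lem:Phi_YPsi_Yareconvex}, hence finite and continuous on the finite-dimensional space $X_Y$, this says precisely that $c_0$ is admissible for $|\mathcal H^{\mathbb T}_{X_Y}|_{\Phi_Y,\Psi_Y}$, proving the proposition.

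I do not anticipate a genuine obstacle. The two points deserving care are the surjectivity $\mathrm{Ran}(P^*|_X)=X_Y$ (which is exactly why $Y$ is taken finite-dimensional) and the $\eps$-optimal measurable lift, which is immediate here because a step function has finitely many values and $\Phi_Y$ is a pointwise infimum. The genuinely delicate approximation issue — not destroying weak differential subordination and orthogonality of the associated martingales — appears only at the next stage, when $Y$ is allowed to exhaust $X^*$.
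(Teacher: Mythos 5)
Your proposal is correct and takes essentially the same approach as the paper's proof: lift a step function from $X_Y$ to $X$ by choosing $\eps$-near-optimal preimages under $P^*$ of the finitely many values, use that $P^*$ intertwines $\mathcal H^{\mathbb T}_X$ with $\mathcal H^{\mathbb T}_{X_Y}$, and apply $\Psi_Y(P^*\cdot)\leq\Psi(\cdot)$ from Lemma \ref{lem:Phi_YPsi_Yareconvex} together with the admissibility of $c_0$. The only cosmetic difference is your additive $\eps$ where the paper uses a multiplicative $(1+\eps)$.
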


\begin{proof}
Recall that
\[
 |\mathcal H^{\mathbb T}_{X_Y}|_{\Phi_Y, \Psi_Y} = \sup_{f\in F^{\rm step}_{X_Y}}\frac{\int_{\mathbb T}\Psi_Y(\mathcal H^{\mathbb T}_{X_Y} f)\ud \mu}{\int_{\mathbb T}\Phi_Y(f)\ud \mu},
\]
where $\mu$ is the Lebesgue measure on $\mathbb T$. Fix $f\in F^{\rm step}_{X_Y}$ and $\eps>0$. Let $(\tilde x_n)_{n=1}^N\subset X_Y$ be the range of $f$. For each $n=1,\ldots,N$ we define $x_n\in X$ to be such that $P^* x_n = \tilde x_n$ and $\Phi(x_n) \leq (1+\eps)\Phi_Y(\tilde x_n)$ (existence of such $x_n$ follows from the fact that $\text{Ran}(P^*) = X_Y$); we define $g:\mathbb T \to X$ to be such that $f(s) = \tilde x_n$ if and only if $g(s) = x_n$, $s\in \mathbb T$. Then $\Phi_{Y}(f) = \Phi_Y(P^* g)$ and $\Psi_Y(\mathcal H^{\mathbb T}_{X_Y} f) = \Psi_Y(\mathcal H^{\mathbb T}_{X_Y} P^*g)=\Psi_Y(P^*\mathcal H^{\mathbb T}_X g)$ for any $s\in \mathbb T$ by the definition of the Hilbert transform on the torus. Therefore
\begin{align*}
 \frac{\int_{\mathbb T}\Psi_Y(\mathcal H^{\mathbb T}_{X_Y} f)\ud \mu}{\int_{\mathbb T}\Phi_Y(f)\ud \mu}
 = \frac{\int_{\mathbb T}\Psi_Y(P^*\mathcal H^{\mathbb T}_X g)\ud \mu}{\int_{\mathbb T}\Phi_Y(P^*g)\ud \mu}
 \stackrel{(*)}\leq (1+\eps)\frac{\int_{\mathbb T}\Psi(\mathcal H^{\mathbb T}_X g)\ud \mu}{\int_{\mathbb T}\Phi(g)\ud \mu} \stackrel{(**)}\leq (1+\eps)|\mathcal H^{\mathbb T}_X|_{\Phi,\Psi},
\end{align*}
where $(*)$ follows from the fact that $\Phi(g(s))\leq (1+\eps)\Phi_Y(f(s))$ for any $s\in \mathbb T$ and from the fact that $\Psi_Y(P^*\cdot) \leq \Psi(\cdot)$ on $X$, while $(**)$ follows from the definition of $|\mathcal H^{\mathbb T}_X|_{\Phi,\Psi}$. Since $f\in F^{\rm step}_{X_Y}$ and $\eps>0$ were arbitrary, the claim follows.
\end{proof}

The final ingredient is the following well-known statement from the theory of stochastic integration.

\begin{lemma}\label{lem:stochintfor*bddmart}
 Let $d\geq 1$ and let $M$ be a martingale with values in $\R^d$ satisfying the condition $\mathbb E M^*_{\infty}<\infty$. Let $V:\mathbb R_+ \times \Omega \to \mathbb R^d$ be a predictable and bounded process. Then $V\cdot M := \int \langle V,\ud M\rangle$ is a well-defined martingale and $\mathbb E (V\cdot M)^*_{\infty}<\infty$.
\end{lemma}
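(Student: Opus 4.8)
The plan is to recognise $V\cdot M$ as a local martingale that is dominated by an integrable random variable, and then to invoke the elementary fact that such a process is automatically a genuine martingale. First I would observe that, since $M$ is a c\`adl\`ag martingale (hence a semimartingale) and $V$ is predictable and bounded, hence locally bounded, the stochastic integral $V\cdot M=\int\langle V,\ud M\rangle$ is well defined, and it is a local martingale because the stochastic integral of a locally bounded predictable process against a local martingale is again a local martingale (see \cite{Prot,JS}). Thus the whole content of the lemma reduces to the bound $\mathbb E(V\cdot M)^*_\infty<\infty$: once this is known, $V\cdot M$ is a local martingale with $(V\cdot M)^*_\infty\in L^1$, so, picking a localizing sequence $(\sigma_n)_{n\geq 1}$, one may pass to the limit $n\to\infty$ in the identity $\mathbb E[(V\cdot M)_{t\wedge\sigma_n}\mid\mathcal F_s]=(V\cdot M)_{s\wedge\sigma_n}$ by conditional dominated convergence, which yields the uniformly integrable martingale property.

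To prove $\mathbb E(V\cdot M)^*_\infty<\infty$ I would apply the Burkholder--Davis--Gundy (Davis) inequality twice, in both directions. Writing $M=(M^1,\dots,M^d)$ and $K:=\sup_{s,\omega}\|V(s,\omega)\|$, the scalar quadratic variation of $V\cdot M$ equals $[V\cdot M]_t=\sum_{i,j=1}^d\int_0^t V^i_sV^j_s\ud[M^i,M^j]_s$, and since the covariation matrix $([M^i,M^j])_{i,j}$ has positive semidefinite increments (equivalently, by the Kunita--Watanabe inequality and Cauchy--Schwarz), this is bounded by $K^2\sum_{i=1}^d[M^i]_t$ for every $t$, so $[V\cdot M]_\infty^{1/2}\leq K\sum_{i=1}^d[M^i]_\infty^{1/2}$. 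Applying the BDG inequality (see \cite[Ch.~VII]{DM82} or \cite{Kal,Prot}) to each coordinate $M^i$ and using $(M^i)^*_\infty\leq M^*_\infty$,
\begin{equation*}
\mathbb E[V\cdot M]_\infty^{1/2}\leq K\sum_{i=1}^d\mathbb E[M^i]_\infty^{1/2}\leq CK\sum_{i=1}^d\mathbb E(M^i)^*_\infty\leq CdK\,\mathbb E M^*_\infty<\infty,
\end{equation*}
and then, applying the same inequality to the real-valued local martingale $V\cdot M$, one gets $\mathbb E(V\cdot M)^*_\infty\leq C'\,\mathbb E[V\cdot M]_\infty^{1/2}<\infty$. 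Combined with the first paragraph, this finishes the proof.

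This argument is essentially routine, and I do not expect a serious obstacle. The only points requiring a little care are the bookkeeping for the vector-valued quadratic variation --- controlling $[V\cdot M]$ by $\sum_i[M^i]$ via positive semidefiniteness of the covariation matrix --- and the passage, recalled above, from ``local martingale with integrable maximal function'' to ``true martingale''.
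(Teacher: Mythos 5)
The paper does not actually prove this lemma: it is stated without proof as a well-known fact from stochastic integration theory. Your argument is correct and is essentially the canonical proof one would give. The two-sided use of the Davis ($p=1$) BDG inequality is exactly the right tool given that the hypothesis is only $\mathbb{E}\,M^*_\infty<\infty$ (rather than $L^2$ control), and the reduction of $[V\cdot M]$ to $K^2\sum_i[M^i]$ via positive semidefiniteness of the increments of the covariation matrix $([M^i,M^j])_{i,j}$ is the standard bookkeeping. The only cosmetic point worth flagging is that the paper's convention for quadratic variation in \eqref{eq:defquadvar} sets $[M]_0=|M_0|^2$ rather than $0$; this only shifts the Davis constant by a harmless additive term (since $\mathbb{E}|M_0|\leq\mathbb{E}\,M^*_\infty$) and does not affect the argument, and for $V\cdot M$ itself one has $(V\cdot M)_0=0$ so there is no issue. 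The final upgrade from local martingale to true (in fact uniformly integrable) martingale via dominated convergence once $(V\cdot M)^*_\infty\in L^1$ is also the standard step. No gap.
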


Equipped with the above statements, we are ready for the study of our main result. We should point out that the main difficulty lies in proving the inequality \eqref{eq:HTnormboudfororcontmart} for finite-dimensional Banach spaces. The novelty in comparison to other results from the literature is that we work under slightly different condition of weak differential subordination and orthogonality; therefore, though at some places the arguments might look similar to, for instance, those appearing in \cite{BanWang95}, there is no apparent connection between them.

\begin{proof}[Proof of \eqref{eq:HTnormboudfororcontmart} for finite-dimensional $X$]
 We split the reasoning into several intermediate parts.

\smallskip

{\em Step 1. Some reductions.} First assume that the function $U_{\Phi, \Psi}$ (defined in Theorem~\ref{thm:existenceofU^H_Phi,Psi}) is continuous and twice differentiable. Since $N$ has continuous paths almost surely, we may assume that $N$ is a bounded martingale: this is due to a simple stopping time argument combined with Lemma \ref{lemma:sttimeargforconvexfuncandmart}. Moreover, we may assume that $\mathbb E \Phi(M_t)<\infty$, since otherwise there is nothing to prove. Let $d$ be the dimension of $X$.
 Then analogously to \cite[Section 4]{Y17MartDec} we can find a continuous time-change $\tau = (\tau_s)_{s\geq 0}$ and redefine $M:= M\circ \tau$ and $N:= N\circ\tau$, so that the following holds. For some $2d$-dimensional standard Brownian motion $W$ on an extended probability space $(\widetilde {\Omega}, \widetilde {\mathcal F}, \widetilde {\mathbb P})$ equipped with an extended filtration $\widetilde {\mathbb F}=(\widetilde {\mathcal F}_t)_{t\geq 0}$, there exist progressively measurable processes $\phi, \psi:\mathbb R_+ \times\Omega \to \mathcal L(\mathbb R^{2d},X)$ such that $M^c = \phi \cdot W$ and $N=\psi \cdot W$, where $M=M^c + M^d$ is the Meyer-Yoeurp decomposition of $M$ (see \cite{Kal,Y17MartDec,Y17GMY}). In addition, the arguments in  \cite[Section 4]{Y17MartDec} also yield the identities
  $[M\circ \tau] = [M]\circ \tau$, $[N\circ\tau] = [N]\circ\tau$ and $[M\circ \tau, N\circ \tau] = [M,N]\circ \tau$,
so the weak differential subordination and orthogonality are not ruined under the time-change.

Now, for each $n\geq 1$, introduce the stopping time
\begin{equation}\label{eq:defofsigma_n}
 \sigma_n:= \inf\{t\geq 0:M_t>n\}.
\end{equation}
By Lemma \ref{lemma:sttimeargforconvexfuncandmart} it is sufficient to show that
\begin{equation}\label{eq:inmain}
   \mathbb E \Psi(N_{t\wedge \sigma_n}) \leq |\mathcal H^{\mathbb T}_X|_{\Phi, \Psi}\mathbb E \Phi(M_{t\wedge \sigma_n})
\end{equation}
for any $n\geq 1$. Actually, passing to $M/n$ and $N/n$ {(and by modifying $\Phi$ and $\Psi$ accordingly)}, we see that it is enough to show the above estimate for $n=1$. For the sake of notational convenience, we redefine $M:= M^{\sigma_1}$ and $N:= N^{\sigma_1}$ and observe that it suffices to show $\mathbb E U_{\Phi, \Psi}(M_t + iN_t) \geq 0$, since then \eqref{eq:inmain} follows at once from the majorization property of $U_{\Phi,\Psi}$.

\smallskip

{\em Step 2. Application of It\^o's formula.} Let $(e_n)_{n=1}^d$ be a basis of $X$, and $(e_n^*)_{n=1}^d$ be the corresponding dual basis. Then by the It\^o formula \eqref{eq:itoformula}, we get
\begin{equation}\label{eq:U_PhiPsi(M_t+iN_t)}
\begin{split}
   \mathbb E U_{\Phi, \Psi}(M_t + iN_t) = \mathbb E U_{\Phi, \Psi}(M_0+iN_0) &+ \mathbb E \int_{0}^t \langle \partial_x U_{\Phi, \Psi}(M_{s-}+iN_{s-}),\ud M_s\rangle\\
  &+\mathbb E \int_{0}^t \langle \partial_{ix} U_{\Phi, \Psi}(M_{s-}+iN_{s-}),\ud N_s\rangle+ \mathbb E I_1 + \mathbb E I_2,
  \end{split}
\end{equation}
where {$\partial_x U_{\Phi, \Psi}(\cdot),\partial_{ix} U_{\Phi, \Psi}(\cdot)\in X^*$} are the corresponding {Fr\'echet} derivatives of $U_{\Phi, \Psi}$ in the real and the imaginary subspaces of $X+iX$ respectively,
\begin{align*}
 I_1 =\sum_{0\leq s\leq t}(\Delta U_{\Phi,\Psi}(M_s + iN_s) - \langle \partial_x U_{\Phi, \Psi}(M_{s-} + iN_{s-}),\Delta M_s\rangle),
\end{align*}
and
\begin{align*}
 I_2 &= \frac 12 \int_0^t\sum_{i,j=1}^d \frac{\partial^2 U_{\Phi, \Psi}(M_{s-}+iN_{s-})}{\partial e_i\partial e_j}\ud[\langle M^c, e_i^*\rangle, \langle M^c, e_j^*\rangle]_s\\
 &\quad +\frac 12 \int_0^t\sum_{i,j=1}^d \frac{\partial^2 U_{\Phi, \Psi}(M_{s-}+iN_{s-})}{\partial ie_i\partial ie_j} \ud[\langle N, e_i^*\rangle, \langle N, e_j^*\rangle]_s\\
 &\quad + \int_0^t\sum_{i,j=1}^d\frac{\partial^2 U_{\Phi, \Psi}(M_{s-}+iN_{s-})}{\partial e_i\partial ie_j} \ud[\langle M^c, e_i^*\rangle, \langle N, e_j^*\rangle]_s\\
 & =\frac 12 \int_0^t\sum_{i,j=1}^d \frac{\partial^2 U_{\Phi, \Psi}(M_{s-}+iN_{s-})}{\partial e_i\partial e_j}\langle \phi^*(s) e_i^*, \phi^*(s) e_j^*\rangle\ud s\\
  &\quad +\frac 12 \int_0^t\sum_{i,j=1}^d \frac{\partial^2 U_{\Phi, \Psi}(M_{s-}+iN_{s-})}{\partial ie_i\partial ie_j}\langle \psi^*(s) e_i^*, \psi^*(s) e_j^*\rangle\ud s\\
 &\quad + \int_0^t\sum_{i,j=1}^d \frac{\partial^2 U_{\Phi, \Psi}(M_{s-}+iN_{s-})}{\partial e_i\partial ie_j}\langle \phi^*(s) e_i^*, \psi^*(s) e_j^*\rangle\ud s.
\end{align*}

{\em Step 3. Analysis of the terms on the right of \eqref{eq:U_PhiPsi(M_t+iN_t)}.}
Let us first show that
\[
  \mathbb E \int_{0}^t \langle \partial_x U_{\Phi, \Psi}(M_{s-}+iN_{s-}),\ud M_s\rangle+\mathbb E \int_{0}^t \langle \partial_{ix} U_{\Phi, \Psi}(M_{s-}+iN_{s-}),\ud N_s\rangle
\]
exists and equals zero.
First notice that since $M = M^{\sigma_1}$, the variable $M_{s-}$ is bounded by $1$ for any $0\leq s\leq \sigma_1$. Furthermore, as we have assumed above, the process $N$ is also bounded. Since $U_{\Phi,\Psi}$ is twice differentiable, both $\partial_x U_{\Phi, \Psi}(\cdot)$ and $\partial_{ix} U_{\Phi, \Psi}(\cdot)$ are continuous functions, so $s\mapsto \partial_x U_{\Phi, \Psi}(M_{s-}+iN_{s-})$ and $s\mapsto\partial_{ix} U_{\Phi, \Psi}(M_{s-}+iN_{s-})$ define bounded processes on $0\leq s\leq \sigma_1$. Furthermore, it is easy to see that
\[
 \mathbb E M_t^{*} = \mathbb E M_{t\wedge \sigma_1}^*\leq \mathbb E \|M_{t\wedge \sigma_1}\| + 1  \leq  \mathbb E \|M_t\|+1 <\infty,
\]
and hence by Lemma \ref{lem:stochintfor*bddmart},
\begin{equation}\label{eq:stochintstoppedatsigma_1}
 \begin{split}
   t\mapsto \int_{0}^t \langle \partial_x U_{\Phi, \Psi}(M_{s-}+iN_{s-})\mathbf 1_{s\in[0,\sigma_1]},\ud M_s\rangle,\;\;\; t\geq 0,\\
 t\mapsto \int_{0}^t \langle \partial_{ix} U_{\Phi, \Psi}(M_{s-}+iN_{s-})\mathbf 1_{s\in[0,\sigma_1]},\ud N_s\rangle,\;\;\; t\geq 0,
 \end{split}
\end{equation}
define martingales. Moreover, with probability $1$,
\begin{align*}
\int_{0}^t \langle \partial_x U_{\Phi, \Psi}(M_{s-}+iN_{s-})\mathbf 1_{s\in[0,\sigma_1]},\ud M_s\rangle = \int_{0}^t \langle \partial_x U_{\Phi, \Psi}(M_{s-}+iN_{s-}),\ud M_s\rangle,\\
 \int_{0}^t \langle \partial_{ix} U_{\Phi, \Psi}(M_{s-}+iN_{s-})\mathbf 1_{s\in[0,\sigma_1]},\ud N_s\rangle=\int_{0}^t \langle \partial_{ix} U_{\Phi, \Psi}(M_{s-}+iN_{s-}),\ud N_s\rangle,
\end{align*}
since $M=M^{\sigma_1}$ and $N=N^{\sigma_1}$, and consequently the expectations of the above integrals vanish.
Let us now show that $I_1, I_2\geq 0$ almost surely. For the first term, the argument is simple: by \eqref{eq:plurisubharmonicconcaveinseconds-->convexinfirst}, each summand in $I_1$ is nonnegative. The analysis of $I_2$ is slightly more complex.
By Proposition \ref{prop:MYdecconttocontpdtopd}, we get that $N \stackrel{w}\ll M^c$ and $M^c$, $N$ are orthogonal, so Proposition \ref{prop:existofaskewsymmopvalpr} implies the existence of a progressively measurable operator-valued process $A:\mathbb R_+ \times \Omega \to \mathcal L(\mathbb R^d)$ such that $\|A\|\leq 1$, $\psi^* = A\phi^*$, and $P_{\text{Ran}(\phi^*)}A$ is skew-symmetric on $\mathbb R_+ \times \Omega$ (here $P_{\text{Ran}(\phi^*)}$ is an orthoprojection on $\text{Ran}(\phi^*)$). Thus it is enough to show that
\begin{equation}\label{eq:derofIintimetimes2}
 \begin{split}
 &\sum_{i,j=1}^d \frac{\partial^2 U_{\Phi, \Psi}(M_{s-}+iN_{s-})}{\partial e_i\partial e_j}\langle \phi^*(s) e_i^*, \phi^*(s) e_j^*\rangle\ud s\\
  &\quad +\sum_{i,j=1}^d \frac{\partial^2 U_{\Phi, \Psi}(M_{s-}+iN_{s-})}{\partial ie_i\partial ie_j}\langle \psi^*(s) e_i^*, \psi^*(s) e_j^*\rangle\ud s\\
 &\quad + 2\sum_{i,j=1}^d \frac{\partial^2 U_{\Phi, \Psi}(M_{s-}+iN_{s-})}{\partial e_i\partial ie_j}\langle \phi^*(s) e_i^*, P_{\text{Ran}(\phi^*)}A\phi^*(s) e_j^*\rangle\ud s \geq 0.
 \end{split}
\end{equation}
By the spectral theory of skew-symmetric matrices (see e.g.\ \cite[Corollary 2]{You61}) there exist $L\geq 0$, positive numbers $(\lambda_n)_{n=1}^L$ and an orthonormal basis $(h_n)_{n=1}^{2d}$ of $\mathbb R^{2d}$ such that $P_{\text{Ran}(\phi^*)}Ah_{2n-1} = \lambda_n h_{2n}$ and $P_{\text{Ran}(\phi^*)}Ah_{2n} = -\lambda_n h_{2n-1}$ for all $n=1,\ldots,L$, and $P_{\text{Ran}(\phi^*)}Ah_{n} =0$ for all $2L<n\leq d$. Moreover, the condition $\|A\|\leq 1$ implies that $|\lambda_1|,\ldots,|\lambda _L|\leq 1$, and since $(\text{Ran}(\phi^*))^{\perp}$ is a zero eigenspace of $P_{\text{Ran}(\phi^*)}A$ (see the construction of $A$ in the proof of Proposition \ref{prop:existofaskewsymmopvalpr}), we conclude that $h_{n} \in \text{Ran}(\phi^*)$ for $n=1,\,2,\,\ldots,2L$. By a usual orthogonalization procedure, we may assume that there exists $K\geq 2L$ such that $h_n\in  \text{Ran}(\phi^*)$ for $2L<n\leq K$ and $h_n\bot \text{Ran}(\phi^*)$ for $K<n\leq 2d$ (then $K$ is the dimension of $\text{Ran}(\phi^*)$). Notice that $X^*$ is $d$-dimensional, so $\text{Ran}(\phi^*)$ is at most $d$-dimensional and hence obviously $K\leq d$. Due to Lemma \ref{lemma:traceindep}, the expression \eqref{eq:derofIintimetimes2} does not depend on the basis $(e_n)_{n=1}^d$ (and the corresponding dual basis $(e^*_n)_{n=1}^d$), so we can choose a basis $(e_n)_{n=1}^d$ such that $\phi^*e_n^* = h_n$ for all $n=1,\ldots,K$ and $\phi^*e_n^* = 0$ for all $K<n\leq d$ (such a basis exists since $\text{span}\{h_1,\ldots,h_K\}= \text{Ran}(\phi^*)$). Then \eqref{eq:derofIintimetimes2} becomes
\begin{equation}\label{eq:derofIintimetimes2ver23}
 \begin{split}
    \sum_{i,j=1}^K& \frac{\partial^2 U_{\Phi, \Psi}(M_{s-}+iN_{s-})}{\partial e_i\partial e_j}\langle h_i, h_j\rangle \\
    &+\sum_{i,j=1}^K \frac{\partial^2 U_{\Phi, \Psi}(M_{s-}+iN_{s-})}{\partial ie_i\partial ie_j}\langle \psi^* e_i^*, \psi^* e_j^*\rangle\\
     &\quad+2\sum_{i,j=1}^K \frac{\partial^2 U_{\Phi, \Psi}(M_{s-}+iN_{s-})}{\partial e_i\partial ie_j}\langle h_i, P_{\text{Ran}(\phi^*)}Ah_j\rangle\geq 0
    \end{split}
\end{equation}
(The second sum is up to $K$ due to the fact that $\phi^*x^*=0$ implies $\psi^*x^*=0$ for any $x^*\in X^*$, see \eqref{eq:ineqweakonphiandpsi}).
Notice that the bilinear form $V:X\times X \to \mathbb R$ defined by
$$
V(x,y) := -\frac{\partial^2 U_{\Phi, \Psi}(M_{s-}+iN_{s-})}{\partial ix\partial iy},\;\;\; x,y\in X,
$$
is nonnegative by Theorem \ref{thm:existenceofU^H_Phi,Psi} and symmetric by the definition. Moreover, by \eqref{eq:ineqweakonphiandpsi},
$$
\langle \psi^* x^*, \psi^* x^*\rangle = \|\psi^* x^*\|^2 \leq \|\phi^* x^*\|^2=\langle \phi^* x^*, \phi^* x^*\rangle,\;\;\; \mbox{for }x^*\in X^*.
$$
Therefore Corollary \ref{cor:lemmatraceindep} yields
\begin{equation*}
 \begin{split}
   \sum_{i,j=1}^K \frac{\partial^2 U_{\Phi, \Psi}(M_{s-}+iN_{s-})}{\partial ie_i\partial ie_j}\langle \psi^* e_i^*, \psi^* e_j^*\rangle &\geq \sum_{i,j=1}^K \frac{\partial^2 U_{\Phi, \Psi}(M_{s-}+iN_{s-})}{\partial ie_i\partial ie_j}\langle \phi^* e_i^*, \phi^* e_j^*\rangle\\
   &\quad =\sum_{i,j=1}^K \frac{\partial^2 U_{\Phi, \Psi}(M_{s-}+iN_{s-})}{\partial ie_i\partial ie_j}\langle h_i, h_j\rangle,
    \end{split}
\end{equation*}
so \eqref{eq:derofIintimetimes2ver23} is not less than
\begin{align*}
 \sum_{i,j=1}^K \frac{\partial^2 U_{\Phi, \Psi}(M_{s-}+iN_{s-})}{\partial e_i\partial e_j}\langle h_i, h_j\rangle &+\sum_{i,j=1}^K \frac{\partial^2 U_{\Phi, \Psi}(M_{s-}+iN_{s-})}{\partial ie_i\partial ie_j}\langle h_i, h_j\rangle\\
    & +2\sum_{i,j=1}^K \frac{\partial^2 U_{\Phi, \Psi}(M_{s-}+iN_{s-})}{\partial e_i\partial ie_j}\langle h_i, P_{\text{Ran}(\phi^*)}Ah_j\rangle\\
    =\sum_{i=1}^K \frac{\partial^2 U_{\Phi, \Psi}(M_{s-}+iN_{s-})}{\partial e_i\partial e_i}\langle h_i, h_i\rangle &+\sum_{i=1}^K \frac{\partial^2 U_{\Phi, \Psi}(M_{s-}+iN_{s-})}{\partial ie_i\partial ie_i}\langle h_i, h_i\rangle\\
    & +2\sum_{i,j=1}^K \frac{\partial^2 U_{\Phi, \Psi}(M_{s-}+iN_{s-})}{\partial e_i\partial ie_j}\langle h_i, P_{\text{Ran}(\phi^*)}Ah_j\rangle.
\end{align*}
The latter expression consists of two parts:
    \begin{equation}\label{eq:derofIintimetimes2ver2}
 \begin{split}
  &\sum_{i=1}^{2L} \frac{\partial^2 U_{\Phi, \Psi}(M_{s-}+iN_{s-})}{\partial e_i\partial e_i} +\sum_{i=1}^{2L}\frac{\partial^2 U_{\Phi, \Psi}(M_{s-}+iN_{s-})}{\partial ie_i\partial ie_i}\\
 & +2\sum_{n=1}^L \lambda_n \Bigl(\frac{\partial^2 U_{\Phi, \Psi}(M_{s-}+iN_{s-})}{\partial e_{2n-1}\partial ie_{2n}}- \frac{\partial^2 U_{\Phi, \Psi}(M_{s-}+iN_{s-})}{\partial e_{2n}\partial ie_{2n-1}}\Bigr) \\
 =&\,\sum_{n=1}^L\biggl\{\frac{\partial^2 U_{\Phi, \Psi}(M_{s-}+iN_{s-})}{\partial e_{2n-1}\partial e_{2n-1}} + \frac{\partial^2 U_{\Phi, \Psi}(M_{s-}+iN_{s-})}{\partial e_{2n}\partial e_{2n}}\\
 &\quad +2 \lambda_n \Bigl(\frac{\partial^2 U_{\Phi, \Psi}(M_{s-}+iN_{s-})}{\partial e_{2n-1}\partial ie_{2n}} - \frac{\partial^2 U_{\Phi, \Psi}(M_{s-}+iN_{s-})}{\partial e_{2n}\partial ie_{2n-1}}\Bigr) \\
 &\quad +\Bigl(\frac{\partial^2 U_{\Phi, \Psi}(M_{s-}+iN_{s-})}{\partial ie_{2n-1}\partial ie_{2n-1}} + \frac{\partial^2 U_{\Phi, \Psi}(M_{s-}+iN_{s-})}{\partial ie_{2n}\partial ie_{2n}}\Bigr)\biggr\}
 \end{split}
\end{equation}
and
    \begin{equation}\label{eq:derofIintimetimes2ver2018}
 \begin{split}
  \sum_{i=2L+1}^{K} \frac{\partial^2 U_{\Phi, \Psi}(M_{s-}+iN_{s-})}{\partial e_i\partial e_i} +\sum_{i=2L+1}^{K}\frac{\partial^2 U_{\Phi, \Psi}(M_{s-}+iN_{s-})}{\partial ie_i\partial ie_i}\\
  = \sum_{i=2L+1}^{K}\Bigl(\frac{\partial^2 U_{\Phi, \Psi}(M_{s-}+iN_{s-})}{\partial e_i\partial e_i} + \frac{\partial^2 U_{\Phi, \Psi}(M_{s-}+iN_{s-})}{\partial ie_i\partial ie_i} \Bigr).
 \end{split}
\end{equation}
Now, the expression \eqref{eq:derofIintimetimes2ver2} is nonnegative by Corollary \ref{cor:propnicesecderofU_PhiPsi} and \eqref{eq:derofIintimetimes2ver2018} is nonnegative by Remark \ref{rem:xzplurisubhstaff}. This gives $I_2 \geq 0$. Putting all the above facts together, we obtain
\[
 \mathbb E U_{\Phi, \Psi}(M_t + iN_t) \geq \mathbb EU_{\Phi, \Psi}(M_0 + iN_0).
\]
However, by Remark \ref{rem:orth+wds}, we have $N_0=0$ almost surely, so Theorem~\ref{thm:existenceofU^H_Phi,Psi} implies
\[
 \mathbb EU_{\Phi, \Psi}(M_0 + iN_0) = \mathbb EU_{\Phi, \Psi}(M_0) \geq 0,
\]
which completes the proof.

\smallskip

{\em Step 4.} Now we assume that $U_{\Phi,\Psi}$ is general (i.e., not necessarily twice integrable). We will use a standard mollification argument. Let $\phi:X+iX \to \mathbb R_+$ be a $C^{\infty}$ radial function with compact support such that $\int_{X+iX}\phi(s)\ud s = 1$. For each $\eps>0$, define $U^{\eps}_{\Phi,\Psi}:X+iX\to \mathbb R$ via the convolution
\[
 U^{\eps}_{\Phi,\Psi}(x+iy) :=\int_{X+iX} U_{\Phi,\Psi}(x+iy - \eps s)\phi(s)\ud s,\;\;\; x,y\in X.
\]
Then $U^{\eps}_{\Phi,\Psi}$ is of class $C^{\infty}$ and for any $x\in X$ we have
\begin{equation}\label{eq:U^epsisposonX}
  U^{\eps}_{\Phi,\Psi}(x) =\int_{X+iX} U_{\Phi,\Psi}(x - \eps s)\phi(s)\ud s \geq  U_{\Phi,\Psi}(x)\geq 0,
\end{equation}
since $U_{\Phi,\Psi}$ is subharmonic (see Remark \ref{rem:PSHeither-inftyorL^1loc}).
Therefore, repeating the arguments from the above steps, we get
\begin{equation}\label{eq:epspertofUPhiPsi}
 \begin{split}
   \mathbb E \int_{X+iX}\Bigl[|\mathcal H^{\mathbb T}_X|_{\Phi, \Psi}&\Phi(M_t-\eps r) - \Psi(N_t-\eps u)\Bigr] \phi(r+iu)\ud {r + iu}\geq \mathbb EU^{\eps}_{\Phi,\Psi}(M_t + iN_t)\geq \mathbb E U^{\eps}_{\Phi,\Psi}(M_0) \geq 0,
 \end{split}
\end{equation}
where the latter bound follows from \eqref{eq:U^epsisposonX}. Note that $\Psi(N_t + \eps u)$ is uniformly bounded (when $r+iu$ runs over the support of $\phi$) and notice that for any $x$,  $\eps \mapsto \frac{\Phi(x-\eps)+\Phi(x+\eps)}{2}$ is an increasing function of $\e>0$. Furthermore, we have $\phi(r+iu)=\phi(-r+iu)\geq 0$ and hence
\begin{equation}\label{eq:approxofconvexPhi}
\begin{split}
   \eps \mapsto \int_{X+iX}\Phi(M_t-\eps r) \phi(r+iu)\ud(r+iu)  = \int_{X+iX}\frac{\Phi(M_t-\eps r) + \Phi(M_t+\eps r)}{2} \phi(r+iu)\ud(r+iu),
\end{split}
\end{equation}
decreases as $\e\downarrow 0$. Combining these observations with standard limiting theorems, we deduce the desired claim.
\end{proof}

Now we prove our main result in full generality. Of course, we will exploit an appropriate limiting procedure, which enables us to deduce the claim from its finite-dimensional version just established above.

\begin{proof}[Proof of \eqref{eq:HTnormboudfororcontmart} for infinite-dimensional $X$] We may assume that $\E \Phi(M_t)<\infty$, since otherwise the claim is obvious.
Suppose that $(Y_n)_{n\geq 1}$ is a se\-que\-n\-ce of finite-dimensional subspaces of $X^*$ such that $Y_{n}\subset Y_{n+1}$ for any $n\geq 1$ and $\overline{\cup_{n\geq 1} Y_n}=X^*$. For each $n\geq 1$ define $X_n := Y_n^*$, let $P_n:Y_n \hookrightarrow X^*$ be the corresponding embedding operator and let $P_n^*:X\to X_n$ be its adjoint  (recall that $X$ is reflexive). Finally, define $ {\Phi}_n, {\Psi}_n:X_n \to \mathbb R_+$ by the formulae
\begin{align*}
  {\Phi}_n(\widetilde x) = \inf\{\Phi(x)\,:\,{x\in X,\,P_n^* x=\tilde x}\}, \quad
  {\Psi}_n(\widetilde x) = \inf\{\Psi(x)\,:\,{x\in X,\,P_n^* x=\tilde x}\},
\end{align*}
for $\tilde x\in X_n$.
In the light of  Lemma \ref{lem:Phi_YPsi_Yareconvex}, both $  {\Phi}_n$ and $ {\Psi}_n$ are convex functions. Moreover, by Proposition \ref{prop:normPhi_YPsi_YlessnormPhiPsi},
\begin{equation}\label{eq:normofHforPhinPsinisless}
  |\mathcal H^{\mathbb T}_{X_n}|_{ {\Phi}_n, {\Psi}_n} \leq |\mathcal H^{\mathbb T}_{X}|_{{\Phi}, {\Psi}}.
\end{equation}
Let us show that the processes $P_n^*M$ and $P_n^*N$ are orthogonal for each $n\geq 1$. By the very definition, we must prove that for a fixed functional $x^*\in X_n^*$, the local martingales $\langle P_n^*M, x^*\rangle$ and $\langle P_n^*N, x^*\rangle$ are orthogonal. This follows at once from orthogonality of $M$, $N$  and the identities
\begin{equation}\label{eq:P_n^*MandP_n^*NareorthandWDS}
\begin{split}
  \langle P_n^*M, x^*\rangle &= \langle M, P_nx^*\rangle, \qquad  \langle P_n^*N, x^*\rangle = \langle N, P_nx^*\rangle.
\end{split}
\end{equation}
These identities also immediately give the weak differential subordination $P_n^*N \stackrel{w}\ll P_n^*M$, since $M$, $N$ enjoy this condition.
Finally, observe that  by Lemma \ref{lem:Phi_YPsi_Yareconvex}, we have $ \mathbb E \Phi_n(P^*_n M_t)\leq \mathbb E \Phi(M_t)<\infty$.
Therefore, applying the finite-dimensional version of \eqref{eq:HTnormboudfororcontmart}, we see that for each $n\geq 1$,
\begin{equation}\label{eq:ineqforPhi_n(P_n^*M)andPsi_n(P_n^*N)}
  \mathbb E \Psi_n(P^*_n N_t) \leq |\mathcal H^{\mathbb T}_{X_n}|_{\Phi_n, \Psi_n}\mathbb E \Phi_n(P^*_n M_t) \leq |\mathcal H^{\mathbb T}_{X}|_{\Phi, \Psi}\mathbb E \Phi_n(P^*_n M_t),
\end{equation}
where the second passage is due to \eqref{eq:normofHforPhinPsinisless}. Note that with probability $1$ we have $\Phi_n(P_n^* M_t)\nearrow \Phi(M_t)$ and  $\Psi_n(P_n^* N_t)\nearrow \Psi(N_t)$ monotonically as $n\to \infty$ by Lemma \ref{lem:Phi_nPsi_nconvergestoPhiPsi}. This establishes the desired estimate, by Lebesgue's monotone convergence theorem.
\end{proof}

It remains to handle the sharpness of \eqref{eq:HTnormboudfororcontmart}.

\begin{proof}[Proof of the estimate $|\mathcal H_{X}^{\mathbb T}|_{\Phi, \Psi}\leq C_{\Phi,\Psi, X}$]
 This follows immediately from the reasoning presented in Section \ref{sec:orthodiff}: indeed, \eqref{eq:HTnormboudfororcontmart} implies the corresponding bound
$$ \int_\mathbb{T}\Psi\big(\mathcal{H}^\mathbb{T}_X f\big)\mbox{d}x\leq C_{\Phi,\Psi, X}\int_\mathbb{T}\Phi( f)\mbox{d}x$$
for any step function $f:\mathbb{T}\to X$.
\end{proof}

\begin{remark}\label{rem:PhiisnotconvexifXisfd}
It is easy to see that if $X$ is finite dimensional, then there is no need for $\Phi$ to be convex. The limiting argument presented in the above proof does not need this requirement. {(The only place where the convexity of $\Phi$ is used is \eqref{eq:approxofconvexPhi}; we leave to the reader the question how to avoid this issue).}
\end{remark}

\section{Applications}\label{sec:applications}

\subsection{Hilbert transforms on $\mathbb T$, $\mathbb R$, and $\mathbb Z$}

Let $X$ be a Banach space and let $\Phi,\Psi:X\to \mathbb R_+$ be continuous functions. Let $(S, \Sigma,\mu)$ be a measure space, with $S$ equal to $\mathbb T$, $\mathbb R$, or $\mathbb Z$. A~function $f:S\to X$ is called a {\em step function}, if it is of the form
$$
f(t)=\sum_{k=1}^N x_k\mathbf 1_{A_k}(t),\;\;\; t\in S,
$$
where $N$ is finite, $x_k\in X$ and $A_k$ are intervals in $S$ of a finite measure.

\begin{definition}\label{def:defofRHT}
 The {\em Hilbert transform} $\mathcal H_{X}^{\mathbb R}$ is a linear operator that maps a step function $f:\mathbb R\to X$ to the function
 \begin{equation}\label{eq:defdefofRHT}
   (\mathcal H^{\mathbb R}_X f)(t):= \frac{1}{\pi}{\textnormal{p.v.}}\int_{\mathbb R}\frac{f(s)}{t-s}\ud s,\;\;\; t\in \mathbb R.
 \end{equation}
The associated $\Phi,\Psi$-norms $|\mathcal H_{X}^{\mathbb R}|_{\Phi,\Psi}$ are given by a formula similar to that used previously:
\begin{align*}
|\mathcal H_{X}^{\mathbb R}|_{\Phi,\Psi}:=\inf\Biggl\{c\in [0,\infty]: \int_{\mathbb R} \Psi(\mathcal H_X^{\mathbb R}f(s))\ud s\leq c\int_{\mathbb R} \Phi(f(s))\ud s\mbox{ for all step functions }f:\mathbb R\to X\Biggr\}.
\end{align*}
 \end{definition}

\begin{definition}\label{def:defofdisHT}
 The {\em discrete Hilbert transform} $\mathcal H_{X}^{\rm dis}$ is a linear operator that maps a step function $f:\mathbb Z \to X$ to the function
\[
  (\mathcal H^{\rm dis}_X f)(t):= \frac{1}{\pi}\sum_{s\in \mathbb Z \setminus \{t\}}\frac{f(s)}{t-s},\;\;\; t\in \mathbb Z.
\]
The associated $\Phi,\Psi$-norms $|\mathcal H_{X}^{\rm dis}|_{\Phi,\Psi}$ are given by
\begin{align*}
|\mathcal H_{X}^{\rm dis}|_{\Phi,\Psi}:=\inf\Biggl\{c\in [0,\infty]: \sum_{s\in \mathbb Z} \Psi(\mathcal H_X^{\rm dis}f(s))&\leq c\sum_{s\in \mathbb Z} \Phi(f(s))\mbox{ for all step functions }f:\mathbb Z\to X\Biggr\}.
\end{align*}
\end{definition}

We will also need a certain variant of $\Phi,\Psi$-norm in the periodic setting. Namely, define $|\mathcal{H}_X^{\mathbb{T},0}|_{\Phi,\Psi}$ by
\begin{align*}
|\mathcal{H}_X^{\mathbb{T},0}|_{\Phi,\Psi}:=\inf\Bigg\{c\in [0,\infty]&: \int_\mathbb{T} \Psi(\mathcal{H}^\mathbb{T}_Xf(s))\mbox{d}s\leq c\int_\mathbb{T} \Phi(f(s))\mbox{d}s\\
&\mbox{for all step functions }f:\mathbb{T}\to X\mbox{ with }\int_\mathbb{T}f(s)\mbox{d}s=0\Bigg\}.
\end{align*}

The following theorem demonstrates that the norm of the Hilbert transform does not depend whether it is defined on $\mathbb T$, $\mathbb R$, or $\mathbb Z$.

\begin{theorem}\label{thm:HTnormthesameforTSZT,0}
 Let $X$ be a Banach space and let $\Phi, \Psi:X\to \mathbb R$ be continuous convex functions such that $\Phi(0)=0$. Then
 \[
  |\mathcal H_X^{\mathbb T,0}|_{\Phi,\Psi} = |\mathcal H_X^{\mathbb R}|_{\Phi,\Psi}\leq|\mathcal H_X^{\rm dis}|_{\Phi,\Psi}\leq | \mathcal H_X^{\mathbb T}|_{\Phi,\Psi}.
 \]
 Moreover, if $\Phi$ is symmetric, then
  \[
 |\mathcal H_X^{\mathbb T,0}|_{\Phi,\Psi} = |\mathcal H_X^{\mathbb R}|_{\Phi,\Psi}=|\mathcal H_X^{\rm dis}|_{\Phi,\Psi}= | \mathcal H_X^{\mathbb T}|_{\Phi,\Psi}.
 \]
\end{theorem}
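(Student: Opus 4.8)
The statement is a cycle of four transference inequalities, which I would organise as
\[
 |\mathcal H_X^{\mathbb T,0}|_{\Phi,\Psi}\le |\mathcal H_X^{\mathbb R}|_{\Phi,\Psi},\qquad
 |\mathcal H_X^{\mathbb R}|_{\Phi,\Psi}\le |\mathcal H_X^{\rm dis}|_{\Phi,\Psi},\qquad
 |\mathcal H_X^{\rm dis}|_{\Phi,\Psi}\le |\mathcal H_X^{\mathbb T}|_{\Phi,\Psi},
\]
together with $|\mathcal H_X^{\mathbb R}|_{\Phi,\Psi}\le |\mathcal H_X^{\mathbb T,0}|_{\Phi,\Psi}$ (which closes the first equality) and, when $\Phi$ is symmetric, the extra link $|\mathcal H_X^{\mathbb T}|_{\Phi,\Psi}\le |\mathcal H_X^{\mathbb R}|_{\Phi,\Psi}$ (which collapses the whole chain). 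Throughout, the hypothesis $\Phi(0)=0$ is used repeatedly: it is exactly what makes $\int\Phi(\cdot)$ additive when a test function is built out of finitely many disjointly supported copies of another one, and it lets one ignore the part of the torus/line where the constructed test function vanishes. Each of the inequalities is a scaling argument; the only delicate analytic inputs are the two identities $\cot\frac{x}{2}=\frac{2}{x}+O(x)$ near $0$ and the Mittag--Leffler expansion $\tfrac12\cot\frac{x}{2}=\sum_{n\in\mathbb Z}\frac{1}{x-2\pi n}$, which pass between the kernels $\frac{1}{\pi x}$, $\frac{1}{2\pi}\cot\frac{x}{2}$ and $\frac{1}{\pi n}$.

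\textbf{The line versus the periodic setting.} For $|\mathcal H_X^{\mathbb R}|_{\Phi,\Psi}\le|\mathcal H_X^{\mathbb T,0}|_{\Phi,\Psi}$ I would take a compactly supported step function $f$ on $\mathbb R$, compress it into a tiny arc of $\mathbb T$ by $g_\mu(t)=f(\mu t)$ with $\mu\to\infty$, and add the (tiny) constant that makes $g_\mu$ mean zero; since $\cot\frac{t-s}{2}$ agrees with $\frac{2}{t-s}$ up to a uniformly bounded perturbation on that arc, $\mathcal H^{\mathbb T}_X g_\mu$ is, after undoing the dilation, an arbitrarily good approximation of $\mathcal H^{\mathbb R}_X f$ (the contribution of the added constant to $\int_{\mathbb T}\Phi$ is of strictly lower order once the relevant norm is finite). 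For the reverse direction I would ``unroll'' a \emph{mean-zero} step function $f$ on $\mathbb T$ into $R$ consecutive copies $f^{(R)}$ on $\mathbb R$; the Mittag--Leffler expansion shows that on all but the $o(R)$ boundary periods $\mathcal H^{\mathbb R}_X f^{(R)}$ converges to $\mathcal H^{\mathbb T}_X f$ shifted by a constant proportional to $\int_{\mathbb T}f$, which here is $0$; dividing the resulting inequality by $R$ and letting $R\to\infty$ yields the claim, using that $\int_{\mathbb R}\Phi(f^{(R)})=R\int_{\mathbb T}\Phi(f)$. The inequality $|\mathcal H_X^{\mathbb R}|_{\Phi,\Psi}\le|\mathcal H_X^{\rm dis}|_{\Phi,\Psi}$ is the Riemann-sum version of the same idea: sample $f$ on the grid $h\mathbb Z$, apply the $\mathbb Z$-estimate to the sequence $(f(hn))_n$, and let $h\to0$, noting that $h\cdot(\mathcal H^{\rm dis}_X(f(h\,\cdot)))(m)$ Riemann-approximates $\mathcal H^{\mathbb R}_X f(hm)$ while $h\sum_n\Phi(f(hn))\to\int_{\mathbb R}\Phi(f)$ and $h\sum_m\Psi(\cdot)\to\int_{\mathbb R}\Psi(\mathcal H^{\mathbb R}_X f)$, with the (measure-zero) logarithmic blow-ups at the jumps of $f$ handled by a routine domination.

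\textbf{The discrete versus the periodic setting, and the role of symmetry.} To obtain $|\mathcal H_X^{\rm dis}|_{\Phi,\Psi}\le|\mathcal H_X^{\mathbb T}|_{\Phi,\Psi}$ I would pass through the \emph{discrete conjugation} operator on $\mathbb Z/(2N+1)\mathbb Z$, whose kernel on band-limited data is $\frac{1}{2N+1}\cot\frac{\pi m}{2N+1}$; this operator is exactly realised by $\mathcal H^{\mathbb T}_X$ acting on suitable functions sampled at the $(2N+1)$st roots of unity, and its kernel converges to $\frac{1}{\pi m}$ as $N\to\infty$, so a finitely supported sequence on $\mathbb Z$ is recovered in the limit. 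Making this precise is, I expect, the main obstacle: one must control aliasing and the behaviour of $\Psi$ of the conjugate between sample points (a naive step-function sampling produces the wrong discrete kernel, essentially because $\cot$ is far from constant over the cells adjacent to the singularity), so the argument has to be set up with the correct interpolation/averaging. Finally, for the symmetric case the extra input is the reverse inequality $|\mathcal H_X^{\mathbb T}|_{\Phi,\Psi}\le|\mathcal H_X^{\mathbb T,0}|_{\Phi,\Psi}$: here one reduces a general step function $f$ on $\mathbb T$ to a mean-zero competitor by adjoining a reflected copy of $-f$, which forces the appearance of $\int\Phi(-f)$; the hypothesis $\Phi(x)=\Phi(-x)$ is precisely what turns this into a harmless factor and lets one pass to the limit. (Without symmetry this step fails — indeed for $\Phi(x)=\|x\|^4$ one already has $\int_{\mathbb T}\Phi(f-\bar f)>\int_{\mathbb T}\Phi(f)$ — which is why in general $|\mathcal H_X^{\rm dis}|$ may be strictly smaller than $|\mathcal H_X^{\mathbb T}|$.)
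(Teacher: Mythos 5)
There are two genuine gaps, both located precisely where the paper has to leave elementary transference behind and invoke probabilistic machinery.

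\textbf{Where you are on solid ground.} The analytic transference steps that compare the line, the discrete line, and the mean-zero periodic setting are broadly sound. Your ``compress onto a small arc'' argument for $|\mathcal H^{\mathbb R}_X|\le|\mathcal H^{\mathbb T,0}_X|$ matches the paper's proof of Theorem \ref{thm:HXRleqHXT,0} (the paper dilates and recenters, citing Zygmund's observation that $\frac{1}{2\pi n}\int_{-\pi n}^{\pi n}f(t)\cot\frac{x-t}{2n}\,dt\to\mathcal H^{\mathbb R}_Xf(x)$). Your Riemann-sampling argument for $|\mathcal H^{\mathbb R}_X|\le|\mathcal H^{\rm dis}_X|$ is essentially the paper's argument in Proposition \ref{prop:ineqforHTonRdisandT} (the paper routes it through the auxiliary convolution operator $\mathcal H^{\mathbb R,\rm dis}_X$ and Laeng's rescaling lemma, but the substance is the same). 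Your periodization/``unrolling'' argument for $|\mathcal H^{\mathbb T,0}_X|\le|\mathcal H^{\mathbb R}_X|$ is a genuinely \emph{different} route from the paper's: Theorem \ref{thm:HXTleqHXR} is proved with a conformal map $L$ from the upper half-plane to the unit disc, the composition $M_n=F(L^{2n})$ of the analytic extension $F$ of $f+i\mathcal H^{\mathbb T}_Xf$, and a careful $L^2$ estimate for the tails using the UMD-boundedness of $\mathcal H^{\mathbb R}_X$. Your unrolling idea is more elementary and plausibly works (the mean-zero assumption gives the $O(1/D)$ decay in the truncation error that you need), so this is a reasonable alternative.

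\textbf{Gap 1: $|\mathcal H^{\rm dis}_X|_{\Phi,\Psi}\le|\mathcal H^{\mathbb T}_X|_{\Phi,\Psi}$.} This is the identity that was open for ninety years for $L^p(\mathbb Z;\mathbb R)$ and was only recently settled by Ba\~nuelos and Kwa\'snicki. The naive route you describe — sampling at roots of unity and passing to the discrete conjugation kernel $\frac{1}{2N+1}\cot\frac{\pi m}{2N+1}$ — is exactly the classical approach that does \emph{not} close, for the reasons you yourself identify: the discrete conjugation kernel does not converge to $\tfrac{1}{\pi m}$ in a way that preserves sharp $\Phi,\Psi$-bounds, and there is no known control of $\Psi$ between sample points. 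The paper does not fix this analytically. Instead, Proposition \ref{prop:ineqforHTonRdisandT} explicitly says the inequality ``follows word-by-word from the infinite-dimensional analogue of the recent approach of Ba\~nuelos and Kwa\'snicki combined with the estimate \eqref{eq:HTnormboudfororcontmart}'' — that is, it relies on the paper's main martingale inequality (Theorem \ref{thm:orthmartPhiPsi}) for orthogonal weakly differentially subordinate martingales. Your proposal never appeals to Theorem \ref{thm:orthmartPhiPsi} and therefore cannot reproduce this step; the discrete conjugation idea is not a workable substitute.

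\textbf{Gap 2: $|\mathcal H^{\mathbb T}_X|_{\Phi,\Psi}\le|\mathcal H^{\mathbb T,0}_X|_{\Phi,\Psi}$ when $\Phi$ is symmetric.} Your plan to ``adjoin a reflected copy of $-f$'' cannot work as stated: the periodic Hilbert transform is nonlocal, so the transform of a concatenation is not the concatenation of the transforms, and there is no way to read off $\mathcal H^{\mathbb T}_Xf$ from the transform of the modified function. Subtracting the mean does not help either (it leaves $\mathcal H^{\mathbb T}_Xf$ unchanged, but as you yourself observe, $\int\Phi(f-\bar f)$ need not be $\le\int\Phi(f)$). The paper's proof of Theorem \ref{thm:symmPhiHT0=HT} is probabilistic: one realizes the ratio $\int\Psi(\mathcal H^{\mathbb T}_Xf)/\int\Phi(f)$ as a ratio of martingale expectations $\E\Psi(N_t)/\E\Phi(M_t)$, prepends to $(M,N)$ a planar Brownian motion in a strip that converts the nonzero starting value $\mathrm x=\E M_t$ into a martingale started from $0$, and then uses the symmetry of $\Phi$ (to flip the sign of the branch hit), the symmetry of $W^2_\sigma$, and the convexity of $\Psi$ (Jensen) to pass the estimate through the splicing. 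This Brownian splicing is the key idea and has no analogue in your outline, so this step is also a genuine gap.

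In short: the elementary transference parts of your proposal are correct (and in one place more elementary than the paper), but both inequalities that truly require the orthogonal-martingale machinery — $\rm dis\le\mathbb T$ and, under symmetry, $\mathbb T\le\mathbb T,0$ — are left open, and the replacements you sketch are known not to close.
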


The proof will consist of several steps.

\begin{proposition}\label{prop:ineqforHTonRdisandT}
 Let $X$ be a Banach space and let $\Phi,\Psi:X\to \mathbb R_+$ be convex functions. Then we have
 \[
  |\mathcal H^{\mathbb R}_X|_{\Phi, \Psi}\leq |\mathcal H^{\rm dis}_X|_{\Phi, \Psi} \leq |\mathcal H^{\mathbb T}_X|_{\Phi, \Psi}.
 \]
\end{proposition}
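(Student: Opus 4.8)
The plan is to establish both inequalities by approximation arguments that transfer a near-extremal configuration for the ``smaller'' operator into a configuration for the ``larger'' one with controlled distortion of the $\Phi,\Psi$-ratio. Throughout I will work directly with the ratio
$\frac{\int \Psi(\mathcal{H}f)}{\int \Phi(f)}$
over step functions $f$, using that the $\Phi,\Psi$-norm is the supremum of these ratios.

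\textbf{Step 1: $|\mathcal H^{\mathbb R}_X|_{\Phi,\Psi}\le |\mathcal H^{\rm dis}_X|_{\Phi,\Psi}$.} The idea is a rescaling/discretization argument. Given a step function $f:\mathbb R\to X$ supported on a bounded set and constant on intervals, apply a dilation $f_\delta(s):=f(\delta s)$; the Hilbert transform on $\mathbb R$ commutes with dilations, and both $\int\Phi(f_\delta)$ and $\int\Psi(\mathcal H^{\mathbb R}f_\delta)$ scale by $\delta^{-1}$, so the ratio is dilation-invariant. Hence we may assume $f$ is constant on unions of unit intervals $[n,n+1)$, $n\in\mathbb Z$. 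Now compare $\mathcal H^{\mathbb R}f$ evaluated by averaging over each interval $[n,n+1)$ with the discrete Hilbert transform of the sampled sequence $a_n:=$ the value of $f$ on $[n,n+1)$: the kernel $\frac{1}{t-s}$ integrated over $s\in[m,m+1)$ and then averaged over $t\in[n,n+1)$ is comparable (and converges, after a further dilation pushing the support to infinity so local singular effects become negligible) to $\frac{1}{\pi(n-m)}$. More precisely, as in the classical Riesz/Titchmarsh comparison, a dilation $\delta\to 0$ sending the support off to scale $\delta^{-1}$ makes the off-diagonal kernel behave like the discrete one in the limit, while the diagonal contribution has vanishing relative mass; a Jensen-type inequality (using convexity of $\Phi,\Psi$ and $\Phi,\Psi\ge 0$) controls the averaging step from below on the $\Psi$-side and from above on the $\Phi$-side. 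Passing to the limit yields the ratio for $f$ bounded by $|\mathcal H^{\rm dis}_X|_{\Phi,\Psi}$.

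\textbf{Step 2: $|\mathcal H^{\rm dis}_X|_{\Phi,\Psi}\le |\mathcal H^{\mathbb T}_X|_{\Phi,\Psi}$.} Here the plan is to embed a finitely-supported sequence into a periodic configuration of large period $2L$ and let $L\to\infty$. Given a finitely supported $f:\mathbb Z\to X$, build a step function $f_L:\mathbb T\to X$ (after identifying $\mathbb T\simeq[-\pi,\pi)$, rescaled to have circumference $2L$) that takes the value $f(n)$ on a tiny interval of length $\varepsilon$ around the point $n$, for $|n|\le$ some cutoff, and is zero elsewhere. As $L\to\infty$ the periodic Hilbert transform kernel $\frac{1}{2}\cot\frac{\theta-s}{2}$, rescaled, converges to $\frac{1}{\pi(n-m)}$ on the relevant off-diagonal pairs, while the $\varepsilon$-spreading and the $\varepsilon\to 0$ limit recover the discrete sums $\sum_n\Phi(f(n))$ and $\sum_n\Psi(\mathcal H^{\rm dis}f(n))$ up to factors of $\varepsilon$ that cancel in the ratio. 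The cutoff-tail of the discrete kernel is summable, so truncation errors are uniformly small. This shows every ratio for a finitely-supported discrete $f$ is dominated by $|\mathcal H^{\mathbb T}_X|_{\Phi,\Psi}$; since finitely supported step functions are dense enough for the supremum defining $|\mathcal H^{\rm dis}_X|_{\Phi,\Psi}$, the inequality follows.

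\textbf{Main obstacle.} The delicate point in both steps is the interchange of limits with the (possibly merely continuous, non-Lipschitz) convex functions $\Phi,\Psi$: one must justify that $\int\Psi(\mathcal H f_\delta)$ and $\int\Phi(f_\delta)$ converge to the expected discrete/continuous quantities, which requires uniform integrability and careful handling of the principal-value singularity near the diagonal (the contribution of $t$ close to the jump points of $f$). The convexity and nonnegativity of $\Phi$ and $\Psi$ are exactly what make the averaging (Jensen) estimates go in the right direction, and a dominated/monotone convergence argument, together with the local boundedness of $\mathcal H f$ away from the jumps and an $L^1$-type bound near them, closes the gap. The passage through convexity to reduce to unit-interval step functions in Step 1, and the explicit asymptotics of $\cot$ versus $\tfrac1x$ in Step 2, are otherwise routine.
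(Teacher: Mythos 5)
The two halves of your sketch have very different status.

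\textbf{Step~1 has a directional gap.} To prove $|\mathcal H^{\mathbb R}_X|_{\Phi,\Psi}\le |\mathcal H^{\rm dis}_X|_{\Phi,\Psi}$ you must take an arbitrary step $f$ on $\mathbb R$ and produce a sequence whose discrete ratio \emph{dominates} the continuous ratio for $f$, i.e.\ you need the continuous numerator $\int_{\mathbb R}\Psi(\mathcal H^{\mathbb R}_Xf)$ controlled from \emph{above} and the denominator $\int_{\mathbb R}\Phi(f)$ from \emph{below} by the corresponding discrete sums. If $f$ is constant on unit intervals, the denominator is exact, but for the numerator Jensen applied to the convex $\Psi$ gives
$\sum_n\Psi\bigl(\operatorname{avg}_{[n,n+1)}\mathcal H^{\mathbb R}_Xf\bigr)\le\int_{\mathbb R}\Psi(\mathcal H^{\mathbb R}_Xf)$, a \emph{lower} bound on the continuous numerator, which is the wrong direction; the same sign problem persists after dilation, because it is intrinsic to convexity, not to the diagonal singularity. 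The paper's proof sidesteps this entirely by introducing the \emph{semi-discrete} operator $\mathcal H^{\mathbb R,\rm dis}_X f(t)=\tfrac1\pi\sum_{k\neq0}f(t-k)/k$ acting on functions on $\mathbb R$. One shows $|\mathcal H^{\mathbb R,\rm dis}_X|_{\Phi,\Psi}=|\mathcal H^{\rm dis}_X|_{\Phi,\Psi}$ by Laeng's tensoring argument (slice the line into $\mathbb Z+\theta$, apply the discrete estimate for each $\theta\in[0,1)$, integrate), and then $|\mathcal H^{\mathbb R}_X|_{\Phi,\Psi}\le|\mathcal H^{\mathbb R,\rm dis}_X|_{\Phi,\Psi}$ follows from the dilation $f_\eps(\cdot)=f(\eps\cdot)$ and Fatou's lemma: the argument of $\Psi$ is a Riemann sum converging pointwise to $\mathcal H^{\mathbb R}_Xf$, and Fatou transfers the inequality in the needed direction without any use of convexity. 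You would need this (or some other inequality-preserving device) to close Step~1; the Jensen-sampling route as written does not work.

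\textbf{Step~2 is a genuinely different route from the paper's, and is at least plausible.} The paper proves $|\mathcal H^{\rm dis}_X|_{\Phi,\Psi}\le|\mathcal H^{\mathbb T}_X|_{\Phi,\Psi}$ by invoking the (infinite-dimensional analogue of the) Ba\~nuelos--Kwa\'snicki probabilistic construction of the discrete Hilbert transform from martingales, combined with the main martingale estimate \eqref{eq:HTnormboudfororcontmart}. Your approach instead tries to embed a finitely supported sequence into a large torus as shrinking bumps. Here the Jensen direction is favorable -- you want $\int_{\mathbb T}\Psi(\mathcal H^{\mathbb T}_Xf_L)$ to dominate $\eps\sum_n\Psi(\mathcal H^{\rm dis}_Xa(n))$, and Jensen plus the symmetry of the self-contribution of each bump around its center does give the right sign -- so the strategy is not obviously broken. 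However, the convergence of the averaged periodic kernel to $1/(\pi(n-m))$, the vanishing of the self-contribution, and the uniform control of the logarithmic edge singularities of $\mathcal H^{\mathbb T}_X\mathbf 1_{I_n}$ under a merely continuous $\Psi$ are all delicate points that you acknowledge but do not carry out; the paper's probabilistic route avoids them and is consistent with the martingale machinery that underlies the rest of the article.
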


\begin{proof}
Introduce yet another Hilbert-type operator acting on step functions $f:\mathbb{R}\to \R$ by
\[
  (\mathcal H^{\mathbb R, \rm dis}_Xf)(t) := \frac{1}{\pi}\sum_{s\in \mathbb Z\setminus\{0\}}\frac{f(t-s)}{s},\;\;\; t\in \mathbb R,
 \]
and define its $\Phi,\Psi$-norm analogously.
We will first prove that $|\mathcal H^{\mathbb R}_X|_{\Phi, \Psi}\leq|\mathcal H^{\mathbb R, \rm dis}_X|_{\Phi, \Psi}$. To this end,  fix a step function $f$ on $\R$ and define its $\e$-dilation by $f_{\eps}(\cdot):= f(\eps \cdot)$. Then similarly to \cite[Theorem 4.3]{Lae07}, we have
\begin{align*}
 \frac{\int_{\mathbb R} \Psi((\mathcal H^{\mathbb R, \rm dis}_Xf_{\eps})(s))\ud s}{\int_{\mathbb R}\Phi(f_{\eps}(s))\ud s} &= \frac{\int_{\mathbb R} \Psi(\pi^{-1}\sum_{k\in \mathbb Z\setminus\{0\}}{f_{\eps}(s-k)}/{k})\ud s}{\int_{\mathbb R}\Phi(f_{\eps}(s))\ud s}\\
 &=\frac{\int_{\mathbb R} \Psi(\pi^{-1}\sum_{k\in \mathbb Z\setminus\{0\}}{\eps f(\eps s-\eps k)}/({\eps k}))\ud (\eps s)}{\int_{\mathbb R}\Phi(f(\eps s))\ud (\eps s)}\\
  &=\frac{\int_{\mathbb R} \Psi(\pi^{-1}\sum_{k\in \mathbb Z\setminus\{0\}}{\eps f(s-\eps k)}/({\eps k}))\ud s}{\int_{\mathbb R}\Phi(f(s))\ud s}.
\end{align*}
Since $\frac{1}{\pi}\sum_{k\in \mathbb Z\setminus\{0\}}\frac{f(s-\eps k)}{\eps k}\eps\to \mathcal H^{\mathbb R}_X f(s)$ for a.e.\ $s\in \mathbb R$, Fatou's lemma yields
\begin{align*}
 |\mathcal H^{\mathbb R}_X|_{\Phi, \Psi} = \sup_{f\in F^{\rm step}_X}\frac{\int_{\mathbb R} \Psi(\mathcal H^{\mathbb R}_X f(s))\ud s}{\int_{\mathbb R}\Phi(f(s))\ud s}&\leq \sup_{f\in F^{\rm step}_X} \liminf_{\eps\to 0}\frac{\int_{\mathbb R} \Psi((\mathcal H^{\mathbb R, \rm dis}_Xf_{\eps})(s))\ud s}{\int_{\mathbb R}\Phi(f_{\eps}(s))\ud s}\leq |\mathcal H^{\mathbb R,\rm dis}_X|_{\Phi, \Psi} = |\mathcal H^{\rm dis}_X|_{\Phi, \Psi}.
\end{align*}
where the latter equality follows from the direct repetition of the arguments from \cite[Theorem 4.2]{Lae07}. This gives us the first inequality of the assertion.
The proof of the fact that $|\mathcal H^{\rm dis}_X|_{\Phi, \Psi} \leq |\mathcal H^{\mathbb T}_X|_{\Phi, \Psi}$ follows word-by-word from the infinite-di\-men\-sio\-nal analogue of the recent approach of Ba{\~n}uelos and Kwa{\'s}nicki \cite{BanKw17} combined with the estimate \eqref{eq:HTnormboudfororcontmart}.
\end{proof}

\begin{theorem}\label{thm:HXRleqHXT,0}
Let $X$ be a Banach space and let $\Phi,\,\Psi:X\to \R_+$ be continuous functions. Then $|\mathcal{H}_X^\R|_{\Phi,\Psi}\leq |\mathcal{H}_X^{\mathbb{T},0}|_{\Phi,\Psi}$.
\end{theorem}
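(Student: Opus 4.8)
The plan is to pass from the real line to the torus by a periodization (wrapping) argument, exploiting scaling invariance of the Hilbert transform on $\mathbb{R}$. Concretely, given a step function $f:\mathbb{R}\to X$ with compact support, contained say in $(-\pi L,\pi L)$ for some large $L\in\mathbb{N}$, I would rescale so that the support lies in $(-\pi,\pi)$; more precisely, for large $R>0$ set $f_R(t):=f(Rt)$, which is supported in a small neighbourhood of $0$ inside $(-\pi,\pi)$, and compare $\mathcal{H}^{\mathbb{R}}_X f_R$ with $\mathcal{H}^{\mathbb{T}}_X f_R$ viewed as functions on $\mathbb{T}\simeq[-\pi,\pi)$. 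The key analytic fact is that on a compact set away from the boundary the kernels $\frac{1}{\pi}\,\mathrm{p.v.}\frac{1}{t-s}$ and $\frac{1}{2\pi}\,\mathrm{p.v.}\cot\frac{t-s}{2}$ differ by a bounded (indeed smooth) function, and moreover as $R\to\infty$ the rescaled function $f_R$ concentrates near $0$ where $\cot\frac{t-s}{2}\approx \frac{2}{t-s}$, so $\mathcal{H}^{\mathbb{T}}_X f_R(t)\to$ (a rescaled copy of) $\mathcal{H}^{\mathbb{R}}_X f(t)$ pointwise for $t$ in the support region, while outside the support the values are controlled.

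The precise steps I would carry out: (i) Reduce to $f$ a step function on $\mathbb{R}$ with compact support and $\int_{\mathbb{R}} f\,\mathrm{d}s$ arbitrary — note that for a compactly supported step function, $\mathcal{H}^{\mathbb{R}}_X f(t)$ decays like $1/|t|$ at infinity, so the integrals $\int_{\mathbb{R}}\Psi(\mathcal{H}^{\mathbb{R}}_X f)$ and $\int_{\mathbb{R}}\Phi(f)$ are finite (using $\Phi,\Psi$ continuous, hence locally bounded, together with $\Phi(0)=\Psi(0)=0$ for the tails — actually $\Psi(0)=0$ is not assumed here, but $\Psi$ is continuous so $\Psi$ composed with something decaying is integrable provided $\Psi(0)=0$; if $\Psi(0)\neq 0$ the left side is $+\infty$ and there is nothing to prove, since then $|\mathcal{H}^{\mathbb{R}}_X|_{\Phi,\Psi}=\infty$ only if also $\int\Phi(f)$ can be made small, which forces the statement to be examined — I would simply observe that the relevant nontrivial case has $\Psi(0)=0$). (ii) For such $f$, define $g_\eps(t):=f(t/\eps)$ for small $\eps>0$; then $g_\eps$ is supported in $\eps\cdot\mathrm{supp}(f)\subset(-\pi,\pi)$ for $\eps$ small, and $\int_{\mathbb{T}}g_\eps\,\mathrm{d}s = \eps\int_{\mathbb{R}}f\,\mathrm{d}s$, which is \emph{not} zero in general — this is where I would need to be careful and instead subtract off a correction or use the $\mathbb{T},0$ definition's freedom; actually the cleanest route is: write $g_\eps$ as a step function on $\mathbb{T}$ and compute $\mathcal{H}^{\mathbb{T}}_X g_\eps$ directly. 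Since the support shrinks to $\{0\}$, for $s$ in the support and $t$ fixed we have $\cot\frac{t-s}{2}$ bounded and continuous in the region $t$ away from $0$, giving $\mathcal{H}^{\mathbb{T}}_X g_\eps(t)\to 0$ there; and near $0$, by the substitution $s=\eps\sigma$, $t=\eps\theta$ and $\cot\frac{\eps(\theta-\sigma)}{2}\sim\frac{2}{\eps(\theta-\sigma)}$, we get $\mathcal{H}^{\mathbb{T}}_X g_\eps(\eps\theta)\to \mathcal{H}^{\mathbb{R}}_X f(\theta)$. (iii) A change of variables then gives $\int_{\mathbb{T}}\Psi(\mathcal{H}^{\mathbb{T}}_X g_\eps)\,\mathrm{d}s = \eps\int_{\mathbb{R}}\Psi(\mathcal{H}^{\mathbb{T}}_X g_\eps(\eps\theta))\,\mathrm{d}\theta + (\text{contribution from }|t|\gg\eps)$, and similarly $\int_{\mathbb{T}}\Phi(g_\eps)\,\mathrm{d}s=\eps\int_{\mathbb{R}}\Phi(f)\,\mathrm{d}\theta$; dividing, taking $\eps\to 0$, and applying Fatou's lemma on the $\Psi$ side exactly as in the proof of Proposition~\ref{prop:ineqforHTonRdisandT}, I obtain
\[
\frac{\int_{\mathbb{R}}\Psi(\mathcal{H}^{\mathbb{R}}_X f)\,\mathrm{d}\theta}{\int_{\mathbb{R}}\Phi(f)\,\mathrm{d}\theta}\le \liminf_{\eps\to 0}\frac{\int_{\mathbb{T}}\Psi(\mathcal{H}^{\mathbb{T}}_X g_\eps)\,\mathrm{d}s}{\int_{\mathbb{T}}\Phi(g_\eps)\,\mathrm{d}s}\le |\mathcal{H}_X^{\mathbb{T},0}|_{\Phi,\Psi},
\]
provided $g_\eps$ is admissible for the $\mathbb{T},0$-norm. (iv) To fix the mean-zero constraint: replace $f$ by $f - c\,\mathbf{1}_{A}$ where $A$ is a far-away interval — more efficiently, note that in the limit $\eps\to 0$ the mean $\eps\int f\to 0$, so one can add a vanishing correction term $x_\eps\mathbf{1}_{I_\eps}$ supported away from the support of $g_\eps$ whose contribution to both numerator and denominator is $o(\eps)$; here I would use $\Phi(0)=0$ (assumed!) to ensure the correction costs nothing in the limit on the $\Phi$ side, and continuity of $\Psi$ plus the $1/|t|$ decay of its Hilbert transform on the $\Psi$ side.

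The main obstacle I anticipate is step (ii)–(iii): making the kernel comparison between $\mathrm{p.v.}\,\cot\frac{t-s}{2}$ and $\mathrm{p.v.}\,\frac{2}{t-s}$ uniform enough — in particular controlling the "far" part $\int_{|t|\gtrsim \delta}\Psi(\mathcal{H}^{\mathbb{T}}_X g_\eps(t))\,\mathrm{d}t$ and showing it is negligible compared to $\eps\int_{\mathbb{R}}\Psi(\mathcal{H}^{\mathbb{R}}_X f)$. On the far part, $\mathcal{H}^{\mathbb{T}}_X g_\eps(t) = \frac{1}{2\pi}\int g_\eps(s)\cot\frac{t-s}{2}\,\mathrm{d}s$ with $\cot\frac{t-s}{2}$ smooth and bounded (no principal value needed since $s$ ranges over a tiny interval near $0$ and $t$ is bounded away from it), so $\|\mathcal{H}^{\mathbb{T}}_X g_\eps(t)\|\lesssim \int\|g_\eps(s)\|\,\mathrm{d}s = \eps\int\|f\|\,\mathrm{d}s = O(\eps)$ uniformly in such $t$, whence $\Psi$ of it is $o(1)$ uniformly and integrating over $t\in(-\pi,\pi)$ gives $o(1)$, which is indeed negligible next to $\eps\cdot(\text{constant})$ only if that $o(1)$ is actually $o(\eps)$ — it is not automatically. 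The remedy is to be sharper: on the far part $\|\mathcal{H}^{\mathbb{T}}_X g_\eps(t)\|\lesssim\eps$ \emph{and} by the mean-zero-up-to-$O(\eps^2)$ cancellation (or by a first-order Taylor expansion of $\cot\frac{t-s}{2}$ in $s$ around $0$), one actually gets $\|\mathcal{H}^{\mathbb{T}}_X g_\eps(t)\|\lesssim \eps^2$ on $|t|\gtrsim\delta$ when $\int g_\eps = O(\eps)$ — no wait, $\int g_\eps\neq 0$ so we only get the crude bound; thus the correct approach is to keep $\int g_\eps = \eps\int f =: \eps v$ and split $\cot\frac{t-s}{2} = \cot\frac{t}{2} + O(s)$, so $\mathcal{H}^{\mathbb{T}}_X g_\eps(t) = \frac{\eps v}{2\pi}\cot\frac{t}{2} + O(\eps^2)$ on the far set, and then $\int_{\mathrm{far}}\Psi\bigl(\tfrac{\eps v}{2\pi}\cot\tfrac{t}{2}+O(\eps^2)\bigr)\mathrm{d}t$ which, since $\cot\frac{t}{2}$ is integrable away from $0$ and $\Psi$ is continuous with $\Psi(0)=0$ — here I would need $\Psi$ to satisfy a mild growth bound near $0$, but in fact continuity of $\Psi$ suffices to conclude this integral is $o(\eps)$ only if $\Psi(y)=o(\|y\|)$ near $0$, which need not hold. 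This is precisely why the theorem is stated with $|\mathcal{H}_X^{\mathbb{T},0}|$ rather than $|\mathcal{H}_X^{\mathbb{T}}|$: by choosing $g_\eps$ mean-zero from the start (subtracting a far-away constant bump as in step (iv), which is legitimate because $\Phi(0)=0$), the term $\frac{\eps v}{2\pi}\cot\frac{t}{2}$ is replaced by a genuinely $O(\eps^2)$ quantity on the far set, and then the crude continuity of $\Psi$ is enough. So the real content — and the step deserving the most care — is executing (iv) to enforce the mean-zero condition while keeping all error terms $o(\eps)$, after which (i)–(iii) go through by the Fatou argument already used in Proposition~\ref{prop:ineqforHTonRdisandT}.
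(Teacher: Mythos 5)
Your scaling strategy is the same one the paper uses: compress $f$ into a shrinking neighbourhood of $0$ in $\mathbb T$, compare the periodic and real-line kernels in the concentrating regime, and pass to the limit. Your diagnosis of where the real work lies — the far-field contribution and the mean-zero constraint — is also accurate. What you have not found are the two devices the paper uses to make those steps essentially painless, and the difference is worth understanding.

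On the $\Psi$-side, the paper does not perform the manual near/far kernel split that causes you so much trouble. It introduces $g_n(x)=\frac{1}{2\pi n}\int_{-\pi n}^{\pi n}f(t)\cot\frac{x-t}{2n}\,\mathrm{d}t$, invokes Zygmund's classical observation that $g_n\to\mathcal H^{\mathbb R}_X f$ pointwise a.e.\ (so the whole "$\cot\frac{u}{2n}\approx\frac{2n}{u}$" comparison is delegated to a citation), notes that $x\mapsto g_n(nx)$ on $|x|\le\pi$ is exactly $\mathcal H^{\mathbb T}_X$ of $x\mapsto f(nx)$, and then applies Fatou's lemma over the growing intervals $(-\pi n,\pi n)$. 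Fatou only needs a lower bound for the $\liminf$, which is precisely the direction required, so the far-field piece of $\int\Psi(\mathcal H^{\mathbb T}_X g_\eps)$ that you were struggling to prove small simply never needs to be estimated — it can only increase the right-hand side. Your concern about whether that piece is $o(\eps)$ is real in your formulation, but it disappears under the Fatou argument.

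On the mean-zero constraint, the paper's trick is simpler and sharper than your far-away bump $x_\eps\mathbf 1_{I_\eps}$: since the periodic Hilbert transform annihilates constants, $\mathcal H^{\mathbb T}_X\bigl[f(n\cdot)\bigr]=\mathcal H^{\mathbb T}_X\bigl[f(n\cdot)-\tfrac{1}{2\pi}\int_{-\pi}^\pi f(ns)\,\mathrm{d}s\bigr]$. Subtracting the \emph{constant} mean changes nothing on the transform side, yet the subtracted function has zero integral and is therefore a legal test function for $|\mathcal H^{\mathbb T,0}_X|_{\Phi,\Psi}$. Your correction bump, by contrast, does change $\mathcal H^{\mathbb T}_X g_\eps$ and would force you to re-estimate the transform, which is exactly the extra labour you were anticipating. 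The only price of the paper's trick is paid on the $\Phi$-side: after the change of variables one must show $\int_{-\pi n}^{\pi n}\Phi\bigl(f(y)-c_n\bigr)\,\mathrm{d}y\to\int_{\mathbb R}\Phi(f)\,\mathrm{d}y$ where $c_n=\tfrac{1}{2\pi n}\int f\to 0$, and the far-field piece there is of size $\sim n\,\Phi(-c_n)$, which vanishes only when $\Phi$ is $o(\|\cdot\|)$ near $0$ in the direction of $\int f$. You correctly sniffed out this subtlety (you attributed it to the $\Psi$-side of your own argument, but the same phenomenon sits on the $\Phi$-side of the paper's); in the applications one has $\Phi(x)=\|x\|^p$ with $p>1$, so it is harmless, but it is the one place where both your sketch and the published proof are doing less than they appear to. In short: same approach, but the paper's combination of Zygmund's a.e.\ convergence, Fatou on the growing intervals, and subtraction of the constant mean lets it avoid the hands-on kernel estimates and the $o(\eps)$ bookkeeping that dominate your write-up.
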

\begin{proof}
Fix a step function $f:\R\to X$. It takes only a finite number of values, so we may assume that $X$ is finite dimensional (which will guarantee the validity of the reasoning below). For any $n\geq 1$, introduce the function $g_n:\R\to X$ by
$$
g_n(x)=\frac{1}{2\pi n}\int_{-\pi n}^{\pi n}f(t)\cot\frac{x-t}{2n}\mbox{d}t,\;\;\; x\in \mathbb R.
$$
It follows from the observation of Zygmund \cite[p.\ 256]{ZygTS02} that  $g_n\to \mathcal{H}^\R_X f$ a.e.
as $n\to \infty$. On the other hand, the function $x\mapsto g_n(nx)$, $|x|\leq \pi$, is precisely the periodic Hilbert transform of the function $x\mapsto f(nx)$, $|x|\leq \pi$ (see \eqref{eq:periodic}). Therefore, it is also the periodic Hilbert transform of the \emph{centered} function
$$ x\mapsto f(nx)-\frac{1}{2\pi}\int_{-\pi}^\pi f(ns)\mbox{d}s, \qquad |x|\leq \pi.$$
Clearly, the latter is a step function. Consequently, by Fatou's lemma and the definition of $|\mathcal{H}_X^{\mathbb{T},0}|_{\Phi,\Psi}$,
\begin{align*}
 \int_\R \Psi(\mathcal{H}^\R_X f)\mbox{d}x&\leq \liminf_{n\to \infty}\int_{-\pi n}^{\pi n} \Psi(g_n)\mbox{d}x= \liminf_{n\to \infty}\int_{-\pi }^{\pi } \Psi(g_n(nx))n\mbox{d}x\\
 &\leq |\mathcal{H}_X^{\mathbb{T},0}|_{\Phi,\Psi} \liminf_{n\to \infty} \int_{-\pi}^\pi \Phi\left(f(nx)-\frac{1}{2\pi}\int_{-\pi}^\pi f(ns)\mbox{d}s\right)n\mbox{d}x\\
 &= |\mathcal{H}_X^{\mathbb{T},0}|_{\Phi,\Psi} \liminf_{n\to \infty} \int_{-\pi n}^{\pi n} \Phi\left(f(x)-\frac{1}{2\pi n}\int_{-\pi n}^{\pi n} f(s)\mbox{d}s\right)\mbox{d}x.
\end{align*}
However, $\frac{1}{2\pi n}\int_{-\pi n}^{\pi n} f(s)\mbox{d}s \to 0$ by the fact that $f$ is a step function. Therefore, again using this property of $f$ and the continuity of $\Phi$, the last expression of the above chain equals
$|\mathcal{H}_X^{\mathbb{T},0}|_{\Phi,\Psi}\int_\R \Phi(f)\mbox{d}x$.
Since $f$ was arbitrary, the result follows.
\end{proof}

Now we turn our attention to the estimate in the reverse direction. We start from the observation that it does not hold true if $\Phi(0)>0$ and $\Psi\neq 0$. Indeed, if $\Phi(0)>0$, then $\int_\R \Phi(f)\mbox{d}x=\infty$ for any step function  and hence $|\mathcal{H}^\R|_{\Phi,\Psi}=0$.  On the other hand, the condition $\Psi\neq 0$ implies that $|\mathcal{H}_X^{\mathbb{T},0}|_{\Phi,\Psi}>0$: it is easy to construct a step function $f:\mathbb{T}\to X$ of mean zero for which $\int_\R \Psi(\mathcal{H}^\mathbb{T}f)\mbox{d}x>0$.

In other words, the inequality $|\mathcal{H}_X^{\mathbb{T},0}|_{\Phi,\Psi}\leq |\mathcal{H}_X^\R|_{\Phi,\Psi}$ fails, because of obvious reasons, if $\Phi(0)>0$ and $\Psi\neq 0$. If $\Psi$ is identically $0$, then the estimate holds true: the reason is even more trivial -- both sides are zero. It remains to study the key possibility when $\Phi(0)=0$ and $\Psi\neq 0$.

\begin{theorem}\label{thm:HXTleqHXR}
Let $X$ be a Banach space and let $\Phi,\,\Psi:X\to \R_+$ be arbitrary continuous functions such that $\Phi(0)=0$ and $\Psi\neq 0$. Then $|\mathcal{H}_X^{\mathbb{T},0}|_{\Phi,\Psi}\leq |\mathcal{H}_X^\R|_{\Phi,\Psi}$.
\end{theorem}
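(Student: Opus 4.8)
The plan is a periodization argument: we manufacture from $f$ a step function on $\mathbb{R}$ consisting of many translated copies of $f$, apply the $\mathbb{R}$-estimate to it, and pass to the limit, using the identity $\frac12\cot\frac{\theta}{2}=\mathrm{p.v.}\sum_{l\in\mathbb{Z}}\frac{1}{\theta-2\pi l}$. If $|\mathcal{H}_X^{\mathbb{R}}|_{\Phi,\Psi}=\infty$ there is nothing to prove, so assume it is finite. Fix a step function $f:\mathbb{T}\to X$ with $\int_{\mathbb{T}}f=0$; since $f$ takes finitely many values we may replace $X$ by $\mathrm{span}(\mathrm{range}\,f)$ and assume $X$ is finite-dimensional. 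For odd $N=2m+1$ let $f_N:\mathbb{R}\to X$ equal the $2\pi$-periodic extension of $f$ on $[-\pi N,\pi N)$ and equal $0$ elsewhere; then $f_N$ is a step function on $\mathbb{R}$, so $\int_{\mathbb{R}}\Psi(\mathcal{H}_X^{\mathbb{R}}f_N)\le |\mathcal{H}_X^{\mathbb{R}}|_{\Phi,\Psi}\int_{\mathbb{R}}\Phi(f_N)$. Since $\Phi(0)=0$, periodicity gives the crucial identity $\int_{\mathbb{R}}\Phi(f_N)=N\int_{\mathbb{T}}\Phi(f)$.

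The computational heart is a uniform comparison of $\mathcal{H}_X^{\mathbb{R}}f_N$ on the central periods with the periodic transform $\mathcal{H}_X^{\mathbb{T}}f$. Substituting variables interval by interval, for a.e.\ $t'\in[-\pi,\pi)$ and all $|j|\le m$,
$$\mathcal{H}_X^{\mathbb{R}}f_N(t'+2\pi j)=\frac{1}{\pi}\sum_{l=-(m+j)}^{m-j}\mathrm{p.v.}\!\int_{-\pi}^{\pi}\frac{f(u)}{t'-u-2\pi l}\,\mathrm{d}u,\qquad \mathcal{H}_X^{\mathbb{T}}f(t')=\frac{1}{\pi}\,\mathrm{p.v.}\!\int_{-\pi}^{\pi}f(u)\sum_{l\in\mathbb{Z}}\frac{1}{t'-u-2\pi l}\,\mathrm{d}u,$$
with symmetric summation in the second formula. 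Subtracting, the $l=0$ principal-value term cancels and the difference becomes $\frac1\pi\int_{-\pi}^{\pi}f(u)\bigl(\text{tail of the series over }|l|\gtrsim m-|j|\bigr)\,\mathrm{d}u$. Here the hypothesis $\int_{\mathbb{T}}f=0$ is indispensable: it allows one to replace each summand $\frac{1}{t'-u-2\pi l}$ by $\frac{1}{t'-u-2\pi l}+\frac{1}{2\pi l}=O(l^{-2})$, so the tail is summable and bounded uniformly in $t'$, giving
$$\Bigl\|\mathcal{H}_X^{\mathbb{R}}f_N(t'+2\pi j)-\mathcal{H}_X^{\mathbb{T}}f(t')\Bigr\|\le \frac{C(f)}{\,m-|j|\,},\qquad 1\le |j|\le m-1.$$
In particular, for $|j|\le m-R$ the error $E_{N,j}(t')$ is at most $C(f)/R$, uniformly in $t'$.

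Next I would prove $\liminf_{N\to\infty}\frac1N\int_{\mathbb{R}}\Psi(\mathcal{H}_X^{\mathbb{R}}f_N)\ge\int_{\mathbb{T}}\Psi(\mathcal{H}_X^{\mathbb{T}}f)$. Write $h=\mathcal{H}_X^{\mathbb{T}}f$ and, for $\delta>0$, put $\psi_\delta(t'):=\inf_{\|E\|\le\delta}\Psi(h(t')+E)$; this is measurable (the infimum runs over a compact ball in the finite-dimensional range) and, by continuity of $\Psi$, $\psi_\delta\uparrow\Psi(h)$ pointwise as $\delta\downarrow0$, so $\int_{\mathbb{T}}\psi_\delta\to\int_{\mathbb{T}}\Psi(h)$ by monotone convergence. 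Given $\varepsilon>0$ choose $\delta$ with $\int_{\mathbb{T}}\psi_\delta\ge\int_{\mathbb{T}}\Psi(h)-\varepsilon$ and then $R$ with $C(f)/R\le\delta$. Since $\Psi\ge0$ and $f_N$ comprises $N=2m+1$ periodic blocks, discarding all but the central ones and using the previous paragraph,
$$\frac1N\int_{\mathbb{R}}\Psi(\mathcal{H}_X^{\mathbb{R}}f_N)\ \ge\ \frac1N\sum_{|j|\le m-R}\int_{-\pi}^{\pi}\Psi\bigl(h(t')+E_{N,j}(t')\bigr)\,\mathrm{d}t'\ \ge\ \frac{2(m-R)+1}{2m+1}\Bigl(\int_{\mathbb{T}}\Psi(h)-\varepsilon\Bigr),$$
and letting $N\to\infty$ with $R$ fixed, then $\varepsilon\downarrow0$, yields the claim.

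Combining the two main inequalities, $\int_{\mathbb{T}}\Psi(\mathcal{H}_X^{\mathbb{T}}f)\le\liminf_{N}\frac1N\int_{\mathbb{R}}\Psi(\mathcal{H}_X^{\mathbb{R}}f_N)\le\liminf_N\frac1N|\mathcal{H}_X^{\mathbb{R}}|_{\Phi,\Psi}\,N\int_{\mathbb{T}}\Phi(f)=|\mathcal{H}_X^{\mathbb{R}}|_{\Phi,\Psi}\int_{\mathbb{T}}\Phi(f)$; taking the supremum over mean-zero step functions $f$ and recalling the definition of $|\mathcal{H}_X^{\mathbb{T},0}|_{\Phi,\Psi}$ finishes the proof. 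I expect the main obstacle to be the uniform error estimate of the second paragraph, which is exactly where both hypotheses are used in an essential way: $\Phi(0)=0$ produces the exact factor $N$ in $\int_{\mathbb{R}}\Phi(f_N)$, while $\int_{\mathbb{T}}f=0$ is what makes the tails of the cotangent series absolutely summable and $O(1/(m-|j|))$ rather than divergent. A secondary nuisance is that $\Psi$ is only assumed continuous (and $h$ may be unbounded), which is handled by the monotone-convergence device via $\psi_\delta$ instead of by uniform continuity.
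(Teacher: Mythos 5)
Your proposal is correct, and it takes a genuinely different route from the paper's proof. The paper works on the complex disc: it builds the analytic extension $F$ of $f+i\mathcal{H}^{\mathbb T}_Xf$, pulls it back to the upper half-plane via iterates $L^{2n}$ of the inverse of the conformal map $K(z)=-(1-z)^2/(4z)$, computes the resulting integrals over $[0,1]$ as Riemann sums over $\mathbb T$, and controls the leakage of $\mathbf 1_{\R\setminus[0,1]}\Re M_n$ through the UMD-based $L^2$-boundedness of $\mathcal H^{\mathbb R}_X$ plus a truncated $\Psi^R$ and uniform continuity. You instead periodize directly on $\R$: you replicate $f$ on $N=2m+1$ consecutive periods, feed the resulting compactly supported step function into the $\R$-estimate, and compare $\mathcal H^{\mathbb R}_X f_N$ on the central blocks to $\mathcal H^{\mathbb T}_X f$ via the kernel identity $\frac12\cot\frac{\theta}{2}=\mathrm{p.v.}\sum_{l}\tfrac{1}{\theta-2\pi l}$. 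The mean-zero hypothesis $\int_{\mathbb T}f=0$ enters exactly where it should — it lets you add back the constants $\tfrac{1}{2\pi l}$ at no cost, turning the truncation error into a telescoping $O(1/(m-|j|))$ tail — and $\Phi(0)=0$ gives the exact factor $N$ on the $\Phi$ side. Your $\psi_\delta$ device replaces the paper's $\Psi^R$-truncation and uniform-continuity argument by a pointwise-monotone one, which is cleaner and also dispenses with the $L^2$ bound and the UMD input altogether. I checked the crucial error estimate: writing $\frac{1}{t'-u-2\pi l}+\frac{1}{2\pi l}=\frac{t'-u}{2\pi l(t'-u-2\pi l)}$, the one-sided sums over $l>m-j$ and $l<-(m+j)$ telescope to $O(1/(m-j))$ and $O(1/(m+j))$ respectively once $m-|j|\ge 1$, so the bound $\|E_{N,j}\|_\infty\le C(f)/(m-|j|)$ does hold, and the asymmetry of the excluded index set $(m-j,m+j]$ is harmless because the constant it produces integrates to zero against $f$. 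In short, a more elementary, self-contained argument that buys robustness (no appeal to singular-integral $L^2$ theory) at the cost of a slightly longer kernel computation; the paper's conformal approach is shorter in spirit but leans on more machinery. One minor observation: your proof never uses the hypothesis $\Psi\neq 0$, which appears in the theorem statement only to make the paper's particular proof mechanics run; your route shows it is not needed.
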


\begin{proof} {As was mentioned above,} the assumption $\Psi\neq 0$ implies $|\mathcal{H}_X^{\mathbb{T},0}|_{\Phi,\Psi}>0$. For the sake of clarity, we split the reasoning into a few separate parts.

\smallskip

\emph{Step 1. Auxiliary analytic maps.}
Let $D $ denote the open unit disc of $\C$ and let $H=R\times (0,\infty)$ be the upper halfplane. Define $K:D \cap H\to H$ by the formula $K(z)=-(1-z)^2/(4z)$. It is not difficult to verify that $K$ is conformal and hence so is its inverse $L$. Let us extend $L$ to the continuous function on $\overline{H}$. It is easy to see that $L(z)\to 0$ as $z\to \infty$. Furthermore, $L$ maps the interval $[0,1]$ onto $\{e^{i\theta}:0\leq \theta\leq \pi\}$. More precisely, we have the following formula: if $x\in [0,1]$, then
\begin{equation}\label{soul}
L(x)=e^{i\theta}, \mbox{ where $\theta \in [0,\pi]$ is uniquely determined by $x=\sin^2(\theta/2)$.}
\end{equation}
In addition, $L$ maps the set $\R\setminus [0,1]$ onto the open interval $(-1,1)$; precisely, we have the identity
\begin{equation}\label{soul2}
L(x)=\left\{\begin{array}{ll}
1-2x-2\sqrt{x^2-x} & \mbox{if }x<0,\\
1-2x+2\sqrt{x^2-x} & \mbox{if }x>1.
\end{array}\right.
\end{equation}
In particular, we easily check that for any $\delta>0$, the function $L$ is bounded away from $1$ outside any interval of the form $[-\delta,1+\delta]$ and $|L(x)|=O(|x|^{-1})$ as $x\to \pm \infty$.

\smallskip

\emph{Step 2. A function on $\mathbb{T}$ and its extension to a disc}. Fix a positive number $\e$ and pick a step function $f:\mathbb{T}\to X$ of integral $0$ such that
  $$
  \int_\mathbb{T} \Psi(\mathcal{H}^\mathbb{T}_Xf)\mbox{d}x>(|\mathcal{H}_X^{\mathbb{T},0}|_{\Phi,\Psi}-\e)\cdot \int_\mathbb{T} \Phi(f)\mbox{d}x.
  $$
We may assume that $X$ is finite-dimensional, restricting to the range of $f$ if necessary. Given a big number $R>0$, consider a continuous function $\kappa^R:X\to [0,1]$ equal to $1$ on $B(0,R)$ and equal to $0$ outside $B(0,2R)$.
Set $\Psi^R(x)=\Psi(x)\cdot \kappa^R(x)$ for $x\in X$. Note that $\Psi^R$ is uniformly continuous, since it is continuous and supported on the compact ball $B(0,2R)$ (recall that $X$ is finite dimensional).
 By Lebesgue's monotone convergence theorem, if $R$ is sufficiently big, we also have
\begin{equation}\label{lower}
\int_\mathbb{T} \Psi^R(\mathcal{H}^\mathbb{T}_Xf)\mbox{d}x>(|\mathcal{H}_X^{\mathbb{T},0}|_{\Phi,\Psi}-\e)\cdot \int_\mathbb{T} \Phi(f)\mbox{d}x.
\end{equation}
There is an analytic function $F:D \to X+iX$ with the property that the radial limit $\lim_{r\to 1-} F(re^{i\theta})$ is equal to $f(e^{i\theta})+i\mathcal{H}^\mathbb{T}_Xf(e^{i\theta})$ for almost all $|\theta|\leq \pi$. Note that we have
\begin{equation}\label{zeroF}
 F(0)=\frac{1}{2\pi}\int_\mathbb{T} f\mbox{d}x+i\cdot 0=0
\end{equation}
and that the ``real part'' of $F$ is bounded (by the supremum norm of $f$).
Consider the analytic function $M_n:H\to X+iX$ given by the composition
$$
M_n(z)=F(L^{2n}(z))
$$
and decompose it as $M_n(z)=\Re{M}_n(z)+i \Im{M}_n(z)$, with $\Re{M}_n$ and $\Im{M}_n$ taking values in $X$. Observe that for each $n$ the function $\Re M_n$ is bounded by the supremum norm of $f$ (which is directly inherited from the ``real part'' of the function $F$). In addition, $h=\mathbf 1_{[0,1]}\Re M_n$, considered as a function on $\R$, is a step function (with the number of steps depending on $n$ and going to infinity). Since $\lim_{z\to\infty}L(z)=0$, we have $\lim_{z\to\infty}M_n(z)=0$ and therefore $\mathcal{H}^\mathbb{T}_X \Re M_n(x)=\Im M_n(x)$ for $x\in \R$.

\smallskip

\emph{Step 3. Calculations.} We compute that

\begin{equation}\label{chain}
\begin{split}
\int_\R \Phi\left(h(x)\right)\mbox{d}x&= \int_0^1 \Phi \left(\,\Re M_n(x))\right)\mbox{d}x=\int_0^1 \Psi \left(\,f(L^{2n}(x))\right)\mbox{d}x=\frac{1}{2}\int_0^\pi \Phi\left(\,f(e^{2in\theta})\right)\sin\theta\mbox{d}\theta\\
&=\frac{1}{2}\int_0^{2n\pi}\Phi\left(f(e^{i\theta})\right)\sin\left(\frac{\theta}{2n}\right)\frac{\mbox{d}\theta}{2n}=\frac{1}{2}\int_0^{2\pi}\Phi\left(f(e^{i\theta})\right)\sum_{k=0}^{n-1}\sin\left(\frac{k\pi}{n}+\frac{\theta}{2n}\right)\frac{\mbox{d}\theta}{2n}\\
&=\frac{1}{2}\int_0^{2\pi}\Phi\left(f(e^{i\theta})\right)\frac{\cos\left(\frac{\theta-\pi}{{2n}}\right)}{2n\sin\left(\frac{\pi}{2n}\right)}\,\mbox{d}\theta\xrightarrow{n\to\infty}\frac{1}{2\pi}\int_0^{2\pi}\Phi\left(f(e^{i\theta})\right)\mbox{d}\theta.
\end{split}
\end{equation}
Now, let us similarly handle the integral $\int_\R \Psi^K(\mathcal{H}^\R h)\mbox{d}x$. We have
\begin{equation}\label{chain_two}
\begin{split}
&\int_\R \Psi^R\left(\mathcal{H}^\R_X h(x)\right)\mbox{d}x
\geq  \int_0^1 \Psi^R \left(\mathcal{H}^\R_X h(x)\right)\mbox{d}x=\int_0^1 \Psi^R(\mathcal{H}^\R_X\Re M_n-\mathcal{H}^\R_X (\mathbf 1_{\R\setminus [0,1]}\Re M_n))\mbox{d}x\\
&=\int_0^1 \Psi^R(\mathcal{H}^\R_X\Re M_n)\mbox{d}x +\int_0^1 \bigg[\Psi^R(\mathcal{H}^\R_X\Re M_n-\mathcal{H}^\R_X (\mathbf 1_{\R\setminus [0,1]}\Re M_n))-\Psi^R(\mathcal{H}^\R_X\Re M_n)\bigg]\mbox{d}x.
\end{split}
\end{equation}
Now, we have $\mathcal{H}^\R_X\Re M_n(x)=\Im M_n(x)=\mathcal{H}^\mathbb{T}_Xf(L^{2n}(x))$, so a calculation similar to that in \eqref{chain} gives
$$
\int_0^1 \Psi^R(\mathcal{H}^\R_X\Re M_n)\mbox{d}x\xrightarrow{n\to\infty}\frac{1}{2\pi}\int_0^{2\pi}\Psi^R\left(\mathcal{H}^\mathbb{T}_Xf(e^{i\theta})\right)\mbox{d}\theta.
$$
To deal with the last integral in \eqref{chain_two} we will first show that $\mathcal{H}^\R_X (\mathbf 1_{\R\setminus [0,1]}\Re M_n)$ converges to $0$ in $L^2$, as $n\to \infty$. To this end, recall that $X$ is finite-dimensional and hence it has the UMD property. Consequently, by \cite[Corollary 5.2.11]{HNVW1}
\begin{equation}\label{term1}
 \int_\R |\mathcal{H}^\R_X (\mathbf 1_{\R\setminus [0,1]}\Re M_n)|^2\mbox{d}x\leq C_X \int_{\R\setminus [0,1]} |\Re M_n|^2\mbox{d}x
\end{equation}
for some constant $C_X$ depending only on $X$. Fix an arbitrary $\eta>0$. As we have already noted above, $\Re M_n$ is bounded by the supremum norm of $f$. Setting $\delta=\eta/(C_X \sup_X ||f||^2)$, we see that
\begin{equation}\label{term2}
  \int_{(-\delta,0)}|\Re M_n(x)|^2\mbox{d}x+\int_{(1,1+\delta)}|\Re M_n(x)|^2\mbox{d}x\leq 2\eta C_X^{-1}.
 \end{equation}
Furthermore, recall that $L$ maps $\R\setminus [0,1]$ onto $(-1,1)$, it is bounded away from $1$ outside $[-\delta,1+\delta]$ and $|L(x)|=O(|x|^{-1})$ as $x\to \pm \infty$.
 Since $F$ is analytic and vanishes at $0$, we conclude that $M_n(x)=F(L^{2n}(x))=O(|x|^{-2n})$ and hence
\begin{equation}\label{term3}
 \lim_{n\to \infty} \int_{\R \setminus [-\delta,1+\delta]} |\Re M_n(x)|^2\mbox{d}x=0.
\end{equation}
Putting \eqref{term1}, \eqref{term2} and \eqref{term3} together, we see that if $n$ is sufficiently large, then $\int_\R |\mathcal{H}^\R_X (\mathbf 1_{\R\setminus [0,1]}\Re M_n)|^2\mbox{d}x\leq 3\eta$ and the aforementioned convergence in $L^2$ holds. In particular, passing to a subsequence if necessary, we see that $\mathcal{H}^\R_X (\mathbf 1_{\R\setminus [0,1]}\Re M_n) \to 0$ almost everywhere. However, as we have already mentioned above, the function $\Psi^R$ is uniformly continuous, so the expression in the square brackets in the last term in \eqref{chain_two} converges to zero almost everywhere. In addition, this expression is bounded in absolute value by $\sup \Psi^R$. Consequently, by Lebesgue's dominated convergence theorem, the last integral in \eqref{chain_two} converges to $0$ as $n\to \infty$. Putting all the above facts together, we see that if $n$ is sufficiently large, then
$$
\int_\R \Psi^R\left(\mathcal{H}^\R_X h(x)\right)\mbox{d}x\geq (1-\e)\cdot \frac{1}{2\pi}\int_0^{2\pi}\Psi^R\left(\mathcal{H}^\mathbb{T}_Xf(e^{i\theta})\right)\mbox{d}\theta.
$$
Combining this with \eqref{lower} and \eqref{chain}, we obtain that for $n$ large enough we have
$$
\int_\R \Psi(\mathcal{H}^\R_X h(x))\mbox{d}x\geq \int_\R \Psi^R\left(\mathcal{H}^\R_X h(x)\right)\mbox{d}x\geq (1-\e)(|\mathcal{H}_X^{\mathbb{T},0}|_{\Phi,\Psi}-\e)\int_\R \Phi(h)\mbox{d}x.
$$
Since $h$ is a step function and $\e$ was arbitrary, the claim follows.
\end{proof}

\begin{remark}
 Note that if $\Psi(0)\neq 0$ then Theorem \ref{thm:HXRleqHXT,0} and \ref{thm:HXTleqHXR} do not make any sense. Indeed, if this is the case, then there exists $\eps>0$ and $R$ such that $\Psi(x)\geq \eps$ for any $x\in X$ with $\|x\|\leq R$. Since for any step function $f:\mathbb R \to X$ the function $\mathcal H^{\mathbb R}_X f$ is in $L^2(\mathbb R;X)$, the set $\{\|\mathcal H^{\mathbb R}_X f\|\leq R\}\subset \mathbb R$ is of infinite measure, so
 \[
  \int_{\mathbb R} \Psi(\mathcal H^{\mathbb R}_X f(s))\ud s \geq \int_{\mathbb R} \mathbf 1_{\|\mathcal H^{\mathbb R}_X f\|\leq R}(s) \eps \ud s = \infty,
 \]
so $|\mathcal H^{\mathbb T}_X|_{\Phi,\Psi} \geq  |\mathcal H^{\mathbb T,0}_X|_{\Phi,\Psi} = |\mathcal H^{\mathbb R}_X|_{\Phi,\Psi} =\infty$.
\end{remark}

\begin{rem}\label{remark_2}
 The finiteness of $|\mathcal{H}_X^{\mathbb{T},0}|_{\Phi,\Psi}$ implies the existence of  a plurisubharmonic function $U_{\Phi,\Psi}:X+iX\to \R$ such that $U_{\Phi,\Psi}(0)\geq 0$. Hence, modifying the proof of Theorem \ref{thm:orthmartPhiPsi}, we see that the inequality \eqref{eq:HTnormboudfororcontmart} holds, with $|\mathcal{H}_X^{\mathbb{T}}|_{\Phi,\Psi}$ replaced with $|\mathcal{H}_X^{\mathbb{T},0}|_{\Phi,\Psi}$, if the dominating martingale $M$ is additionally assumed to start from $0$.
\end{rem}

\begin{theorem}\label{thm:symmPhiHT0=HT}
Let $\Phi, \Psi:X\to \mathbb R_+$ be continuous such that $\Phi$ is symmetric (i.e., $\Phi(x)=\Phi(-x)$ for all $x\in X$) and $\Psi$ is convex. Then $|\mathcal{H}^{\mathbb{T},0}_X|_{\Phi,\Psi}=|\mathcal{H}^\mathbb{T}_X|_{\Phi,\Psi}$.
\end{theorem}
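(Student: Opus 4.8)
The inequality $|\mathcal{H}^{\mathbb{T},0}_X|_{\Phi,\Psi}\le|\mathcal{H}^\mathbb{T}_X|_{\Phi,\Psi}$ is immediate, since the infimum defining the left-hand side runs over the smaller class of mean-zero step functions. The whole content of the theorem is the reverse bound $|\mathcal{H}^\mathbb{T}_X|_{\Phi,\Psi}\le C_0:=|\mathcal{H}^{\mathbb{T},0}_X|_{\Phi,\Psi}$, i.e.\ the estimate $\int_\mathbb{T}\Psi(\mathcal{H}^\mathbb{T}_Xf)\ud s\le C_0\int_\mathbb{T}\Phi(f)\ud s$ for every step function $f:\mathbb{T}\to X$. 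We may assume $C_0<\infty$, and, restricting to the linear span of the finitely many values of $f$, that $X$ is finite-dimensional.

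The naive idea — to replace $f$ by its mean-zero part $f-\frac1{2\pi}\int_\mathbb{T}f$, which has the same Hilbert transform — fails, because subtracting the mean can \emph{strictly increase} $\int_\mathbb{T}\Phi$. The natural substitute, and the place where the symmetry of $\Phi$ enters, is a ``folding with sign'': to $f$ one associates a step function $g$ on $\mathbb{T}$ obtained by running $f$ through full periods on the two halves of the circle with opposite signs, say $g(\theta):=\mathrm{sign}(\theta)\,f(2\theta)$ for $\theta\in[-\pi,\pi)$ (or a finer version $g_n(\theta)=\mathrm{sign}(\sin n\theta)\,f(2n\theta)$). Such a $g$ automatically has zero integral, and — crucially — $\int_\mathbb{T}\Phi(g)\ud\theta=\int_\mathbb{T}\Phi(f)\ud\theta$: indeed $\Phi(\mathrm{sign}(\theta)f(2\theta))=\Phi(f(2\theta))$ by symmetry of $\Phi$, and the substitution $\phi=2\theta$ together with $2\pi$-periodicity of $f$ gives back $\int_\mathbb{T}\Phi(f)$. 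Feeding $g$ into the definition of $C_0$ yields $\int_\mathbb{T}\Psi(\mathcal{H}^\mathbb{T}_Xg)\le C_0\int_\mathbb{T}\Phi(f)$, so everything reduces to understanding $\int_\mathbb{T}\Psi(\mathcal{H}^\mathbb{T}_Xg)$.

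Writing $\mathcal{H}^\mathbb{T}_Xg=\mathrm{sign}(\theta)\,(\mathcal{H}^\mathbb{T}_Xf)(2\theta)+R$, where $R$ is the remainder produced by the non-commutation of multiplication by $\mathrm{sign}$ with $\mathcal{H}^\mathbb{T}_X$ (it carries, in particular, a term built from the mean of $f$ and the conjugate of the sign), one computes the principal part exactly: $\int_\mathbb{T}\Psi(\mathrm{sign}(\theta)(\mathcal{H}^\mathbb{T}_Xf)(2\theta))\ud\theta=\tfrac12\bigl(\int_\mathbb{T}\Psi(\mathcal{H}^\mathbb{T}_Xf)+\int_\mathbb{T}\Psi(-\mathcal{H}^\mathbb{T}_Xf)\bigr)$. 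The two remaining tasks are: (i) to control $R$ — here I would follow the scheme of Theorem~\ref{thm:HXTleqHXR}, truncating $\Psi$ to a large ball to obtain a uniformly continuous $\Psi^R$ (harmless by monotone convergence, as in \eqref{lower}), realizing $f+i\mathcal{H}^\mathbb{T}_Xf$ as the boundary values of an analytic $X+iX$-valued function on the disc, and using the $L^2$-boundedness of $\mathcal{H}^\mathbb{T}_X$ on the finite-dimensional (hence UMD) space $X$ (cf.\ \eqref{term1}) together with the essentially explicit description of the conjugate of a step function; and (ii) to remove the spurious symmetrization, replacing $\tfrac12(\int\Psi(\mathcal{H}^\mathbb{T}_Xf)+\int\Psi(-\mathcal{H}^\mathbb{T}_Xf))$ by $\int\Psi(\mathcal{H}^\mathbb{T}_Xf)$ itself.

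Task (ii) is the main obstacle and is what makes the argument delicate: any manipulation of $f$ that leaves $\int_\mathbb{T}\Phi$ unchanged must involve an orientation-reversing reflection (no orientation-preserving reparametrization of the circle turns a function of nonzero mean into one of zero mean), and this reflection is exactly what both symmetrizes $\Psi$ and generates the remainder $R$ to be absorbed. Resolving it requires a more refined choice of the mean-zero test function — for instance combining the folding of $f$ with the folding of a reflected copy, and then exploiting the convexity of $\Psi$ through inequalities such as $\Psi(A)\le\tfrac12\Psi(A+R)+\tfrac12\Psi(A-R)$ applied to suitably paired mean-zero functions — possibly together with a limiting argument that sends the scale of the folding to infinity while keeping the remainder under control. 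This reconciliation is the technical heart of the theorem and the step that will demand the most care.
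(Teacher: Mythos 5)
Your reduction to the reverse inequality and the finite-dimensional case is fine, but the main argument has a genuine gap — and you yourself flag it without resolving it, which means this is a sketch of an obstacle rather than a proof.

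Two concrete problems with the ``folding with sign'' approach. First, the remainder $R$ is not small. Writing $g(\theta)=\operatorname{sign}(\theta)\,f(2\theta)$ and hoping that $\mathcal{H}^{\mathbb T}_X g$ is approximately $\operatorname{sign}(\theta)\,(\mathcal H^{\mathbb T}_X f)(2\theta)$ runs into the fact that multiplication by $\operatorname{sign}$ does not commute with $\mathcal{H}^{\mathbb T}_X$ in any controlled way: the conjugate of $\operatorname{sign}$ on $\mathbb T$ is an \emph{unbounded} function (a logarithm near $0$ and $\pm\pi$), and $R$ inherits this unboundedness with a coefficient proportional to the mean of $f$ — which is exactly the quantity that is nonzero in the hard case. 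There is no obvious sense in which $R$ is negligible after composing with $\Psi$ and integrating, and truncating $\Psi$ does not change the fact that the ``principal part'' you compare against is itself only correct up to this uncontrolled error. Second, even granting the principal-part computation, you land on $\tfrac12\bigl(\int\Psi(\mathcal H^{\mathbb T}_X f)+\int\Psi(-\mathcal H^{\mathbb T}_X f)\bigr)$, which is in general strictly smaller than $\int\Psi(\mathcal H^{\mathbb T}_X f)$. The convexity inequality you suggest, $\Psi(A)\le\tfrac12\Psi(A+R)+\tfrac12\Psi(A-R)$, goes the wrong direction for your purposes (it bounds a quantity you want on the left by something you cannot control on the right), and the vague combination ``folding plus a reflected copy plus a scale limit'' is not a proof. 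You say this ``reconciliation is the technical heart of the theorem and the step that will demand the most care'' — which is a candid admission that the argument is incomplete at precisely the crucial step.

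The paper's actual proof goes by an entirely different, probabilistic route and avoids both difficulties at once. It passes to the martingales $M_t=F_1(B_t)$, $N_t=F_2(B_t)$ given by composing the analytic extension of $f+i\mathcal H^{\mathbb T}_X f$ with a planar Brownian motion stopped at the unit circle, and, when the mean $\mathrm x=\mathbb E M_t\neq0$, \emph{splices} onto the front of $(M,N)$ an independent planar Brownian motion run in the strip $\{|x|\le1\}$ scaled by $\mathrm x$. The resulting pair $(\widetilde M,\widetilde N)$ is orthogonal, weakly differentially subordinate, and starts from $0$, so the martingale counterpart of the $|\mathcal H^{\mathbb T,0}_X|_{\Phi,\Psi}$-bound (Remark~\ref{remark_2}) applies. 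After the splice, $\widetilde M$ equals $\pm M$ (depending on which side of the strip was hit) — so symmetry of $\Phi$ lets one replace $\Phi(\widetilde M)$ by $\Phi(M)$ — while $\widetilde N$ equals $N$ plus an independent \emph{symmetric} perturbation $\mathrm x W^2_\sigma$, which convexity of $\Psi$ (plus Jensen) allows one to discard. A final limiting argument in $t$ closes the proof. This is a fundamentally different mechanism from your function-level folding: the strip-splice performs the ``mean removal'' in a way that is exactly compatible with the structural hypotheses on $\Phi$ and $\Psi$ (each hypothesis kills exactly one of the two nuisance terms), whereas your folding introduces an uncontrolled conjugate-of-sign term and a spurious symmetrization of $\Psi$ with no clear way to undo either.
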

\begin{proof}
It suffices to show the estimate $|\mathcal{H}^{\mathbb{T},0}_X|_{\Phi,\Psi}\geq |\mathcal{H}^\mathbb{T}_X|_{\Phi,\Psi}$. Fix $\e>0$. By the definition of $|\mathcal{H}^\mathbb{T}_X|_{\Phi,\Psi}$, there is a step function $f:\mathbb{T}\to X$ such that
\begin{equation}\label{boundH}
 \int_\mathbb{T} \Psi(\mathcal{H}^\mathbb{T}_Xf)\mbox{d}x>(|\mathcal{H}^\mathbb{T}_X|_{\Phi,\Psi}-\e)\int_\mathbb{T} \Phi(f)\mbox{d}x.
\end{equation}
Let $F=F_1+iF_2$ be the analytic extension of $f+i\mathcal{H}^\mathbb{T}_Xf:\mathbb{T}\to X+iX$ to the unit disc and suppose that
 $B=(B^1,B^2)$ is the planar Brownian motion started at $0$ and stopped upon hitting $\mathbb{T}$. Let $\tau=\inf\{t\geq 0:|B_t|=1\}$ be the lifetime of $B$. The processes $M_t=F_1(B_t)$, $N_t=F_2(B_t)$ are orthogonal martingales such that $N$ is weakly differentially subordinate to $M$. By Fatou's lemma and Lebesgue's monotone convergence theorem (observe that $f$, being a step function, is bounded) we see that if $t$ is sufficiently large, then
$$ \E \Psi(N_t)>(|\mathcal{H}^\mathbb{T}_X|_{\Phi,\Psi}-\e)\E \Phi(M_t).$$
If the expectation of $M$ is zero, then by {Remark \ref{remark_2}} we know that
$$ \E \Psi(N_t)\leq |\mathcal{H}^{\mathbb{T},0}_X|_{\Phi,\Psi}\E \Phi(M_t)$$
and hence we obtain that
\begin{equation}\label{desired_bound}
|\mathcal{H}^{\mathbb{T},0}_X|_{\Phi,\Psi}\geq |\mathcal{H}^\mathbb{T}_X|_{\Phi,\Psi}-\e.
\end{equation}
We will show that this is also true if the expectation $\mathrm{x}=\mathbb{E}M_t$ does not vanish. To this end, consider another Brownian motion $W=(W^1,W^2)$ in $\R^2$ started at $0$ and stopped upon reaching the boundary of the strip $S=\{(x,y):|x|\leq 1\}$. Let $\sigma=\inf\{t:|W^1_t|=1\}$ denote its lifetime. We may assume that $W$ is constructed on the same probability space as $B$ and that both processes are independent. We splice these processes as follows: set
$$ \widetilde{M}_s=\begin{cases}
\mathrm{x}W^1_s & \mbox{if }s\leq \sigma,\\
\operatorname*{sgn}(W_\sigma^1)M_{s-\sigma} & \mbox{if }s>\sigma
\end{cases}$$
and
$$ \widetilde{N}_s=\begin{cases}
\mathrm{x}W^2_s & \mbox{if }s\leq \sigma,\\
\mathrm{x}W^2_\sigma+N_{s-\sigma} & \mbox{if }s>\sigma.
\end{cases}$$
In other words, the pair $(\widetilde{M},\widetilde{N})$ behaves like a Brownian motion evolving in the strip $S\mathrm{x}$ until its first coordinate reaches $\mathrm{x}$ or $-\mathrm{x}$, and then it starts behaving like the pair $(M,\widetilde{N}_\sigma+N)$ or $(-M,\widetilde{N}_\sigma+N)$, depending on which the side of the boundary of $S\mathrm{x}$ the process $\widetilde{M}$ reaches. Note that $\widetilde{M}$, $\widetilde{N}$ are orthogonal martingales such that $\widetilde{N}$ is weakly differentially subordinate to $\widetilde{M}$ and $\widetilde{M}_0=0$. Consequently, by Remark \ref{remark_2} for any $t$,
\begin{equation}\label{4.20}
 \E \Psi(\widetilde{N}_t)\leq |\mathcal{H}^{\mathbb{T},0}_X|_{\Phi,\Psi}\E \Phi(\widetilde{M}_t).
\end{equation}
Now,
$$ \E \Psi(\widetilde{N}_t)\geq \E \Psi(\widetilde{N}_t)1_{\{t\geq \sigma\}}=\E \Psi(\mathrm{x}W^2_\sigma+N_{t-\sigma})1_{\{t\geq \sigma\}}.$$
However, $W$ and $B$ are independent, and the random variable $\mathrm{x}W^2_\sigma$ is symmetric. Therefore, using the fact that $\Psi$ is convex, we see that
$$ \E \Psi(\widetilde{N}_t)\geq \E \Psi(N_{t-\sigma})1_{\{t\geq \sigma\}}.$$
Furthermore, using the symmetry of $\Phi$, we have
$$ \E \Phi(\widetilde{M}_t)1_{\{t\geq \sigma\}}= \E \Phi(\operatorname*{sgn}(W_\sigma^1)M_{t-\sigma})1_{\{t\geq \sigma\}}=\E \Phi(M_{t-\sigma})1_{\{t\geq \sigma\}}.$$
As previously, combining \eqref{boundH} with Fatou's lemma and Lebesgue's dominated convergence theorem, if $t$ is sufficiently large, then
$$ \E \Psi(N_{t-\sigma})1_{\{t\geq \sigma\}}>(|\mathcal{H}^\mathbb{T}_X|_{\Phi,\Psi}-\e)\E \Phi(M_{t-\sigma})1_{\{t\geq \sigma\}}$$
and hence also
$$ \E \Psi(\widetilde{N}_t)>(|\mathcal{H}^\mathbb{T}_X|_{\Phi,\Psi}-\e)\E \Phi(\widetilde{M}_{t})1_{\{t\geq \sigma\}}.$$
But $\lim_{t\to \infty}\E \Phi(\widetilde{M}_{t})1_{\{t< \sigma\}}=0$, by Lebesgue's dominated convergence theorem (we have $1_{\{t<\sigma\}}\to 0$ and the norm of $\widetilde{M}_t$ is bounded by $\|x\|$ for $t\in [0,\sigma]$). Therefore, the preceding estimate gives
$$ \E \Psi(\widetilde{N}_t)>(|\mathcal{H}^\mathbb{T}_X|_{\Phi,\Psi}-\e)\E \Phi(\widetilde{M}_{t})$$
if $t$ is sufficiently big. By \eqref{4.20}, this gives \eqref{desired_bound} and completes the proof of the theorem, since $\e$ was arbitrary.
\end{proof}

\begin{remark}
 Assume that $|\mathcal H^{\mathbb T}_X|_{\Phi, \Psi} = |\mathcal H^{\mathbb T,0}_{X}|_{\Phi, \Psi}$ (this holds true under some additional assumptions on $\Phi$ and $\Psi$, see Theorem \ref{thm:symmPhiHT0=HT}). Then the plurisubharmonic function $U_{\Phi,\Psi}$ considered in Remark \ref{remark_2} coincides with the one considered in Theorem \ref{thm:existenceofU^H_Phi,Psi}, and hence we automatically have that $U_{\Phi, \Psi}(x)\geq 0$ for all $x\in X$.
\end{remark}

\begin{proof}[Proof of Theorem \ref{thm:HTnormthesameforTSZT,0}]
 The theorem follows from Proposition \ref{prop:ineqforHTonRdisandT}, Theorem \ref{thm:HXRleqHXT,0}, \ref{thm:HXTleqHXR}, \ref{thm:symmPhiHT0=HT}, and the fact that $|\mathcal H_X^{\mathbb T, 0}|_{\Phi,\Psi}\leq |\mathcal H_X^{\mathbb T}|_{\Phi,\Psi}$.
\end{proof}

\begin{remark}
Notice that Theorem \ref{thm:HTnormthesameforTSZT,0} can not be applied to more general norms. For example, if $X$ is a UMD Banach space, $1<q<p<\infty$, then
$$
\|\mathcal H^{\mathbb T}_X\|_{\mathcal L(L^p(\mathbb T;X), L^q(\mathbb T; X))}<\infty \;\;
\text{and}\;\;
\|\mathcal H^{\mathbb R}_X\|_{\mathcal L(L^p(\mathbb R;X), L^q(\mathbb R; X))}=\infty.
$$
\end{remark}

\subsection{Decoupling constants}\label{sebsec:decconstants}

We turn our attention to the next important application. We need some additional notation. Consider the probability space $([0,1],\mathcal{B}(0,1),|\cdot|)$, equipped with the dyadic filtration $(\F_n)_{n\geq 0}$ (i.e., generated by the Haar system $(h_n)_{n=0}^\infty$, {see e.g.\ \cite{HNVW1}}). A martingale $f$ adapted to this filtration is called a \emph{Paley-Walsh martingale}.

\begin{definition}
 Let $X$ be a Banach space and let $1<p<\infty$ be a fixed parameter. Then we define $\beta_{p,X}^{\Delta ,+}$ and $\beta_{p,X}^{\Delta ,-}$ to be the smallest $\beta^+$ and $\beta^-$ such that
  \[
  \frac{1}{(\beta^-)^p}\mathbb E\Bigl\|\sum_{n=0}^{\infty}df_n\Bigr\|^p \leq \mathbb E\Bigl\|\sum_{n=0}^{\infty}r_ndf_n\Bigr\|^p  \leq (\beta^+)^p \mathbb E\Bigl\|\sum_{n=0}^{\infty}df_n\Bigr\|^p
  \]
for any finite Paley-Walsh martingale $(f_n)_{n\geq 0}$ and any independent Rademacher sequence $(r_n)_{n\geq 0}$. Furthermore, we define  $\beta_{p,X}^{\gamma ,+}$ and $\beta_{p,X}^{\gamma ,-}$ to be the least possible values of $\beta^+$ and $\beta^-$ for which
 \[
  \frac{1}{(\beta^-)^p}\mathbb E\Bigl\|\int_0^\infty \phi \ud W\Bigr\|^p \leq \mathbb E\Bigl\|\int_0^\infty \phi \ud \widetilde W\Bigr\|^p  \leq (\beta^+)^p \mathbb E\Bigl\|\int_0^\infty \phi \ud W\Bigr\|^p,
  \]
where $W$ is a standard Brownian motion, $\phi:\mathbb R_+ \times \Omega \to X$ is an elementary progressive process, and $\widetilde{W}$ is another Brownian motion independent of $\phi$ and $W$.
\end{definition}

Decoupling constants appear naturally while working with UMD Banach spaces (see e.g.\ \cite{Gar85,CV,HNVW1,Ver07,MC,Geiss99,CG}).
The following result, a natural corollary of Theorem \ref{thm:orthmartPhiPsi} for $\Phi(x)=\Psi(x)=\|x\|^p$, exhibits the direct connection between decoupling constants and $\hbar_{p,X}:= \|\mathcal H_X^{\mathbb T}\|_{\mathcal L(L^p(\mathbb T;X))}$ (see Corollary~\ref{thm:existenceofU^H_p,X}).

\begin{corollary}
 Let $X$ be a Banach space and let $1<p< \infty$ be a fixed parameter. Then we have
 \begin{equation}\label{eq:gammabeta+beta-estforHilbTrans}
  \hbar_{p,X} \geq \max\{\beta_{p, X}^{\gamma,+}, \beta_{p,X}^{\gamma,-}\}
 \end{equation}
and hence
  \begin{equation}\label{eq:deltabeta+beta-estforHilbTrans}
  \hbar_{p,X} \geq C\max\{\beta_{p, X}^{\Delta,+}, \beta_{p,X}^{\Delta,-}\}.
 \end{equation}
 Here $C=\mathbb E |\gamma| \mathbb E\sqrt{\tau}$, where $\gamma$ is a standard normal random variable and $\tau= \inf\{t\geq 0: |W_t|=1\}$ for a standard Brownian motion $W$.
\end{corollary}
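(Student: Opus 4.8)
The plan is to derive both inequalities from Theorem~\ref{thm:orthmartPhiPsi}, specialised to $\Phi(x)=\Psi(x)=\|x\|^p$ (for which $|\mathcal H^{\mathbb T}_X|_{\Phi,\Psi}=\hbar_{p,X}^p$). First I would establish \eqref{eq:gammabeta+beta-estforHilbTrans}. Fix an elementary progressive $X$-valued process $\phi$, a standard Brownian motion $W$ and an independent standard Brownian motion $\widetilde W$. Since $\phi$ takes finitely many values we may replace $X$ by the finite-dimensional subspace they span (the $L^p$-norm of $\mathcal H^{\mathbb T}$ does not increase on a subspace, and a finite-dimensional space has separable dual, so Theorem~\ref{thm:orthmartPhiPsi} applies). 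On the augmented filtration of $(W,\widetilde W)$ set $M_t:=\int_0^t\phi\ud W$ and $N_t:=\int_0^t\phi\ud\widetilde W$; these are $X$-valued martingales with $M_0=N_0=0$, and for every $x^*\in X^*$ we have $\langle M,x^*\rangle=\int\langle\phi,x^*\rangle\ud W$, $\langle N,x^*\rangle=\int\langle\phi,x^*\rangle\ud\widetilde W$, whence
\[
[\langle M,x^*\rangle,\langle N,x^*\rangle]_t=\int_0^t\langle\phi,x^*\rangle^2\ud[W,\widetilde W]_s=0,\qquad [\langle N,x^*\rangle]_t=\int_0^t\langle\phi,x^*\rangle^2\ud s=[\langle M,x^*\rangle]_t
\]
by the independence of $W$ and $\widetilde W$. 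Thus $M,N$ are orthogonal and $N\stackrel{w}{\ll}M$, so Theorem~\ref{thm:orthmartPhiPsi} yields $\mathbb E\|N_t\|^p\le\hbar_{p,X}^p\,\mathbb E\|M_t\|^p$; letting $t\to\infty$ (an elementary $\phi$ vanishes off a bounded time-interval, so $M,N$ are eventually constant) gives $\mathbb E\|\int_0^\infty\phi\ud\widetilde W\|^p\le\hbar_{p,X}^p\,\mathbb E\|\int_0^\infty\phi\ud W\|^p$, i.e.\ $\beta_{p,X}^{\gamma,+}\le\hbar_{p,X}$. Running the same argument with $(N,M)$ in place of $(M,N)$ — still orthogonal, and $N\stackrel{w}{\ll}M$ since the two brackets above coincide — gives $\beta_{p,X}^{\gamma,-}\le\hbar_{p,X}$, which is \eqref{eq:gammabeta+beta-estforHilbTrans}.

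To pass to \eqref{eq:deltabeta+beta-estforHilbTrans} it remains to show $C\,\beta_{p,X}^{\Delta,\pm}\le\beta_{p,X}^{\gamma,\pm}$, the comparison of the Paley--Walsh and Wiener decoupling constants with the stated constant $C$. For the $(+)$-direction I would represent a finite Paley--Walsh martingale $(f_n)_{n\ge0}$ through the excursions of $W$ across $\{-1,1\}$, started afresh at each exit point: the $n$-th excursion carries a sign $\epsilon_n$ (the $(\epsilon_n)$ i.i.d.\ symmetric, matching the dyadic Haar structure) and a duration $\tau^{(n)}$ (i.i.d.\ copies of $\tau$, independent of the signs). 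With $T_n:=\tau^{(1)}+\dots+\tau^{(n)}$ and $df_n=\phi_n\epsilon_n$, $\phi_n$ a function of $\epsilon_1,\dots,\epsilon_{n-1}$, the piecewise-constant progressive integrand $\phi:=\sum_n\phi_n\mathbf 1_{(T_{n-1},T_n]}$ satisfies $\int_0^\infty\phi\ud W=\sum_n df_n$, while for an independent $\widetilde W$ one has $\int_0^\infty\phi\ud\widetilde W=\sum_n\phi_n\xi_n$ with $\xi_n:=\widetilde W_{T_n}-\widetilde W_{T_{n-1}}$ i.i.d.\ symmetric, independent of $(\phi_n)$, and $\mathbb E|\xi_n|=\mathbb E|\gamma|\cdot\mathbb E\sqrt\tau=C$. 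Writing $\xi_n=r_n|\xi_n|$ with $(r_n)=(\mathrm{sgn}\,\xi_n)$ an i.i.d.\ Rademacher sequence independent of $(|\xi_n|)$ and of $(\phi_n)$, a conditional Jensen inequality in the variables $(|\xi_n|)$ gives
\[
\mathbb E\Bigl\|\sum_n\phi_n\xi_n\Bigr\|^p=\mathbb E\Bigl\|\sum_n\phi_n r_n|\xi_n|\Bigr\|^p\ge C^p\,\mathbb E\Bigl\|\sum_n\phi_n r_n\Bigr\|^p=C^p\,\mathbb E\Bigl\|\sum_n r_n\,df_n\Bigr\|^p,
\]
the last equality since each side equals the $p$-th moment of $\sum_n\phi_n$ against a fresh i.i.d.\ Rademacher sequence independent of $(\phi_n)$. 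After a routine approximation extending the Wiener decoupling inequality from elementary to general progressive integrands, $\beta_{p,X}^{\gamma,+}$ bounds the norm of $\int\phi\ud\widetilde W$ by that of $\int\phi\ud W$, so $C^p\,\mathbb E\|\sum_n r_n df_n\|^p\le(\beta_{p,X}^{\gamma,+})^p\,\mathbb E\|\sum_n df_n\|^p$, i.e.\ $C\,\beta_{p,X}^{\Delta,+}\le\beta_{p,X}^{\gamma,+}\le\hbar_{p,X}$. The estimate $C\,\beta_{p,X}^{\Delta,-}\le\hbar_{p,X}$ follows in the same manner, interchanging the roles of the coupled and decoupled sums (alternatively, by duality: apply the $(+)$-case to $X^*$ and the conjugate exponent and use $\hbar_{p,X}=\hbar_{p/(p-1),X^*}$ together with the corresponding duality for decoupling constants). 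Combining the two bounds gives $\hbar_{p,X}\ge C\max\{\beta_{p,X}^{\Delta,+},\beta_{p,X}^{\Delta,-}\}$.

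The genuinely new content is the first step: the observation that $(\int\phi\ud W,\int\phi\ud\widetilde W)$ is an orthogonal, weakly differentially subordinate pair to which Theorem~\ref{thm:orthmartPhiPsi} applies directly; the accompanying reductions (to finite dimensions, $t\to\infty$, general integrands) are routine. I expect the main obstacle to lie in the second step: pinning down the exact numerical constant $C$ in the Paley--Walsh/Wiener comparison and, in particular, the $(-)$-direction, where the plain excursion computation only produces increments of Gaussian size, so that one must either route through the duality $\hbar_{p,X}=\hbar_{p/(p-1),X^*}$ or invoke the known comparison between these decoupling constants.
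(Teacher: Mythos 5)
Your argument is correct and runs parallel to the paper's proof, with one noteworthy divergence. For \eqref{eq:gammabeta+beta-estforHilbTrans} the paper does exactly what you do: set $M=\int\phi\ud W$, $N=\int\phi\ud\widetilde W$, verify orthogonality and mutual weak differential subordination via the quadratic variation computations, and apply Theorem~\ref{thm:orthmartPhiPsi}; your explicit finite-dimensional reduction (needed because that theorem is stated for $X$ with separable dual) is a small but correct piece of bookkeeping that the paper glosses over. For \eqref{eq:deltabeta+beta-estforHilbTrans}, the paper simply cites the comparison $\beta^{\gamma,+}_{p,X}\geq C\beta^{\Delta,+}_{p,X}$ from \cite[(2.5)]{Ver07} rather than re-deriving it, and then handles the minus direction by precisely the duality you describe: $\hbar_{p,X}=\hbar_{p',X^*}$ together with $\beta^{\Delta,+}_{p',X^*}\geq\beta^{\Delta,-}_{p,X}$ as in \cite{Gar90}. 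Your excursion-plus-Jensen derivation of the $(+)$-comparison is a self-contained substitute for the citation and appears sound, modulo the point you already flag: the constructed integrand is piecewise constant on \emph{random} intervals $(T_{n-1},T_n]$, so it is not elementary progressive in the sense used to define $\beta^{\gamma,\pm}_{p,X}$, and a density/approximation step is needed before $\beta^{\gamma,+}$ can be invoked. You also correctly diagnose why a naive ``interchange of roles'' fails for the minus direction: conditional Jensen gives $\mathbb E\|\sum\phi_n\xi_n\|^p\geq C^p\mathbb E\|\sum\phi_n r_n\|^p$, not the reverse, so the Gaussian-size increments can only be bounded from below by the Rademacher ones, forcing the detour through duality — which is exactly what the paper does. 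In short, same skeleton, with your proposal trading the Veraar citation for a direct (and slightly longer) proof sketch.
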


Note that $\mathbb E{\tau} \leq (\mathbb E \sqrt{\tau})^{\frac 23}(\mathbb E \tau^2)^{\frac 13}$ by H\"older's inequality, so $C$ in \eqref{eq:deltabeta+beta-estforHilbTrans} is bounded from below by $\frac{(\mathbb E \tau)^{\frac{3}{2}}}{(\mathbb E \tau^2)^{\frac 12}}\mathbb E |\gamma| = \frac{\sqrt{6}}{\sqrt {5\pi}}\approx 0.618$ (since $\mathbb E \tau = 1$ and $\mathbb E \tau^2 = \frac{5}{3}$).

\begin{proof}
 The inequality \eqref{eq:gammabeta+beta-estforHilbTrans} follows directly from the definition of $\beta_{p, X}^{\gamma,+}$ and $\beta_{p,X}^{\gamma,-}$. Indeed, for any Brownian motion $W$, elementary progressive process $\phi$, and a Brownian motion $\widetilde W$ independent of $\phi$ and $W$ we have, for any $x^*\in X^*$,
 \[
 \Bigl [\Bigl\langle \int_0^{\cdot} \phi \ud W, x^*\Bigr\rangle\Bigr]_t = \Bigl[\int_0^{\cdot}\langle  \phi, x^*\rangle \ud W\Bigr]_t = \int_0^t |\langle \phi(s), x^*\rangle|^2\ud s,
 \]
  \[
 \Bigl [\Bigl\langle \int_0^{\cdot} \phi \ud \widetilde W, x^*\Bigr\rangle\Bigr]_t = \Bigl[\int_0^{\cdot}\langle  \phi, x^*\rangle \ud \widetilde W\Bigr]_t = \int_0^t |\langle \phi(s), x^*\rangle|^2\ud s,
 \]
so $\int \phi \ud W \stackrel{w}\ll  \int \phi \ud \widetilde  W \stackrel{w}\ll \int \phi \ud W$. Moreover, by \cite[Lemma 17.10]{Kal},
\begin{align*}
  \Bigl [\Bigl\langle \int_0^{\cdot} \phi \ud W, x^*\Bigr\rangle,\Bigl\langle \int_0^{\cdot} \phi \ud \widetilde W, x^*\Bigr\rangle\Bigr]_t &=  \Bigl [\int_0^{\cdot}\langle  \phi , x^*\rangle\ud W, \int_0^{\cdot} \langle\phi, x^* \rangle\ud \widetilde W \Bigr]_t=\int_0^{t}|\langle  \phi(s) , x^*\rangle|^2 \ud [W,\widetilde W]_s=0,
\end{align*}
where the latter holds since $W$ and $\widetilde W$ are independent. Therefore $\int \phi \ud W$ and $\int \phi \ud\widetilde W$ are orthogonal local martingales satisfying the differential subordination (``in both directions''), so by Theorem \ref{thm:orthmartPhiPsi},
 \[
  \frac{1}{(\hbar_{p,X})^p}\mathbb E\Bigl\|\int_{\mathbb R_+} \phi \ud W\Bigr\|^p \leq \mathbb E\Bigl\|\int_{\mathbb R_+} \phi \ud \widetilde W\Bigr\|^p  \leq (\hbar_{p,X})^p \mathbb E\Bigl\|\int_{\mathbb R_+} \phi \ud W\Bigr\|^p.
  \]
Let us now turn to the second part. First notice that $\beta_{p, X}^{\gamma, +}\geq C \beta_{p, X}^{\Delta, +}$ (see \cite[(2.5)]{Ver07} and the discussion thereafter), so $\hbar_{p,X} \geq \beta_{p, X}^{\gamma, +}\geq C \beta_{p, X}^{\Delta, +}$. On the other hand, $X$ can be assumed UMD (and hence reflexive), so by the discussion above we have $\hbar_{p',X^*}\geq C \beta_{p', X^*}^{\Delta, +}$. But $\hbar_{p',X^*} = \hbar_{p,X}$ (since $(\mathcal H^{\mathbb T}_X)^* = \mathcal H^{\mathbb T}_{X^*}$), and $\beta_{p', X^*}^{\Delta, +} \geq \beta_{p, X}^{\Delta, -}$ analogously to \cite[Theorem 1]{Gar90}, so $\hbar_{p,X} \geq C \beta_{p, X}^{\Delta, -}$.
\end{proof}

\begin{remark}
 Notice that \eqref{eq:gammabeta+beta-estforHilbTrans} together with \cite[Theorem 3]{Gar85} yields the related estimate $
 \max\{\beta_{p, X}^{\gamma,+}, \beta_{p,X}^{\gamma,-}\} \leq \hbar_{p,X} \leq \beta_{p, X}^{\gamma,+}\beta_{p,X}^{\gamma,-}.
 $
\end{remark}

\begin{remark}
Let $X$ be a UMD Banach function space. Then inequality \eqref{eq:deltabeta+beta-estforHilbTrans} together with \cite{KLW18} provide the lower bound for $\hbar_{p,X}$ in terms of $\beta_{p,X}$ of the same order as \eqref{eq:sqrtbetaleqhleqbeta^2}. Indeed, by \cite{KLW18} thanks to Banach function space techniques one can show that
\[
\beta_{p, X} \lesssim_{p} q(c_{q,X} \beta_{p, X}^{\Delta, +})^2,
\]
where $q$ is the cotype of $X$ and $c_{q, X}$ is the corresponding cotype constant. Therefore by applying \eqref{eq:deltabeta+beta-estforHilbTrans} we get the following square root dependence:
\[
\sqrt {\beta_{p, X}}\lesssim_{p} \sqrt q c_{q, X} \hbar_{p,X}.
\]
\end{remark}

\subsection{Necessity of the UMD property}\label{sebsec:NecofUMD}

Our next result answers a very natural question about the link of the number $|\mathcal{H}_X^{\mathbb{T},0}|_{\Phi,\Psi}$ to the UMD property.
\begin{theorem}\label{UMD}
Let $\Phi$, $\Psi:X\to \R_+$ be continuous convex functions such that $\Psi(0)=0$. Assume in addition that there is a positive number $C$ such that the sets $\{x\in X:\Psi(x)<C\}$ and $\Phi(B(0,C))$ are bounded. If $|\mathcal{H}_X^{\mathbb{T},0}|_{\Phi,\Psi}<\infty$, then $X$ is UMD.
\end{theorem}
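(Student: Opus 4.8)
The plan is to establish $\hbar_{2,X}<\infty$, which by Burkholder \cite{Burk81} and Bourgain \cite{Bour83} forces $X$ to be UMD. Since $X$ is UMD precisely when $\sup\{\beta_{2,Y}\colon Y\subseteq X\ \text{finite-dimensional}\}<\infty$, and since the hypotheses descend to every finite-dimensional $Y$ with \emph{unchanged} constants (one has $|\mathcal H^{\mathbb T,0}_Y|_{\Phi|_Y,\Psi|_Y}\le|\mathcal H^{\mathbb T,0}_X|_{\Phi,\Psi}$, the set $\{\Psi|_Y<C\}$ is still bounded, and $\Phi|_Y(B_Y(0,C))$ is still bounded), it suffices, via \eqref{eq:sqrtbetaleqhleqbeta^2}, to bound $\hbar_{2,Y}$ for finite-dimensional $Y$ by a quantity depending only on $\Phi,\Psi$. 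Thus I would assume $X$ finite-dimensional and set $c:=|\mathcal H^{\mathbb T,0}_X|_{\Phi,\Psi}<\infty$. By Remark \ref{remark_2} (which produces a plurisubharmonic $U$ with $U(0)\ge0$, $U(x+iy)\le c\Phi(x)-\Psi(y)$, concave in $y$ and, by Proposition \ref{prop:PSH+concaveis2ndvar->convexin1st+cont}, continuous, and then runs the It\^o argument of Theorem \ref{thm:orthmartPhiPsi}),
\[
\mathbb E\Psi(N_t)\le c\,\mathbb E\Phi(M_t),\qquad t\ge0,
\]
holds for \emph{every} pair of orthogonal martingales with $N\stackrel{w}\ll M$ and $M_0=0$; recall also that such an $N$ is automatically continuous with $N_0=0$.

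Next I would turn the two side hypotheses into quantitative growth estimates. A continuous convex function is locally bounded, so $\Phi_*(r):=\sup_{\|x\|\le r}\Phi(x)$ is finite, convex and increasing in $r\ge0$; and boundedness of $\{\Psi<C\}$, together with $\Psi(0)=0$ and convexity (which yield $\Psi(sy)\ge s\Psi(y)$ for $s\ge1$), gives a linear minorant $\Psi(y)\ge a\|y\|-b$ with $a,b>0$. These make $K:=\tfrac1a\inf_{s>0}\tfrac{c\Phi_*(s)+b}{s}$ a finite positive constant. Applying the displayed inequality to the dilated pair $(rM,rN)$, $r>0$, letting $t\to\infty$ (Fatou on the left, dominated convergence on the right, using $M^*\le\lambda$), and then bounding $\mathbb E\Phi(rM_\infty)\le\Phi_*(r\lambda)$ and $\mathbb E\Psi(rN_\infty)\ge ar\,\mathbb E\|N_\infty\|-b$, I obtain $ar\,\mathbb E\|N_\infty\|-b\le c\,\Phi_*(r\lambda)$ for all $r>0$; optimizing in $r$ gives the (linear) weak-type bound
\[
M^*\le\lambda\ \text{a.s.}\quad\Longrightarrow\quad\mathbb E\|N_\infty\|\le K\lambda ,
\]
for all orthogonal pairs with $N\stackrel{w}\ll M$ and $M_0=0$.

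The core — and the step I expect to be the main obstacle — is a Burkholder good-$\lambda$ self-improvement of this bound, carried out for continuous $M$. Fix $\delta>0$, take $(M,N)$ orthogonal with $N\stackrel{w}\ll M$ and $M$ continuous, and set $\mu=\inf\{t:\|N_t\|>\lambda\}$, $\nu=\inf\{t:\|M_t\|>\delta\lambda\}$. On $\{\mu\le\nu\}$ the recentred, time-shifted and $\nu$-stopped pair $\hat M_s:=M_{(\mu+s)\wedge\nu}-M_\mu$, $\hat N_s:=N_{(\mu+s)\wedge\nu}-N_\mu$ is again orthogonal with $\hat N\stackrel{w}\ll\hat M$ and $\hat M_0=0$, and by continuity of $M$ satisfies $\hat M^*\le2\delta\lambda$; feeding $(\hat M,\hat N)$ into the weak-type bound and applying Doob's weak-type inequality to the submartingale $\|\hat N\|$ I get
\[
\mathbb P\bigl(N^*>2\lambda,\ M^*\le\delta\lambda\bigr)\le2K\delta\,\mathbb P\bigl(N^*>\lambda\bigr),\qquad\lambda>0 .
\]
Since $2K\delta\to0$ as $\delta\to0$, a small choice of $\delta$ together with the classical good-$\lambda$ lemma (after the usual truncation of $M$ to secure a priori $L^2$-integrability) yields $\|N^*\|_{L^2}\lesssim\|M^*\|_{L^2}$, hence $\mathbb E\|N_t\|^2\le C'\,\mathbb E\|M_t\|^2$ with $C'=C'(c,a,b,\Phi_*)$, for every continuous-$M$ orthogonal pair with $N\stackrel{w}\ll M$.

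Finally, for a step function $f:\mathbb T\to X$ with conjugate $g=\mathcal H^{\mathbb T}_Xf$ and a planar Brownian motion $W$ started at $0$ and stopped upon hitting $\mathbb T$, the processes $M=(u_f(W_t))_{t\ge0}$ and $N=(u_g(W_t))_{t\ge0}$ (Section \ref{sec:orthodiff}) are continuous, orthogonal and satisfy $N\stackrel{w}\ll M$; applying the last estimate to them and using that $W$ ends uniformly distributed on $\mathbb T$ gives $\|\mathcal H^{\mathbb T}_Xf\|^2_{L^2(\mathbb T;X)}\le C'\|f\|^2_{L^2(\mathbb T;X)}$, i.e.\ $\hbar_{2,X}^2\le C'$. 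As $C'$ depends only on $\Phi$ and $\Psi$, this is uniform over the finite-dimensional subspaces, so $\beta_{2,X}<\infty$ and $X$ is UMD. The points needing care are: the non-homogeneity of $\Phi$ and $\Psi$ — which is why the weak-type bound must be produced by optimizing a dilation and why $M$ is stopped at the fixed-proportion level $\delta\lambda$, only a linear lower bound on $\Psi$ and a local upper bound on $\Phi$ being available (checking $K>0$ here uses that $\Phi$ is unbounded, itself a consequence of the coercivity of $\Psi$); the uniformity of every constant over the finite-dimensional subspaces; and the a priori integrability needed for the good-$\lambda$ lemma.
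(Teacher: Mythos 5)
Your proof is essentially correct and does reach the conclusion, but it follows a genuinely different route from the paper, replacing the hardest intermediate step with a more direct and quantitative argument. The paper first proves a weak-type $(1,1)$ estimate $\mathbb P(N^*>1)\le C\|M\|_1$ (its Lemma 4.26) via a chain of lemmas: a qualitative probability bound (Lemma 4.23), a truncation reduction (Lemma 4.24), and then two proofs by contradiction that splice countably many independent copies of martingale pairs and invoke the Borel--Cantelli lemma (Lemmas 4.25--4.26). You instead extract explicit growth data from the hypotheses — a linear minorant $\Psi(y)\ge a\|y\|-b$ from coercivity and $\Psi(0)=0$, and a finite local maximum $\Phi_*(r)$ of $\Phi$ — and obtain a quantitative $L^\infty\!\to L^1$ bound $\mathbb E\|N_\infty\|\le K\lambda$ (for $M^*\le\lambda$) directly by applying the $\Phi,\Psi$-inequality to the dilated pair $(rM,rN)$ and optimizing in $r$. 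This bypasses the splicing/Borel--Cantelli machinery entirely and gives uniform constants over finite-dimensional subspaces for free (one just evaluates $\Phi_*$ at a single scale $s=C$). Both approaches finish with the same Burkholder good-$\lambda$ extrapolation.

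Two points in your good-$\lambda$ step deserve to be made explicit, because as written they hide the mechanism that produces the factor $\mathbb P(N^*>\lambda)$. First, the pair $\hat M,\hat N$ must be defined so that it vanishes when the interval of integration is empty — e.g.\ $\hat M_s=(M_{(\mu+s)\wedge\nu}-M_\mu)\mathbf 1_{\{\mu\le\nu\}}$ (or, as the paper does, via the stochastic integral $\int\mathbf 1_{(\mu,\nu]}\,\mathrm dM$); otherwise $\hat M_0\ne0$ on $\{\mu>\nu\}$ and the $L^\infty\!\to L^1$ bound does not apply. Second, and more importantly, the $L^\infty\!\to L^1$ bound alone gives only $\mathbb E\|\hat N_\infty\|\le 2K\delta\lambda$, hence $\mathbb P(\hat N^*>\lambda)\le 2K\delta$ — an absolute bound, \emph{without} the factor $\mathbb P(N^*>\lambda)$. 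To recover that factor you must apply the $L^\infty\!\to L^1$ bound under the conditional measure $\mathbb P(\cdot\mid\mu<\infty)$. This is legitimate because $\{\mu<\infty\}\in\mathcal F_\mu$, so $\hat M,\hat N$ remain martingales (with the same quadratic variation, hence orthogonality and weak differential subordination) under that measure, and the $\Phi,\Psi$-inequality holds on any filtered probability space; one then gets $\mathbb P(\hat N^*>\lambda\mid\mu<\infty)\le 2K\delta$, and multiplying through by $\mathbb P(\mu<\infty)=\mathbb P(N^*>\lambda)$ gives the stated good-$\lambda$ inequality. The paper avoids this conditioning because it feeds the $L^1$-norm $\|\widetilde M\|_1\le 2\delta\lambda\,\mathbb P(N^*>\lambda)$ directly into its weak-type $(1,1)$ estimate; you need the conditional argument because your bound is stated in $L^\infty$ of $M$. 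Once this is spelled out, your proof is a cleaner and more elementary alternative to the paper's.
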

\begin{rem}\label{remark_1}
It is easy to see that the assumption $\Psi(0)=0$ combined with the boundedness of $\{\Psi<C\}$ enforces the function $\Psi$ to explode ``uniformly'' in the whole space. That is, if $B(0,R)$ is the ball containing $\{\Psi<C\}$, then the convexity of $\Psi$ implies $\Psi(x)\geq C\|x\|/R$ for all $x\notin B(0,R)$. Some condition of this type is necessary, as the following simple example indicates. Take $X=\ell_\infty$ and set $\Phi(x)=|x_1|^2=\Psi(x)$. Then $|\mathcal{H}_X^{\mathbb{T},0}|_{\Phi,\Psi}=1<\infty$, while $X$ is not UMD. The reason is that the function $\Psi$ controls only the subspace generated by the first coordinate.
\end{rem}

\begin{remark}
 Note that $X$ being UMD does not imply $|\mathcal{H}_X^{\mathbb{T},0}|_{\Phi,\Psi}<\infty$. Indeed, if $\Phi$ and $\Psi$ are of different homogeneity (i.e.\ $\Phi(ax) = a^{\alpha}\Phi(x)$, $\Phi(ax) = a^{\beta}\Phi(x)$ for any $x\in X$, $a\geq 0$, and for some fixed positive $\alpha \neq \beta$), then for any nonzero step function $f:\mathbb T \to X$ such that $\int_{\mathbb T} f(s)\ud s=0$ and for any $a\geq 0$ we have that
 \begin{align*}
  \int_{\mathbb T}\Psi(\mathcal H^{\mathbb T}_X f(s))\ud s &= \frac{1}{a^{\beta}}\int_{\mathbb T}\Psi(\mathcal H^{\mathbb T}_X (af)(s))\ud s \leq \frac{1}{a^{\beta}}|\mathcal H^{\mathbb T,0}_X|_{\Phi,\Psi}\int_{\mathbb T}\Phi(af(s))\ud s=a^{\alpha-\beta}|\mathcal H^{\mathbb T,0}_X|_{\Phi,\Psi}\int_{\mathbb T}\Phi(f(s))\ud s,
 \end{align*}
 so $a^{\alpha-\beta}|\mathcal H^{\mathbb T,0}_X|_{\Phi,\Psi}\leq |\mathcal H^{\mathbb T,0}_X|_{\Phi,\Psi}$ for any $a> 0$, and since $\alpha \neq \beta$, $|\mathcal H^{\mathbb T,0}_X|_{\Phi,\Psi} = \infty$. The classical examples of such $\Phi$ and $\Psi$ are $\Phi(x) = \|x\|^p$, $\Psi(x)=\|x\|^q$, $x\in X$ for different $p$ and $q$.
\end{remark}

The proof of Theorem \ref{UMD} will exploit the following four lemmas. In what follows, $N^*=\sup_{t\geq 0}\|N_t\|$ is the maximal function of $N$.

\begin{lemma}\label{lemma1}
Under the assumptions of Theorem \ref{UMD}, there exists a constant $c_1$ depending on $\Phi$, $\Psi$ and $X$, such that if $M$, $N$ are orthogonal martingales such that $N$ is weakly differentially subordinate to $M$, $M_0=0$ and $\|M\|_\infty\leq c_1$, then $\mathbb{P}(N^*\geq 1)<1$.
\end{lemma}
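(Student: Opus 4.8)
The statement is exactly the "starting point" in Bourgain's scheme for deducing UMD from boundedness of the Hilbert transform, adapted to the present martingale language. The plan is to argue by contradiction: suppose the conclusion fails, i.e.\ for \emph{every} $c_1>0$ one can find orthogonal martingales $M,N$ with $N\stackrel{w}\ll M$, $M_0=0$, $\|M\|_\infty\le c_1$ and $\mathbb P(N^*\ge 1)=1$. The idea is to feed such a pair into the estimate of Remark~\ref{remark_2} (the $\mathbb T,0$-analogue of \eqref{eq:HTnormboudfororcontmart} valid for $M$ starting from $0$), which gives
\[
 \mathbb E\,\Psi(N_t)\le |\mathcal H^{\mathbb T,0}_X|_{\Phi,\Psi}\,\mathbb E\,\Phi(M_t),\qquad t\ge 0,
\]
and to show that the right-hand side can be made arbitrarily small (by the size bound on $\Phi(B(0,C))$ and $\|M\|_\infty\le c_1$), while the left-hand side stays bounded away from $0$. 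The latter is where Remark~\ref{remark_1} enters: since $\Psi(0)=0$ and $\{\Psi<C\}\subset B(0,R)$, convexity forces $\Psi(x)\ge C\|x\|/R$ for $\|x\|\ge R$, so $\Psi$ is ``coercive'', and if $N$ gets large with high probability then $\mathbb E\Psi(N_t)$ is bounded below.

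\medskip

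First I would make the reductions. By a stopping-time argument (stop $N$ at the first time it hits norm $1$, and use that $N$ is continuous by Remark~\ref{rem:orth+wds}, so the stopped value has norm exactly $1$ on $\{N^*\ge 1\}$), together with Lemma~\ref{lemma:sttimeargforconvexfuncandmart} applied to the convex function $\Psi$, one may assume $\|N\|_\infty\le 1$ and that $N^*=\|N_\infty\|$; one can also stop $M$ correspondingly without destroying $M_0=0$, $\|M\|_\infty\le c_1$, orthogonality or weak differential subordination (these pass to stopped processes). Passing to a large $t$, Remark~\ref{remark_2} yields $\mathbb E\Psi(N_t)\le |\mathcal H^{\mathbb T,0}_X|_{\Phi,\Psi}\,\mathbb E\Phi(M_t)$.

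\medskip

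Next I would estimate both sides. For the right-hand side: $\|M_t\|\le c_1$, so if we choose $c_1\le C$ then $M_t\in B(0,C)$, whence $\Phi(M_t)\le \sup_{x\in B(0,C)}\Phi(x)=:A<\infty$ by hypothesis, giving $\mathbb E\Phi(M_t)\le A$; but this is not yet small. To gain smallness one rescales: replace $(M,N)$ by $(\lambda M,\lambda N)$ for small $\lambda>0$ — this preserves orthogonality, $N\stackrel{w}\ll M$, $M_0=0$, and the event $\{N^*\ge \lambda\}$ now has probability $1$; however $\Psi(\lambda N)$ need no longer be coercive at scale $\lambda$. So instead the cleaner route is: keep $\|N\|_\infty\le 1$, $\mathbb P(N^*\ge 1)=1$ as above, and for the left-hand side use coercivity at the \emph{fixed} scale: on $\{N^*\ge 1\}=\{\|N_t\|\ge 1\}$ (for $t$ large, with $N$ already stopped at the hitting time of the sphere) we have $\|N_t\|= 1$ if $1\ge R$, or more carefully, if $R\le 1$ then $\Psi(N_t)\ge C\|N_t\|/R\ge C/R$ on that event; if $R>1$, shrink the target radius in the stopping argument to any $r_0$ with $r_0$ fixed and note $\mathbb P(N^*\ge r_0)=1$ too, but $\Psi(N_t)$ may be $0$ there — so one does want $r_0\ge R$, which one gets by instead stopping at the hitting time of norm $R$ after rescaling $M$ by $1/R$ first, i.e.\ run the whole argument with the threshold $R$ in place of $1$, legitimate since the family of such pairs is closed under scaling. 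Either way one obtains $\mathbb E\Psi(N_t)\ge \delta>0$ for a constant $\delta$ depending only on $\Phi,\Psi,X$. Combining, $\delta\le |\mathcal H^{\mathbb T,0}_X|_{\Phi,\Psi}\cdot \sup_{B(0,c_1)}\Phi$; but $\sup_{B(0,c_1)}\Phi\to \Phi(0)$ as $c_1\to 0$ (by continuity of $\Phi$) — hmm, and $\Phi(0)$ need not be $0$. If $\Phi(0)>0$ this gives no contradiction directly; but in that case $|\mathcal H^{\mathbb T,0}_X|_{\Phi,\Psi}$ forces $\int_{\mathbb T}\Phi(f)=\infty$ for all step $f$ of mean zero, hence $|\mathcal H^{\mathbb T,0}_X|_{\Phi,\Psi}=0$, and then the martingale inequality reads $\mathbb E\Psi(N_t)\le 0$, contradicting $\delta>0$ outright. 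If $\Phi(0)=0$, then $\sup_{B(0,c_1)}\Phi\to 0$, so choosing $c_1$ small enough that $|\mathcal H^{\mathbb T,0}_X|_{\Phi,\Psi}\cdot\sup_{B(0,c_1)}\Phi<\delta$ contradicts the displayed inequality. In both cases such a $c_1$ exists, so $\mathbb P(N^*\ge 1)<1$ must hold, proving the lemma.

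\medskip

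\textbf{Main obstacle.} The delicate point is the bookkeeping in the second paragraph: making sure that the stopping/rescaling used to simultaneously (i) bound $\|N\|_\infty$, (ii) keep $\mathbb P(N^*\ge \text{threshold})=1$, and (iii) land in the coercive regime of $\Psi$ (threshold $\ge R$) is done in a consistent order and does not inadvertently destroy $M_0=0$ or the bound $\|M\|_\infty\le c_1$ — the natural fix is to phrase the whole argument with threshold $R$ from the start and to let $c_1$ be chosen at the very end. A secondary (but routine) technical point is justifying the passage to large $t$ in Remark~\ref{remark_2} together with Fatou/monotone convergence so that the lower bound $\mathbb E\Psi(N_t)\ge\delta$ is genuinely available for some finite $t$; this uses continuity of paths of $N$ and Lemma~\ref{lemma:sttimeargforconvexfuncandmart}.
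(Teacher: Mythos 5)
Your overall strategy (rescale, invoke Remark~\ref{remark_2}, and exploit the coercivity of $\Psi$ from Remark~\ref{remark_1}) is in the same spirit as the paper's, but the execution has a genuine gap precisely at the point you flag: the case $\Phi(0)>0$. Your proposed resolution — that $\Phi(0)>0$ forces $\int_{\mathbb{T}}\Phi(f)\ud s=\infty$ and hence $|\mathcal{H}^{\mathbb{T},0}_X|_{\Phi,\Psi}=0$ — is false. That divergence phenomenon occurs for the Hilbert transform on $\mathbb{R}$ (the paper discusses it before Theorem~\ref{thm:HXTleqHXR} because $\mathbb{R}$ has infinite measure), but $\mathbb{T}$ has finite measure, so $\int_{\mathbb{T}}\Phi(f)\ud s$ is finite for every step function and nothing forces the $\Phi,\Psi$-norm on $\mathbb{T}$ to vanish. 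So the contradiction you seek simply does not appear when $\Phi(0)>0$, and the argument breaks.

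The underlying issue is that you use the coercivity of $\Psi$ only qualitatively (to get a fixed lower bound $\delta$) and then try to make the right-hand side small by shrinking $c_1$, which only drives $\sup_{B(0,c_1)}\Phi$ to $\Phi(0)$. The paper instead uses the coercivity \emph{quantitatively}: rescale the pair by a \emph{large} factor $R\lambda$ with $\lambda\ge 1$. On the event $\{\|N_t\|\ge 1\}$ one then has $\|R\lambda N_t\|\ge R\lambda\ge R$, so Remark~\ref{remark_1} gives $\Psi(R\lambda N_t)\ge C\lambda$, and Markov's inequality combined with Remark~\ref{remark_2} yields
\[
\mathbb{P}(\|N_t\|\ge 1)\le \frac{\mathbb{E}\Psi(R\lambda N_t)}{C\lambda}\le \frac{|\mathcal{H}^{\mathbb{T},0}_X|_{\Phi,\Psi}\,\mathbb{E}\Phi(R\lambda M_t)}{C\lambda}.
\]
If $\|M\|_\infty\le c_1:=C/(R\lambda)$, then $\|R\lambda M_t\|\le C$ and $\mathbb{E}\Phi(R\lambda M_t)\le \sup_{B(0,C)}\Phi=:R'$, which is a fixed finite constant by hypothesis. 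Taking $\lambda=2R'|\mathcal{H}^{\mathbb{T},0}_X|_{\Phi,\Psi}/C$ gives $\mathbb{P}(\|N_t\|\ge 1)\le 1/2$ directly, with no contradiction and no case distinction on $\Phi(0)$. The key point you missed is that the lower bound on the $\Psi$-side grows linearly in $\lambda$ while the upper bound on the $\Phi$-side stays bounded by $R'$; this is what lets the argument succeed regardless of the value of $\Phi(0)$.

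Your reductions (stop $N$ at the hitting time of a sphere, using continuity of $N$ from Remark~\ref{rem:orth+wds}, and pass to large $t$) are fine and indeed needed to go from $\mathbb{P}(\|N_t\|\ge 1)$ to $\mathbb{P}(N^*\ge 1)$, but they do not repair the core quantitative gap described above.
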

\begin{proof} Let $R$ be as in Remark \ref{remark_1} and suppose that $\Phi(B(0,C))\subseteq [-R',R']$. Then for any $\lambda\geq 1$ we have, in the light of Remark \ref{remark_2},
\begin{align*}
 \mathbb{P}(\|N_t\|\geq 1)&=\mathbb{P}(R\lambda \|N_t\|\geq R\lambda)\leq \frac{\mathbb{E}\Psi(R\lambda N_t)}{C\lambda}\leq \frac{|\mathcal{H}_X^{\mathbb{T},0}|_{\Phi,\Psi}\E \Phi(R\lambda M_t)}{C\lambda}.
 \end{align*}
It suffices to take $\lambda=\tfrac{2R'|\mathcal{H}_X^{\mathbb{T},0}|_{\Phi,\Psi}}{C}$ and $c_1=C/(R\lambda)$.
\end{proof}

\begin{lemma}\label{lemma2}
Suppose that the assumptions of Theorem \ref{UMD} are satisfied. Let $M$ and $N$ be \emph{continuous-path} orthogonal martingales such that $N$ is weakly differentially subordinate to $M$, $M_0=0$ and $\mathbb{P}(N^*> 1)=1$. Then there exist continuous-path martingales $\widetilde {M}$, $\widetilde {N}$ such that $\widetilde {N}$ is weakly differentially subordinate to $\widetilde {M}$, $\widetilde {M}_0=0$, $\mathbb{P}(\widetilde {N}^*> 1)\geq 1/2$ and $\|\widetilde {M}\|_\infty\leq 2\|M\|_1.$
\end{lemma}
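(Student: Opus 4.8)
The plan is to truncate the dominating martingale by a stopping time placed at level $2\|M\|_1$, to stop $N$ at the very same time, and then to control the probability that this truncation changes anything by means of the weak-type $(1,1)$ maximal inequality for $\|M\|$; that probability will be at most $\tfrac12$, which is exactly what is needed to keep $\widetilde N^{*}>1$ with probability at least $\tfrac12$. First one disposes of the degenerate cases. If $\|M\|_1=\infty$ one simply takes $\widetilde M=M$, $\widetilde N=N$ and there is nothing to prove. The case $\|M\|_1=0$ cannot occur, since it would force $M\equiv 0$, hence $[\langle N,x^{*}\rangle]\equiv 0$ for every $x^{*}\in X^{*}$ by $N\stackrel{w}\ll M$ (quadratic variations being nonnegative), hence $\langle N,x^{*}\rangle$ constant $=\langle N_0,x^{*}\rangle=0$ for all $x^{*}$, so $N\equiv 0$, contradicting $\mathbb P(N^{*}>1)=1$. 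So assume $0<\|M\|_1<\infty$, put $\lambda:=2\|M\|_1$, let $\tau:=\inf\{t\ge 0:\|M_t\|\ge\lambda\}$, and set $\widetilde M:=M^{\tau}$, $\widetilde N:=N^{\tau}$.

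The next step is to verify that the truncated pair inherits every structural hypothesis. That $\widetilde M$, $\widetilde N$ are continuous-path martingales with $\widetilde M_0=M_0=0$ is immediate. For orthogonality and weak differential subordination one uses that, for each $x^{*}\in X^{*}$, the stopped scalar processes satisfy $[\langle M^{\tau},x^{*}\rangle]_t=[\langle M,x^{*}\rangle]_{t\wedge\tau}$, $[\langle N^{\tau},x^{*}\rangle]_t=[\langle N,x^{*}\rangle]_{t\wedge\tau}$ and $[\langle M^{\tau},x^{*}\rangle,\langle N^{\tau},x^{*}\rangle]_t=[\langle M,x^{*}\rangle,\langle N,x^{*}\rangle]_{t\wedge\tau}$, so the nondecreasing nonnegativity of $[\langle M,x^{*}\rangle]-[\langle N,x^{*}\rangle]$ and the vanishing of the covariation are preserved under composition with the nondecreasing map $t\mapsto t\wedge\tau$. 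For the sup-norm bound one invokes path-continuity: on $\{\tau<\infty\}$ the continuous function $t\mapsto\|M_t\|$ cannot overshoot the level $\lambda$, so $\|M_\tau\|=\lambda$, while on $\{\tau=\infty\}$ one has $\|M_t\|<\lambda$ for all $t$; in either case $\|\widetilde M_t\|\le\lambda=2\|M\|_1$, i.e.\ $\|\widetilde M\|_\infty\le 2\|M\|_1$.

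Finally one estimates $\mathbb P(\widetilde N^{*}>1)$. On $\{\tau=\infty\}$ we have $\widetilde N=N$, whence $\widetilde N^{*}=N^{*}>1$ almost surely on that event, so $\mathbb P(\widetilde N^{*}>1)\ge\mathbb P(\tau=\infty)$. Since $\|\cdot\|$ is convex, $(\|M_t\|)_{t\ge 0}$ is a nonnegative submartingale, and the weak-type $(1,1)$ maximal inequality gives
$$\mathbb P(\tau<\infty)\le\mathbb P(M^{*}\ge\lambda)\le\lambda^{-1}\sup_{t\ge 0}\mathbb E\|M_t\|=\frac{\|M\|_1}{\lambda}=\frac12,$$
so $\mathbb P(\widetilde N^{*}>1)\ge 1-\tfrac12=\tfrac12$. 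The only step calling for genuine care is the one in the middle paragraph — checking that a common stopping time does not destroy orthogonality or weak differential subordination, and that continuity of the norm upgrades the truncation to an honest uniform bound; the rest is the textbook maximal estimate, and I expect no real obstacle.
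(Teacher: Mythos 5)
Your proposal is correct and follows essentially the same route as the paper: stop both $M$ and $N$ at $\tau=\inf\{t\geq 0:\|M_t\|\geq 2\|M\|_1\}$, use path-continuity to get the uniform bound on $\widetilde M$, and use Doob's maximal inequality for the nonnegative submartingale $\|M\|$ to bound $\mathbb P(\tau<\infty)\leq\tfrac12$. The only (minor) additions you make beyond the paper's argument are the explicit disposal of the degenerate cases $\|M\|_1\in\{0,\infty\}$ and the verification that stopping at a common time preserves orthogonality and weak differential subordination — both worthwhile sanity checks, but not a different method.
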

\begin{proof}
Define $\tau=\inf\{t\geq 0:\|M_t\|\geq 2\|M\|_1\}$ (as usual, $\inf\varnothing=+\infty$) and put $\widetilde {M}=M^\tau$, $\widetilde {N}=N^\tau$. Since $M$ has continuous paths and starts from $0$, we have $\|\widetilde {M}\|_\infty\leq 2\|M\|_1$. Furthermore, $\mathbb{P}(\widetilde {N}^*> 1)\geq \mathbb{P}(\widetilde {N}=N)\geq 1/2$,
since
$$
\mathbb{P}(\widetilde {N}\neq N)=\mathbb{P}(\tau<\infty)=\mathbb{P}(M^*\geq 2\|M\|_1)\leq 1/2
$$
by \cite[Theorem 1.3.8(i)]{KS}.
\end{proof}

\begin{lemma}\label{lemma3}
Suppose that the assumptions of Theorem \ref{UMD} are satisfied. Then there exists a constant $c>0$ such that if $M$, $N$ are continuous-path orthogonal martingales such that $N$ is weakly differentially subordinated to $M$, $M_0=0$ and $N^*> 1$ almost surely, then $\|M\|_1\geq c$.
\end{lemma}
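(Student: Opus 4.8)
The plan is to combine Lemmas \ref{lemma1} and \ref{lemma2} with a quantitative reworking of the computation behind Lemma \ref{lemma1}, inserting an auxiliary stopping time to bridge between maximal-function and fixed-time information. Let $R$ be the radius from Remark \ref{remark_1} (so $\Psi(x)\ge C\|x\|/R$ whenever $\|x\|\ge R$) and put $R':=\sup_{\|x\|\le C}\Phi(x)<\infty$. I would show that $c:=\min\{C/(2R),\,C^2/(4RR'|\mathcal H^{\mathbb T,0}_X|_{\Phi,\Psi})\}$ works (with the convention $C^2/0=+\infty$; note $c>0$ because $|\mathcal H^{\mathbb T,0}_X|_{\Phi,\Psi}<\infty$). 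Given $(M,N)$ as in the statement, if $\|M\|_1\ge C/(2R)$ there is nothing to do, so I assume $\|M\|_1<C/(2R)$. Since $\mathbb P(N^*>1)=1$, Lemma \ref{lemma2} yields continuous-path martingales $\widetilde M=M^{\tau}$, $\widetilde N=N^{\tau}$ (obtained by stopping in the proof of Lemma \ref{lemma2}, hence still orthogonal) with $\widetilde N\stackrel{w}\ll\widetilde M$, $\widetilde M_0=0$, $\|\widetilde M\|_\infty\le 2\|M\|_1$ and $\mathbb P(\widetilde N^*>1)\ge 1/2$.

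The step I expect to carry the real weight is a further stopping at $\rho:=\inf\{t\ge 0:\|\widetilde N_t\|\ge 1\}$: with $\bar M:=\widetilde M^{\rho}$, $\bar N:=\widetilde N^{\rho}$, these remain orthogonal continuous-path martingales with $\bar N\stackrel{w}\ll\bar M$, $\bar M_0=0$, $\|\bar M\|_\infty\le 2\|M\|_1$. Since $\widetilde N$ is continuous and $\widetilde N_0=0$ (Remark \ref{rem:orth+wds}), one has $\|\widetilde N_\rho\|=1$ on $\{\rho<\infty\}$, so $\|\bar N_t\|\ge 1$ on $\{\rho\le t\}$ and $\mathbb P(\rho<\infty)\ge\mathbb P(\widetilde N^*>1)\ge 1/2$. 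This is precisely what converts the statement ``$\sup_t\|\widetilde N_t\|>1$ with probability $\ge 1/2$'' (which controls no individual $\|\widetilde N_t\|$) into the usable bound $\liminf_{t\to\infty}\mathbb P(\|\bar N_t\|\ge 1)\ge 1/2$.

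Finally I would rerun the estimate from the proof of Lemma \ref{lemma1} on the scaled pair. Set $\lambda:=C/(2R\|M\|_1)>1$. For each $t$, on $\{\|\bar N_t\|\ge1\}$ we have $\|R\lambda\bar N_t\|\ge R\lambda>R$, hence $\Psi(R\lambda\bar N_t)\ge C\lambda\|\bar N_t\|\ge C\lambda$ by Remark \ref{remark_1}, so Chebyshev's inequality gives $\mathbb P(\|\bar N_t\|\ge1)\le\mathbb E\,\Psi(R\lambda\bar N_t)/(C\lambda)$. The martingales $R\lambda\bar M$, $R\lambda\bar N$ are orthogonal, $R\lambda\bar N\stackrel{w}\ll R\lambda\bar M$, $R\lambda\bar M$ starts from $0$, and $\Phi,\Psi$ are convex with $\Psi(0)=0$ and $|\mathcal H^{\mathbb T,0}_X|_{\Phi,\Psi}<\infty$, so Remark \ref{remark_2} bounds $\mathbb E\,\Psi(R\lambda\bar N_t)$ by $|\mathcal H^{\mathbb T,0}_X|_{\Phi,\Psi}\,\mathbb E\,\Phi(R\lambda\bar M_t)$; and $\|R\lambda\bar M_t\|\le 2R\lambda\|M\|_1=C$ forces $\Phi(R\lambda\bar M_t)\le R'$. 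Altogether $\mathbb P(\|\bar N_t\|\ge1)\le 2RR'|\mathcal H^{\mathbb T,0}_X|_{\Phi,\Psi}\|M\|_1/C^2$ for every $t$; letting $t\to\infty$ and comparing with $\liminf_t\mathbb P(\|\bar N_t\|\ge1)\ge 1/2$ yields $\|M\|_1\ge C^2/(4RR'|\mathcal H^{\mathbb T,0}_X|_{\Phi,\Psi})\ge c$ (if $R'=0$ or $|\mathcal H^{\mathbb T,0}_X|_{\Phi,\Psi}=0$ the displayed estimate already contradicts the lower bound $1/2$, so the case $\|M\|_1<C/(2R)$ does not arise). The only remaining items, all routine, are to check that orthogonality, weak differential subordination, path-continuity and the property of starting from $0$ survive both stoppings and multiplication by the positive constant $R\lambda$, and that the expectations involved are finite --- which holds since $\bar N$ is bounded in norm by $1$ and $R\lambda\bar M$ is bounded in norm by $C$.
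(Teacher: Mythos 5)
Your proof is correct, and it is genuinely different from the one in the paper. The paper argues by contradiction: assuming no positive $c$ exists, it extracts a sequence of pairs $(M^j,N^j)$ with $\|M^j\|_1\le 2^{-j-1}c_1$, applies Lemma~\ref{lemma2} to each, makes all the resulting pairs independent, splices them into a single pair $(M,N)$ via~\eqref{splice}, and invokes the Borel--Cantelli lemma to force $\mathbb P(N^*\ge 1)=1$ while $\|M\|_\infty\le c_1$, contradicting Lemma~\ref{lemma1}. You instead give a direct, quantitative argument: after applying Lemma~\ref{lemma2}, you introduce the additional stopping time $\rho=\inf\{t:\|\widetilde N_t\|\ge 1\}$, which exploits path-continuity to convert the maximal-function information $\mathbb P(\widetilde N^*>1)\ge 1/2$ into the fixed-time bound $\liminf_t\mathbb P(\|\bar N_t\|\ge 1)\ge 1/2$, and then you rerun the Chebyshev--plus--Remark~\ref{remark_2} computation that underlies Lemma~\ref{lemma1} on the scaled stopped pair $(R\lambda\bar M,R\lambda\bar N)$. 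This produces an explicit constant $c=\min\{C/(2R),\,C^2/(4RR'|\mathcal H^{\mathbb T,0}_X|_{\Phi,\Psi})\}$ and avoids both the splicing construction and the appeal to Borel--Cantelli. The trade-off is that the paper's contradiction argument uses Lemma~\ref{lemma1} as a black box, whereas your route essentially re-derives Lemma~\ref{lemma1}'s estimate in the stopped setting; but your approach is more elementary, self-contained, and yields an explicit lower bound for $c$, which the paper's argument does not. All the small points you flag as routine (preservation of orthogonality, weak differential subordination, path-continuity and $\bar M_0=0$ under stopping and positive scaling, the continuity argument giving $\|\widetilde N_\rho\|=1$ on $\{\rho<\infty\}$, and integrability since $\|\bar N\|\le 1$ and $\|R\lambda\bar M\|\le C$) do indeed check out.
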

\begin{proof} Let $c_1$ be the number guaranteed by Lemma \ref{lemma1}.
Suppose that such a $c$ does not exist. Then for any positive integer $j$ there exist a pair $(M^j,N^j)$ of orthogonal martingales such that $N^j$ is weakly differentially subordinate to $M^j$, $M^j_0=0$, $\mathbb{P}((N^j)^*> 2)=1$ and $\|M^j\|_1\leq 2^{-j-1}c_1$. By Lemma \ref{lemma2}, for each $j$ there is a pair $(\widetilde {M}^j,\widetilde {N}^j)$ of orthogonal, weakly differentially subordinate martingales satisfying $\widetilde {M}^j_0=0$, $\mathbb{P}((\widetilde {N}^j)^*> 2)\geq 1/2$ and $\|\widetilde {M}^j\|_\infty\leq 2^{-j}c_1$. We may assume that the underlying probability space is the same for all pairs and that all the pairs are independent. For each $j$ there is a positive number $t_j$ such that the event
$$ A_j=\{\|\widetilde {N}^j_t\|>2\mbox{ for some }t\leq t_j\}$$
has probability greater than $1/3$. Set $t_0=0$ and consider the martingale pair $(M,N)$ defined as follows: if $t\in [t_0+t_1+\ldots+t_n,t_0+t_1+\ldots+t_{n+1})$ for some $n$, then
\begin{equation}\label{splice}
 M_t=\widetilde {M}_{t_1}^1+\widetilde {M}_{t_2}^2+\ldots+\widetilde {M}_{t_n}^n+\widetilde {M}_{t-t_1-t_2-\ldots-t_n}^{n+1},
\end{equation}
and analogously for $N$. Then $M$ and $N$ are orthogonal, $N$ is weakly differentially subordinate to $M$, $M_0=0$ and
$$ \|M\|_\infty\leq \sum_{j=1}^\infty \|\widetilde {M}^j\|_\infty\leq \sum_{j=1}^\infty 2^{-j}c_1=c_1.$$
Furthermore, by Borel-Cantelli lemma,
$$ \mathbb{P}(N^*\geq 1)\geq \mathbb{P}\left(\limsup_{j\to\infty} A_j\right)=1,$$
since the events $A_j$ are independent and $\sum_{j=1}^\infty \mathbb{P}(A_j)=\infty$. Therefore we have that $\|M\|_{\infty} \leq c_1$, $\mathbb P(N^*\geq 1)=1$, $N \stackrel{w}\ll M$, and $M$ and $N$ are orthogonal, which contradicts the assertion of Lemma \ref{lemma1}.
\end{proof}

\begin{lemma}
Suppose that the assumptions of Theorem \ref{UMD} are satisfied. Then there exists a positive constant $C$ such that if $M$, $N$ are continuous-path orthogonal martingales such that $N$ is weakly differentially subordinate to $M$ and $M_0=0$, then
\begin{equation}\label{weak_type}
 \mathbb{P}(N^*>1)\leq C\|M\|_1.
\end{equation}
\end{lemma}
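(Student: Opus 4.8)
The plan is to argue by contradiction and to reduce, through a concatenation of many independent copies of hypothetical counterexamples, to the scaled form of Lemma~\ref{lemma3}; recall that the proof of Lemma~\ref{lemma3} itself rests on Lemmas~\ref{lemma1} and \ref{lemma2}, so all four ingredients enter. Call a pair $(M,N)$ \emph{admissible} if $M,N$ are continuous-path orthogonal martingales with $N\stackrel{w}\ll M$ and $M_0=0$. Suppose the assertion fails for every constant; then for each $n\geq 1$ there is an admissible pair $(M^n,N^n)$ with $p_n:=\mathbb{P}((N^n)^*>1)>4^n\|M^n\|_1$, and necessarily $0<p_n\leq 1$ and $0<\|M^n\|_1\leq 4^{-n}$. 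Since $(N^n)^*=\sup_{t\geq 0}\|N^n_t\|$, we may fix a finite time $t_n\geq 1$ with $\mathbb{P}\big(\sup_{s\leq t_n}\|N^n_s\|>1\big)\geq p_n/2$.

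First I would assemble, for each $n$, a \emph{block}: take $m_n:=\lceil 2(n+2)/p_n\rceil$ independent copies of $(M^n,N^n)$, each run up to time $t_n$, and splice them additively into one continuous admissible pair $(\overline M^{\,n},\overline N^{\,n})$, each copy starting where the previous one ended (a routine product-space construction under which weak differential subordination, orthogonality, path-continuity and $\overline M^{\,n}_0=0$ are inherited from the pieces). By the choice of $m_n$, the probability that none of the $m_n$ copies has its $N$-coordinate exceed norm $1$ is at most $(1-p_n/2)^{m_n}\leq e^{-m_np_n/2}\leq e^{-(n+2)}<2^{-(n+2)}$. Moreover the value of each copy at time $t_n$ has expected norm at most $\|M^n\|_1$, so a triangle inequality along the splice gives $\|\overline M^{\,n}\|_1\leq m_n\|M^n\|_1\leq\big(2(n+2)/p_n+1\big)\|M^n\|_1<(2n+5)4^{-n}$. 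The decisive point is that the factor $1/p_n$ forced by the number of copies is absorbed by the gain $\|M^n\|_1/p_n\leq 4^{-n}$ built into the counterexamples, so the $L^1$-size of a block decays geometrically in $n$.

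Next I would fix $n_0$ so large that $\sum_{n\geq n_0}(2n+5)4^{-n}<c/2$, where $c$ is the constant of Lemma~\ref{lemma3}, and splice the blocks $(\overline M^{\,n},\overline N^{\,n})$, $n\geq n_0$, into one admissible pair $(\overline M,\overline N)$; as above $\|\overline M\|_1\leq\sum_{n\geq n_0}\|\overline M^{\,n}\|_1<c/2$. Since distinct blocks use independent copies and $\sum_n 2^{-(n+2)}<\infty$, the Borel--Cantelli lemma shows that almost surely at least one block \emph{succeeds}, i.e.\ at least one of its copies has its $N$-coordinate exceed norm $1$ at some internal time. On such a path let $w$ be the value of $\overline N$ at the instant the successful copy begins, i.e.\ the accumulated contribution of all earlier copies and blocks; then $\overline N^*\geq\|w\|$, while at the instant that copy's $N$-coordinate exceeds norm $1$ we have $\|\overline N\|>1-\|w\|$, and adding these two estimates yields $2\,\overline N^*>1$. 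Hence $(2\overline N)^*>1$ almost surely, so Lemma~\ref{lemma3} applied to the admissible pair $(2\overline M,2\overline N)$ forces $\|2\overline M\|_1\geq c$, i.e.\ $\|\overline M\|_1\geq c/2$, contradicting $\|\overline M\|_1<c/2$. This proves the asserted weak-type estimate with some finite $C$.

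I expect the main obstacle to be exactly the size bookkeeping in the second step: pushing a possibly microscopic success probability $p_n$ up to $\approx 1$ requires $\sim 1/p_n$ copies, and a priori this could spoil the $L^1$-size of a block; it is controlled only because the hypothetical counterexamples are chosen with the \emph{super-linear} gap $\mathbb{P}((N^n)^*>1)\geq 4^n\|M^n\|_1$, which makes $\sum_n m_n\|M^n\|_1$ converge. A secondary technical nuisance is that additive splicing produces cancellations between blocks, so the level $1$ is not preserved verbatim; this is precisely what the elementary bound $2\,\overline N^*>\|w\|+(1-\|w\|)=1$ repairs (alternatively one could let the copies overshoot the level $2$), at the cost only of absolute constants. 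Finally, the verification that an infinite concatenation of independent continuous admissible pairs is again a continuous admissible pair is routine, and I would only sketch it.
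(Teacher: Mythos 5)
Your proof is correct and takes essentially the same approach as the paper: both argue by contradiction, take many independent copies of the hypothetical counterexamples (with the number of copies per group tuned so that a success occurs almost surely while the $L^1$-cost stays below $c$), splice them additively into a single continuous admissible pair, and invoke Lemma~\ref{lemma3} via Borel--Cantelli. The only differences are bookkeeping: the paper works directly with level-$2$ counterexamples and $\lceil 1/a_j\rceil$ copies per group, whereas you use level-$1$ counterexamples with $\lceil 2(n+2)/p_n\rceil$ copies per block and rescale by $2$ at the end.
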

\begin{proof}
Let $c$ be the constant guaranteed by the previous lemma.
Suppose that the assertion is not true. Then for any positive integer $j$ there is a martingale pair $(M^j,N^j)$ satisfying the usual structural properties such that
\begin{equation}\label{reverse}
 \mathbb{P}((N^j)^*>2)> 2^{j+1}c^{-1}\|M^j\|_1.
\end{equation}
We splice these martingale pairs into one pair $(M,N)$ as previously, however, this time we allow pairs to appear several times. More precisely, denote $a_j=\mathbb{P}((N^*)^j> 2)$. Consider $\lceil 1/a_1\rceil$ copies of $(M^1,N^1)$, $\lceil  1/a_2\rceil$ copies of $(M^2,N^2)$, and so on (all the pairs are assumed to be independent). Let $t_j$ be positive numbers such that the events $A_j=\{\|N^j_t\|>2\mbox{ for some }t\leq t_j\}$ have probability greater than $a_j/2$. Splice the aforementioned independent martingale pairs (with multiplicities) into one pair $(M,N)$ using a formula analogous to \eqref{splice}. Then, by \eqref{reverse},
$$
\|M\|_1\leq \sum \|M^j\|_1\leq \sum_{j=1}^\infty \left\lceil \frac{1}{a_j}\right\rceil \|M^j\|_1\leq \sum_{j=1}^\infty \frac{2}{a_j}\cdot a_j c2^{-j-1}=c
$$
and, again by Borel-Cantelli lemma, $ \mathbb{P}(N^*>1)=1$. Here we use the independence of the events $A_j$ and
$$
\sum \mathbb{P}(A_j)\geq \sum_{j=1}^\infty \frac{1}{a_j}\cdot \frac{a_j}{2}=\infty.
$$
This contradicts Lemma \ref{lemma3}.
\end{proof}

\begin{proof}[Proof of Theorem \ref{UMD}]
We will prove that theorem using the well-known extrapolation technique (\mbox{good-$\lambda$} inequalities) of Burkholder \cite{Bur73}.

{\em Step 1.} First we show that for any fixed $0<\delta<1$ and $\beta>1$ there exists $\eps>0$ depending only on $\delta$, $\beta$, and $X$ such that for any orthogonal \emph{continuous-path} martingales $M,N:\mathbb R_+ \times \Omega \to X$ with $M_0=N_0=0$ and $N \stackrel{w}\ll M$,
\begin{equation}\label{eqLgoodlambdaorthcontmart}
  \mathbb P(N^*>\beta \lambda, M^* \leq \delta\lambda) \leq \eps \mathbb P(N^* >\lambda)
\end{equation}
for any $\lambda >0$. Without loss of generality assume that both martingales take their values in a finite-dimensional subspace of $X$. Define three stopping times
\begin{equation}\label{eq:goodlambdastoptimes}
 \begin{split}
   \mu&:= \inf\{t\geq 0: \|N_t\| > \lambda\},\\
 \nu&:= \inf\{t\geq 0: \|M_t\| > \delta\lambda\},\\
 \sigma&:= \inf\{t\geq 0: \|N_t\| > \beta\lambda\}.
 \end{split}
\end{equation}
All the stopping times are predictable since $M$ and $N$ are continuous. Therefore, the equation $U(t)=\mathbf 1_{[\mu, \nu\wedge \sigma ]}(t)$ defines a predictable process {(where $U=0$ on $\mathbb R_+$ if $\nu\wedge \sigma < \mu$)}, which in turn gives rise to  the martingales
{
\begin{equation}\label{eq:goodlambdatildeMN}
 \begin{split}
   \widetilde M &:= \int U\ud M= M^{\nu\wedge\delta} - M^{\mu\wedge \nu\wedge\delta},\\
 \widetilde N &:= \int U \ud N= N^{\nu\wedge\delta} - N^{\mu \wedge \nu\wedge\delta}.
 \end{split}
\end{equation}
}
Notice that {by \eqref{eq:goodlambdastoptimes} and \eqref{eq:goodlambdatildeMN}}, $\widetilde M^* \leq 2\delta\lambda$ on $\{\mu <\infty\}$ and $\widetilde M^* = 0$ on $\{\mu=\infty\}$, so
\begin{equation}\label{eq:goodlambdawidetildeMismajbywidetildeN}
 \|\widetilde M\|_1 \leq  2\delta \lambda \mathbb P(N^* > \lambda).
\end{equation}
Since $\widetilde N \stackrel{w}\ll \widetilde M$, $\widetilde M_0=\widetilde N_0=0$ and $\widetilde M$ and $\widetilde N$ are orthogonal,
\[
  \mathbb P(N^*>\beta \lambda, M^* \leq \delta\lambda) \leq \mathbb P(\widetilde N^* > (\beta-1)\lambda) \stackrel{(i)}\leq \frac{C}{(\beta-1)\lambda} \|\widetilde M\|_1 \stackrel{(ii)}\leq \frac{2\delta  C}{(\beta-1)}\mathbb P(N^* > \lambda),
\]
where $(i)$ follows from \eqref{weak_type} with the same constant $C$ depending only on $X$, and $(ii)$ follows from \eqref{eq:goodlambdawidetildeMismajbywidetildeN}. Therefore \eqref{eqLgoodlambdaorthcontmart} holds with $\eps = {2\delta C}/{(\beta-1)}$.

\smallskip

{\em Step 2.}
Now a straightforward integration argument (cf. \cite[Lemma 7.1]{Bur73}), together with Doob's maximal inequality, yield the $L^p$ estimate
$$
\sup_{t\geq 0}\|N_t\|_p\leq \|N^*\|_p\leq C_{p, X}\|M^*\|_p\leq \frac{pC_{p,X}}{p-1}\sup_{t\geq 0}\|M_t\|_p,\qquad 1<p<\infty,
$$
for any pair of continuous, orthogonal, differentially subordinated martingales such that $M_0=0$. Here
\begin{equation}\label{eq:defCpX}
C_{p,X}^p=\frac{\delta^{-p}\beta^p}{1-\beta^p\cdot 2\delta C/{(\beta-1)}},
\end{equation}
which, if we let $\beta=1+p^{-1}$ and $\delta=(10Cp)^{-1}$,
depends only on $p$ and the constant in \eqref{weak_type}. This in turn yields the corresponding $L^p$ inequality for the periodic Hilbert transform for functions of integral $0$. By Theorem \ref{thm:HTnormthesameforTSZT,0} the assumption on the zero-average can be omitted, and hence $X$ is UMD by \cite[Corollary 5.2.11]{HNVW1}.
\end{proof}

Now we will take a closer look at the classical ``LlogL'' estimates of Zygmund \cite{ZygTS02}.
For a Banach space $X$ and a step function $f:\mathbb T \to X$, we define
\[
 \|f\|_{L\log L(\mathbb T;X)} := \int_{\mathbb T} (\|f(s)\|+1)\log(\|f(s)\|+1)\ud s
\]
and denote
{
$$
\hbar_{L\log L,X}=|\mathcal H^{\mathbb T}_X|_{L\log L(\mathbb T;X)\to L^1(\mathbb T;X)} := \sup_{f:\mathbb T \to X \; \text{step}}\frac{\|\mathcal H^{\mathbb T}_Xf\|_{L^1(\mathbb T;X)}}{\|f\|_{L\log L(\mathbb T;X)}}.
$$

\begin{remark}
In the light of Theorem \ref{thm:HTnormthesameforTSZT,0},  we have
 \begin{align*}
  \hbar_{L\log L,X} = |\mathcal H_X^{\mathbb T,0}|_{L\log L(\mathbb T;X)\to L^1(\mathbb T;X)} &= |\mathcal H_X^{\mathbb R}|_{L\log L(\mathbb R;X)\to L^1(\mathbb R;X)}=|\mathcal H_X^{\rm dis}|_{L\log L(\mathbb Z;X)\to L^1(\mathbb Z;X)}
 \end{align*}
for any Banach space $X$.
\end{remark}

We will establish the following statement.

\begin{theorem}\label{cor:LlogLiffUMD}
  Let $X$ be a Banach space. Then $X$ has the UMD property if and only if $\hbar_{L\log L,X}<\infty$.
\end{theorem}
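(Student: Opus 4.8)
The plan is to establish the two implications of the equivalence separately. The ``if'' part will be an essentially immediate consequence of Theorem \ref{UMD}, while the ``only if'' part requires a classical Zygmund-type endpoint argument, fed by the weak-type information already produced inside the proof of Theorem \ref{UMD}.

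For the direction ``$\hbar_{L\log L,X}<\infty\Rightarrow X$ is UMD'', I would apply Theorem \ref{UMD} with $\Phi(x)=(\|x\|+1)\log(\|x\|+1)$ and $\Psi(x)=\|x\|$. Since $t\mapsto(t+1)\log(t+1)$ is convex and increasing on $[0,\infty)$, the function $\Phi$ is continuous, convex, symmetric and satisfies $\Phi(0)=0$; moreover $\Psi$ is continuous, convex, $\Psi(0)=0$, the sublevel set $\{x:\Psi(x)<C\}=B(0,C)$ is bounded and $\Phi(B(0,C))$ is bounded for every $C>0$. With this choice the $\Phi,\Psi$-norm of $\mathcal H_X^{\mathbb T}$ is literally $\hbar_{L\log L,X}$, so $|\mathcal H_X^{\mathbb T,0}|_{\Phi,\Psi}\le|\mathcal H_X^{\mathbb T}|_{\Phi,\Psi}=\hbar_{L\log L,X}<\infty$, and Theorem \ref{UMD} yields that $X$ is UMD.

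For the converse, assume $X$ is UMD. The key observation is that the hypotheses of Theorem \ref{UMD} are themselves satisfied by $X$ with the admissible pair $\Phi=\Psi=\|\cdot\|^2$, because $|\mathcal H_X^{\mathbb T,0}|_{\|\cdot\|^2,\|\cdot\|^2}\le\hbar_{2,X}^2<\infty$; hence the whole chain of lemmas in the proof of Theorem \ref{UMD} applies to $X$, and in particular the weak-type estimate \eqref{weak_type} holds (and, by homogeneity, in the rescaled form $\mathbb P(N^*>\lambda)\le C_X\lambda^{-1}\|M\|_1$ for all $\lambda>0$) for all continuous-path orthogonal martingales $N\stackrel{w}\ll M$ with $M_0=0$. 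Next I would transfer this to the periodic Hilbert transform via the Doob/Brownian-motion construction of Section \ref{sec:orthodiff}: given a step function $f$, replace it by $f-\frac1{2\pi}\int_{\mathbb T}f$ (this leaves $\mathcal H_X^{\mathbb T}f$ unchanged and makes the harmonic extension vanish at the origin), compose the harmonic extensions of this centered function and of $\mathcal H_X^{\mathbb T}f$ with a Brownian motion in the disc, and use that $W_\infty$ is uniform on $\mathbb T$; this gives the weak-type $(1,1)$ bound $|\{\|\mathcal H_X^{\mathbb T}f\|>\lambda\}|\lesssim_X\lambda^{-1}\|f\|_{L^1(\mathbb T;X)}$. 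Since $X$ is UMD, $\mathcal H_X^{\mathbb T}$ is also bounded on $L^2(\mathbb T;X)$. The final step is then the classical Calder\'on--Zygmund/layer-cake interpolation between weak type $(1,1)$ and strong type $(2,2)$ on the finite measure space $\mathbb T$ --- splitting $f=f\mathbf 1_{\{\|f\|\le\lambda\}}+f\mathbf 1_{\{\|f\|>\lambda\}}$ at each height $\lambda$, estimating the two summands by the $L^2$- and the weak-$(1,1)$-bounds respectively, and integrating in $\lambda$ --- which yields $\mathcal H_X^{\mathbb T}:L\log L(\mathbb T;X)\to L^1(\mathbb T;X)$, i.e.\ $\hbar_{L\log L,X}<\infty$.

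The main obstacle is precisely the step of making the weak-type $(1,1)$ inequality available: one has to recognise that \eqref{weak_type}, proved only as an auxiliary ingredient under the standing hypotheses of Theorem \ref{UMD}, is in fact valid for every UMD space (via the quadratic pair $\|\cdot\|^2$) and can then be pushed forward from orthogonal martingales to $\mathcal H_X^{\mathbb T}$ itself. Once the weak-type bound is in hand, the remaining endpoint estimate is entirely standard. A minor technical point one must not overlook is that the transferred martingale $M$ must start from $0$ for \eqref{weak_type} to apply, which is exactly why one centers $f$ before passing to its harmonic extension.
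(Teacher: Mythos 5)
Your first direction ($\hbar_{L\log L,X}<\infty\Rightarrow X$ UMD) is precisely the paper's argument: apply Theorem \ref{UMD} with $\Phi(x)=(\|x\|+1)\log(\|x\|+1)$ and $\Psi(x)=\|x\|$. For the converse you take a genuinely different route. The paper invokes Lemma \ref{lem:hbarleq1/p-1} (the quantitative bound $\hbar_{p,X}\le C_X\,p/(p-1)$ for $1<p<2$) together with Yano's extrapolation theorem. You instead observe that the admissible pair $\Phi=\Psi=\|\cdot\|^2$ satisfies the hypotheses of Theorem \ref{UMD} because $|\mathcal H_X^{\mathbb T,0}|_{\Phi,\Psi}\le\hbar_{2,X}^2<\infty$, so the weak-type estimate \eqref{weak_type} is available; you push it to $\mathcal H_X^{\mathbb T}$ via the Brownian/Doob transference of Section \ref{sec:orthodiff} (with the centering needed so that $M_0=0$), and then carry out the classical layer-cake interpolation against $\hbar_{2,X}<\infty$ on the finite measure space $\mathbb T$. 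This is essentially the paper's route unrolled: Lemma \ref{lem:hbarleq1/p-1} is itself derived from the good-$\lambda$/weak-type machinery, and Yano's theorem is a packaged form of the interpolation you carry out by hand. Your version is more self-contained and makes the role of the weak-type $(1,1)$ bound explicit; the paper's is terser.

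One caveat worth flagging, which applies equally to your proof and to the paper's: both the layer-cake argument and Yano's theorem produce an inequality of the form $\|\mathcal H_X^{\mathbb T}f\|_{L^1}\le C_X\bigl(\int_{\mathbb T}\|f\|\max\{\log\|f\|,0\}\ud s + 1\bigr)$, with an additive constant coming from the finite measure of $\mathbb T$. This does not literally yield $\hbar_{L\log L,X}<\infty$ with the modular definition $\|f\|_{L\log L}=\int_{\mathbb T}(\|f\|+1)\log(\|f\|+1)\ud s$, because the modular can be arbitrarily small without controlling the additive term: for $f=\mathbf 1_{[0,a]}$ one has $\|f\|_{L\log L}=2a\log 2$ while $\|\mathcal H_{\mathbb R}^{\mathbb T}f\|_{L^1}$ is of order $a\log(1/a)$, so the defining ratio blows up as $a\to 0^+$. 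The statement becomes correct if $\hbar_{L\log L,X}$ is read via the Luxemburg norm of the Orlicz space $L\log L(\mathbb T;X)$ rather than the modular, but this is a feature of the stated definition shared with the paper, not a defect of your argument relative to the paper's.
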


For the proof we will need the following lemma.

\begin{lemma}\label{lem:hbarleq1/p-1}
 Let $X$ be a UMD Banach space. Then there exists a constant $C_X$ depending only on $X$ such that $\hbar_{p,X}\leq C_X\frac{ p}{p-1}$ for all $1<p<2$.
\end{lemma}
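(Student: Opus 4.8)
The plan is to establish the bound $\hbar_{p,X}\leq C_X\frac{p}{p-1}$ for $p$ near $1$ by running the good-$\lambda$ machinery of the proof of Theorem~\ref{UMD} once more, but keeping careful track of the constants. Concretely, since $X$ is UMD it is in particular reflexive and the weak-type estimate \eqref{weak_type} holds with some constant $C=C_X$ depending only on $X$, for all continuous-path orthogonal martingales $M,N$ with $N\stackrel{w}\ll M$ and $M_0=0$. Feeding this into Step~1 of the proof of Theorem~\ref{UMD} gives the good-$\lambda$ inequality \eqref{eqLgoodlambdaorthcontmart} with $\eps=2\delta C_X/(\beta-1)$, and then Burkholder's integration lemma \cite[Lemma~7.1]{Bur73} together with Doob's maximal inequality yields, for any such pair,
\[
 \sup_{t\geq 0}\|N_t\|_p \leq C_{p,X}\,\sup_{t\geq 0}\|M_t\|_p,\qquad C_{p,X}^p = \frac{\delta^{-p}\beta^p}{1-\beta^p\cdot 2\delta C_X/(\beta-1)},
\]
exactly as in \eqref{eq:defCpX}. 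The point now is simply to choose $\beta$ and $\delta$ as functions of $p$ so that $C_{p,X}=O\bigl(\tfrac{1}{p-1}\bigr)$ as $p\downarrow 1$: take $\beta = 1+(p-1)$ (so $\beta^p$ stays bounded, say $\leq 2$, for $1<p<2$) and $\delta = c_0(p-1)$ with $c_0$ chosen small enough (depending on $C_X$) that the denominator $1-\beta^p\cdot 2\delta C_X/(\beta-1) = 1-2\beta^p c_0 C_X \geq \tfrac12$. Then $C_{p,X}^p \leq 2\cdot 2\cdot \bigl(c_0(p-1)\bigr)^{-p}$, and taking $p$-th roots gives $C_{p,X}\leq 4^{1/p}\bigl(c_0(p-1)\bigr)^{-1}\leq C_X'\,\frac{1}{p-1}\leq C_X'\,\frac{p}{p-1}$ for $1<p<2$, with $C_X'$ depending only on $X$.

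The remaining step is to transfer this martingale estimate to the periodic Hilbert transform. As recalled in Subsection~\ref{sec:orthodiff}, if $f:\mathbb T\to X$ is a step function and $u_f$, $u_{\mathcal H^{\mathbb T}_Xf}$ denote the harmonic extensions to the unit disc, then $M=u_f(W)$, $N=u_{\mathcal H^{\mathbb T}_Xf}(W)$ (with $W$ a planar Brownian motion from the origin stopped on $\mathbb T$) are orthogonal continuous-path martingales with $N\stackrel{w}\ll M$ and $N_0=0$; moreover $M_0=\frac{1}{2\pi}\int_{\mathbb T}f$ need not vanish, but by Theorem~\ref{thm:HTnormthesameforTSZT,0} (more precisely by the inequality $|\mathcal H^{\mathbb T}_X|_{\Phi,\Psi}=|\mathcal H^{\mathbb T,0}_X|_{\Phi,\Psi}$ for $\Phi=\Psi=\|\cdot\|^p$, symmetric) it suffices to bound the zero-average case, where $M_0=0$. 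Applying the estimate from the previous paragraph and using that $M_t\to f(W_\infty)$, $N_t\to \mathcal H^{\mathbb T}_Xf(W_\infty)$ with $W_\infty$ uniform on $\mathbb T$ (together with Fatou and the boundedness of step functions), we obtain $\bigl(\int_{\mathbb T}\|\mathcal H^{\mathbb T}_Xf\|^p\bigr)^{1/p}\leq C_{p,X}\bigl(\int_{\mathbb T}\|f\|^p\bigr)^{1/p}$, i.e.\ $\hbar_{p,X}\leq C_{p,X}\leq C_X\frac{p}{p-1}$ for $1<p<2$.

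The main obstacle — and the only genuinely delicate point — is the uniformity of the constant $C_X$ in the weak-type bound \eqref{weak_type}: that inequality was derived in the lemma preceding the proof of Theorem~\ref{UMD} under the structural hypotheses on $\Phi,\Psi$, whereas here we must know it holds for a UMD space with a constant that does \emph{not} degenerate, independently of $p$. This is exactly where we invoke that $X$ is UMD: one runs the chain Lemma~\ref{lemma1}--Lemma~\ref{lemma3}--(weak-type lemma) starting from the \emph{finite} $L^2$-boundedness $\hbar_{2,X}<\infty$ (equivalently $\beta_{2,X}<\infty$, via \eqref{eq:sqrtbetaleqhleqbeta^2}) in place of the hypothesis $|\mathcal H^{\mathbb T,0}_X|_{\Phi,\Psi}<\infty$, which yields \eqref{weak_type} with $C_X$ depending only on $\hbar_{2,X}$. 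Once \eqref{weak_type} is in hand with a $p$-independent constant, the rest is the bookkeeping in \eqref{eq:defCpX} described above, which is routine. A secondary, purely technical point is the reduction to continuous-path martingales, but this is automatic here since orthogonality plus weak differential subordination forces $N$ to be continuous (Remark~\ref{rem:orth+wds}) and the Brownian construction produces continuous $M$ as well.
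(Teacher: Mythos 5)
The strategy is the same as the paper's (run the good-$\lambda$ machinery from the proof of Theorem~\ref{UMD} and track constants), and the preparatory remarks about getting a $p$-independent weak-type constant $C_X$ from the UMD hypothesis (via $\Phi=\Psi=\|\cdot\|^2$) and about transferring from zero-mean to general step functions via Theorem~\ref{thm:HTnormthesameforTSZT,0} are fine. But the central computation contains an error that defeats the whole argument.

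First, the displayed inequality
\[
 \sup_{t\geq 0}\|N_t\|_p \leq C_{p,X}\,\sup_{t\geq 0}\|M_t\|_p,\qquad C_{p,X}^p = \frac{\delta^{-p}\beta^p}{1-\beta^p\cdot 2\delta C_X/(\beta-1)},
\]
is not what the good-$\lambda$ lemma gives. Burkholder's integration argument converts the good-$\lambda$ estimate into $\|N^*\|_p \leq C_{p,X}\|M^*\|_p$ with that $C_{p,X}$; passing from $\|M^*\|_p$ to $\sup_t\|M_t\|_p$ then costs a further Doob factor of $p/(p-1)$, so the correct conclusion is $\sup_t\|N_t\|_p\leq \tfrac{p}{p-1}C_{p,X}\sup_t\|M_t\|_p$ (this is exactly how \eqref{eq:defCpX} is used in the paper). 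Consequently, to get $\hbar_{p,X}=O\bigl(\tfrac{p}{p-1}\bigr)$ one must choose $\beta,\delta$ so that the \emph{formula part} $C_{p,X}$ stays \emph{bounded} as $p\downarrow 1$, not merely $O(1/(p-1))$.

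Second, and this is the real problem, your choice $\beta=p$, $\delta=c_0(p-1)$ cannot achieve this. Since $\beta-1=p-1\to 0$, the positivity constraint $\beta^p\cdot 2\delta C_X/(\beta-1)<1$ forces $\delta=O(p-1)$, and then $\delta^{-p}\gtrsim (p-1)^{-p}\sim (p-1)^{-1}$, so $C_{p,X}\gtrsim (p-1)^{-1}$ inescapably. Combined with the Doob factor your argument actually yields $\hbar_{p,X}=O\bigl((p-1)^{-2}\bigr)$, which is not the claimed bound. The fix is the paper's: take $\beta=1+p^{-1}$, so that $\beta-1=1/p$ is bounded \emph{below} on $(1,2)$ and $\beta^p\leq e$, and then $\delta=(10C_Xp)^{-1}$ is bounded away from $0$; this makes the formula part $C_{p,X}\leq 10C_Xpe(1-e/5)^{-1/p}$ uniformly bounded for $1<p<2$, and the only divergence comes from the Doob factor $p/(p-1)$, giving the desired $\hbar_{p,X}\leq C_X\tfrac{p}{p-1}$.
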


\begin{proof}
Let $M, N:\mathbb R_+\times \Omega \to X$ be continuous orthogonal martingales such that $N \stackrel{w}\ll M$ and $N_0 =0$. As we have already seen above,
\[
 \sup_{t\geq 0}(\mathbb E \|N_t\|^p)^{\frac{1}{p}} \leq \frac{p}{p-1}C_{p,X} \sup_{t\geq 0}(\mathbb E \|M_t\|^p)^{\frac{1}{p}},
\]
where $ C_{p,X}\leq 10Cpe(1-e/5)^{-1/p}$ (see \eqref{eq:defCpX} and the discussion following it). Therefore, if $1<p<2$, we may assume that this constant depends only on {$C$ (which essentially depends only on $X$)}. The claim follows from the sharpness part of Theorem \ref{thm:orthmartPhiPsi}.
\end{proof}

\begin{proof}[Proof of Theorem \ref{cor:LlogLiffUMD}]
The inequality $\hbar_{L\log L,X}<\infty$ implies UMD by Theorem \ref{UMD} applied to $\Phi(x) =(\|x\|+1)\log(\|x\|+1)$ and $\Psi(x) = \|x\|$, $x\in X$. The converse holds true by Lemma \ref{lem:hbarleq1/p-1} and Yano's extrapolation argument (see e.g.\ \cite{Yano51,EK05}).
\end{proof}

\subsection{Weak differential subordination of martingales: sharper $L^p$-in\-equ\-a\-li\-ties}\label{subsec:sharperLpforWDS}

As it was noticed in \eqref{eq:WDSbeta^2(beta+1)est}, for a UMD Banach space $X$, any $1<p<\infty$ and any $X$-valued local martingales $M$ and $N$ such that $N\stackrel{w}\ll M$, we have
\[
 \mathbb E \|N_t\|^p \leq c_{p, X}^p \mathbb E \|M_t\|^p,\;\;\; t\geq 0,
\]
with $c_{p, X}\leq \beta_{p, X}^2(\beta_{p, X}+1)$. The purpose of this subsection is to show that the upper bound can be substantially improved.

\begin{theorem}\label{thm:WDSofmartbeta+h}
 Let $X$ be a Banach space, let $1<p<\infty$ and assume that $M,\,N$ are local martingales satisfying $N\stackrel{w}\ll M$. Then
 \begin{equation}\label{eq:thmWDSofmartbeta+h}
   \mathbb E\|N_t\|^p \leq (\beta_{p, X} + \hbar_{p,X})^p \mathbb E \|M_t\|^p\;\;\; \mbox{for any }t\geq 0.
 \end{equation}
\end{theorem}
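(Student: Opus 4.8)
The plan is to split $N$ into a part ``parallel'' to $M$, controlled by the UMD constant $\beta_{p,X}$ via the classical martingale transform inequality, and a part orthogonal to $M$, controlled by $\hbar_{p,X}$ via Theorem~\ref{thm:orthmartPhiPsi}; the estimate \eqref{eq:thmWDSofmartbeta+h} will then follow from the $L^{p}$-triangle inequality. If $X$ is not UMD there is nothing to prove, since then $\beta_{p,X}=\infty$; so assume $X$ is UMD and, by the localization and finite-dimensional approximation already used in the proof of \eqref{eq:HTnormboudfororcontmart}, reduce to the case where $X$ is finite-dimensional and all quantities involved are finite. Take the Meyer--Yoeurp decompositions $M=M^{c}+M^{d}$, $N=N^{c}+N^{d}$; by Proposition~\ref{prop:MYdecconttocontpdtopd} we then have $N^{c}\stackrel{w}\ll M^{c}$ and $N^{d}\stackrel{w}\ll M^{d}$.

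Next I would construct the splitting $N=N_{\parallel}+N_{\perp}$. For the continuous parts, after a time change one represents $M^{c}=\phi\cdot W$ and $N^{c}=\psi\cdot W$ for a Brownian motion $W$, and, repeating the construction of Proposition~\ref{prop:existofaskewsymmopvalpr} using only the inequality $\|\psi^{*}x^{*}\|\le\|\phi^{*}x^{*}\|$ (and not orthogonality), one obtains a predictable $\mathcal{L}(\mathbb{R}^{d})$-valued process $A$ with $\|A\|\le 1$, vanishing on $\mathrm{Ran}(\phi^{*})^{\perp}$, such that $\psi^{*}=A\phi^{*}$. Decompose $A=S+K$ into its symmetric part $S=\tfrac12(A+A^{*})$ and its skew-symmetric part $K=\tfrac12(A-A^{*})$ (so $\|S\|,\|K\|\le 1$), and let $N^{c}_{\parallel}$, $N^{c}_{\perp}$ be the continuous martingales whose stochastic-integral densities $\psi_{\parallel}$, $\psi_{\perp}$ satisfy $\psi_{\parallel}^{*}=S\phi^{*}$ and $\psi_{\perp}^{*}=K\phi^{*}$. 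Then $N^{c}=N^{c}_{\parallel}+N^{c}_{\perp}$, both summands are weakly differentially subordinate to $M^{c}$, and $N^{c}_{\perp}$ is orthogonal to $M^{c}$ because $\langle K\phi^{*}x^{*},\phi^{*}x^{*}\rangle=0$ for all $x^{*}$. For the discontinuous parts I would use a rigidity observation: $\langle N^{d},x^{*}\rangle\ll\langle M^{d},x^{*}\rangle$ forces $\langle\Delta N^{d}_{s},x^{*}\rangle^{2}\le\langle\Delta M^{d}_{s},x^{*}\rangle^{2}$ for every $x^{*}$, and in finite dimensions this implies $\Delta N^{d}_{s}=a_{s}\Delta M^{d}_{s}$ for a (predictable) scalar $a_{s}\in[-1,1]$, so that $N^{d}=a\cdot M^{d}$ is an ordinary scalar martingale transform of $M^{d}$. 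Finally set $N_{\perp}:=N^{c}_{\perp}$ and $N_{\parallel}:=N^{c}_{\parallel}+N^{d}$.

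The orthogonal term is handled directly. Being continuous, $N_{\perp}$ is automatically orthogonal to the purely discontinuous martingale $M^{d}$, and it is orthogonal to $M^{c}$ by construction, hence $N_{\perp}$ is orthogonal to $M$; moreover $[\langle N_{\perp},x^{*}\rangle]=[\langle N^{c}_{\perp},x^{*}\rangle]\le[\langle M^{c},x^{*}\rangle]\le[\langle M,x^{*}\rangle]$, so $N_{\perp}\stackrel{w}\ll M$, and $N_{\perp,0}=0$. Since $X$ is finite-dimensional, hence UMD, we have $|\mathcal{H}^{\mathbb{T}}_{X}|_{\Phi,\Psi}=\hbar_{p,X}^{p}<\infty$ for $\Phi(x)=\Psi(x)=\|x\|^{p}$, so Theorem~\ref{thm:orthmartPhiPsi} applies and yields $\|N_{\perp,t}\|_{p}\le\hbar_{p,X}\|M_{t}\|_{p}$. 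For the parallel term one has to show $\|N_{\parallel,t}\|_{p}\le\beta_{p,X}\|M_{t}\|_{p}$. The point is that $N_{\parallel}$ is a \emph{self-adjoint} predictable transform of $M$: it acts on the continuous component $M^{c}$ by the symmetric contraction $S$, and on the jumps of $M^{d}$ by the scalar contraction $a$. Writing $S$ through its (predictable) spectral decomposition $S=\sum_{j}\lambda_{j}\Pi_{j}$ and averaging over independent signs, the estimate reduces to transforms with $\lambda_{j},a\in\{-1,1\}$, which are governed by the unconditionality of martingale differences, i.e.\ by the UMD property in its continuous-time form; treating the continuous and the purely discontinuous parts of $M$ \emph{simultaneously} (rather than estimating $\|M^{c}_{t}\|_{p}$ and $\|M^{d}_{t}\|_{p}$ separately) produces the constant $\beta_{p,X}$ with no further loss. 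Combining, $\|N_{t}\|_{p}\le\|N_{\parallel,t}\|_{p}+\|N_{\perp,t}\|_{p}\le(\beta_{p,X}+\hbar_{p,X})\|M_{t}\|_{p}$, which is \eqref{eq:thmWDSofmartbeta+h}.

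The main obstacle is the inequality $\|N_{\parallel,t}\|_{p}\le\beta_{p,X}\|M_{t}\|_{p}$, and in particular getting the constant $\beta_{p,X}$ rather than $\beta_{p,X}^{2}$: one must genuinely exploit that the parallel part carries no rotational, Hilbert-transform-like component --- equivalently, the self-adjointness of $S$ --- so that the controlling quantity is an unconditionality constant and not the norm of an operator-valued Fourier multiplier, and one must glue the continuous and purely discontinuous parts of $M$ together so that the two Meyer--Yoeurp projections do not each cost a factor of $\beta_{p,X}$. The unconditionality statements needed for continuous-time orthogonal martingale decompositions are available in the UMD literature cited earlier; assembling them together with the orthogonal estimate and tracking the sharp constants is the technical heart of the argument, and it simultaneously sharpens the upper bound in \eqref{eq:WDSbeta^2(beta+1)est} from $\beta_{p,X}^{2}(\beta_{p,X}+1)$ to $\beta_{p,X}+\hbar_{p,X}$.
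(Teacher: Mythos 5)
Your high-level plan coincides with the paper's: split $A=A^{\rm sym}+A^{\rm asym}$, use Theorem~\ref{thm:orthmartPhiPsi} for the skew-symmetric (orthogonal) part, prove a $\beta_{p,X}$-bound for the symmetric part plus jumps, and close with the triangle inequality. The reduction to finite dimensions, the time-change, the construction of $A$, the treatment of $N^{\rm asym}$ (continuity implies orthogonality to $M^d$; skew-symmetry gives orthogonality to $M^c$; apply Theorem~\ref{thm:orthmartPhiPsi}) all match the paper.

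The genuine gap is in your argument for $\|N_\parallel\|_p\le \beta_{p,X}\|M\|_p$. Your proposed route is to write the predictable spectral decomposition $S(s)=\sum_j\lambda_j(s)\Pi_j(s)$, pick independent signs $\eps_j$, and use Jensen to reduce to the ``symmetry'' transform $\sum_j\eps_j\Pi_j$. This does not reduce to the UMD inequality: the eigenprojections $\Pi_j(s)$ are operator-valued and time-dependent, so $\sum_j\eps_j\Pi_j(s)$ is an orthogonal operator transform, \emph{not} a scalar $\pm 1$ martingale transform, and the constant $\beta_{p,X}$ is defined for the latter. Moreover the eigenvalues $\lambda_j(s)$ depend on $s$ and $\omega$, so there is no global choice of independent $\eps_j$ with $\mathbb E\,\eps_j=\lambda_j(s)$ for all $s$; the averaging has to happen pointwise, not at the level of the whole martingale. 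The paper's proof realizes this pointwise convexity in $\lambda\in[-1,1]$ through the Burkholder function $U$ of Theorem~\ref{thm:Burkholder}: after diagonalizing $P_{{\rm Ran}(\phi^*)}A^{{\rm sym}*}P_{{\rm Ran}(\phi^*)}$ at each instant, Lemma~\ref{lem:twdiffzigzagfnction} (zigzag-concavity, checked at $\lambda=\pm 1$ and interpolated by linearity) makes the second-order It\^o term $I_1$ nonpositive, while the jump term $I_2$ is handled by zigzag-concavity directly, with the jumps of $N^d$ folded in and \emph{without} needing your rigidity claim $\Delta N^d_s=a_s\Delta M^d_s$. That rigidity claim is correct as a pointwise statement, but the scalar $a_s$ one extracts is optional rather than predictable, so writing $N^d$ as a genuine predictable stochastic integral against $M^d$ would require an additional argument; the Burkholder-function route sidesteps this entirely. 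In short: the intuition ``self-adjoint $\Rightarrow$ no rotational loss, costs only $\beta_{p,X}$'' is right and is what the paper proves, but the way to make it precise is It\^o's formula with the zigzag-concave $U$, not an averaging over global signs applied to the martingale.
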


\begin{remark}\label{rem:wdsbeta(beta+1)est}
 Note that $\hbar_{p,X} \leq \beta_{p, X}^2$ (see \eqref{eq:sqrtbetaleqhleqbeta^2}), so \eqref{eq:thmWDSofmartbeta+h} gives
 \[
   (\mathbb E\|N_t\|^p)^{\frac {1}{p}} \leq \beta_{p, X}(\beta_{p, X} + 1) (\mathbb E \|M_t\|^p)^{\frac {1}{p}}\;\;\; t\geq 0,
 \]
 which is better than \eqref{eq:WDSbeta^2(beta+1)est}.
\end{remark}

For the proof of Theorem \ref{thm:WDSofmartbeta+h} we will need the notion of the {\em Burkholder function}.

\begin{definition}\label{def:concbiconczigzagconcave}
 Let $E$ be a linear space. A function $f:E \to \mathbb R$ is called concave if for any $x, y \in E$ and any $\lambda \in [0,1]$ we have $f(\lambda x+(1-\lambda)y)\geq \lambda f(x)+(1-\lambda)f(y)$.
  A function $f:E\times E \to \mathbb R$ is called zigzag-concave if for each $x, y\in E$ and $\eps \in[-1,1]$ the function $z\mapsto f(x+z, y+\eps z)$ is concave.
\end{definition}

The following theorem can be found in \cite{HNVW1,Y17FourUMD,Burk86}.

\begin{theorem}[Burkholder]\label{thm:Burkholder}
 For a Banach space $X$ the following conditions are equivalent:
 \begin{enumerate}
  \item $X$ is a UMD Banach space;
  \item for each $p\in (1,\infty)$ there exists a constant $\beta$ and a zigzag-concave function $U:X\times X \to \mathbb R$, convex in the second variable, such that
  \begin{equation}\label{eq:ineqonU}
     U(x,y)\geq \|y\|^p - \beta^p\|x\|^p,\;\;\;x,y\in X.
  \end{equation}
 \end{enumerate}
 The smallest admissible $\beta$, for which such $U$ exists, is equal to $\beta_{p, X}$.
\end{theorem}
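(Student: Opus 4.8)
The plan is to prove Theorem~\ref{thm:Burkholder} by the classical route of Burkholder (see \cite{Burk86,HNVW1,Y17FourUMD}), establishing the two implications separately while keeping track of constants so that the sharpness assertion drops out at the end. Condition $(2)\Rightarrow(1)$ is the soft direction. Assume $U$ is as in \eqref{eq:ineqonU} for some $\beta$. Let $(d_n)_{n\geq1}$ be an $X$-valued $L^p$-martingale difference sequence relative to $(\mathcal{F}_n)$, let $(\varepsilon_n)_{n\geq1}$ be a sequence of signs, and put $f_n=\sum_{k=1}^n d_k$, $g_n=\sum_{k=1}^n\varepsilon_k d_k$, so $f_0=g_0=0$. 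The increment of the pair $(f,g)$ at step $n$ equals $(d_n,\varepsilon_n d_n)$, so, conditionally on $\mathcal{F}_{n-1}$, the map $z\mapsto U(f_{n-1}+z,\,g_{n-1}+\varepsilon_n z)$ is a fixed concave function (zigzag-concavity with parameter $\varepsilon_n\in\{-1,1\}$); since $\mathbb{E}[d_n\mid\mathcal{F}_{n-1}]=0$, the conditional Jensen inequality gives $\mathbb{E}[U(f_n,g_n)\mid\mathcal{F}_{n-1}]\leq U(f_{n-1},g_{n-1})$. Iterating (a routine stopping-time truncation makes all integrals finite) and using \eqref{eq:ineqonU},
\begin{equation*}
 \mathbb{E}\|g_N\|^p-\beta^p\,\mathbb{E}\|f_N\|^p\leq\mathbb{E}\,U(f_N,g_N)\leq U(0,0)<\infty .
\end{equation*}
Replacing $(d_n)$ by $(\lambda d_n)$ and using $p$-homogeneity of the norm multiplies the left-hand side by $\lambda^p$ while the right-hand side stays fixed; letting $\lambda\to\infty$ forces $\mathbb{E}\|g_N\|^p\leq\beta^p\,\mathbb{E}\|f_N\|^p$. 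Hence $X$ is UMD and $\beta_{p,X}\leq\beta$ for every admissible $\beta$.

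For $(1)\Rightarrow(2)$, assume $X$ is UMD, set $\beta:=\beta_{p,X}$ and $V(x,y):=\|y\|^p-\beta^p\|x\|^p$, and define
\begin{equation*}
 U(x,y):=\sup\Big\{\,\mathbb{E}\,V(x+f_n,\,y+g_n)\ :\ n\geq0,\ (f,g)\text{ a transform pair}\,\Big\},
\end{equation*}
the supremum over $n$ and over all pairs of $X$-valued $L^p$-martingales $(f,g)$ with $f_0=g_0=0$ and $dg_k=\vartheta_k\,df_k$ for some predictable sequence of signs $(\vartheta_k)$. Taking $f=g\equiv0$ gives $U\geq V$, which is exactly \eqref{eq:ineqonU}. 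For fixed $x$ and fixed $(f,g)$, the map $y\mapsto\mathbb{E}\|y+g_n\|^p$ is convex (convexity of $\|\cdot\|^p$ precomposed with an affine map, then $\mathbb{E}$), and a supremum of convex functions is convex, so $U$ is convex in its second variable.

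It remains to prove zigzag-concavity, granting for now that $U$ is real-valued. First I would establish concavity of $z\mapsto U(x+z,y+z)$ and of $z\mapsto U(x+z,y-z)$ by a \emph{splicing} argument: given $(a_1,b_1),(a_2,b_2)$ lying on a line of slope $\pm1$ and $\lambda\in(0,1)$, prepend to near-optimal transform pairs for $(a_1,b_1)$ and $(a_2,b_2)$ a single two-valued martingale step issued from $\big(\lambda a_1+(1-\lambda)a_2,\ \lambda b_1+(1-\lambda)b_2\big)$ landing at $(a_i,b_i)$ with probabilities $\lambda,1-\lambda$; along such a diagonal direction the increment of this first step in the second coordinate is $\pm$ its increment in the first coordinate, so $\vartheta=\pm1$ is admissible, and the concatenated pair witnesses $U\big(\lambda a_1+(1-\lambda)a_2,\ \lambda b_1+(1-\lambda)b_2\big)\geq\lambda U(a_1,b_1)+(1-\lambda)U(a_2,b_2)-\eta$ for arbitrary $\eta>0$. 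Then, for general $\varepsilon\in[-1,1]$, concavity of $z\mapsto U(x+z,y+\varepsilon z)$ follows from the two diagonal cases together with convexity in the second variable: writing $y+\varepsilon w$ and $y-\varepsilon w$ as convex combinations of $y+w$ and $y-w$ and applying convexity in $y$ reduces the midpoint inequality for direction $(1,\varepsilon)$ to the already-proven concavity along $(1,1)$ and $(1,-1)$. This shows that the constructed $U$ is zigzag-concave, convex in the second variable, and majorizes $V$, hence is admissible with $\beta=\beta_{p,X}$; combined with $\beta_{p,X}\leq\beta$ from the first part, the least admissible $\beta$ equals $\beta_{p,X}$.

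The main obstacle is the one point I deferred: showing the supremum defining $U$ is finite, i.e.\ $U(x,y)\in\mathbb{R}$ for all $x,y\in X$. The crude triangle-inequality estimate $\|y+g_n\|_p\leq\|y\|+\beta\|f_n\|_p$ is not good enough, and one must invoke the UMD inequality in the stronger form that controls transforms started from a nonzero point (equivalently, a suitably localized/truncated version), as carried out in \cite{HNVW1,Burk86}; this argument also delivers the growth bound $|U(x,y)|\lesssim_{p,X}(\|x\|+\|y\|)^p$, which is what is needed to justify continuity, the integrability of $U(f_n,g_n)$, and the passages to the limit used above. The measure-theoretic bookkeeping in the splicing step — concatenating martingales on a common probability space with landing-point-dependent continuations — is then routine.
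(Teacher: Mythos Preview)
The paper does not give its own proof of this theorem; it simply records it as ``can be found in \cite{HNVW1,Y17FourUMD,Burk86}''. Your proposal is precisely the classical Burkholder argument from those references: the conditional-Jensen supermartingale argument for $(2)\Rightarrow(1)$, and the Bellman-function supremum construction with a splicing step for $(1)\Rightarrow(2)$. The reduction from the diagonal directions $\varepsilon=\pm1$ to arbitrary $\varepsilon\in[-1,1]$ via convexity in the second variable is correct (midpoint-concavity follows by bounding $U(x+z,y+\varepsilon z)+U(x-z,y-\varepsilon z)$ from above using convexity in $y$ and then invoking concavity along $(1,\pm1)$), and the finiteness issue you flag is exactly the one handled in the cited sources. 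So your proof is correct and matches what the paper defers to.
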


Any function $U$ as in the above theorem will be called a {\em Burkholder function}.

\begin{remark}\label{rem:UisCinfty}
 Suppose that the Banach space $X$ is finite-dimensional and let $U:X\times X \to \mathbb R$ be a zigzag-concave function. Let $\rho:X\times X \to \mathbb R_+$ be a compactly supported nonnegative function of class $C^{\infty}$. Then the convolution $U_{\rho} := U * \rho:X\times X \to \mathbb R$ is zigzag-concave and of class $C^\infty$ (see e.g.\ \cite{BO12}).
\end{remark}

{
While working with the Burkholder function $U:X\times X \to \mathbb R$ we will use the following notation: for given vectors $x,y\in X$ instead of writing
\[
 \frac{\partial^2 U}{\partial (x,0)^2},\frac{\partial^2 U}{\partial (0,y)^2},\frac{\partial^2 U}{\partial (x,0)\partial (0,y)}
\]
we will write
\[
 \frac{\partial^2 U}{\partial x^2},\frac{\partial^2 U}{\partial y^2},\frac{\partial^2 U}{\partial x\partial y}.
\]
Therefore for the convenience of the reader throughout this subsection we always assume that the first coordinate of any vector in $X\times X$ is $x$ (perhaps with a subscript), while the second coordinate is $y$ (perhaps with a subscript). The same holds for partial derivatives.
}

We also will need the following lemma.

\begin{lemma}\label{lem:twdiffzigzagfnction}
 Let $X$ be a finite-dimensional Banach space, let $F:X\times X\to \mathbb R$ be a zigzag-concave function and let $(x_0,y_0)\in X\times X$ be such that $F$ is twice Fr\'echet differentiable at $(x_0,y_0)$. Let $(x,y)\in X\times X$ be such that $y=x$. Then for each $\lambda\in [-1,1]$,
 {
 \[
  \frac{\partial^2F(x_0, y_0)}{\partial x^2} + 2\lambda \frac{\partial^2F(x_0, y_0)}{\partial x\partial y} + \frac{\partial^2F(x_0, y_0)}{\partial y^2}\leq 0.
 \]
 }
\end{lemma}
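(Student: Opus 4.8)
The statement asserts that along the "diagonal" direction $(x,x)$ in $X\times X$, the Hessian of a twice-differentiable zigzag-concave function $F$ is nonpositive, even when perturbed by the cross term with coefficient $2\lambda$ for $\lambda\in[-1,1]$. The key is to invoke directly the defining property of zigzag-concavity: for fixed $a,b\in X$ and $\e\in[-1,1]$, the map $z\mapsto F(a+z,\,b+\e z)$ is concave on $X$. Concavity of a twice-differentiable function means its second derivative in every direction is $\le 0$. So I would plug in the right one-parameter family and read off the inequality.

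Concretely, fix $(x_0,y_0)$ where $F$ is twice Fréchet differentiable, fix $x\in X$ with $y=x$, and fix $\lambda\in[-1,1]$. Apply the definition of zigzag-concavity with $\e=\lambda$, $a=x_0$, $b=y_0$: the function $g(t):=F(x_0+tx,\,y_0+\lambda t x)$ is concave in $t\in\mathbb R$, hence $g''(0)\le 0$. Now expand $g''(0)$ using the chain rule and the bilinearity of the second Fréchet derivative, with the notational conventions fixed just before the lemma (first slot is the "$x$"-slot, second is the "$y$"-slot): writing $u=(x,0)$ and $v=(0,x)$, the displacement at time $t$ is $t u + \lambda t v$, so
\[
 g''(0) = \frac{\partial^2 F(x_0,y_0)}{\partial (u+\lambda v)^2}
 = \frac{\partial^2 F(x_0,y_0)}{\partial u^2}
 + 2\lambda \frac{\partial^2 F(x_0,y_0)}{\partial u\,\partial v}
 + \lambda^2 \frac{\partial^2 F(x_0,y_0)}{\partial v^2}.
\]
In the lemma's shorthand this is exactly $\frac{\partial^2F(x_0,y_0)}{\partial x^2} + 2\lambda\frac{\partial^2F(x_0,y_0)}{\partial x\partial y} + \lambda^2\frac{\partial^2F(x_0,y_0)}{\partial y^2}\le 0$.

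There is a small gap between this and the claimed inequality: the claimed bound has coefficient $1$ (not $\lambda^2$) in front of $\frac{\partial^2F}{\partial y^2}$. To close it, I observe that taking $\lambda=\pm 1$ already gives
\[
 \frac{\partial^2F(x_0,y_0)}{\partial x^2} \pm 2\frac{\partial^2F(x_0,y_0)}{\partial x\partial y} + \frac{\partial^2F(x_0,y_0)}{\partial y^2}\le 0,
\]
and for $|\lambda|<1$ the left-hand side of the \emph{target} inequality, namely $\frac{\partial^2F}{\partial x^2} + 2\lambda\frac{\partial^2F}{\partial x\partial y} + \frac{\partial^2F}{\partial y^2}$, is \emph{affine} (hence convex) in $\lambda\in[-1,1]$, so its maximum over $[-1,1]$ is attained at an endpoint $\lambda=\pm 1$, where we have just shown it is $\le 0$. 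Hence it is $\le 0$ for all $\lambda\in[-1,1]$, which is the assertion. (Alternatively, since $F$ concave in the second variable gives $\frac{\partial^2F}{\partial y^2}\le 0$, one can also argue $\lambda^2\frac{\partial^2F}{\partial y^2}\ge \frac{\partial^2F}{\partial y^2}$ and combine with the $\lambda^2$-version; either route works.)

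**Main obstacle.** There is essentially no deep obstacle here; the only point requiring care is bookkeeping: making sure the direction fed into the zigzag-concavity definition is indeed $(x,\lambda x)$ with the first coordinate "free" and that this matches the $\frac{\partial^2}{\partial x^2}$, $\frac{\partial^2}{\partial x\partial y}$, $\frac{\partial^2}{\partial y^2}$ notation introduced in the paragraph preceding the lemma — i.e. that $y=x$ lets us write the $v$-slot displacement as $\lambda t$ times the fixed vector $x$ placed in the second coordinate. One should also note that twice Fréchet differentiability at $(x_0,y_0)$ is exactly what legitimizes the identification of $g''(0)$ with the quadratic form in the second Fréchet derivative and the endpoint/convexity-in-$\lambda$ argument; no regularity beyond the hypothesis is needed.
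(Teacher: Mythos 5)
Your primary argument is correct and is, in essence, exactly the paper's proof: observe the target expression is affine in $\lambda$, reduce to the endpoints $\lambda=\pm 1$, and there read off nonpositivity as the second derivative of the concave map $t\mapsto F(x_0+tx,\,y_0\pm tx)$ guaranteed by zigzag-concavity. The intermediate detour through the ``$\lambda^2$-version'' is harmless but superfluous; once you realize the endpoint argument is needed anyway, the $\lambda^2$-version plays no role.

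One caveat: the parenthetical ``alternatively, since $F$ concave in the second variable gives $\partial^2F/\partial y^2\le 0$\dots'' is wrong. Zigzag-concavity only controls the directions $(z,\e z)$ with $|\e|\le 1$; the pure second-variable direction $(0,w)$ has, so to speak, infinite slope and is not among them, so concavity in the second variable does not follow. In fact, the Burkholder function $U$ to which this lemma is applied (Theorem \ref{thm:Burkholder}) is assumed \emph{convex} in the second variable, so there one has $\partial^2 U/\partial y^2\ge 0$ --- the opposite sign from what your alternative route requires. Drop that aside; the endpoint argument is complete on its own.
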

\begin{proof}
 Since the function
 {
 \[
  \lambda \mapsto   \frac{\partial^2F(x_0, y_0)}{\partial x^2} + 2\lambda \frac{\partial^2F(x_0, y_0)}{\partial x\partial y} + \frac{\partial^2F(x_0, y_0)}{\partial y^2}
 \]
 }
is linear in $\lambda \in [-1,1]$, it is sufficient to check the cases $\lambda = \pm 1$. To this end notice that
{
\begin{align*}
    \frac{\partial^2F(x_0, y_0)}{\partial x^2} \pm 2 \frac{\partial^2F(x_0, y_0)}{\partial x\partial y} + \frac{\partial^2F(x_0, y_0)}{\partial y^2}= \frac{\partial^2}{\partial t^2}F(x_0 + tx, y_0 \pm tx)\Big|_{t=0} \leq 0,
\end{align*}
}
where the latter follows from Definition \ref{def:concbiconczigzagconcave}.
\end{proof}

\begin{proof}[Proof of Theorem \ref{thm:WDSofmartbeta+h}] We begin with similar reductions as in the proof of Theorem \ref{thm:orthmartPhiPsi}. First, we may assume that $X$ is a finite-dimensional Banach space. Let $d\geq 1$ be the dimension of $X$. Let $M = M^c + M^d$ and $N= N^{c} + N^{d}$ be the Meyer-Yoeurp decompositions (see Subsection \ref{subsec:MYdec}). Then by Proposition \ref{prop:MYdecconttocontpdtopd} $N^c \stackrel{w}\ll M^c$ and $N^d \stackrel{w}\ll M^d$. Let $\tau = (\tau_s)_{s\geq 0}$ be the time-change constructed in Step 1 of the proof of Theorem \ref{thm:orthmartPhiPsi} (see \cite[Section 4]{Y17MartDec}). So,  there exists a $2d$-dimensional standard Brownian motion $W$ on an extended probability space $(\widetilde {\Omega}, \widetilde {\mathcal F}, \widetilde {\mathbb P})$ equipped with an extended filtration $\widetilde {\mathbb F}=(\widetilde {\mathcal F}_t)_{t\geq 0}$, and there exist two progressively measurable processes $\phi, \psi:\mathbb R_+ \times\Omega \to \mathcal L(\mathbb R^{2d},X)$ such that $M^c \circ \tau = \phi \cdot W$ and $N^c\circ \tau=\psi \cdot W$. Let us redefine $M:= M\circ \tau$ and $N:= N\circ \tau$ (hence $M^c := M^c \circ \tau$, $M^d := M^d \circ \tau$, $N^c := N^c\circ \tau$, and $N^d := N^d \circ \tau$, see \cite[Subsection 2.6]{Y17MartDec}). Without loss of generality we may further assume that $M$ and $N$ terminate after some deterministic time: $M_t = M_{t\wedge T}$ and $N_{t} = N_{t\wedge T}$ for some fixed parameter $T\geq 0$. Analogously to Proposition \ref{prop:existofaskewsymmopvalpr} there exists a progressively measurable $A:\mathbb R_+ \times \Omega \to \mathcal L(\mathbb R^{2d})$ which satisfies $\|A\|\leq 1$ on $\mathbb R_+ \times \Omega$ and $\psi = \phi A$.
Let us define $A^{\rm sym} := \frac{A + A^T}{2}$, $A^{\rm asym}:= \frac{A-A^T}{2}$. If we set
\[
 N^{\rm sym} := N^d + (\phi A^{\rm sym}) \cdot W, \qquad
 N^{\rm asym}:= (\phi A^{\rm asym})\cdot W,
\]
then $N^{\rm sym} \stackrel{w}\ll M$ and $N^{\rm asym} \stackrel{w}\ll M$. Indeed, if $N^{\rm sym} = N^{{\rm sym},c} + N^{{\rm sym},d}$ and $N^{\rm asym} = N^{{\rm asym},c} + N^{{\rm asym},d}$ are the corresponding Meyer-Yoeurp decompositions, then $N^{{\rm sym},d} = N^d \stackrel{w}\ll M^d$, $N^{{\rm asym},d} = 0\stackrel{w}\ll M^d$, and for any $x^* \in X^*$ and $t\geq 0$, we have
\begin{align*}
 [\langle N^{{\rm sym},c}, x^*\rangle]_t = \int_0^t \bigl\|\tfrac{A(s)+A^T(s)}{2} \phi^*(s)x^*\bigr\|^2 \ud s &\leq \int_0^t \bigl\|\tfrac{A(s)+A^T(s)}{2}\bigr\|^2\| \phi^*(s)x^*\|^2 \ud s\leq \int_0^t\| \phi^*(s)x^*\|^2\ud s = [\langle M^c, x^*\rangle]_t.
\end{align*}
Here $\bigl\|\tfrac{A(s)+A^T(s)}{2}\bigr\|\leq 1$ by the triangle inequality. Therefore $N^{{\rm sym},c}\stackrel{w}\ll M^c$ and, analogously, $N^{{\rm asym},c}\stackrel{w}\ll M^c$, so the weak differential subordination holds by virtue of Proposition \ref{prop:MYdecconttocontpdtopd}.

Let us now show that
 \begin{equation}\label{eq:asympartLpineq}
    \mathbb E \|N^{\rm asym}_t\|^p \leq \hbar_{p, X}^p \mathbb E \|M_t\|^p\;\;\;\mbox{ for } t\geq 0.
 \end{equation}
We have $N^{\rm asym}_0=0$ and $N^{\rm asym} \stackrel{w}\ll M$; we will prove in addition that $M$ and $N^{\rm asym}$ are orthogonal. For fixed $x^*\in X^*$ and $t\geq 0$ we may write
\begin{align*}
 [\langle M, x^*\rangle,\langle N^{\rm asym}, x^*\rangle]_t &=  [\langle M^c, x^*\rangle,\langle N^{\rm asym}, x^*\rangle]_t +  [\langle M^d, x^*\rangle,\langle N^{\rm asym}, x^*\rangle]_t\\
&= [\langle M^c, x^*\rangle,\langle N^{\rm asym}, x^*\rangle]_t = [\langle \phi\cdot W, x^*\rangle,\langle (\phi A^{\rm asym})\cdot W, x^*\rangle]_t\\
&= [\langle \phi, x^*\rangle\cdot W,\langle (\phi A^{\rm asym}), x^*\rangle\cdot W]_t=\int_0^t \langle \phi^*(s)x^*, A^{{\rm asym}*}(s) \phi^*(s)x^*\rangle\ud s = 0,
\end{align*}
where the second equality is a consequence of pure discontinuity of $M^d$ and continuity of $N^{\rm asym}$, while the last equality follows from the fact that $A^{\rm asym}$ is antisymmetric. This gives the orthogonality of the processes and \eqref{eq:asympartLpineq} follows from \eqref{eq:HTnormboudfororcontmart}.

The next step is to show that
\begin{equation}\label{eq:sympartLpineq}
    \mathbb E \|N^{\rm sym}_t\|^p \leq \beta_{p, X}^p \mathbb E \|M_t\|^p\;\;\; \mbox{ for } t\geq 0.
 \end{equation}
Let $U:X\times X\to \mathbb R$ be the Burkholder function guaranteed  by Theorem \ref{thm:Burkholder}. Using the same argument as in \cite{BO12}, we may assume that $U$ is of class $C^\infty$ (see also Remark \ref{rem:UisCinfty}). Applying It\^o's formula \eqref{eq:itoformula} for a fixed basis $(x_i)_{i=1}^d$ of $X$ with the dual basis $(x_i^*)_{i=1}^d$ of $X^*$, we get
\begin{align*}
 \mathbb E U(M_t, N^{\rm sym}_t) &= \mathbb E U(M_0, N^{\rm sym}_0)+ \frac{1}{2}\mathbb E I_1 +  \mathbb E I_2,
\end{align*}
where

\begin{equation}\label{eq:defofI_1forbeta+hbar}
 \begin{split}
    I_1 &:= \int_0^t \sum_{i,j=1}^d \frac{\partial^2 U(M_{s-}, N^{\rm sym}_{s-})}{\partial x_i \partial x_j}  \ud [\langle M, x_i^*\rangle,\langle M, x_j^*\rangle]^c_s\\
  &\quad+ \int_0^t \sum_{i,j=1}^d \frac{\partial^2 U(M_{s-}, N^{\rm sym}_{s-})}{\partial y_i \partial y_j}  \ud [\langle N^{\rm sym}, x_i^*\rangle,\langle N^{\rm sym}, x_j^*\rangle]^c_s\\
  &\quad+ 2\int_0^t \sum_{i,j=1}^d \frac{\partial^2 U(M_{s-}, N^{\rm sym}_{s-})}{\partial x_i \partial y_j}  \ud [\langle M, x_i^*\rangle,\langle N^{\rm sym}, x_j^*\rangle]^c_s
 \end{split}
\end{equation}
and
\begin{align*}
  I_2 :=\sum_{0\leq s\leq t}(\Delta U(M_s, N^{\rm sym}_s)&- \langle \partial_x U(M_{s-}, N^{\rm sym}_{s-}), \Delta M_s \rangle- \langle \partial_y U(M_{s-}, N^{\rm sym}_{s-}), \Delta N^{\rm sym}_s \rangle).
\end{align*}
{Here $\partial_x U(\cdot),\partial_y U(\cdot)\in X^*$ are the corresponding Fr\'echet derivatives of $U$ in the first and the second $X$-subspace of the product space $X\times X$.}
Let us first show that $\mathbb E I_1 \leq 0$.
{Indeed, note that}
\begin{equation}\label{eq:Itoform2ndtermBurkfun}
 \begin{split}
   &\sum_{i,j=1}^d\frac{\partial^2 U(M_{s-}, N^{\rm sym}_{s-})}{\partial x_i \partial x_j} \langle \phi^* x_i^*, \phi^* x_j^*\rangle \\
  &\quad+ \sum_{i,j=1}^d\frac{\partial^2 U(M_{s-}, N^{\rm sym}_{s-})}{\partial y_i \partial y_j}  \langle A^{{\rm sym}*} \phi^* x_i^*, A^{{\rm sym}*} \phi^* x_j^*\rangle \\
  &\quad+ 2 \sum_{i,j=1}^d \frac{\partial^2 U(M_{s-}, N^{\rm sym}_{s-})}{\partial x_i \partial y_j}\langle \phi^* x_i^*,A^{{\rm sym}*} \phi^* x_j^*\rangle \leq 0.
 \end{split}
\end{equation}
Note {also} that by Corollary \ref{cor:lemmatraceindep} and convexity of $U$ in the second variable,
\begin{equation}\label{eq:withoutABurkfunraioni}
 \begin{split}
  \sum_{i,j=1}^d \frac{\partial^2 U(M_{s-}, N^{\rm sym}_{s-})}{\partial y_i \partial y_j}  \langle A^{{\rm sym}*} \phi^* x_i^*, A^{{\rm sym}*}\phi^* x_j^*\rangle\leq \sum_{i,j=1}^d \frac{\partial^2 U(M_{s-}, N^{\rm sym}_{s-})}{\partial y_i \partial y_j}  \langle  \phi^* x_i^*,  \phi^* x_j^*\rangle.
 \end{split}
\end{equation}
The operator $P_{\text{Ran}(\phi^*)}A^{{\rm sym}*}P_{\text{Ran}(\phi^*)}$ is symmetric and
$$
\|P_{\text{Ran}(\phi^*)}A^{{\rm sym}*}P_{\text{Ran}(\phi^*)}\|\leq 1.
$$
Therefore by the spectral theorem there exist a $[-1,1]$-valued sequence $(\lambda_i)_{i=1}^{2d}$ and an orthonormal basis $(\tilde h_i)_{i=1}^{2d}$ of $(\mathbb R^{2d})^*$ such that $P_{\text{Ran}(\phi^*)}A^{{\rm sym}*}P_{\text{Ran}(\phi^*)} \tilde h_i = \lambda_i \tilde h_i$. Moreover, since $\text{Ran}(P_{\text{Ran}(\phi^*)}A^{{\rm sym}*}P_{\text{Ran}(\phi^*)})\subset \text{Ran}(\phi^*)$, $\tilde h_i \in \text{Ran}(\phi^*)$ if $\lambda_i \neq 0$, so we may assume that there exists a basis
$(\tilde x_i)_{i=1}^d$ of $X$ with the dual basis $(\tilde x^*_i)_{i=1}^d$ such that $ \phi^*\tilde x^*_i = \tilde h_i$ for $1\leq i\leq m$ and $ \phi^*\tilde x^*_i = 0$ for $m<i\leq d$, where $m\in \{0,\ldots,d\}$ is the dimension of $\phi^*$. By Lemma \ref{lemma:traceindep} the expression on the left-hand side of \eqref{eq:Itoform2ndtermBurkfun} does not depend on the choice of the basis of $X$ and the corresponding dual basis. Therefore, using \eqref{eq:withoutABurkfunraioni}, it is not bigger than
\begin{equation*}
   \sum_{i=1}^m\frac{\partial^2 U(M_{s-}, N^{\rm sym}_{s-})}{\partial x_i \partial x_i}
 + \sum_{i=1}^m \frac{\partial^2 U(M_{s-}, N^{\rm sym}_{s-})}{\partial y_i \partial y_i}
 + 2 \sum_{i=1}^m \lambda_i \frac{\partial^2 U(M_{s-}, N^{\rm sym}_{s-})}{\partial x_i \partial y_i},
\end{equation*}
which is bounded from above by $0$ (see Lemma \ref{lem:twdiffzigzagfnction}). Thus, \eqref{eq:Itoform2ndtermBurkfun} follows.
Therefore by \eqref{eq:defofI_1forbeta+hbar} and \eqref{eq:Itoform2ndtermBurkfun}, we see that
\begin{align*}
 I_1 &= \int_0^t \sum_{i,j=1}^d \frac{\partial^2 U(M_{s-}, N^{\rm sym}_{s-})}{\partial x_i \partial x_j}\langle \phi^* x_i^*, \phi^* x_j^*\rangle \ud s\\
  &\quad+ \int_0^t \sum_{i,j=1}^d \frac{\partial^2 U(M_{s-}, N^{\rm sym}_{s-})}{\partial y_i \partial y_j}  \langle A^{{\rm sym}*} \phi^* x_i^*, A^{{\rm sym}*} \phi^* x_j^*\rangle \ud s\\
  &\quad+ 2\int_0^t \sum_{i,j=1}^d \frac{\partial^2 U(M_{s-}, N^{\rm sym}_{s-})}{\partial x_i \partial y_j} \langle \phi^* x_i^*,A^{{\rm sym}*} \phi^* x_j^*\rangle \ud s \leq 0,
\end{align*}
and hence the expectation of $I_1$ is nonpositive. The inequality $I_2 \leq 0$ can be proved by repeating the arguments from \cite[Proof of Theorem 3.18]{Y17FourUMD}, while for the estimate
$ U(M_0,N^{\rm sym}_0) \leq 0$, consult \cite[Remark 3.10]{Y17FourUMD}. Therefore, we have
\[
 \mathbb E \|N^{\rm sym}_t\|^p - \beta_{p, X}^p \mathbb E \|M_t\|^p \leq \mathbb E U(M_t, N^{\rm sym}_t) \leq \mathbb E U(M_0,N^{\rm sym}_0) \leq 0,
\]
so \eqref{eq:sympartLpineq} holds. The general inequality \eqref{eq:thmWDSofmartbeta+h} follows from \eqref{eq:asympartLpineq}, \eqref{eq:sympartLpineq}, and the triangle inequality.
\end{proof}

{
\begin{remark}
 It is an open problem whether there exists a Burkholder function $U$ such that $-U$ is plurisubharmonic (note that $X\times X\simeq X+ iX$, so the plurisubharmonicity condition is well-defined). If it exists, then $\hbar_{p, X}\leq \beta_{p, X}$ by Theorem \ref{thm:existenceofU^H_Phi,Psi}, and so the open problem outlined in Remark \ref{rem:h_p,Xvsbeta_p,X} is solved. Unfortunately, plurisubharmonicity of $-U$ is discovered only in the Hilbert space case (see \cite{Wang} and \cite[Remark 5.6]{Y17FourUMD}).
\end{remark}
}

\subsection{Weak differential subordination of harmonic functions}\label{subsec:WDSharmfunc}

Let $X$ be a Banach space, let $d\geq 1$ be a fixed dimension and let $\mathcal O$ be an open subset of $\mathbb R^d$. A function $f:\mathcal O \to X$ is called {\em harmonic} if it takes its values in a finite-dimensional subspace of $X$, is twice-differentiable, and
$$
\Delta f(s) := \sum_{i=1}^d \partial_i^2 f(s) = 0,\;\;\; s\in \mathcal O.
$$


For each $s\in \mathcal O$, we define $\nabla f(s)\in \mathcal L(\mathbb R^d, X)$ by
\[
 \nabla f(s) (a_1e_1 + \cdots a_de_d) = \sum_{i=1}^d a_i \partial_i f(s),\;\;\;a_1,\ldots,a_d \in \mathbb R,
\]
where $(e_i)_{i=1}^d$ is the basis of $\mathbb R^d$.

\begin{definition}
 Let $X$, $d$, $\mathcal O$ be as above and assume that $f,g:\mathcal O\to X$ are harmonic functions. Then
 \begin{enumerate}
  \item $g$ is said to be {\em weakly differentially subordinate} to $f$ (which will be denoted by $g \stackrel{w}\ll f$) if
  \begin{equation}\label{eq:defWDSofharmfunc}
   |\langle \nabla g(s), x^* \rangle| \leq |\langle \nabla f(s), x^* \rangle|,\;\;\; s\in \mathcal O, x^* \in X^*;
  \end{equation}
\item $f$ and $g$ are said to be {\em orthogonal} if
\begin{equation}\label{eq:deforthofharmfunc}
 \Bigl\langle \langle \nabla f(s), x^* \rangle,\langle \nabla g(s), x^* \rangle \Bigl\rangle=0,\;\;\;s\in \mathcal O, x^* \in X^*.
\end{equation}
 \end{enumerate}
\end{definition}
Here $|\cdot|$ in \eqref{eq:defWDSofharmfunc} is assumed to be the usual Euclidean norm in $(\mathbb R^d)^* \simeq \mathbb R^d$, and $\langle \cdot, \cdot\rangle$ in \eqref{eq:deforthofharmfunc} is the usual scalar product in $(\mathbb R^d)^* \simeq \mathbb R^d$.

The notion of weak differential subordination of vector-valued harmonic functions extends the concept originally formulated in the one-dimensional case by Burkholder \cite{Burk89}. As shown in that paper, the differential subordination of harmonic functions lead to the corresponding $L^p$-inequalities for $1<p<\infty$. The aim of this subsection is to show the extension of that result to general weakly differentially subordinated harmonic functions and to show more general $\Phi, \Psi$-type estimates under the orthogonality assumption. We start with recalling the  definition of a harmonic measure.

\begin{definition}
 Let $\mathcal O\subset \mathbb R^d$ be an open set containing the origin and let $\partial \mathcal O$ be the {\em boundary} of $\mathcal O$. The probability measure $\mu$ on $\partial \mathcal O$ is called a {\em harmonic measure with respect to the origin}, if for any Borel subset $A \subset \partial \mathcal O$ we have
 \[
  \mu(A):= \mathbb P\{W_{\tau}\in A\}.
 \]
 Here $W:\mathbb R_+ \times \Omega \to \mathbb R^d$ is a standard Brownian motion starting from $0$ and $\tau$ is the exit-time of $W$ from $\mathcal{O}$.
\end{definition}

\begin{theorem}\label{thm:WDSoforthharmfunc}
 Let $X$ be a Banach space, let $d\geq 1$ be a fixed dimension and let $\mathcal O$ be an open, bounded subset of $\R^d$ containing the origin. Assume further that $\Phi,\Psi:X \to \mathbb R_+$ are continuous functions such that $\Psi$ is convex and $\Psi(0)=0$. Then for any continuous functions $f, g:\overline{\mathcal O}\to X$ harmonic and orthogonal on $\mathcal O$ satisfying  $g \stackrel{w}\ll f$ and $g(0)=0$ we have
 \[
  \int_{\partial \mathcal O} \Psi(g(s))\ud \mu(s) \leq C_{\Phi,\Psi,X}\int_{\partial \mathcal O} \Phi(f(s))\ud \mu(s).
 \]
Here $\mu$ is the harmonic measure on $\partial \mathcal O$ with respect to the origin and the least admissible $C_{\Phi,\Psi,X}$ equals $|\mathcal H^{\mathbb T}_{X}|_{\Phi,\Psi}$.
\end{theorem}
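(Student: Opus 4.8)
The plan is to transfer both the inequality and its sharpness to the martingale result of Theorem~\ref{thm:orthmartPhiPsi} via Doob's device of composing harmonic functions with Brownian motion. For the inequality, fix $f,g$ as in the statement. Since $f$ and $g$ take their values in a common finite-dimensional subspace $X_0\subseteq X$, and since any step function $\mathbb T\to X_0$ is also a step function into $X$ with the same periodic Hilbert transform, we have $|\mathcal H^{\mathbb T}_{X_0}|_{\Phi|_{X_0},\Psi|_{X_0}}\leq|\mathcal H^{\mathbb T}_{X}|_{\Phi,\Psi}$; hence we may assume $X=X_0$ is finite-dimensional (and if $|\mathcal H^{\mathbb T}_X|_{\Phi,\Psi}=\infty$ there is nothing to prove). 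Let $W$ be a standard $d$-dimensional Brownian motion from $0$ and $\tau$ its exit time from $\mathcal O$, which is a.s.\ finite because $\mathcal O$ is bounded; set $M_t:=f(W_{t\wedge\tau})$, $N_t:=g(W_{t\wedge\tau})$. Applying the one-dimensional It\^o formula on $[0,\tau)$ to the harmonic functions $x\mapsto\langle f(x),x^*\rangle$, $x\mapsto\langle g(x),x^*\rangle$ (the drift terms vanish by harmonicity) and passing to the limit at $\tau$, we obtain for each $x^*\in X^*$
$$
\langle M_t,x^*\rangle=\langle f(0),x^*\rangle+\int_0^{t\wedge\tau}\langle\nabla f(W_s),x^*\rangle\ud W_s,\qquad \langle N_t,x^*\rangle=\int_0^{t\wedge\tau}\langle\nabla g(W_s),x^*\rangle\ud W_s,
$$
so that $[\langle M,x^*\rangle]_t-[\langle N,x^*\rangle]_t=\int_0^{t\wedge\tau}\bigl(|\langle\nabla f(W_s),x^*\rangle|^2-|\langle\nabla g(W_s),x^*\rangle|^2\bigr)\ud s$ is nondecreasing and nonnegative by \eqref{eq:defWDSofharmfunc}, while $[\langle M,x^*\rangle,\langle N,x^*\rangle]_t=\int_0^{t\wedge\tau}\langle\langle\nabla f(W_s),x^*\rangle,\langle\nabla g(W_s),x^*\rangle\rangle\ud s=0$ by \eqref{eq:deforthofharmfunc}. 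Thus $M,N$ are bounded (hence genuine) orthogonal $X$-valued martingales with $N\stackrel{w}\ll M$ and $N_0=g(0)=0$.

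Now Theorem~\ref{thm:orthmartPhiPsi}, in its finite-dimensional form where $\Phi$ need not be convex (Remark~\ref{rem:PhiisnotconvexifXisfd}), gives $\mathbb E\Psi(N_t)\leq|\mathcal H^{\mathbb T}_X|_{\Phi,\Psi}\,\mathbb E\Phi(M_t)$ for all $t\geq0$. Letting $t\to\infty$ we have $W_{t\wedge\tau}\to W_\tau\in\partial\mathcal O$ a.s., hence $M_t\to f(W_\tau)$ and $N_t\to g(W_\tau)$ a.s.; since $\Phi\circ f$ and $\Psi\circ g$ are bounded on the compact set $\overline{\mathcal O}$, dominated convergence together with $\mathrm{law}(W_\tau)=\mu$ yields $\int_{\partial\mathcal O}\Psi(g)\ud\mu\leq|\mathcal H^{\mathbb T}_X|_{\Phi,\Psi}\int_{\partial\mathcal O}\Phi(f)\ud\mu$, which is \eqref{eq:HTnormboudfororcontmart} in the present setting. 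For the reverse bound it suffices to exhibit, for $\mathcal O$ a planar disc, harmonic orthogonal pairs whose ratio of integrals approaches $|\mathcal H^{\mathbb T}_X|_{\Phi,\Psi}$, since the harmonic measure of $B(0,r)\subset\mathbb R^2$ at the origin is normalized arclength on $r\mathbb T$. Concretely, given a step function $\varphi:\mathbb T\to X$, let $F:D\to X+iX$ be the Poisson (analytic) extension of $\varphi+i\mathcal H^{\mathbb T}_X\varphi$; then $\Im F(0)=0$, and for $d=2$, $r<1$, $\mathcal O=B(0,r)$ we take $f(x):=\Re F(x_1+ix_2)$, $g(x):=\Im F(x_1+ix_2)$, which are harmonic near $\overline{B(0,r)}$, orthogonal, satisfy $g\stackrel{w}\ll f$ (the Cauchy--Riemann equations force the two gradients to be orthogonal of equal length) and $g(0)=0$. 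Since $\Re F(re^{i\theta})\to\varphi(e^{i\theta})$ boundedly and $\Im F$ has radial limits $\mathcal H^{\mathbb T}_X\varphi$ a.e., dominated convergence for the denominator and Fatou's lemma for the numerator give $\liminf_{r\to1}\bigl(\int_{\partial B(0,r)}\Psi(g)\ud\mu_r\bigr)\big/\bigl(\int_{\partial B(0,r)}\Phi(f)\ud\mu_r\bigr)\geq\int_{\mathbb T}\Psi(\mathcal H^{\mathbb T}_X\varphi)\ud s\big/\int_{\mathbb T}\Phi(\varphi)\ud s$; taking the supremum over $\varphi$ shows the least admissible constant is $\geq|\mathcal H^{\mathbb T}_X|_{\Phi,\Psi}$, so it equals it.

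The routine technical points are the use of It\^o's formula up to the exit time when $f,g$ are only known to be continuous (not differentiable) on $\partial\mathcal O$ — handled, exactly as in Subsection~\ref{sec:orthodiff}, by stopping at $\inf\{t:\mathrm{dist}(W_t,\partial\mathcal O)<\delta\}$ and letting $\delta\downarrow0$ — and the integrability bookkeeping in the sharpness step ($\Phi\circ f$, $\Psi\circ g$ bounded, and the case $\int_{\mathbb T}\Psi(\mathcal H^{\mathbb T}_X\varphi)=\infty$ being trivial). The main obstacle is the sharpness: the estimate holds with the single constant $|\mathcal H^{\mathbb T}_X|_{\Phi,\Psi}$ for every bounded $\mathcal O\ni0$ and every $d$, and what one must show is that no smaller constant is admissible; this forces the sharpness argument to be run on a carefully chosen domain — the planar disc, whose harmonic measure is precisely the circle measure underlying the periodic Hilbert transform — rather than on the given $\mathcal O$, for which the extremal configurations are genuinely different.
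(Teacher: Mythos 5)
Your proof is correct and follows essentially the same route as the paper: compose the harmonic pair $(f,g)$ with Brownian motion stopped on exit from $\mathcal O$ to obtain orthogonal weakly differentially subordinate martingales, apply Theorem~\ref{thm:orthmartPhiPsi} (in the finite-dimensional form where $\Phi$ need not be convex, via Remark~\ref{rem:PhiisnotconvexifXisfd}), and realize sharpness on a planar disc. Your treatment of sharpness via radii $r<1$ and a Fatou/dominated-convergence passage is a somewhat more careful rendering of the paper's terse remark that the disc case with $g|_{\partial\mathcal O}=\mathcal H^{\mathbb T}_X(f|_{\partial\mathcal O})$ yields the lower bound — it addresses cleanly the mismatch between the step functions in the definition of $|\mathcal H^{\mathbb T}_X|_{\Phi,\Psi}$ and the continuity of $f,g$ on $\overline{\mathcal O}$ required by the theorem.
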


\begin{remark}
 We do not assume that $\Phi$ is convex because both $f$ and $g$ take their values in a finite-dimensional subspace of $X$, see Remark \ref{rem:PhiisnotconvexifXisfd}.
\end{remark}

\begin{proof}[Proof of Theorem \ref{thm:WDSoforthharmfunc}]
 Let $W:\mathbb R_+\times \Omega \to \mathbb R^d$ be a standard Brownian motion and let $
 \tau:= \inf\{t\geq 0: W_t\notin \mathcal O\}.
 $
 Then both $M:=f(W^{\tau})$ and $N:=g(W^{\tau})$ are martingales since both $f$ and $g$ are harmonic on $\mathcal O$ (see e.g.\ \cite[Theorem 18.5]{Kal}). By It\^o's formula and the fact that both $f$ and $g$ are harmonic we have
 \begin{align*}
  M_t&=f(W^{\tau}_t) = f(0) + \int_0^t \nabla f(W^{\tau}_s) \ud W^{\tau}_s,\;\;\; t\geq 0,\\
   N_t&=g(W^{\tau}_t) = \int_0^t \nabla g(W^{\tau}_s) \ud W^{\tau}_s,\;\;\; t\geq 0,
 \end{align*}
where in the second line we have used the equality $g(0)=0$. Therefore for any $x^*\in X^*$ and any $0\leq u\leq t$ we have
 \begin{align*}
  [\langle N, x^*\rangle]_t-[\langle N, x^*\rangle]_u = \int_u^t\|\langle\nabla g(W^{\tau}_s), x^*\rangle\|^2 \ud s\leq \int_u^t\|\langle\nabla f(W^{\tau}_s), x^*\rangle\|^2 \ud s = [\langle M, x^*\rangle]_t-[\langle M, x^*\rangle]_u,
 \end{align*}
and
 \begin{align*}
  [\langle M, x^*\rangle,\langle N, x^*\rangle]_t &= \int_0^t\Bigl\langle \langle\nabla g(W^{\tau}_s), x^*\rangle ,\langle\nabla f(W^{\tau}_s), x^*\rangle\Bigr\rangle\ud s =0.
 \end{align*}
 Consequently, $M$ and $N$ are orthogonal and $N \stackrel{w}\ll M$, so
 \begin{align*}
  \int_{\partial \mathcal O} \Psi(g(s))\ud \mu(s) = \lim_{t\to \infty}\mathbb E \Psi(g(W^\tau_t))\leq \lim_{t\to \infty}|\mathcal H_X^{\mathbb T}|_{\Phi, \Psi}\mathbb E \Phi(f(W^\tau_t))  = |\mathcal H_X^{\mathbb T}|_{\Phi, \Psi}\int_{\partial \mathcal O} \Phi(f(s))\ud \mu(s).
 \end{align*}
Here the first and the last equality follow from the dominated convergence theorem and the definition of $\mu$, while the middle one is due to Theorem \ref{thm:orthmartPhiPsi}.

The sharpness of the constant $C_{\Phi,\Psi,X}=|\mathcal H_X^{\mathbb T}|_{\Phi,\Psi}$ follows from the case $d=2$, $\mathcal O\subset \mathbb R^2$ being the unit disc, $f$ and $g$ being such that $g|_{\partial \mathcal O} = \mathcal H_X^{\mathbb T} (f|_{\partial \mathcal O})$ (in this case $\mu$ becomes the probability Lebesgue measure on the unit circle $\partial \mathcal O$).
\end{proof}

{
\begin{remark}
 Sharpness of the estimate
  \[
  \int_{\partial \mathcal O} \Psi(g(s))\ud \mu(s) \leq |\mathcal H^{\mathbb T}_{X}|_{\Phi,\Psi}\int_{\partial \mathcal O} \Phi(f(s))\ud \mu(s)
 \]
 for a fixed domain $\mathcal O$ remains open. Nevertheless, in the case $d=2$ and $\mathcal O$ being bounded with a Jordan boundary (e.g.\ polygon-shaped) the sharpness follows immediately from the Carath\'eodory's theorem (see e.g.\ \cite[Subsection I.3 and Appendix F]{GM08}).
\end{remark}

}

Let us turn to the corresponding result for $L^p$-estimates for differentially subordinate harmonic functions (i.e., not necessarily orthogonal).

\begin{theorem}\label{thm:WDSHFpestimate}
 Let $X$, $d$ and $\mathcal{O}$ be as in the previous statement. Assume further that $f,g:\overline {\mathcal O} \to X$ are continuous functions harmonic on $\mathcal O$ satisfying $g \stackrel{w}\ll f$ and $g(0) = a_0f(0)$ for some $a_0\in [-1,1]$. Then for any $1<p<\infty$ we have
 \begin{equation}\label{eq:thmWDSHFpestimate}
    \Bigl(\int_{\partial \mathcal O}\|g(s)\|^p \ud \mu(s)\Bigr)^{\frac{1}{p}} \leq C_{p, X}  \Bigl(\int_{\partial \mathcal O}\|f(s)\|^p \ud \mu(s)\Bigr)^{\frac{1}{p}},
 \end{equation}
where $\mu$ is the harmonic measure of $\partial \mathcal O$, and the least admissible constant $C_{p, X}$ is within the segment $[\hbar_{p,X}, \beta_{p, X} + \hbar_{p,X}]$.
\end{theorem}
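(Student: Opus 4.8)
The plan is to pass to the martingale setting by Doob's device and then appeal to Theorem~\ref{thm:WDSofmartbeta+h} for the upper bound and to Theorem~\ref{thm:orthmartPhiPsi} for the lower one. Since harmonic functions take values in a finite-dimensional subspace of $X$, we may assume $\dim X<\infty$. First I would fix a standard $d$-dimensional Brownian motion $W$ started at $0$, set $\tau:=\inf\{t\geq 0:W_t\notin\mathcal O\}$ (a.s.\ finite because $\mathcal O$ is bounded), and put $M:=f(W^{\tau})$, $N:=g(W^{\tau})$. Exactly as in the proof of Theorem~\ref{thm:WDSoforthharmfunc}, these are continuous $X$-valued martingales (see \cite[Theorem 18.5]{Kal}), bounded since $f,g$ are continuous on the compact set $\overline{\mathcal O}$, and It\^o's formula together with harmonicity gives $M_t=f(0)+\int_0^t\nabla f(W^{\tau}_s)\ud W^{\tau}_s$ and $N_t=a_0f(0)+\int_0^t\nabla g(W^{\tau}_s)\ud W^{\tau}_s$. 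Hence for every $x^*\in X^*$,
\[
 [\langle M,x^*\rangle]_t-[\langle N,x^*\rangle]_t=(1-a_0^2)\,|\langle f(0),x^*\rangle|^2+\int_0^t\bigl(|\langle\nabla f(W^{\tau}_s),x^*\rangle|^2-|\langle\nabla g(W^{\tau}_s),x^*\rangle|^2\bigr)\ud s,
\]
which by $|a_0|\leq 1$ and \eqref{eq:defWDSofharmfunc} is nonnegative and nondecreasing in $t$; therefore $N\stackrel w\ll M$.

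For the upper bound I would invoke Theorem~\ref{thm:WDSofmartbeta+h}, obtaining $\mathbb E\|N_t\|^p\leq(\beta_{p,X}+\hbar_{p,X})^p\,\mathbb E\|M_t\|^p$ for all $t\geq 0$. Letting $t\to\infty$ and using boundedness of $M$, $N$ (so that $M_t\to f(W_\tau)$, $N_t\to g(W_\tau)$ a.s.\ and in $L^p$ by dominated convergence) and the definition of the harmonic measure $\mu$, one gets $\int_{\partial\mathcal O}\|g\|^p\ud\mu\leq(\beta_{p,X}+\hbar_{p,X})^p\int_{\partial\mathcal O}\|f\|^p\ud\mu$, i.e.\ $C_{p,X}\leq\beta_{p,X}+\hbar_{p,X}$. (If one wishes to see where the two summands come from, one can instead rerun the splitting $N=N^{\rm sym}+N^{\rm asym}$ from the proof of Theorem~\ref{thm:WDSofmartbeta+h}: $N^{\rm asym}$ starts from $0$ and is orthogonal to $M$, so Theorem~\ref{thm:orthmartPhiPsi} with $\Phi=\Psi=\|\cdot\|^p$ supplies the $\hbar_{p,X}$-term, whereas $N^{\rm sym}\stackrel w\ll M$ starts from $a_0M_0$ and the Burkholder-function argument---using $U(M_0,a_0M_0)\leq 0$ for $|a_0|\leq 1$, cf.\ \cite[Remark 3.10]{Y17FourUMD}---supplies the $\beta_{p,X}$-term; the triangle inequality then finishes.)

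For the lower bound $C_{p,X}\geq\hbar_{p,X}$ I would specialize, as in the sharpness part of Theorem~\ref{thm:WDSoforthharmfunc}, to $d=2$ with $\mathcal O$ the open unit disc of $\mathbb R^2\simeq\mathbb C$, so that $\mu$ is the normalized Lebesgue measure on the circle. Given a step function $\varphi:\mathbb T\to X$, let $f$ and $g$ be the harmonic (Poisson) extensions of $\varphi$ and of $\mathcal H^{\mathbb T}_X\varphi$ respectively; after a harmless inner dilation $z\mapsto f(rz),\,g(rz)$ with $r<1$ (needed only to secure continuity up to the boundary) one has $g(0)=0$ and, by the discussion in Section~\ref{sec:orthodiff}, $g\stackrel w\ll f$ in the sense of \eqref{eq:defWDSofharmfunc}. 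Plugging this pair into \eqref{eq:thmWDSHFpestimate}, letting $r\to1$ (Fatou on the left, where the boundary values converge to $\mathcal H^{\mathbb T}_X\varphi$, and dominated convergence on the right), and taking the supremum over step functions $\varphi$ yields $C_{p,X}\geq\hbar_{p,X}$.

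All the quantitative work---the It\^o identity for the covariations, the limit $t\to\infty$, the sharpness example---is routine given Theorems~\ref{thm:orthmartPhiPsi} and~\ref{thm:WDSofmartbeta+h}; there is no single hard step. The only place that requires a little attention---and the reason for the hypothesis on $g(0)$---is the transfer of weak differential subordination: it is precisely the assumption $g(0)=a_0f(0)$ with $|a_0|\leq 1$ that makes the initial term $(1-a_0^2)|\langle f(0),x^*\rangle|^2$ nonnegative, so that $N\stackrel w\ll M$ holds for the stopped martingales and not merely for their gradient parts. A secondary bit of care is the standard localization needed to justify the It\^o representation when $f,g$ are only assumed continuous up to $\partial\mathcal O$, together with the dilation step in the sharpness argument.
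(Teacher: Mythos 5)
Your proof is correct and follows essentially the same route as the paper: pass to the martingales $M=f(W^\tau)$, $N=g(W^\tau)$, verify $N\stackrel{w}\ll M$ using the hypothesis $g(0)=a_0f(0)$ with $|a_0|\le1$ to control the initial jump of the quadratic variation, invoke Theorem~\ref{thm:WDSofmartbeta+h} for the upper bound, and specialize to the unit disc (with a dilation to secure continuity up to the boundary) for the lower bound $C_{p,X}\ge\hbar_{p,X}$. The only place you add detail is the explicit accounting for the initial quadratic-variation term and the inner dilation, both of which the paper leaves implicit in the phrase ``arguing as in the proof of Theorem~\ref{thm:WDSoforthharmfunc}.''
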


\begin{remark}\label{rem:WDSHFscalarcaseopenproblem}
 In the scalar-valued setting it is known that the optimal $C_{p, \mathbb R}$ is within the range $[\cot(\tfrac{\pi}{2p^*}), p^*-1]$. The precise identification of $C_{p,\mathbb R}$ is an open problem formulated by Burkholder in \cite{Burk89}.
\end{remark}

\begin{proof}[Proof of Theorem \ref{thm:WDSHFpestimate}]
This is quite similar to the proof of the latter statement, so we will be brief and only indicate the necessary changes which need to be implemented. For the lower bound $C_{p, X}\geq \hbar_{p,X}$, modify appropriately the last sentence of the proof of Theorem \ref{thm:WDSoforthharmfunc}. To show the upper bound for $C_{p,X}$,
consider the martingales $M:= f(W^{\tau})$ and $N:= g(W^{\tau})$, where $W$ and $\tau$ are as previously. Arguing
as in the proof of Theorem \ref{thm:WDSoforthharmfunc}, we show that $N \stackrel{w}\ll M$ and hence {by Theorem \ref{thm:WDSofmartbeta+h}},
\begin{align*}
 \Bigl(\int_{\partial \mathcal O}\|g(s)\|^p \ud \mu(s)\Bigr)^{\frac{1}{p}} &= \lim_{t\to \infty} (\mathbb E \|N_t\|^p)^{\frac{1}{p}} \leq \limsup_{t\to \infty} (\beta_{p,X} +\hbar_{p,X}) (\mathbb E \|M_t\|^p)^{\frac{1}{p}}\\
 &\leq \lim_{t\to \infty} (\beta_{p,X} +\hbar_{p,X}) (\mathbb E \|M_t\|^p)^{\frac{1}{p}}=(\beta_{p,X} + \hbar_{p,X}) \Bigl(\int_{\partial \mathcal O}\|f(s)\|^p \ud \mu(s)\Bigr)^{\frac{1}{p}}.
\end{align*}
This completes the proof.
\end{proof}

\begin{remark}
 Note that any significant improvement for the upper bound of $C_{p, X}$ in \eqref{eq:thmWDSHFpestimate} could automatically solve an open problem. Let us outline two remarkable examples. If one could show that $C_{p, X} \leq C \beta_{p, X}$ for some universal constant $C>0$, then the open problem outlined in Remark \ref{rem:h_p,Xvsbeta_p,X} will be solved. On the other hand, if one could show that $C_{p, X} = \hbar_{p, X}$, then the question of Burkholder concerning the optimal constant $C_{p,\mathbb R}$ in the real-valued case would be answered (see Remark \ref{rem:WDSHFscalarcaseopenproblem}).
\end{remark}

\subsection{Inequalities for singular integral operators}\label{subsec:singint}
Our final application concerns the extension of $\Phi,\,\Psi$-estimates from the setting of nonperiodic Hilbert transform to the case of odd-kernel singular integral operators on $\R^d$. We start with the notion of a \emph{directional Hilbert transform}: given a unit vector $\theta\in \R^d$, we define the operator $\mathcal{H}_\theta$ by
$$
 \mathcal{H}_\theta f(x)=\frac{1}{\pi}\mbox{p.v.}\int_\R f(x-t\theta)\frac{\mbox{d}t}{t}, {\;\;\;x\in \mathbb R^d,}
 $$
where $f$ is a sufficiently regular real-valued function on $\R^d$,
and call it the Hilbert transform of $f$ in the direction $\theta$. For example, if $e_1$ stands for the  unit vector $(1,0,0,\ldots,0)\in \R^d$, then $\mathcal{H}_{e_1}$ is obtained by applying the Hilbert transform in the first variable followed by the identity operator in the remaining variables. Consequently, by Fubini's theorem, we see that for any functions $\Phi$, $\Psi:X\to [0,\infty)$ and any step function $f:\R^d\to X$ (finite linear combination of characteristic functions of rectangles) we have
$$
 \int_{\R^d}\Psi(\mathcal{H}_{e_1}f)\mbox{d}x\leq |\mathcal{H}^\mathbb{R}_X|_{\Phi,\Psi}\int_{\R^d} \Phi(f)\mbox{d}x.
 $$
Now, if $A$ is an arbitrary orthogonal matrix, we have
$$
\mathcal{H}_{Ae_1}(f)(x)=\mathcal{H}_{e_1}(f\circ A)(A^{-1}x), { \;\;\;x\in \mathbb R^d,}
$$
so the above inequality holds true for any directional Hilbert transform $\mathcal{H}_\theta$.

Suppose that $\Omega:S^{d-1}\to \R$ is an odd function satisfying $||\Omega||_{L^1(S^{d-1})}=1$ and define the associated operator
$$
T_\Omega f(x)=\frac{2}{\pi}\mbox{p.v.}\int_{\R^d} \frac{\Omega(y/|y|)}{|y|^d}f(x-y)\mbox{d}y, { \;\;\;x\in \mathbb R^d.}
$$
Then $T_\Omega$ can be expressed as an average of directional Hilbert transforms:
$$
 T_\Omega f(x)=\int_{S^{d-1}} \Omega(\theta)\mathcal{H}_\theta f(x)\mbox{d}\theta,  { \;\;\;x\in \mathbb R^d}.
 $$
(Sometimes this identity is referred to as the method of rotations.)
Consequently, if $\Psi$ is convex and even, we get
\begin{align*}
 \int_{\R^d}\Psi(T_\Omega f)\mbox{d}x=\int_{\R^d} \Psi\left(\int_{S^{d-1}} \Omega(\theta)\mathcal{H}_\theta f(x)\mbox{d}\theta\right)\mbox{d}x\leq \int_{S^{d-1}}|\Omega(\theta)|\int_{\R^d} \Psi(\mathcal{H}_\theta f(x))\mbox{d}x\mbox{d}\theta\leq |\mathcal{H}^\mathbb{R}_{\Phi,\Psi}|\int_{\R^d} \Phi(f)\mbox{d}x.
\end{align*}
In particular, if we fix $d$ and $j\in \{1,2,\ldots,d\}$, then the kernel
$$
 \Omega_{j,d}(\theta)=\frac{\pi\Gamma\left(\frac{d+1}{2}\right)}{2\pi^{(d+1)/2}}{\theta_j},\qquad \theta\in S^{d-1},
 $$
gives rise to the Riesz transform $R_j$. Therefore, we see that any $\Phi,\Psi$-estimate for the nonperiodic Hilbert transform (where $\Psi$ is assumed to be a convex and even function on $X$) holds true, with an unchanged constant, also in the context of Riesz transforms. In particular, the $L^p$ norms of Riesz transforms are dominated by the $L^p$ norms of Hilbert transform.

\smallskip
The following theorem connects the $\Phi,\Psi$-norm of an odd power of a Riesz transform with the $\Phi,\Psi$-norm of the Hilbert transform.

\begin{theorem}
Let $X$ be a Banach space, $d\geq 1$, $j\in \{1,\ldots,d\}$, $m\geq 1$ be odd. Let $R_{j,X}$ be the corresponding Riesz transform acting on $X$-valued step functions, $\Phi, \Psi:X \to \mathbb R_+$ be convex continuous such that $\Psi$ is even. Then
\[
|R_{j,X}^m|_{\Phi, \Psi} \leq \Bigl|\frac{2\Gamma(\tfrac{m+d}{2})}{\Gamma(\tfrac{d}{2})\Gamma(\tfrac{m}{2})}\mathcal H^{\mathbb R}_X\Bigr|_{\Phi, \Psi}.
\]
\end{theorem}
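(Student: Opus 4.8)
The plan is to pass, by the classical method of rotations for odd powers of Riesz transforms, from $R_{j,X}^m$ to a superposition of directional Hilbert transforms, and then to feed that into the Fubini reduction and the directional consequence of \eqref{eq:HTnormboudfororcontmart} already recorded in this subsection.

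First I would make two routine reductions. Since $|\cdot|_{\Phi,\Psi}$ is tested only on step functions $f\colon\R^d\to X$, which take finitely many values, one may assume $X$ is finite dimensional (restrict to the span of the range of $f$); and since $\Phi(0)>0$ would force $\int_{\R^d}\Phi(f)=\infty$ for every step $f$ and hence $|R_{j,X}^m|_{\Phi,\Psi}=0$, one may assume $\Phi(0)=0$.

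The core step is the representation. For $m$ odd, $R_j^m$ has the homogeneous odd Fourier multiplier $(-i\xi_j/|\xi|)^m$, and I would write $R_j^m f=\int_{S^{d-1}}\Omega_m(\theta)\,\mathcal H_\theta f\,\ud\sigma(\theta)$ for an explicit odd kernel $\Omega_m$ on $S^{d-1}$, where $\mathcal H_\theta$ is the directional Hilbert transform and $\sigma$ the surface measure. For $m=1$ this is exactly the identity $R_j=T_{\Omega_{j,d}}$ recalled above; for general odd $m$ it is the higher-order analogue, obtained by decomposing $(\xi_j/|\xi|)^m$ into spherical harmonics and matching it, degree by degree, with the multiplier of $\int_{S^{d-1}}\Omega\,\mathcal H_\theta\,\ud\sigma$ via the Funk--Hecke formula (which supplies the eigenvalue of the $\operatorname{sign}$ kernel on each odd harmonic). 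By the rotational symmetry of $R_j$ the kernel $\Omega_m(\theta)$ depends on $\theta_j$ alone, so the total $L^1$-mass of $\Omega_m$ on $S^{d-1}$ collapses to a one-variable Beta integral, which I would evaluate to $\dfrac{2\Gamma(\frac{m+d}{2})}{\Gamma(\frac d2)\Gamma(\frac m2)}=:\kappa_{m,d}$.

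Granting this, the rest is quick. Then $\kappa_{m,d}^{-1}|\Omega_m|\,\ud\sigma=:\ud\nu$ is a probability measure on $S^{d-1}$; writing $\widetilde\theta:=\operatorname{sign}(\Omega_m(\theta))\theta$ and using $\mathcal H_{-\theta}=-\mathcal H_\theta$ exhibits $R_j^m f$ as the $\nu$-average of $\kappa_{m,d}\,\mathcal H_{\widetilde\theta}f$. Applying Jensen's inequality (convexity of $\Psi$) pointwise in $x$, then integrating over $\R^d$, using Tonelli, and, for each fixed unit vector $\widetilde\theta$, the Fubini argument recalled above applied with the operator $\kappa_{m,d}\,\mathcal H_X^{\R}$ in place of $\mathcal H_X^{\R}$—legitimate because $\Psi$ is even, so the orientation of $\widetilde\theta$ is immaterial—one obtains $\int_{\R^d}\Psi(R_j^m f)\,\ud x\le\bigl|\kappa_{m,d}\,\mathcal H_X^{\R}\bigr|_{\Phi,\Psi}\int_{\R^d}\Phi(f)\,\ud x$ for every step $f$, which is the claim. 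The main obstacle is precisely the representation and the value of $\kappa_{m,d}$: the $m=1$ case is standard and already used here, but the method of rotations for higher Riesz transforms genuinely requires the Funk--Hecke/Gegenbauer bookkeeping both to identify $\Omega_m$ and to compute $\|\Omega_m\|_{L^1(S^{d-1})}$; once that is in hand, the convexity-plus-evenness juggling at the end is purely formal.
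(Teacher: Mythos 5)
Your overall strategy---express $R_{j}^m$ via the method of rotations as a superposition $\int_{S^{d-1}}\Omega_m(\theta)\mathcal H_\theta\,\ud\sigma(\theta)$ of directional Hilbert transforms, then apply Jensen, the evenness of $\Psi$, and Tonelli to reduce to the one-dimensional estimate for $\mathcal H^{\mathbb R}_X$---is exactly the paper's. You diverge only in how the kernel $\Omega_m$ is produced: you would reconstruct it from the Fourier multiplier $(-i\xi_j/|\xi|)^m$ by Funk--Hecke and spherical harmonics, whereas the paper simply cites the Iwaniec--Martin singular-integral representation
\[
R_{j}^m f(x) = \frac{\Gamma(\tfrac{m+d}{2})}{\pi^{d/2}\Gamma(\tfrac{m}{2})}\,\mathrm{p.v.}\int_{\mathbb R^d}\frac{(y_j/|y|)^m}{|y|^d}f(x-y)\,\ud y
\]
and appeals to $|S^{d-1}|=2\pi^{d/2}/\Gamma(d/2)$. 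Both roads lead to the same $\Omega_m$, and your Jensen/Tonelli bookkeeping is sound.

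The genuine gap is precisely in the step you flag as the main obstacle: the claimed value of $\|\Omega_m\|_{L^1(S^{d-1})}$. With $\mathcal H_\theta$ normalized as in the paper, the method-of-rotations identity carries a factor $\tfrac{\pi}{2}$, so $\Omega_m(\theta)=\tfrac{\pi}{2}\cdot\tfrac{\Gamma(\frac{m+d}{2})}{\pi^{d/2}\Gamma(\frac m2)}\theta_j^m$, and using $\int_{S^{d-1}}|\theta_j|^m\ud\sigma = \tfrac{2\pi^{(d-1)/2}\Gamma(\frac{m+1}{2})}{\Gamma(\frac{m+d}{2})}$ one obtains
\[
\|\Omega_m\|_{L^1(S^{d-1})}=\frac{\sqrt\pi\,\Gamma(\tfrac{m+1}{2})}{\Gamma(\tfrac m2)},
\]
which is independent of $d$ and not equal to $\kappa_{m,d}=\tfrac{2\Gamma(\frac{m+d}{2})}{\Gamma(\frac d2)\Gamma(\frac m2)}$. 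For $m=1$ it equals $1$ for every $d$---consistent with the paper's own normalization $\|\Omega_{j,d}\|_{L^1}=1$ for $R_j$---whereas $\kappa_{1,d}$ takes the values $2/\pi,\,1,\,4/\pi,\ldots$ for $d=1,2,3,\ldots$. Your Funk--Hecke computation, carried out correctly, would therefore not land on $\kappa_{m,d}$; the argument actually yields the sharper, $d$-free bound $|R_{j,X}^m|_{\Phi,\Psi}\le\bigl|\tfrac{\sqrt\pi\,\Gamma(\frac{m+1}{2})}{\Gamma(\frac m2)}\mathcal H^{\mathbb R}_X\bigr|_{\Phi,\Psi}$. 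Note also that the crude bound the paper hints at---replace $\int_{S^{d-1}}|\theta_j|^m\ud\sigma$ by $|S^{d-1}|$---gives $\tfrac{\pi}{2}\kappa_{m,d}$ rather than $\kappa_{m,d}$, and for $m=d=1$ one has $R_1=\mathcal H^{\mathbb R}$ while $\kappa_{1,1}=2/\pi<1$, so the constant as stated should be revisited.
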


\begin{proof}
The proof follows from the discussion above, the fact that $R_{j,X}^m$ is a singular integral of the following form (see e.g.\ \cite[p.\ 33]{IM96}):
\[
R_{j,X}^m f(x) = \frac{\Gamma(\tfrac{m+d}{2})}{\pi^{\tfrac d2}\Gamma(\tfrac{m}{2})}\int_{\mathbb R^d}\frac{f(x-y)y_j^m}{|y|^{m+d}}\ud y,\;\;\; x\in \mathbb R^d,
\]
where $f:\mathbb R^d \to X$ is a step function, and the fact that the volume of $S^{d-1}$ equals ${2\pi^{\tfrac d2}}/{\Gamma(\tfrac d2)}$.
\end{proof}

Notice that if $d$ is fixed, then $\tfrac{2\Gamma(\tfrac{m+d}{2})}{\Gamma(\tfrac{d}{2})\Gamma(\tfrac{m}{2})}$ is of the order $m^{d/2}$, so in particular we have that for all $1<p<\infty$,
\[
\|R_{j,X}^m\|_{L^p(\mathbb R^d;X) \to L^p(\mathbb R^d;X)} \lesssim_d m^{d/2} \|\mathcal H^{\mathbb R}_X\|_{L^p(\mathbb R; X) \to L^p(\mathbb R; X)}.
\]

\subsection{Hilbert operators}\label{sec:Hilbert operators}

Let $X$ be a Banach space, let $d$ be a positive integer and pick $j\in\{1,\ldots, d\}$. Let $f:\mathbb R_{j+}^d \to X$ be locally integrable function, where $\R^d_{j+}=\{x\in \R^d:x_j>0\}$.  We define $T_j f:\mathbb R_{j+}^d \to X$ by the formula
\[
 T_j f(x) := \frac{\Gamma(\frac{d+1}{2})}{\pi^{(d+1)/2}}\int_{\mathbb R_{j+}^d}\frac{f(y)(x_j+y_j)}{|x+y|^{d+1}}\ud y,\;\;\; x\in \mathbb R_{j+}^d.
\]
This type of operators resembles Riesz transforms, but due to the domain restrictions the use of principal value is not necessary.
Note that if $d=1$, then $T_{j}$ is the {\em Hilbert operator} $T$  given by
\[
 T f(x) := \frac{1}{\pi}\int_{\mathbb R_{+}}\frac{f(y)}{x+y}\ud y,\;\;\; x\in \mathbb R_{+}.
\]

We have the following statement.

\begin{theorem}
 Let $X$ be a Banach space, $\Phi, \Psi:X \to \mathbb R_+$ be convex continuous such that $\Psi$ is even, $d\geq 1$, $j\in\{1,\ldots, d\}$, $1<p<\infty$. Then
 \begin{equation}\label{eq:ineqonT_j}
|T_j|_{\Phi,\Psi} \leq |\mathcal H^{\mathbb R}_X|_{\Phi, \Psi}.
 \end{equation}
\end{theorem}

\begin{proof}
By the discussion in Subsection \ref{subsec:singint} it is sufficient to show that
$$
 |T_j|_{\Phi,\Psi} \leq |R_{j,X}|_{\Phi, \Psi}.
 $$
 Fix a step function $f:\mathbb R^d_{j+}\to X$. Let $\tilde f:\mathbb R^d \to X$ be such that $\tilde f(x_1,\ldots,x_d) = 0$ if $x_j<0$ and $\tilde f|_{\mathbb R^d_{j+}} = f$. Then $T_j f(x) = R_{j, X} \tilde f(-x)$ for any $x\in \mathbb R^d_{j+}$, and therefore
 \begin{align*}
\int_{\mathbb R_{j+}^d} \Psi(T_j f(x)) \ud x &= \int_{\mathbb R^d} \Psi(R_{j, X} \tilde f(-x))\mathbf 1_{x_j>0} \ud x \leq \int_{\mathbb R^d} \Psi(R_{j, X} \tilde f(-x)) \ud x\\
&=\int_{\mathbb R^d} \Psi(R_{j, X} \tilde f(x)) \ud x\leq |R_{j,X}|_{\Phi, \Psi}\int_{\mathbb R^d} \Phi(\tilde f(x))\ud x\\
& = |R_{j,X}|_{\Phi, \Psi}\int_{\mathbb R^d_{j+}} \Phi(f(x))\ud x.
 \end{align*}
\end{proof}

\begin{remark}\label{rem:T_jnicephipsi}
Notice that if $\Phi$ and $\Psi$ are of the form $\Phi(x) = \phi(\|x\|)$, $\Psi(x) = \psi(\|x\|)$ for some convex symmetric functions $\phi, \psi:\mathbb R \to \mathbb R_+$, then one can improve \eqref{eq:ineqonT_j}. Indeed, one can show that $|T_j|_{\Phi, \Psi} = |T_j|_{\phi, \psi}$, which does not depend on the Banach space $X$: for any step function $f:\mathbb R^d_{j+} \to X$ one has that
\begin{align*}
\int_{\mathbb R_{j+}^d}\Psi(T_j f(x)) \ud x &= \int_{\mathbb R_{j+}^d}\psi(\|T_j f(x)\|) \ud x\\
& = \int_{\mathbb R_{j+}^d}\psi\Bigl(\Bigl\| \frac{\Gamma(\frac{d+1}{2})}{\pi^{(d+1)/2}}\int_{\mathbb R_{j+}^d}\frac{f(y)(x_j+y_j)}{|x+y|^{d+1}}\ud y\Bigr\|\Bigr) \ud x\\
&\leq \int_{\mathbb R_{j+}^d}\psi\Bigl( \frac{\Gamma(\frac{d+1}{2})}{\pi^{(d+1)/2}}\int_{\mathbb R_{j+}^d}\frac{g(y)(x_j+y_j)}{|x+y|^{d+1}}\ud y\Bigr) \ud x\\
&= \int_{\mathbb R_{j+}^d}\psi(T_j g(x)) \ud x \leq |T_j|_{\phi, \psi}\int_{\mathbb R_{j+}^d}\phi(g(x))\ud x \\
&=  |T_j|_{\phi, \psi}\int_{\mathbb R_{j+}^d}\Phi(f(x))\ud x,
\end{align*}
where $g:\mathbb R^d_{j+}\to \mathbb R_+$ is a step function such that $g(\cdot) = \|f(\cdot)\|$. In particular, if $\Phi(x) = \Psi(x) = \|x\|^p$ for some $1<p<\infty$, then by \cite[Theorem 1.1]{Os17a}
\[
\|T_j\|_{L^p(\mathbb R^d_{j+};X) \to L^p(\mathbb R^d_{j+};X)} = \sin^{-1}(\pi/p).
\]
\end{remark}

\section*{Acknowledgment}
The authors would like to thank Mark Veraar for pointing out the estimate \eqref{eq:deltabeta+beta-estforHilbTrans} and Remark \ref{rem:T_jnicephipsi} to them. The authors would also like to express their gratitude to the referee for many helpful comments and suggestions which greatly improved the paper. A. Os\k ekowski was supported by Narodowe Centrum Nauki (Poland), grant no. DEC-2014/14/E/ST1/00532.

\def\cprime{$'$} \def\polhk#1{\setbox0=\hbox{#1}{\ooalign{\hidewidth
  \lower1.5ex\hbox{`}\hidewidth\crcr\unhbox0}}}
  \def\polhk#1{\setbox0=\hbox{#1}{\ooalign{\hidewidth
  \lower1.5ex\hbox{`}\hidewidth\crcr\unhbox0}}} \def\cprime{$'$}

\end{document}